\newtheorem{dfn}{Definition}[subsection]
\newtheorem{thm}[dfn]{Theorem}
\newtheorem{lem}[dfn]{Lemma}
\newtheorem{cor}[dfn]{Corollary}
\newtheorem{rem}[dfn]{Remark}
\newtheorem{prop}[dfn]{Proposition}\makeatletter
\begin{document}
\title{\bf {\large Hausdorff dimension of the scaling limit of loop-erased random walk \\ in three dimensions}}
\author{Daisuke Shiraishi}
\date{}

\maketitle
\begin{abstract}
Let $M_{n}$ be the length (number of steps) of the loop-erasure of a simple random walk up to the first exit from a ball of radius $n$ centered at its starting point. It is shown in \cite{Shi-gr} that there exists $\beta \in (1, \frac{5}{3}]$ such that $E (M_{n} )$ is of order $n^{\beta}$ in 3 dimensions. In the present article, we show that the Hausdorff dimension of the scaling limit of the loop-erased random walk in 3 dimensions is equal to $\beta$ almost surely.

 \end{abstract}
 
 \section{Introduction}
 \subsection{Introduction}
 Loop-erased random walk (LERW) is a simple path obtained by erasing all loops from a random walk path chronologically (see Section \ref{def-lew} for the precise definition), which was originally introduced in \cite{Law-dfn}. In this article, we study the Hausdorff dimension of the scaling limit of LERW in three dimensions.

 It is known that the scaling limit of LERW in $\mathbb{Z}^{d}$ exists for every $d$. Let $S$ be the simple random walk in $\mathbb{Z}^{d}$ started at the origin and $\tau_{n}$ be the first exit time from a ball of radius $n$. We write $\text{LEW}_{n} = \frac{LE (S[0, \tau_{n}])}{n}$ for the rescaled loop-erased random walk obtained by multiplying LERW up to $\tau_{n}$ by $\frac{1}{n}$ (see Section 2.1 for the definition of $LE$). We think of $\text{LEW}_{n}$ as a random element of the metric space of compact subsets in the closed unit ball in $\mathbb{R}^{d}$ endowed with the Hausdorff distance. Then $\text{LEW}_{n}$ converges weakly to a $d$-dimensional Brownian motion for $d \ge 4$ (Theorem 7.7.6 of \cite{Law-book90}), and converges weakly to $\text{SLE}_{2}$ (\cite{Sch}, \cite{LSW}) for $d=2$ (actually, even in a stronger sense). For $d=3$, the sequence $\text{LEW}_{2^{n}}$ is Cauchy in the metric space and it converges weakly to a random compact subset in the closed unit ball in $\mathbb{R}^{3}$, see \cite{Koz}. We denote the weak convergence limit by ${\cal K}$ in $d=3$ and call it the scaling limit of LERW in 3 dimensions. It is also known that ${\cal K}$ is invariant under rotations and dilations, see \cite{Koz}.

 While the scaling limit of LERW for $d \ge 4$ and $d=2$ are well-studied, little is known about ${\cal K}$ when $d=3$. Recently some topological properties of ${\cal K}$ were studied in \cite{SS}. In \cite{SS}, it was proved that ${\cal K}$ is a simple path almost surely, and that the random set obtained by adding the loops of the independent Brownian loop soup of parameter 1 that meet ${\cal K}$ (see \cite{LW} for the Brownian loop soup) to ${\cal K}$, has the same distribution as the trace of Brownian motion (see Section \ref{scaling limit} for details). Furthermore, bounds on the Hausdorff dimension of ${\cal K}$ were also derived in \cite{SS}. Namely, one has 
\begin{equation}
2 - \xi \le \text{dim}_{\text{H}} ( {\cal K} ) \le \beta, \text{ almost surely},
\end{equation}
where $\xi \in (\frac{1}{2}, 1)$ is the intersection exponent for three dimensional Brownian motion (see \cite{Law-haus} for $\xi$) and $\beta \in (1, \frac{5}{3}]$ is the growth exponent for LERW in $d=3$, i.e., if we write $M_{n}$ for the length (the number of steps) of $LE ( S[0, \tau_{n}] )$, then \cite{Shi-gr} shows that the following limit exists in 3 dimensions: 
\begin{equation}
\lim_{n \to \infty} \frac{\log E (M_{n}) }{\log n} = \beta.
\end{equation} 
In particular, we have $1 < \text{dim}_{\text{H}} ({\cal K}) \le \frac{5}{3}$ almost surely.

\medskip
 
In the present article, we will show that $\text{dim}_{\text{H}} ({\cal K}) \ge \beta$, i.e., the main result of the article is the following.
\begin{thm}\label{main result}
Let $d=3$. We have
\begin{equation}\label{main-result-1}
\text{dim}_{\text{H}} ( {\cal K} ) = \beta, \text{ almost surely}.
\end{equation}
\end{thm}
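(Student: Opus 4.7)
Since the upper bound $\text{dim}_{\text{H}}(\mathcal{K}) \le \beta$ is already established in \cite{SS}, the task is to prove the matching lower bound $\text{dim}_{\text{H}}(\mathcal{K}) \ge \beta$ almost surely. My plan is to proceed by the Frostman energy method: for every $s < \beta$ I aim to produce a random finite Borel measure $\mu = \mu^{(s)}$ supported on $\mathcal{K}$, with $\mu \not\equiv 0$ on an event of positive probability, and such that the $s$-energy
\[
I_{s}(\mu) = \iint \frac{d\mu(x)\, d\mu(y)}{|x - y|^{s}}
\]
is finite on that event. Frostman's lemma then gives $\text{dim}_{\text{H}}(\mathcal{K}) \ge s$ with positive probability, and a short argument exploiting the rotation and dilation invariance of $\mathcal{K}$ (recorded in \cite{Koz}) upgrades this to an almost sure statement. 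Letting $s \uparrow \beta$ then concludes the proof.

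The natural candidate for $\mu$ is the scaling limit of the normalized length measure on the loop-erased path. Writing $\gamma_{n} = LE(S[0, \tau_{n}])$ and $M_{n} = |\gamma_{n}|$, set
\[
\mu_{n} = \frac{1}{n^{\beta}} \sum_{j=0}^{M_{n}} \delta_{\gamma_{n}(j)/n}
\]
on the closed unit ball in $\mathbb{R}^{3}$. By the very definition of $\beta$ one has $E[\mu_{n}(\text{unit ball})] = n^{-\beta} E[M_{n}+1] \asymp 1$, so the family is tight as a sequence of random finite measures. Along the dyadic sequence used in \cite{Koz}, a further extraction yields a joint weak limit $(\mathcal{K}, \mu)$ with $\text{supp}(\mu) \subseteq \mathcal{K}$, and a Paley--Zygmund argument based on a matching second moment bound on $\mu_{n}(\text{unit ball})$ then gives $P(\mu \not\equiv 0) > 0$.

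The technical core of the argument is the bound
\[
\sup_{n} E\bigl[I_{s}(\mu_{n})\bigr] < \infty \qquad (s < \beta),
\]
from which Fatou yields $I_{s}(\mu) < \infty$ almost surely on $\{\mu \not\equiv 0\}$. Expanding the definition, this reduces to establishing a sharp two-point estimate of the form
\[
P\bigl(x \in \gamma_{n} \text{ and } y \in \gamma_{n}\bigr) \lesssim n^{\beta - 3} \cdot |x-y|^{\beta - 3}
\]
for $x, y$ in the bulk of the ball of radius $n$. Indeed, plugging this into
\[
E\bigl[I_{s}(\mu_{n})\bigr] = n^{s - 2\beta} \sum_{x \ne y} |x - y|^{-s} \, P(x, y \in \gamma_{n})
\]
and carrying out the sum produces a quantity of order $1$ precisely when $s < \beta$, matching the scaling heuristic that a set of dimension $\beta$ should integrate the Riesz kernel of order $s$ for every $s<\beta$.

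The main obstacle is exactly this two-point estimate. One-point bounds of the form $P(x \in \gamma_{n}) \asymp G(0, x)\,\text{Es}(x, n)$, where $\text{Es}$ denotes an appropriate escape probability, follow from the quasi-multiplicativity and separation lemmas developed in \cite{Shi-gr}. However, controlling the joint event $\{x, y \in \gamma_{n}\}$ requires decomposing the underlying walk $S[0, \tau_{n}]$ at its last visits to $x$ and $y$ into three independent pieces whose loop-erasures must be mutually non-intersecting, and then estimating the probability of this avoidance event in terms of escape probabilities at both the macroscopic scale $n$ and the intermediate scale $|x - y|$. Pushing the resulting bounds so that the constants depend on $n$ and on the positions of $x, y$ only through the factor $|x-y|^{\beta - 3}$ --- and, in particular, absorbing any polylogarithmic corrections --- will require significantly sharpened versions of the escape-probability and separation estimates used in \cite{Shi-gr} and \cite{SS}; this is where the bulk of the technical work will lie.
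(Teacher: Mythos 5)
Your overall strategy (Frostman's lemma plus a two-point estimate for the loop-erased path) is the right one and matches the paper's, but the proposal has two genuine gaps, each of which is precisely where the paper's real work lies. First, the normalization $\mu_n = n^{-\beta}\sum_j \delta_{\gamma_n(j)/n}$ and the target bound $P(x,y\in\gamma_n)\lesssim n^{\beta-3}|x-y|^{\beta-3}$ cannot be made to work: the growth exponent is only known in the form $E(M_n)=n^{\beta+o(1)}$ and $Es(m,n)=(n/m)^{-\alpha+o(1)}$, and it is an open problem (cf.\ Remark \ref{final}) whether $Es(n)\asymp n^{-\alpha}$ in $d=3$. The $n^{o(1)}$ corrections are not ``polylogarithmic errors to be absorbed''; they destroy both the uniform energy bound and the Paley--Zygmund lower bound (the second moment is then only comparable to the square of the first moment up to $n^{o(1)}$ factors, so the positive-probability conclusion degenerates). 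Remark \ref{chuui} of the paper flags exactly this. The fix is structural, not a sharpening: one must normalize by the escape probability itself (the paper puts density $1/(\epsilon\,Es(\epsilon n,n))$ on each $\epsilon$-cube hit by ${\cal K}$, equivalently normalizes lengths by $n^{2}Es(n)$ via Theorem \ref{escape-11}) and prove the two-point bound in the form $C\,Es(\epsilon n,l\epsilon n)\,Es(\epsilon n,n)\,\epsilon/l$ (Theorem \ref{key thm}), so that all escape probabilities cancel exactly through quasi-multiplicativity (Proposition \ref{escape-3}, Lemma \ref{escape-8}) before any $o(1)$ exponent is invoked.

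Second, your proposed upgrade from ``$\dim_{\text{H}}({\cal K})\ge s$ with positive probability'' to an almost sure statement via rotation and dilation invariance is not available. Those invariances give no zero--one law here: the events $\{\dim_{\text{H}}({\cal K}\cap A)\ge s\}$ over dilated annuli $A$ are identically distributed but not independent, and no continuum domain Markov property for ${\cal K}$ is known (the paper states this explicitly and works only with the discrete domain Markov property of LERW). This is why the paper cannot stop at the second-moment method (which only yields Corollary \ref{positive-low}, a positive-probability bound) and instead devotes all of Section 4 to an iteration argument: using the discrete domain Markov property, it studies random walks conditioned to avoid a given initial segment of the path, shows that each of $N$ successive annular pieces of the LERW hits the expected number of cubes with a uniform conditional probability $c_1>0$ (Lemma \ref{imp-38}), and concludes that the total mass lower bound fails with probability at most $(1-c_1)^{N}$ (Proposition \ref{imp-43}). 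Taking $N$ large makes the failure probability smaller than any $r>0$, which is what lets the final Frostman argument produce an almost sure lower bound. Without an argument of this type (or a genuine zero--one law, which you would need to prove), your proof only yields $\dim_{\text{H}}({\cal K})\ge\beta$ with positive probability.
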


The exact value of $\beta$ is not known or even conjectured. Numerical simulations suggest that $\beta = 1.62 \pm 0.01$, see \cite{GB}, \cite{Wil}. The best rigorous bounds are $\beta \in (1, \frac{5}{3}]$, see \cite{Law LERW99}.

The Hausdorff dimension of the scaling limit of LERW is equal to $2$ for $d \ge 4$ (Theorem 7.7.6 of \cite{Law-book90}), and is equal to $\frac{5}{4}$ for $d=2$ (\cite{LSW}, \cite{Bef}) almost surely. The exponent $\frac{5}{4}$ is called the growth exponent for LERW for $d=2$, that is, it is known that $E(M_{n})$ is of order $n^{\frac{5}{4}}$ in 2 dimensions (see \cite{Ken}, \cite{Mas} and \cite{Law14}).

\subsection{Some words about the proof}
In order to show that the Hausdorff dimension of ${\cal K}$ is bounded below by $\beta$, we will use a standard technique referred to as Frostman's lemma (see Lemma \ref{dim-3}). We explain how to apply it to our situation here.

By Frostman's lemma, we need to construct a positive (random) measure $\mu$ supported on ${\cal K}$ such that its $(\beta - \delta )$-energy $I_{\beta - \delta } ( \mu )$ (see Lemma \ref{dim-3} for the $\beta$-energy) is finite with high probability for any $\delta > 0$.

With this in mind, we partition the unit ball $D$ into a collection of $\epsilon$-cubes formed by $b_{x} = \epsilon \prod_{i=1}^{3} [x_{i}, x_{i} +1]$ for $x = (x_{1}, x_{2}, x_{3} ) \in \mathbb{Z}^{3}$. We first want to construct a random measure $\mu_{ \epsilon }$ which approximates $\mu$ as follows. We introduce a (random) measure $\mu_{\epsilon }$ whose density, with respect to Lebesgue measure, is comparable to $\frac{1}{ P ( {\cal K} \cap b_{x} \neq \emptyset )}$ on each $\epsilon$-cube $b_{x}$ with $\frac{1}{3} \le |\epsilon x| \le \frac{2}{3}$ such that ${\cal K}$ hits $b_{x}$ and assigns measure zero elsewhere (see Section 5.1 for the precise definition of $\mu_{ \epsilon }$). Then the limit of the support of $\mu_{ \epsilon }$ as $\epsilon \to 0$ is contained in ${\cal K}$ almost surely. Therefore we need to show that for every $\delta > 0$ and $r > 0$ there exist constants $c_{r} > 0, C_{\delta, r} < \infty$, which do not depend on $\epsilon$, such that 
\begin{align}
&P \Big( I_{\beta - \delta } ( \mu_{\epsilon} ) \le C_{\delta, r} \Big) \ge 1-r, \label{sketch-1-1} \\
&P \Big( \mu_{\epsilon} (\overline{D} ) \ge c_{r} \Big) \ge 1-r. \label{sketch-1-2}
\end{align}
for all $\epsilon > 0$. Once \eqref{sketch-1-1} and \eqref{sketch-1-2} are proved, let $\mu$ be any weak limit of the $\mu_{\epsilon}$. Then the measure $\mu$ is a positive measure satisfying that its support is contained in ${\cal K}$ and the $(\beta - \delta )$-energy is finite with probability at least $1-r$. Using Frostman's lemma, we get $\text{dim}_{\text{H}} ({\cal K}) \ge \beta - \delta$ with probability $\ge 1-r$, and Theorem \ref{main result} is proved.

Next we explain how to prove \eqref{sketch-1-1} and \eqref{sketch-1-2}. For \eqref{sketch-1-1}, by Markov's inequality, it suffices to show that the first moment of $I_{\beta - \delta } ( \mu_{\epsilon} )$ is bounded above by some constant $C_{\delta}$ uniformly in $\epsilon$. In order to estimate the first moment, by definition of $\mu_{\epsilon}$, we need to give an upper bound of the probability that ${\cal K}$ hits two distinct $\epsilon$-cubes $b_{x}$ and $b_{y}$ with $\frac{1}{3} \le |\epsilon x|, |\epsilon y| \le \frac{2}{3}$ (see the proof of Lemma \ref{dim-6}). Such a bound will be given in the important Theorem \ref{key thm}. Theorem \ref{key thm} roughly claims that $P^{0} \Big( {\cal K} \cap b_{x} \neq \emptyset, \ {\cal K} \cap b_{y} \neq \emptyset \Big)$ is bounded above by $C P^{0} \Big( {\cal K} \cap b_{x} \neq \emptyset \Big) P^{ \epsilon x} \Big( {\cal K} \cap b_{y} \neq \emptyset \Big)$, where $P^{z}$ denotes the probability measure for ${\cal K}$ started at $z$. Since the domain Markov property of ${\cal K}$ has not been established up to now, we will consider the corresponding probability for LERW as follows. As $\text{LEW}_{2^{k}}$ converges to ${\cal K}$ by \cite{Koz}, we can couple them on the same probability space such that the Hausdorff distance between $\text{LEW}_{n}$ and ${\cal K}$ is bounded above by $\epsilon^{2}$ for large $n=2^{k}$ with high probability. Then the problem boils down to estimates of  $P \Big( LE (S[ 0, \tau_{n} ] ) \cap n b_{x} \neq \emptyset , \ LE (S[ 0, \tau_{n} ] ) \cap n b_{y} \neq \emptyset \Big)$. It is crucial to control the dependence of these two events with the help of the domain Markov property for LERW (see Lemma \ref{domain} for the domain Markov property). This key step will be done in Theorem \ref{key thm}. In Theorem \ref{key thm}, we will show that the probability is bounded above by $C P \Big( LE (S[ 0, \tau_{n} ] ) \cap n b_{x} \neq \emptyset \Big) P^{\epsilon n x} \Big(  LE (S[ 0, \tau_{n} ] ) \cap n b_{y} \neq \emptyset \Big)$, and using some results derived in \cite{Shi-gr}, we will derive a bound of this product in terms of escape probabilities defined as follows. Let $R_{1} \le R_{2}$ and let $S^{1}, S^{2}$ be two independent simple random walks started at the origin. We write $\tau^{i}_{R}$ for the first time that $S^{i}$ hits the boundary of the ball of radius $R$. We  define the escape probability $Es (R_{1}, R_{2} )$ by
\begin{equation*}
Es (R_{1}, R_{2} ) = P_{1} \otimes P_{2} \Big( LE ( S^{1} [0, \tau^{1}_{R_{2}} ] ) [s, u] \cap S^{2} [0, \tau^{2}_{R_{2}} ] = \emptyset \Big),
\end{equation*}
where $u$ is the length of $LE ( S^{1} [0, \tau^{1}_{R_{2}} ] )$, and $s$ is its last visit to the ball of radius $R_{1}$ before time $u$ (see Section 2.2 for escape probabilities). In order for $z$ to be in $LE (S[ 0, \tau_{n} ] )$, by definition of $LE$ (see Definition \ref{procedure}), the following two conditions are required: (i) $S$ hits $z$ before $\tau_{n}$. (ii) The loop-erasure of $S$ from the origin to $z$ does not intersect the remaining part of $S$ from $z$ to $S ( \tau_{n})$. Reversing a path, the probability for $z$ to be in the LERW is equal to the probability that with $S^{1} (0) = S^{2} (0) = z$,
\begin{itemize}
\item $S^{1}$ hits the origin before exiting the ball of radius $n$,

\item The loop-erasure of $S^{1}$ from $z$ to the origin does not intersect $S^{2}$ up to exiting the ball.
\end{itemize}
It turns out that this probability is comparable to $n^{-1} Es (0, n)$ if $\frac{n}{3} \le |z| \le \frac{2 n}{3}$. Furthermore, a similar consideration gives that the probability of $LE ( S[ 0, \tau_{n} ] )$ hitting $n b_{x}$ is comparable to $\epsilon Es ( \epsilon n, n)$, which leads that the probability of ${\cal K}$ hitting $b_{x}$ is also comparable to $\epsilon Es ( \epsilon n, n)$. (In fact, we will set $\frac{1}{\epsilon Es ( \epsilon n, n)}$ on each $b_{x}$ hit by ${\cal K}$ for the density of $\mu_{\epsilon}$, where we chose $n$ as an arbitrary large integer so that the distance between $\text{LEW}_{n}$ and ${\cal K}$ is small with high probability as explained above. We also point out that for all large $n$, $Es ( \epsilon n, n)$ is of order $\epsilon^{\alpha + o(1)}$ for some constant $\alpha$, see Theorem \ref{escape-5}.)
Finally Theorem \ref{key thm} concludes that 
\begin{equation}\label{sketch-2}
P \Big( {\cal K} \cap b_{x} \neq \emptyset, \ {\cal K} \cap b_{y} \neq \emptyset \Big) \le  \frac{C \epsilon }{|x -y | } Es ( \epsilon n, n) Es ( \epsilon n , \epsilon n |x- y| ),
\end{equation}
which is a new result to our knowledge. 
Combining \eqref{sketch-2} with estimates for the escape probabilities obtained in \cite{Shi-gr} (see Section 2.2), we get \eqref{sketch-1-1}.

Next we consider \eqref{sketch-1-2}. The definition of $\mu_{\epsilon}$ immediately gives that $\mu_{\epsilon} (\overline{D} )$ is equal to $\frac{ \epsilon^{2} Y^{\epsilon} }{ Es ( \epsilon n, n)}$, where $Y^{\epsilon}$ stands for the number of $\epsilon$-cubes $b_{x}$ with $\frac{1}{3} \le |\epsilon x| \le \frac{2}{3}$ such that ${\cal K}$ hits $b_{x}$. (Recall that we 
choose $n$ large enough so that the Hausdorff distance between $\text{LEW}_{n}$ and ${\cal K}$ is smaller than $\epsilon^{2}$ with high probability in the coupling explained as above.) Therefore, in order to prove \eqref{sketch-1-2}, it suffices to show that for all $r > 0$ there exists $c_{r} > 0$ such that
\begin{equation}\label{rough}
P \Big(  Y^{\epsilon} \ge c_{r} \epsilon^{-2} Es ( \epsilon n, n) \Big) \ge 1- r,
\end{equation}
for all $\epsilon > 0$. 

Since the probability of ${\cal K}$ hitting $b_{x}$ is comparable to $\epsilon Es ( \epsilon n, n)$, the first moment of $Y^{\epsilon}$ is of order $\epsilon^{-2} Es ( \epsilon n, n)$. Using \eqref{sketch-2}, it turns out that the second moment of $Y^{\epsilon}$ is comparable to the square of its first moment. So the second moment method gives that $Y^{\epsilon}$ is bounded below by $c \epsilon^{-2} Es ( \epsilon n, n)$ with positive probability for some $c > 0$ (Corollary \ref{positive-low}). However this is not enough to prove \eqref{rough} and we need more careful considerations that we will explain below.

In order to prove \eqref{rough}, again we use the coupling of ${\cal K}$ and $\text{LEW}_{n}$ explained as above. Then \eqref{rough} boils down to the corresponding estimates for LERW as follows. Let $Y^{\epsilon}_{n}$ be the number of $\epsilon n$-cubes $n b_{x}$ with $\frac{1}{3} \le |\epsilon x| \le \frac{2}{3}$ such that $LE (S[ 0, \tau_{n} ] )$ hits $n b_{x}$. Then \eqref{rough} is reduced to proving that for all $r > 0$ there exists $c_{r} > 0$ such that
\begin{equation}\label{rough-1}
P \Big(  Y^{\epsilon}_{n} \ge c_{r} \epsilon^{-2} Es ( \epsilon n, n) \Big) \ge 1- r.
\end{equation}

% Again using the coupling of ${\cal K}$ and $\text{LEW}_{n}$ as above, the problem boils down to

We will prove \eqref{rough-1} in Proposition 4.2.2 using Markovian-type ``iteration arguments" that we will briefly explain here. In order to prove \eqref{rough-1}, we consider $N$ cubes $A_{i}$ ($i=1, \cdots , N$) of side length $\frac{n}{3} + \frac{i n}{3 N}$. We are interested in a subpath $\gamma_{i}$ of $\gamma := LE (S[ 0, \tau_{n} ] )$ which consists of $\gamma$ between its first visit to $\partial A_{i}$ and that to $\partial A_{i+1}$ (see the beginning of Section 4 for the precise definition of $\gamma_{i}$). We want to show that for all $r > 0$, by choosing $N = N_{r}$ and $c_{r}$ suitably, the probability that at least one of $\gamma_{i}$ hits $c_{r} \epsilon^{-2} Es ( \epsilon n, n)$ $\epsilon n$-cubes is bigger than $1-r$. To achieve this, we prove in Lemma 4.2.1 that given $\gamma_{1}, \cdots , \gamma_{i}$ the probability that $\gamma_{i+1}$ hits $c_{r} \epsilon^{-2} Es ( \epsilon n, n)$ cubes is bigger than some universal constant $c >0$ for each $i$. This enables us to show that the probability in \eqref{rough-1} is bigger than $1- (1-c)^{N}$ and finish the proof of \eqref{rough-1} by taking $N$ such that $(1-c)^{N} < r$. To establish Lemma 4.2.1, it is crucial to deal with some sort of independence of $\gamma_{i}$. The domain Markov property (see Lemma \ref{domain}) tells that we need to study a random walk conditioned not to intersect $\gamma_{1}, \cdots , \gamma_{i}$. We will study such a conditioned random walk in Section 4.1. Then we will prove Lemma 4.2.1 and \eqref{rough} by using results derived there in Section 4.2. To our knowledge the tail estimate of $Y^{\epsilon}$ as in \eqref{rough} is also new. This iteration argument is based on the same spirit of the proof of Theorem 6.7 of \cite{BM} and Theorem 8.2.6 of \cite{Shi-gr} where exponential lower tail bounds of $M_{n}$ were established for $d=2$ (\cite{BM}) and $d=3$ (\cite{Shi-gr}).

\begin{rem}\label{chuui-1}
As we discussed above, $ E( Y^{\epsilon} )$ is comparable to $\epsilon^{-2} Es ( \epsilon n, n)$, which is of order $\epsilon^{-(2- \alpha ) + o (1)}$ for some exponent $\alpha \in [\frac{1}{3}, 1 )$ (see Theorem \ref{escape-5}). It turns out that $\beta$ in Theorem \ref{main result} is equal to $2- \alpha$.
\end{rem}

\begin{rem}\label{chuui}
It is crucial that both the upper bound in the right hand side of \eqref{sketch-2} and the lower bound of $Y^{\epsilon }$ are given in terms of the escape probabilities. Since $Es ( \epsilon n, n)= \epsilon ^{\alpha + o(1)}$ (see Theorem \ref{escape-5}), one may suppose that in order to prove Theorem \ref{main result}, it suffices to show that for every $\delta > 0$, $Y^{\epsilon } \ge c \epsilon^{-(2-\alpha) + \delta}$ with high probability instead of proving \eqref{rough}. However this is not the case. Energy estimates as in Lemma \ref{dim-6} do not work if we rely on only such estimates without using the escape probabilities.
\end{rem}

\subsection{Structure of the paper}
The organization of the paper is as follows. In the next subsection, we will give a list of notation used throughout the paper.

In Section 2, we will review known facts about LERW. We explain some basic properties of LERW in Section 2.1. In order to show Theorem \ref{main result}, the probability that an LERW and an independent simple random walk do not intersect up to exiting a large ball, which is referred to as an escape probability, is a key tool. That probability will be considered in Section 2.2. The precise definition and some properties of ${\cal K}$ will be given in Section 2.3. 

One of the key results in the paper is Theorem \ref{key thm}, which gives an upper bound of the probability that ${\cal K}$ hits two small boxes. The proof of Theorem \ref{key thm} will be given in Section 3.1. Using Theorem \ref{key thm}, we study the number of small boxes hit by ${\cal K}$ in Section 3.2. By the second moment method, we give a lower bound of the number of those boxes hit by ${\cal K}$ in Corollary \ref{positive-low}. 

To establish \eqref{main-result-1} almost surely, we need to show \eqref{rough} which is an improvement of Corollary \ref{positive-low} in Section 4. Following iteration arguments used in the proof of Theorem 6.7 \cite{BM} and  Proposition 8.2.5 of \cite{Shi-gr}, we study a random walk conditioned not to intersect a given simple path in Section 4.1. Using estimates derived there, we will prove \eqref{rough} and \eqref{sketch-1-2} in Section 4.2. 

We will prove \eqref{sketch-1-1} in Section 5.1.
Finally, using Frostman's lemma (see Lemma \ref{dim-3}), we will prove Theorem \ref{main result} in Section 5.2.

 \subsection{Notation}\label{kigou}
 In this subsection, we will give some definitions which will be used throughout the paper. 

Let $\lambda = [ \lambda (0), \lambda (1), \cdots , \lambda (m) ] $ be a sequence of points in $\mathbb{Z}^{d}$. We call it a path if $|\lambda (j-1) - \lambda (j) | =1 $ for all $j$. In that case we say $\lambda$ has a length $m$ and denote the length of $\lambda$ by len$\lambda$. We call $\lambda$ a simple path if $\lambda (i) \neq \lambda (j)$ for all $i \neq j$.

We use $| \cdot |$ for the Euclid distance in $\mathbb{R}^{d}$. For $n \ge 0$ and $z \in \mathbb{Z}^{d}$, define $B_{z,n } =B (z, n) := \{  x \in \mathbb{Z}^{d} \ | \ |x-z| < n \}$. If $z = 0$, we write $B_{0, n} = B(0, n) = B(n)$. We write $D = \{ x \in \mathbb{R}^{d} \ | \ |x| < 1 \}$ and $\overline{D}$ for its closure. For $r > 0$, let $D_{r} = \{ x \in \mathbb{R}^{d} \ | \ |x| < r \}$ and $\overline{D_{r}}$ for its closure.

For a subset $A \subset \mathbb{Z}^{d}$, we let $\partial A = \{ x \notin A \ | \ \text{ there exists } y \in A \text{ such that } |x-y| =1 \}$ and $\partial_{i} A = \{ x \in A \ | \ \text{ there exists } y \notin A \text{ such that } |x-y| =1 \}$. We write $\overline{A} := A \cup \partial A$. Given a subset $A \subset \mathbb{Z}^{d}$ and $r > 0$, we write $r A := \{ ry \ | \ y \in A \}$.
 
Throughout the paper, $S$, $S^{1}$, $S^{2}$, $S^{3}$ and $S^{4}$ denote independent simple random walks on $\mathbb{Z}^{d}$. For the probability law and the expectation of $S$ started at $z$, we use $P^{z}$ and $E^{z}$ respectively. If $z=0$, we write $P^{0} = P$ and $E^{0} = E$.  For the probability law and the expectation of $S^{i}$ started at $z$, we use $P^{z}_{i}$ and $E^{z}_{i}$ respectively. If $z=0$, we write $P^{0}_{i} = P_{i}$ and $E^{0}_{i} = E_{i}$.

Given $n \ge 1$, let $\tau_{n} := \inf \{ k \ | \ S(k) \notin B(n) \}$ and $\tau^{i}_{n} := \inf \{ k \ | \ S^{i}(k) \notin B(n) \}$. For $z \in \mathbb{Z}^{d}$, we write $T_{z, n} := \inf \{ k \ | \ S(k) \in \partial B(z, n) \}$ and $T^{i}_{z, n} := \inf \{ k \ | \ S^{i}(k) \in \partial B(z, n) \}$.

For a subset $A \subset \mathbb{Z}^{d}$, define Green's function in $A$ by $G (x, y, A) = G_{A} (x, y) = E^{x} \Big( \sum_{j= 0}^{\tau - 1} {\bf 1} \{ S(j) = y \} \Big)$ for $x, y \in A$, where $\tau = \inf \{ t \ | \ S(t) \in \partial A \}$.

We use $c, C, C_{1}, \dotsb$ to denote arbitrary positive constants which may change from line to line. If a constant is to depend on some other quantity, this will be made explicit. For example, if $C$ depends on $\delta$, we write $C_{\delta}$. To avoid complication of notation, we do not use $\lfloor r \rfloor$ (the largest integer $\le r$) even though it is necessary to carry it.

 \section{Loop-erased random walk}
 In this section, we will review some known facts about loop-erased random walk (LERW). In Section 2.1, we begin with the definition of loop-erasure and LERW. Then we state the time reversibility and the domain Markov property of LERW. All results in Section 2.1 hold for LERW in $\mathbb{Z}^{d}$ (even in any graphs). 
 
As we discussed in Section 1.2, the probability that an LERW and an independent simple random walk do not intersect up to exiting a large ball, which is referred to as escape probability, is a key tool in the paper. We will define and consider the escape probability for LERW in $\mathbb{Z}^{3}$ in Section 2.2. Most of estimates for escape probabilities stated there are results derived in \cite{Shi-gr}, and those results will be repeatedly used throughout the paper.

We will explain some known results about the scaling limit of LERW in 3 dimensions in Section 2.3. 

\subsection{Basic properties}\label{def-lew}
In this subsection, we first define the loop-erasure of a given path in Definition \ref{procedure}. LERW is a (random) simple path obtained by loop-erasing from a random walk. It satisfies the time reversibility (see Lemma \ref{reversal}). LERW is not a Markov process by definition, but it satisfies the domain Markov property (see Lemma \ref{domain}). Lemma \ref{reversal} and Lemma \ref{domain} hold for LERW in $\mathbb{Z}^{d}$ for all $d$.

\medskip

We begin with the definition of loop-erasure of a path.
\begin{dfn}\label{procedure}
Given a path $\lambda = [\lambda (0), \lambda (1), \cdots , \lambda (m) ] \subset \mathbb{Z}^{d}$, define its loop-erasure $LE ( \lambda )$ as follows. Let 
\begin{equation}\label{procedure-1}
s_{0} := \max \{ t \ | \ \lambda (t) = \lambda (0) \},
\end{equation}
and for $i \ge 1$, let 
\begin{equation}\label{procedure-2}
s_{i} := \max \{ t \ | \ \lambda (t) = \lambda (s_{i-1} +1) \}.
\end{equation}
We write $n = \min \{ i \ | \ s_{i} = m \}$. Then define $LE ( \lambda ) $ by 
\begin{equation}\label{procedure-3}
LE ( \lambda ) = [ \lambda ( s_{0} ), \lambda ( s_{1} ), \cdots , \lambda ( s_{n} ) ].
\end{equation}
\end{dfn}
Throughout the paper, we are interested in the loop-erasure of random walks running until some stopping time, the loop-erased random walk. 

\medskip

For two paths $\lambda_{1} = [\lambda_{1} (0), \lambda_{1} (1), \cdots , \lambda_{1} (m_{1}) ]$ and $\lambda_{2} = [\lambda_{2} (0), \lambda_{2} (1), \cdots , \lambda_{2} (m_{2}) ]$ in $\mathbb{Z}^{d}$ with $\lambda_{1} (m_{1}) = \lambda_{2} (0)$, we write 
\begin{equation}\label{comp}
\lambda_{1} + \lambda_{2} := [\lambda_{1} (0), \lambda_{1} (1), \cdots , \lambda_{1} (m_{1}), \lambda_{2} (1), \cdots , \lambda_{2} (m_{2}) ].
\end{equation}
We will use repeatedly the following notation for $LE ( \lambda_{1} + \lambda_{2} )$. Let $u = \min \{ t \ | \ LE ( \lambda_{1} ) (t) \in \lambda_{2} \}$ and let $s = \max \{ t \ | \ \lambda_{2} (t) = LE ( \lambda_{1} ) (u) \}$. Define 
\begin{equation}\label{comp-1}
LE^{(1)} = LE_{1} ( \lambda_{1}, \lambda_{2} ) := LE ( \lambda_{1} ) [0, u ], \ LE^{(2)} = LE_{2} ( \lambda_{1}, \lambda_{2} ) := LE ( \lambda_{2} [ s, m_{2} ] ).
\end{equation}
Then it is easy to check that $LE ( \lambda_{1} + \lambda_{2} ) = LE^{(1)} + LE^{(2)}$.

\medskip

For a path $\lambda = [\lambda (0), \lambda (1), \cdots , \lambda (m) ] \subset \mathbb{Z}^{d}$, define its time reversal $\lambda^{R}$ by $\lambda^{R} := [\lambda (m), \lambda (m-1), \cdots , \lambda (0) ]$. Note that in general, $LE ( \lambda ) \neq (LE ( \lambda^{R} ) )^{R}$. However, as next lemma shows, the time reversal of LERW has same distribution to the original LERW. Let $\Lambda_{m}$ be the set of paths of length $m$ started at the origin.

\begin{lem}\label{reversal} (Lemma 7.2.1 \cite{Law-book90})
For each $m \ge 0$, there exists a bijection $T^{m} \ : \ \Lambda_{m} \to \Lambda_{m}$ such that for each $\lambda \in \Lambda_{m}$,
\begin{equation}\label{rev-1}
LE ( \lambda ) = (LE ( (T^{m} \lambda )^{R} ) )^{R}.
\end{equation}
Moreover, it follows that $\lambda$ and $T^{m} \lambda $ visit the same edges in the same directions with the same multiplicities.
\end{lem}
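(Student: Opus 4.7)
The plan is to translate the two loop-erasure conditions into purely combinatorial statements about directed-edge usage, and then construct the bijection $T^m$ directly. From Definition \ref{procedure}, the condition $LE(\lambda) = \eta = [\eta_0, \ldots, \eta_n]$ is equivalent to saying that for each $i < n$, the edge $\eta_i \to \eta_{i+1}$ is the \emph{last} outgoing edge from $\eta_i$ used by $\lambda$; this is immediate from the definition of $s_i$ as the last visit to $\eta_i$. Applying the same characterization to $\lambda^R$, the condition $(LE((T^m \lambda)^R))^R = \eta$ becomes: in $T^m \lambda$, for each $i < n$, the edge $\eta_i \to \eta_{i+1}$ is the \emph{first} incoming edge to $\eta_{i+1}$. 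Thus the lemma reduces to producing a bijection $T^m$ on $\Lambda_m$ that preserves the multiset of directed edges and converts the ``last-outgoing along $\eta$'' description of $\lambda$ into the ``first-incoming along $\eta$'' description of $T^m \lambda$, with $\eta = LE(\lambda)$.

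I would construct $T^m$ by induction on $m$. The base case $m = 0$ is trivial. For the inductive step, let $s_0 = \max\{t : \lambda(t) = \eta_0\}$ and decompose $\lambda = L_0 + e + \lambda'$, where $L_0 = \lambda[0, s_0]$ is a loop at $\eta_0$, $e = [\eta_0, \eta_1]$, and $\lambda' = \lambda[s_0+1, m]$ is a path of length $m - s_0 - 1$ starting at $\eta_1$ that never returns to $\eta_0$. By the inductive hypothesis applied to $\lambda'$ (after translation of the base point), there is a rearrangement $\tilde{\lambda}'$ of $\lambda'$ realizing the first-incoming structure along $[\eta_1, \ldots, \eta_n]$. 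I then build $T^m \lambda$ by splicing the edges of $L_0$ into the concatenation $e + \tilde{\lambda}'$: cut $L_0$ at each of its first visits to vertices of $\eta$, and insert the resulting sub-excursions into $\tilde{\lambda}'$ immediately before the corresponding first visits in $\tilde{\lambda}'$. This is designed precisely to implement the required first-incoming constraints while preserving the directed-edge multiset. Bijectivity then follows by invertibility of the splicing procedure.

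The main obstacle is the splicing step when $L_0$ itself visits other vertices $\eta_j$ with $j \ge 1$: one must cut $L_0$ into several sub-excursions and reinsert them so as to produce a valid lattice path, correctly adjust the first-incoming edges to $\eta_1, \ldots, \eta_n$, and keep the directed-edge multiset unchanged. Verifying that this splicing is well-defined and invertible --- and in particular that the first-incoming prescriptions never conflict with the available directed edges at each vertex --- requires careful combinatorial bookkeeping; this is the technical content of Lemma 7.2.1 of \cite{Law-book90}.
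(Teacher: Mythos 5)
First, note that the paper does not prove this lemma at all: it is quoted verbatim from Lemma 7.2.1 of \cite{Law-book90}, so there is no in-paper argument to compare against. Your reduction of \eqref{rev-1} to a purely combinatorial statement is correct and is the right way to view the result: $LE(\lambda)=\eta$ is equivalent to ``the last exit of $\lambda$ from each $\eta_i$ is along the edge $\eta_i\to\eta_{i+1}$'' (together with $\eta_0=\lambda(0)$), and hence $(LE((T^m\lambda)^R))^R=\eta$ is equivalent to ``the first entrance of $T^m\lambda$ to each $\eta_{i+1}$ is along $\eta_i\to\eta_{i+1}$.'' So the task is indeed to exhibit an edge-multiset-preserving bijection converting the last-exit description into the first-entrance description.

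The gap is in the construction itself. In the inductive step you must absorb the initial loop $L_0$ at $\eta_0$, and the splicing you describe is not well defined: cutting $L_0$ at its first visits to vertices of $\eta$ produces pieces that run from one vertex $\eta_j$ to a \emph{different} vertex $\eta_{j'}$, and such a piece cannot be ``inserted'' at a single location of $\tilde{\lambda}'$ without breaking the adjacency of the lattice path. Worse, inserting a piece of $L_0$ that enters $\eta_j$ \emph{before} the first visit of $\tilde{\lambda}'$ to $\eta_j$ would make the final step of that inserted piece the new first entrance to $\eta_j$, and that step is in general not the edge $\eta_{j-1}\to\eta_j$; so the operation as stated destroys exactly the first-entrance property the induction is supposed to propagate. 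What is actually required is a further decomposition of $L_0$ into \emph{loops} rooted at the various $\eta_j$, with the loop rooted at $\eta_j$ avoiding $\eta_{j+1},\dots,\eta_n$, inserted \emph{after} the corresponding first entrances, together with a verification that this decomposition is itself bijective; that is the entire content of the lemma, and you explicitly defer it to \cite{Law-book90}. As it stands, the proposal is a correct reformulation plus an appeal to the reference, not a proof.
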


\medskip

Note that LERW is not a Markov process. However it satisfies the domain Markov property in the following sense. 

\begin{lem}\label{domain} (Proposition 7.3.1 \cite{Law-book90})
Let $D \subset \mathbb{Z}^{d}$ be a finite subset. Suppose that $\lambda_{i}$ ($i =1, 2$) are simple paths of length $m_{i}$ with $\lambda_{1} \subset D$, $\lambda_{1} ( m_{1} ) = \lambda_{2} (0) $. Suppose also that $\lambda_{1} + \lambda_{2}$ is a simple path from $\lambda_{1} (0)$ terminated at $\partial D$. Let $Y$ be a random walk $R$ started at $\lambda_{2} (0)$ conditioned on $R [1, \sigma^{R}_{D} ] \cap \lambda_{1} = \emptyset$. Here $\sigma^{R}_{D} = \inf \{ t \ | \ R(t) \notin D \}$. Then we have
\begin{equation}\label{domain-1}
P^{\lambda_{1} (0) } \Big( LE ( S[0, \sigma_{D} ] ) = \lambda_{1} + \lambda_{2} \ \big| \ LE ( S[0, \sigma_{D} ] ) [0, m_{1} ] = \lambda_{1} \Big) = P \Big( LE ( Y [0, \sigma^{Y}_{D} ] ) = \lambda_{2} \Big),
\end{equation}
where $\sigma_{D} $ (resp. $\sigma^{Y}_{D}$) is the first exit time from $D$ for $S$ (resp. $Y$).
\end{lem}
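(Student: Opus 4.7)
The plan is to prove the identity by a bijective path decomposition at the last visit of the walk to $\lambda_1(m_1)$. First, I would take a random walk trajectory $\omega$ from $\lambda_1(0)$ to $\partial D$ (staying in $\overline{D}$) with $LE(\omega) = \lambda_1 + \lambda_2$ and set $T := s_{m_1}(\omega)$, which by Definition \ref{procedure} is the $m_1$-th loop-erasure index, equivalently the last visit of $\omega$ to $\lambda_1(m_1) = \lambda_2(0)$. Splitting at $T$ gives $\omega' := \omega[0, T]$ and $\omega'' := \omega[T, \sigma_D]$, sharing the single vertex $\lambda_1(m_1)$. Using the strict ordering $s_0 < s_1 < \cdots < s_{m_1} = T$ together with the maximality in \eqref{procedure-2}, I would verify that $LE(\omega') = \lambda_1$, that $LE(\omega'') = \lambda_2$, and crucially that $\omega''[1, \sigma_D - T] \cap \lambda_1 = \emptyset$ (for every $i \le m_1$ we have $s_i \le T$, so the walk cannot revisit $\lambda_1(i)$ after time $T$). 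Conversely, for any pair $(\omega', \omega'')$ consisting of a walk $\omega'$ in $\overline{D}$ from $\lambda_1(0)$ to $\lambda_1(m_1)$ with $LE(\omega') = \lambda_1$ and a walk $\omega''$ from $\lambda_1(m_1)$ to $\partial D$ with $\omega''[1, \mathrm{len}(\omega'')] \cap \lambda_1 = \emptyset$ and $LE(\omega'') = \lambda_2$, the concatenation $\omega' + \omega''$ is a walk whose loop-erasure is $\lambda_1 + \lambda_2$. This produces a bijection between admissible $\omega$ and admissible pairs.

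Because each walk $\omega$ of length $\ell$ carries the simple random walk weight $(2d)^{-\ell}$ and lengths add under concatenation, the bijection factorizes probabilities as
\[
P^{\lambda_1(0)}\bigl(LE(S[0, \sigma_D]) = \lambda_1 + \lambda_2\bigr) = A(\lambda_1) \cdot B(\lambda_2),
\]
where $A(\lambda_1) := \sum_{\omega'} (2d)^{-\mathrm{len}(\omega')}$ depends only on $\lambda_1$ and $B(\lambda_2) := \sum_{\omega''} (2d)^{-\mathrm{len}(\omega'')}$ depends only on $\lambda_2$, the sums being over walks satisfying the constraints above. Summing the identity over all $\lambda_2$ consistent with the hypotheses then gives
\[
P^{\lambda_1(0)}\bigl(LE(S[0, \sigma_D])[0, m_1] = \lambda_1\bigr) = A(\lambda_1) \cdot P^{\lambda_1(m_1)}\bigl(R[1, \sigma^R_D] \cap \lambda_1 = \emptyset\bigr),
\]
since summing $B(\lambda_2)$ over admissible $\lambda_2$ simply collects the weight of every walk from $\lambda_1(m_1)$ to $\partial D$ whose post-start trace avoids $\lambda_1$. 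Dividing the two displays cancels $A(\lambda_1)$ and leaves the ratio $B(\lambda_2)/P^{\lambda_1(m_1)}(R[1, \sigma^R_D] \cap \lambda_1 = \emptyset)$, which I would recognize directly as $P(LE(Y[0, \sigma^Y_D]) = \lambda_2)$ from the definition of $Y$ as $R$ conditioned on the non-intersection event.

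The hard part will be the bookkeeping in the bijection: one must verify carefully that cutting $\omega$ at $T = s_{m_1}$ produces a piece $\omega''$ whose interior avoids \emph{every} vertex of $\lambda_1$, not merely $\lambda_1(m_1)$, and conversely that a pair $(\omega', \omega'')$ satisfying only the global avoidance of $\lambda_1$ and the two loop-erasure constraints is precisely what the decomposition recovers. Both assertions reduce to the strict monotonicity of the loop-erasure indices combined with the maximality clause in \eqref{procedure-2}. Once the bijection is established, the remaining probabilistic argument is a one-line factorization of path weights, which is the standard mechanism behind the domain Markov property of LERW.
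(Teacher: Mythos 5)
Your argument is correct. The paper does not prove this lemma but only cites Proposition 7.3.1 of \cite{Law-book90}, and your last-exit decomposition at $T=s_{m_{1}}$ (the last visit to $\lambda_{1}(m_{1})$), together with the weight factorization $(2d)^{-\mathrm{len}(\omega')}(2d)^{-\mathrm{len}(\omega'')}$ and the summation over admissible $\lambda_{2}$, is exactly the standard proof of the domain Markov property given there. The only cosmetic point is that the first piece $\omega'$ must stay in $D$ (not merely $\overline{D}$), since the concatenation has to first exit $D$ only at its terminal step; this is implicit in your bijection and does not affect the argument.
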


\subsection{Escape probabilities}
As we discussed in Section 1.2, the probability that an LERW and an independent simple random walk do not intersect up to hitting a boundary of a large ball is a key ingredient in the present paper. Such a probability is called an escape probability. The escape probability was studied in order to estimate the length of LERW for $d=2$ in \cite{Mas}, \cite{BM} and for $d=3$ in \cite{Shi-gr}.
In this subsection, we will explain it. In this subsection we recall several results proved in \cite{Shi-gr}. Throughout this subsection, we will assume $d=3$.

\begin{dfn}\label{escape}
Let $m < n$. Suppose that $S^{1}$ and $S^{2}$ are independent simple random walks started at the origin on $\mathbb{Z}^{3}$. Define escape probabilities $Es (n)$, $Es^{\star } (n)$ and $Es (m, n)$ as follows: Let
\begin{equation}\label{escape-1-1}
Es (n) := P_{1} \otimes P_{2} \Big( S^{1} [1, \tau^{1}_{n} ] \cap LE ( S^{2} [0, \tau^{2}_{n} ] ) = \emptyset \Big), 
\end{equation}
i.e., $Es (n)$ is the probability that a simple random walk up to exiting $B (n)$ does not intersect the loop erasure of an independent simple random walk up to exiting $B (n)$. Let
\begin{equation}\label{escape-1-2}
Es^{\star } (n) := P_{1} \otimes P_{2} \Big( S^{1} [1, \tau^{1}_{n} ] \cap \eta^{1}_{0, n} \big( LE ( S^{2} [0, \tau^{2}_{4 n} ] ) \big) = \emptyset \Big),
\end{equation}
where $\eta^{1}_{z, n} ( \lambda ) = \lambda [0, u]$ with $u = \inf \{ t \ | \ \lambda ( t) \in \partial B (z, n ) \}$. For $Es^{\star } (n)$, we first consider the loop erasure of a random walk up to exiting $B (4 n)$, then we only look at the loop erasure from the origin to the first visit to $\partial B (n)$. $Es^{\star } (n)$ is the probability that this part of the loop erasure does not intersect an independent simple random walk up to exiting $B (n)$. Finally, let
\begin{equation}\label{escape-1-3}
Es (m, n) :=  P_{1} \otimes P_{2} \Big( S^{1} [1, \tau^{1}_{n} ] \cap \eta^{2}_{0, m, n} \big( LE ( S^{2} [0, \tau^{2}_{n} ] ) \big) = \emptyset \Big),
\end{equation}
where $\eta^{2}_{z, m, n} ( \lambda ) = \lambda [s, u ]$ with $s = \sup \{ t \le u \ | \ \lambda ( t) \in \partial B (z, m ) \}$ ($u$ was defined as above). For $Es (m, n)$, we first consider the loop erasure of a random walk up to exiting $B (n)$, then we only look at the loop erasure after the last visit to $B (m)$. $Es (m, n)$ is the probability that this part of the loop erasure does not intersect an independent simple random walk up to exiting $B (n)$.
\end{dfn}

\medskip

In the next proposition we collect various relations between the escape probabilities on various scales.

\begin{prop}\label{escape-3} (Propositions 6.2.1, 6.2.2, and 6.2.4 \cite{Shi-gr})
Let $d=3$. There exists a constant $C < \infty$ such that for all $l \le m \le n$, 
\begin{align}\label{escape-4}
& \frac{1}{C} Es^{\star } (n) \le Es (n) \le C Es^{\star } (n), \notag \\
& \frac{1}{C} Es (n') \le Es (n) \le C Es (n'), \text{ for all } n \le n' \le 4n, \notag \\
& \frac{1}{C} Es (n) \le Es (m) Es (m, n) \le C Es (n), \notag \\
& \frac{1}{C} Es (l, n) \le Es (l, m) Es (m, n) \le C Es (l, n).
\end{align}
\end{prop}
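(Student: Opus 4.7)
The four bounds form a standard quasi-multiplicativity package for escape probabilities, and I would reduce all of them to a single \emph{separation lemma} at scale $m$, asserting that conditional on the non-intersection event defining $Es^{\star}(m)$ (or $Es(l,m)$), the tip of the LERW on $\partial B(m)$ and the endpoint of the simple random walk on $\partial B(m)$ lie at Euclidean distance at least $m/100$ with probability bounded below by a universal constant. The usual route is an induction on dyadic sub-scales: configurations that fail to be well-separated at scale $m/2$ can be forced into well-separation at scale $m$ with uniformly positive probability. The key technical input is a forcing estimate, namely that if the LERW tip and the random-walk endpoint on $\partial B(m)$ lie within distance $\delta m$ of each other, then with positive probability one can extend the random walk so that it meets the LERW before exiting $B(2m)$; this uses Beurling-type projection estimates and standard Harnack bounds in three dimensions. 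Because LERW is not Markovian, the strong-Markov step for the random walk has to be mediated by the domain Markov property (Lemma~\ref{domain}), which is used to freeze the LERW inside $B(m)$ and then run a fresh walk in the complement.

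For the quasi-multiplicativity statements (3) and (4), the upper bounds follow by restricting to non-intersection on the annuli up to and beyond $\partial B(m)$, writing the LERW of $S^1[0,\tau^1_n]$ in the split form $LE^{(1)}+LE^{(2)}$ from \eqref{comp-1}, and using the strong Markov property to decouple the two pieces; one loses only a universal constant. The matching lower bounds use the separation lemma to obtain well-separated tips at $\partial B(m)$ with positive conditional probability, after which one restarts two fresh independent walks from those tips and extends them to scale $n$ while retaining non-intersection, again at the cost of a universal constant. Item~(2) is then immediate from (3), since for $n \le n' \le 4n$ the factor $Es(n,n')$ involves only a bounded number of dyadic scales and is trivially pinched between two positive constants. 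For item~(1), I would couple a single walk $S^2$ run until $\tau^2_{4n}$ and exploit transience in $d=3$: with positive probability (depending only on dimension) $S^2[\tau^2_n,\tau^2_{4n}]$ never re-enters $B(n/2)$, in which case the loop-erasure of $S^2[0,\tau^2_n]$ and the truncation at $\partial B(n)$ of the loop-erasure of $S^2[0,\tau^2_{4n}]$ agree inside $B(n/2)$; combined with (3) this yields both directions of (1).

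The crux is the separation lemma. Unlike for pairs of independent Brownian motions, where separation lemmas in the style of Lawler are classical, one of the two curves is the loop-erasure of a walk, so its law depends on the entire past and loops erased outside $B(m)$ can, in principle, influence the geometry of the curve inside $B(m)$. Controlling this back-reaction while still producing a uniform constant in the forcing step is the main difficulty; in three dimensions $Es(n)$ already tends to zero polynomially, so there is little slack for losses and one must carefully keep track of all conditioning through the time-reversal (Lemma~\ref{reversal}) and the domain Markov property.
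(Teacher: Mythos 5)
The paper does not prove this proposition: it is imported verbatim from Propositions 6.2.1, 6.2.2 and 6.2.4 of \cite{Shi-gr}, so there is no internal argument to compare your sketch against. That said, your outline correctly identifies the strategy used in the cited source and echoed elsewhere in this paper: a separation lemma at the intermediate scale (the paper's Lemma \ref{escape-17} is exactly such a statement, and Lemma \ref{escape-25} is the resulting up-to-constants identification of $Es(n,L)$ with a non-intersection probability), combined with restriction in one direction and gluing of well-separated configurations in the other, with the bounded-ratio statement for $n\le n'\le 4n$ and the $Es$ versus $Es^{\star}$ comparison falling out of the quasi-multiplicativity plus transience. You have also correctly located the main difficulty, namely that one of the two curves is a loop-erasure whose law is not Markovian, so the forcing step must be routed through the domain Markov property (Lemma \ref{domain}) and time reversal (Lemma \ref{reversal}).

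One imprecision worth flagging: in the ``restriction'' direction you attribute the decoupling of the two pieces $LE^{(1)}+LE^{(2)}$ of \eqref{comp-1} to the strong Markov property. The strong Markov property of the underlying walk is not enough, because the splitting point between $LE^{(1)}$ and $LE^{(2)}$ and the shape of $LE^{(2)}$ depend on the whole trajectory; what is actually needed is the quasi-independence of the loop-erasure on well-separated scales (the three-dimensional analogue of Proposition 4.6 of \cite{Mas}, which this paper invokes repeatedly in Section 3). Relatedly, be careful about which inequality in each line needs which tool: restriction plus quasi-independence yields $Es(n)\le C\,Es(m)\,Es(m,n)$, while the separation-and-gluing argument yields the reverse inequality $Es(m)\,Es(m,n)\le C\,Es(n)$; your phrasing of ``upper'' and ``lower'' bounds leaves this assignment ambiguous. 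These are gaps of precision in a sketch rather than a wrong approach, and the overall route is the one the cited proofs take.
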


\medskip

The next theorem deals with the rate of growth for $Es (n)$ and $Es (m , n)$ in $d=3$.
\begin{thm}\label{escape-5} (Theorem 7.2.1 and Lemma 7.2.2 \cite{Shi-gr})
Let $d=3$. There exists $\alpha \in [\frac{1}{3}, 1)$ such that 
\begin{equation}\label{escape-6}
\lim_{n \to \infty } \frac{ \log Es (n) }{\log n} = - \alpha.
\end{equation}
Furthermore, for all $\kappa > 0$ there exists $ c_{ \kappa } > 0$ and $n_{\kappa } \in \mathbb{N}$ such that 
\begin{equation}\label{escape-7}
c_{\kappa } \Big( \frac{n}{m} \Big)^{-\alpha - \kappa } \le Es (m, n) \le \frac{1}{c_{\kappa }} \Big( \frac{n}{m} \Big)^{-\alpha + \kappa },
\end{equation}
for all $n_{\kappa} \le m \le n$.
\end{thm}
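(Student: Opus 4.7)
The plan is to reduce the existence of $\alpha$ to a Fekete-type subadditivity argument and then to read off the quantitative two-sided bound \eqref{escape-7} as a consequence. The key input from Proposition \ref{escape-3} is the quasi-multiplicativity
$$Es(n) \asymp Es(m) \, Es(m,n), \qquad Es(l,n) \asymp Es(l,m) \, Es(m,n),$$
which already says that $\log Es$ splits across annular scales up to a bounded additive error. To convert this into true (approximate) multiplicativity at the level of radii, I would first establish the scale-invariance estimate $Es(m,n) \asymp Es(n/m)$ for $2m \le n$ with $m$ large.

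The scale-invariance step is the technical core. Sampling the LERW $\gamma = LE(S^2[0,\tau_n^2])$ and its companion walk $S^1$, one uses a separation lemma (of the kind developed for an LERW/random walk pair) to show that, on a high-probability event, on first reaching scale $m$ the tip of $\gamma$ and the tip of $S^1$ are well separated from the rest of $\gamma$ and from each other. The domain Markov property (Lemma \ref{domain}) then permits resampling the annular portion of $\gamma$ and the continuation of $S^1$ conditionally independently of the interior, and Donsker's invariance principle rescales the pair so that scale $m$ is replaced by scale $1$. This yields $Es(m,n) \asymp Es(n/m)$ and, combined with the third identity in \eqref{escape-4}, the approximate multiplicativity $Es(mk) \asymp Es(m) \, Es(k)$ for $m,k$ large. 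Setting $a_j := -\log Es(2^j)$, Fekete's lemma applied to $a_{j+k} = a_j + a_k + O(1)$ produces the exponent $\alpha$, and \eqref{escape-6} follows for general $n$ by sandwiching between consecutive powers of $2$ together with the near-monotonicity in \eqref{escape-4}.

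For \eqref{escape-7}, given $\kappa > 0$ pick $k_\kappa$ large enough that $k^{-\alpha - \kappa/2} \le Es(k) \le k^{-\alpha + \kappa/2}$ for $k \ge k_\kappa$; combined with $Es(m,n) \asymp Es(n/m)$, the multiplicative constants can be absorbed into a slight worsening of the exponent, yielding \eqref{escape-7} whenever $n/m \ge k_\kappa$ (and at most enlarging the constant $c_\kappa^{-1}$ to cover the bounded range $1 \le n/m \le k_\kappa$). The a priori bounds $\alpha \in [\tfrac{1}{3},1)$ then come from crude one-sided estimates: $\alpha < 1$ reduces to $Es(n) \ge c n^{-1+\delta}$ for some $\delta > 0$, which follows from the superlinear growth $\beta > 1$ of the expected length of LERW (a typical point on the LERW contributes to the expectation through a non-vanishing escape probability); while $\alpha \ge \tfrac{1}{3}$ reduces to $Es(n) \le C n^{-1/3}$, which can be obtained by a Beurling-type projection estimate forcing an independent three-dimensional walk to avoid the one-dimensional path $\gamma$ up to distance $n$.

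The main obstacle is the scale-invariance step $Es(m,n) \asymp Es(n/m)$. Because LERW is not a Markov process, one cannot simply restart the loop-erasure at scale $m$; the separation estimates must be strong enough to allow a near-independent resampling of the annular portion of the LERW-walk configuration, and this is where the technical substance of the argument lies.
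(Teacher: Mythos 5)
This theorem is not proved in the present paper at all: it is imported verbatim from Theorem 7.2.1 and Lemma 7.2.2 of \cite{Shi-gr}, so there is no internal proof to compare against. Judged on its own terms, your sketch follows the right general architecture (quasi-multiplicativity from Proposition \ref{escape-3}, a separation lemma to decouple scales, then a Fekete-type argument in the scale index), but it contains a step that is not merely unjustified — it is too strong to be true given the current state of knowledge. You claim the scale invariance $Es(m,n)\asymp Es(n/m)$ with constants uniform in $m,n$ (for $m$ larger than some absolute threshold). Combined with $Es(mk)\asymp Es(m)\,Es(m,mk)$ this gives $a_{j+k}=a_j+a_k+O(1)$ for $a_j=-\log Es(2^j)$ with a \emph{two-sided} bounded error, and the standard sub/super-additivity argument then forces $a_j=\alpha j+O(1)$, i.e.\ $Es(n)\asymp n^{-\alpha}$. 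But the up-to-constants asymptotics $Es(n)\asymp n^{-\alpha}$ is precisely what Remark \ref{final} of this paper states as an open problem in three dimensions (known only for $d=2$ via \cite{Law14}). So your intermediate claim would resolve an open problem, and correspondingly your derivation of \eqref{escape-7} would yield a statement without the $\pm\kappa$ loss in the exponent, which is stronger than what the theorem asserts.

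The tools you invoke cannot deliver that uniform scale invariance: Donsker's theorem rescales the random walk, not the loop erasure, and the non-intersection event defining $Es$ is not a continuous functional of the pair of paths, so weak convergence alone gives nothing quantitative. What the actual argument in \cite{Shi-gr} uses is Kozma's scaling-limit machinery to show that $Es(2^j,2^{j+k})$ stabilizes as $j\to\infty$ \emph{for each fixed $k$}, with a rate that is not uniform in $k$; quasi-multiplicativity of the limiting quantities plus Fekete then produces $\alpha$, and the lack of uniformity in $k$ is exactly why one only obtains $Es(n)=n^{-\alpha+o(1)}$ and the two-sided bound \eqref{escape-7} with an $\epsilon$ of room in the exponent (valid only for $m\ge n_\kappa$). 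Your a priori bounds $\alpha\in[\frac13,1)$ are plausible at sketch level, but the core of the proof — replacing your false uniform scale invariance by a genuine convergence statement at each fixed ratio of scales — is missing.
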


\medskip

The next lemma gives bounds of the ratio of escape probabilities, which will be used repeatedly in the paper.
\begin{lem}\label{escape-8} (Lemma 7.2.3 \cite{Shi-gr})
Let $d=3$. For all $\kappa > 0$, there exists $C_{\kappa} < \infty$ such that for all $1 \le m \le n$,
\begin{equation}\label{escape-9}
m^{\alpha + \kappa} Es (m) \le C_{\kappa} n^{\alpha + \kappa} Es (n).
\end{equation}
Furthermore, for $l \le m$, by dividing both sides above by $Es (l)$ and using Proposition \ref{escape-3}, we see that for all $1 \le l \le m \le n$ 
\begin{equation}\label{escape-10}
m^{\alpha + \kappa} Es (l, m) \le C^{2} C_{\kappa} n^{\alpha + \kappa} Es (l, n),
\end{equation}
where $C$ is a constant as in Proposition \ref{escape-3}.
\end{lem}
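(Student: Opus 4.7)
The strategy is to use the multiplicative factorization from Proposition \ref{escape-3} to reduce the problem to a single escape probability at two scales, and then insert the power-law lower bound for $Es(m,n)$ from Theorem \ref{escape-5}. The crucial algebraic observation is that the $(n/m)^{\alpha+\kappa}$ factor coming from the lower bound on $Es(m,n)$ will exactly cancel the $(m/n)^{\alpha+\kappa}$ factor created when the $m^{\alpha+\kappa}$ and $n^{\alpha+\kappa}$ weights are compared, producing a bound by a constant.

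Concretely, I would first treat the principal range $n_\kappa \le m \le n$, where $n_\kappa$ is the threshold from Theorem \ref{escape-5}. The upper half of Proposition \ref{escape-3}, namely $Es(m)\, Es(m,n) \le C\, Es(n)$, rearranges to
\begin{equation*}
\frac{Es(m)}{Es(n)} \le \frac{C}{Es(m,n)}.
\end{equation*}
Then Theorem \ref{escape-5} gives $Es(m,n) \ge c_\kappa (n/m)^{-\alpha - \kappa}$, so
\begin{equation*}
\frac{m^{\alpha+\kappa} Es(m)}{n^{\alpha+\kappa} Es(n)} \le \frac{C}{c_\kappa}\cdot\Bigl(\frac{m}{n}\Bigr)^{\alpha+\kappa}\cdot\Bigl(\frac{n}{m}\Bigr)^{\alpha+\kappa} = \frac{C}{c_\kappa},
\end{equation*}
which is the desired inequality on this range.

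Next I would handle the leftover small-scale case $1 \le m \le n_\kappa$. When also $n \le n_\kappa$, both quantities range over a finite set of positive values, so their ratio is bounded by some combinatorial constant. When $n > n_\kappa$, I use the trivial bound $m^{\alpha+\kappa} Es(m) \le n_\kappa^{\alpha+\kappa}$ (since $Es \le 1$), together with the lower bound $n^{\alpha+\kappa} Es(n) \ge (c_\kappa Es(n_\kappa)/C)\, n_\kappa^{\alpha+\kappa}$ obtained by applying the principal case with $m = n_\kappa$. Taking $C_\kappa$ to be the maximum of the three arising constants finishes \eqref{escape-9}.

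For \eqref{escape-10}, I would follow the hint in the statement: divide \eqref{escape-9} through by $Es(l)$ and apply Proposition \ref{escape-3} to both sides. The relation $\frac{1}{C} Es(m) \le Es(l)\, Es(l,m) \le C\, Es(m)$ identifies $Es(m)/Es(l)$ with $Es(l,m)$ up to multiplicative constants, and similarly for $Es(n)/Es(l)$ and $Es(l,n)$. Using the lower comparison on the left and the upper comparison on the right yields the constant $C^2 C_\kappa$ appearing in \eqref{escape-10}. There is no real obstacle here — the main care needed is simply not to let the $o(1)$-slack in Theorem \ref{escape-5} accumulate, which is why I must apply that theorem with the \emph{same} $\kappa$ used in the weights $m^{\alpha+\kappa}$, $n^{\alpha+\kappa}$, rather than with some fraction of it.
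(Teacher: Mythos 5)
Your proposal is correct. For \eqref{escape-10} you reproduce exactly the paper's one-line derivation: divide \eqref{escape-9} by $Es(l)$ and use the third relation of Proposition \ref{escape-3} with the lower comparison on the left and the upper comparison on the right, which is precisely where the factor $C^{2}C_{\kappa}$ comes from. For \eqref{escape-9} the paper gives no proof at all --- it simply imports the statement as Lemma 7.2.3 of \cite{Shi-gr} --- whereas you derive it inside the present paper from the two other imported results: the factorization $Es(m)\,Es(m,n)\le C\,Es(n)$ of Proposition \ref{escape-3} combined with the lower bound $Es(m,n)\ge c_{\kappa}(n/m)^{-\alpha-\kappa}$ of Theorem \ref{escape-5}, and the exact cancellation of the $(m/n)^{\alpha+\kappa}$ and $(n/m)^{\alpha+\kappa}$ factors. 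This is a legitimate and complete argument, including the careful treatment of the ranges $m\le n_{\kappa}\le n$ and $m\le n\le n_{\kappa}$, and your remark about using the same $\kappa$ in Theorem \ref{escape-5} as in the weights is exactly the right point of caution. The only caveat worth recording is that in \cite{Shi-gr} the logical order may well be reversed (Lemma 7.2.3 there is plausibly an ingredient in the proof of the bounds \eqref{escape-7}), so your derivation should be read as a consistency check within the present paper's quoted toolbox rather than as an independent proof of the original lemma; from the standpoint of this paper, where both Proposition \ref{escape-3} and Theorem \ref{escape-5} are taken as given, there is no circularity.
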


\medskip

Let $\tau_{n} = \inf \{ t \ | \ S(t) \in \partial B (n) \}$ and let $M_{n} = \text{len} LE ( S[0, \tau_{n}] )$. The next theorem relates the length of LERW with the escape probability.

\begin{thm}\label{escape-11} (Theorem 8.1.4 and Proposition 8.1.5 \cite{Shi-gr})
Let $d=3$. There exists a constant $C < \infty$ such that for all $n \ge 1$,
\begin{equation}\label{escape-12} 
\frac{1}{C} n^{2} Es (n) \le  E ( M_{n} ) \le C n^{2} Es (n).
\end{equation}
In particular, we have
\begin{equation}\label{escape-13} 
\lim_{n \to \infty} \frac{\log E (M_{n} ) }{\log n} = 2- \alpha.
\end{equation}
\end{thm}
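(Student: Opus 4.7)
The natural starting point is the identity
\begin{equation*}
E(M_n) \;=\; \sum_{z \in B(n)} P\bigl(z \in LE(S[0, \tau_n])\bigr),
\end{equation*}
which reduces the problem to estimating the pointwise hitting probability. Decomposing the event $\{z \in LE(S[0, \tau_n])\}$ over the last visit time of $S$ to $z$, and then applying the time-reversibility of Lemma \ref{reversal} to pair each path from $0$ to $z$ with a path from $z$ to $0$ having the same loop-erasure as a set, one obtains the key identity
\begin{equation*}
P\bigl(z \in LE(S[0, \tau_n])\bigr) \;\asymp\; G_{B(n)}(0, z)\cdot \mathrm{Esc}(z, n),
\end{equation*}
where $\mathrm{Esc}(z, n)$ is the probability that, under two independent walks $S^1, S^2$ started at $z$ conditioned on $S^1$ hitting $0$ before $\partial B(n)$, the loop-erasure $LE(S^1[0, T^1_0])$ does not intersect $S^2[1, \tau^2_n]$. (The factor $G_{B(n)}(0,0) = O(1)$ in three dimensions is absorbed into $\asymp$.)

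Next, compare $\mathrm{Esc}(z, n)$ with the scale-$|z|$ escape probability $Es(|z|)$. Writing $r = |z|$, when $r \le n/3$ the walk $S^1$ conditioned to hit $0$ before exiting $B(n)$ stays in $B(z, Cr)$ with probability bounded below, so its loop-erasure is also localized at scale $r$. The probability that $S^2$ escapes $B(z, Cr)$ without hitting this localized LERW is therefore comparable to $Es(r)$ by a coupling with the standard LERW from the origin; moreover, transience of the three-dimensional walk ensures that subsequent excursions of $S^2$ back to $B(z, r)$ before $\tau^2_n$ contribute only a bounded multiplicative factor. Hence $\mathrm{Esc}(z, n) \asymp Es(r)$ for $r \le n/3$, and similarly $\mathrm{Esc}(z, n) \asymp Es(n)$ for $n/3 \le |z| \le n$. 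Combined with the three-dimensional Green's function estimate $G_{B(n)}(0, z) \asymp |z|^{-1}$ (valid for $|z| \le n/2$), this yields $P(z \in LE(S[0, \tau_n])) \asymp |z|^{-1} Es(|z|)$ throughout the ball.

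Summing over lattice points via dyadic shells of radius $r \le n$ (each containing $\asymp r^2$ points) gives
\begin{equation*}
E(M_n) \;\asymp\; \sum_{r \le n} r\, Es(r).
\end{equation*}
For the lower bound, restricting to $r \asymp n$ and invoking Proposition \ref{escape-3} immediately yields $E(M_n) \ge C^{-1} n^2 Es(n)$, since the annulus $\{n/3 \le |z| \le 2n/3\}$ alone contributes $\asymp n^3 \cdot n^{-1} Es(n)$. For the upper bound, Lemma \ref{escape-8} gives $r\,Es(r) \le C_\kappa\, n^{\alpha+\kappa} r^{1-\alpha-\kappa} Es(n)$; choosing $\kappa > 0$ with $\alpha + \kappa < 2$, the resulting geometric sum satisfies $\sum_{r \le n} r^{1-\alpha-\kappa} \asymp n^{2-\alpha-\kappa}$, so $\sum_{r \le n} r\,Es(r) \le C n^2 Es(n)$. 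The asymptotic \eqref{escape-13} then follows at once by taking logarithms in \eqref{escape-12} and applying Theorem \ref{escape-5}.

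The principal obstacle is establishing the two-sided comparison $\mathrm{Esc}(z, n) \asymp Es(r)$ in the regime $r \ll n$. The loop-erased walk from $z$ to $0$ appearing in $\mathrm{Esc}$ is not distributionally identical to the loop-erasure from the origin to $\partial B(r)$ used to define $Es(r)$, and controlling the ratio requires a separation-type coupling together with careful use of three-dimensional transience to handle excursions of $S^2$ back into the scale-$r$ ball after an initial escape. The domain Markov property (Lemma \ref{domain}) enters here as the technical tool that makes such couplings possible.
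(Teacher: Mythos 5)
The paper does not actually prove this theorem: it is imported verbatim from Theorem 8.1.4 and Proposition 8.1.5 of \cite{Shi-gr}, so there is no internal argument to compare against. Your outline is nevertheless the standard route to this estimate and is essentially the one followed in the cited reference (and in Masson's two-dimensional analogue): write $E(M_n)$ as a sum of point-occupation probabilities, convert $P(z\in LE(S[0,\tau_n]))$ via a last-exit decomposition and Lemma \ref{reversal} into a Green's function times a non-intersection probability for a reversed, conditioned walk, and compare that non-intersection probability with the escape probabilities of Section 2.2. The final summation — using \eqref{escape-9} with $\alpha+\kappa<2$ for the upper bound, and the bulk annulus together with Proposition \ref{escape-3} for the lower bound — is correct.

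Two points need attention. First, the claim $\mathrm{Esc}(z,n)\asymp Es(n)$ uniformly over $n/3\le|z|\le n$ is false near the boundary: if $d=\mathrm{dist}(z,\partial B(n))\ll n$, then $S^{2}$ exits $B(n)$ without leaving $B(z,2d)$ with probability bounded below, and on that event it only has to avoid the initial, scale-$d$ portion of the loop-erasure, so $\mathrm{Esc}(z,n)\asymp Es(d)\gg Es(n)$. The correct local statement is $\mathrm{Esc}(z,n)\asymp Es(\min(|z|,d))$. The upper bound on $E(M_n)$ survives this correction — the layer at depth $d$ has $\asymp n^{2}$ points, each contributing $G_{B(n)}(0,z)\,\mathrm{Esc}(z,n)\asymp (d/n^{2})Es(d)$, and $\sum_{d\le n} d\,Es(d)\le Cn^{2}Es(n)$ by the same appeal to \eqref{escape-9} — but your write-up does not carry this out, and as stated the upper-bound step has a hole in the boundary layer. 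Second, the two-sided comparison $\mathrm{Esc}(z,n)\asymp Es(|z|)$, which you correctly identify as the crux, is where essentially all of the work in \cite{Shi-gr} lies (separation lemma, quasi-multiplicativity of escape probabilities, and comparison of the LERW from $z$ conditioned to hit $0$ with the unconditioned one via Propositions 4.2 and 4.4 of \cite{Mas}); as written it is asserted rather than proved, so the proposal should be read as a correct strategy with an identified but unexecuted core, rather than a complete proof.
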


\medskip

In the rest of this subsection, we will give some extension of Theorem 6.1.5 \cite{Shi-gr} which is referred to as the ``separation lemma". Let $R \ge 4$, $n \ge 1$ and $Rn \le L \le 4 Rn$. We are interested in the following event.
\begin{equation}\label{escape-14}
F_{L, R, n} := \Big\{ \eta^{2}_{0, n, Rn} \big( LE ( S^{1} [0, \tau^{1}_{L} ] ) \big) \cap S^{2} [0, \tau^{2}_{Rn} ] = \emptyset \Big\},
\end{equation}
where $\eta^{2}$ was defined right after \eqref{escape-1-3} in Definition \ref{escape}. Let
\begin{align}\label{escape-15}
&A^{+}_{R, n} := \big\{ x= (x_{1}, x_{2}, x_{3} ) \in \mathbb{R}^{3} \ | \ x_{1} \ge \frac{2 Rn}{3} \big\}  \cup B ( \frac{3 Rn}{4} ), \notag \\
&A^{-}_{R, n} :=  \big\{ x= (x_{1}, x_{2}, x_{3} ) \in \mathbb{R}^{3} \ | \ x_{1} \le - \frac{2 Rn}{3} \big\} \cup B ( \frac{3 Rn}{4} ).
\end{align}
Define
\begin{equation}\label{escape-16}
\text{Sep}_{L, R, n} := \Big\{ \eta^{2}_{0, n, Rn} \big( LE ( S^{1} [0, \tau^{1}_{L} ] ) \big) \subset A^{-}_{R, n}, \ S^{2} [0, \tau^{2}_{Rn} ] \subset A^{+}_{R, n} \Big\}.
\end{equation}

The next lemma shows that when a simple random walk does not intersect an independent LERW, they are ``well-separated" with positive probability, i.e., the simple random walk lies in $A^{+}_{R, n}$ and the LERW lies in $A^{-}_{R, n}$ with positive conditional probability under the conditioning. The lemma will be used to compare escape probabilities on various scales by attaching paths to the separated paths (see Lemma \ref{escape-25}, Proposition \ref{first-moment-low} and Lemma \ref{imp-25} for the applications of Lemma \ref{escape-17}).

\begin{lem}\label{escape-17}
Let $d=3$. There exists $c > 0$ such that for all $R \ge 4$, $n \ge 1$ and $Rn \le L \le 4 Rn$, we have 
\begin{equation}\label{escape-18}
P_{1} \otimes P_{2} \Big( \text{Sep}_{L, R, n} \ \big| \ F_{L, R, n} \Big) \ge c.
\end{equation}
\end{lem}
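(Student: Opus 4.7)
The plan is to reduce to the base case $L = Rn$ --- which is the content of Theorem 6.1.5 of \cite{Shi-gr} --- by a path-extension argument, using in addition only a standard half-space escape estimate for simple random walk.

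\textbf{Decomposition.} Write $\lambda_{1} := S^{1}[0, \tau^{1}_{Rn}]$ and $\lambda_{2} := S^{1}[\tau^{1}_{Rn}, \tau^{1}_{L}]$, so $S^{1}[0, \tau^{1}_{L}] = \lambda_{1} + \lambda_{2}$. By the loop-erasure composition formula \eqref{comp-1},
\[
LE(S^{1}[0, \tau^{1}_{L}]) = LE^{(1)} + LE^{(2)},
\]
where $LE^{(1)}$ is a prefix of $LE(\lambda_{1})$ ending at the first intersection of $LE(\lambda_{1})$ with $\lambda_{2}$, and $LE^{(2)}$ is the loop erasure of a suffix of $\lambda_{2}$.

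\textbf{Good event and lower bound.} Let $G$ be the event that (i) the base separation $\text{Sep}_{Rn, R, n}$ holds for the pair $(LE(\lambda_{1}), S^{2})$, and (ii) $\lambda_{2}$ is contained in $H^{-} := \{x \in \mathbb{Z}^{3} : x_{1} \le -3Rn/4\}$. Theorem 6.1.5 of \cite{Shi-gr} gives $P(\text{(i)}) \ge c_{1}\, P(F_{Rn, R, n})$; under (i) the endpoint $S^{1}(\tau^{1}_{Rn})$ lies in $\partial B(Rn) \cap \{x_{1} \le -2Rn/3\}$, and a gambler's-ruin-type estimate for the $x_{1}$-coordinate of SRW then gives $P(\text{(ii)} \mid \lambda_{1}, \text{(i)}) \ge c_{2} > 0$. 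Hence $P(G) \ge c_{1} c_{2}\, P(F_{Rn, R, n})$. On $G$, since $\lambda_{2} \subset H^{-}$ lies outside $B(3Rn/4) \supset B(n)$, we have $LE^{(2)} \subset \lambda_{2} \subset A^{-}_{R, n}$, and $LE^{(1)}$ contains every point of $LE(\lambda_{1})$ in $B(3Rn/4)$; the last visit of $LE(S^{1}[0, \tau^{1}_{L}])$ to $\partial B(n)$ before hitting $\partial B(Rn)$ therefore occurs inside $LE^{(1)}$ and coincides with the corresponding visit of $LE(\lambda_{1})$. Combined with the base separation this shows $\eta^{2}_{0, n, Rn}(LE(S^{1}[0, \tau^{1}_{L}])) \subset A^{-}_{R, n}$, while (i) already gives $S^{2}[0, \tau^{2}_{Rn}] \subset A^{+}_{R, n}$. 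Thus $G \subset \text{Sep}_{L, R, n} \subset F_{L, R, n}$.

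\textbf{Conclusion.} By definition $P(F_{Rn, R, n}) = Es(n, Rn)$, and arguments analogous to those underlying Proposition \ref{escape-3}, together with $Rn \le L \le 4Rn$, give the matching upper bound $P(F_{L, R, n}) \le C\, Es(n, Rn)$. Putting everything together,
\[
P(\text{Sep}_{L, R, n} \mid F_{L, R, n}) \ge \frac{P(G)}{P(F_{L, R, n})} \ge \frac{c_{1} c_{2}\, P(F_{Rn, R, n})}{C\, Es(n, Rn)} \ge c > 0.
\]

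\textbf{Main obstacle.} The delicate point is the structural claim on $G$. Loop erasure is sensitive to the entire trajectory, so a priori the extension $\lambda_{2}$ could produce new loops that swallow parts of $LE(\lambda_{1})$ deep inside $B(Rn)$ and destroy the base separation. Forcing $\lambda_{2} \subset H^{-}$ is exactly the constraint that rules this out: it guarantees that the first intersection of $LE(\lambda_{1})$ with $\lambda_{2}$ lies outside $B(3Rn/4)$, so that $LE^{(1)}$ retains the entire piece of $LE(\lambda_{1})$ inside $B(3Rn/4)$; this is what allows the base separation to propagate to the full path $S^{1}[0, \tau^{1}_{L}]$.
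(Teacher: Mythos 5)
Your reduction-to-base-case strategy founders on the structural claim about $G$, which is exactly the point you flag as delicate but do not actually resolve. Write $s_{1}$ for the last visit of $LE(\lambda_{1})$ to $\partial B(n)$ before its endpoint, and $u$ for the first time $LE(\lambda_{1})$ hits $\lambda_{2}$, so that $LE^{(1)} = LE(\lambda_{1})[0,u]$. The base separation controls only $LE(\lambda_{1})[s_{1}, \mathrm{len}]$; the earlier part of $LE(\lambda_{1})$ is unconstrained and may well make an excursion into $H^{-}$ \emph{before} time $s_{1}$. Since $\lambda_{2}$ wanders in $H^{-}$, it can hit that early excursion, forcing $u < s_{1}$. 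Then the concatenation $LE^{(1)} + LE^{(2)}$ deletes the final approach of $LE(\lambda_{1})$ to $B(n)$, and the last visit of the new curve to $\partial B(n)$ is some earlier time $s_{2} < u$; consequently $\eta^{2}_{0,n,Rn}(LE(S^{1}[0,\tau^{1}_{L}]))$ contains the stretch $LE(\lambda_{1})[s_{2}, u]$, which the base separation says nothing about and which can leave $A^{-}_{R,n}$. Your stated justification (``the first intersection of $LE(\lambda_{1})$ with $\lambda_{2}$ lies outside $B(3Rn/4)$, so $LE^{(1)}$ retains the entire piece of $LE(\lambda_{1})$ inside $B(3Rn/4)$'') confuses a spatial location with a temporal one: the cut at time $u$ discards \emph{all} of $LE(\lambda_{1})$ after $u$, including later returns to $B(3Rn/4)$ and to $B(n)$. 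This is not a removable technicality: the paper's own proof devotes most of its length to precisely this issue, bounding the probability that the loop erasure returns to $\partial B(n)$ after reaching scale $Rn/8$ (and a related intersection event) by $C(\log R)/R$, and then absorbing these error terms using the lower bound $P(F_{L,R,n}) \ge c R^{-\xi}$ with $\xi < 1$. Your argument has no counterpart to this step, and without one the inclusion $G \subset \text{Sep}_{L,R,n}$ is simply false.

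There are three further problems. First, $\text{Sep}_{L,R,n} \not\subset F_{L,R,n}$: the regions $A^{+}_{R,n}$ and $A^{-}_{R,n}$ both contain $B(3Rn/4)$, so separated paths can still intersect near the origin; you therefore need $G$ to force the base event $F_{Rn,R,n}$ as well (Theorem 6.1.5 of \cite{Shi-gr} does give $P(\text{Sep} \cap F) \ge c P(F)$, so this is fixable, but as written your chain of inclusions does not bound $P(\text{Sep}_{L,R,n} \cap F_{L,R,n})$, which is what the conditional probability requires). Second, under (i) the endpoint $S^{1}(\tau^{1}_{Rn})$ is only guaranteed to satisfy $x_{1} \le -2Rn/3$, so it need not lie in $H^{-} = \{x_{1} \le -3Rn/4\}$ at all, and $P(\text{(ii)} \mid \lambda_{1}, \text{(i)})$ can be $0$; the constants must be rearranged. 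Third, the upper bound $P(F_{L,R,n}) \le C\,Es(n,Rn)$ that you invoke in the conclusion is essentially Lemma \ref{escape-25}, which the paper deduces \emph{from} Lemma \ref{escape-17}; using it here requires an independent derivation (e.g., via the quasi-independence results of \cite{Mas}), which you do not supply.
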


\begin{proof}
Throughout the proof of this lemma, let $\gamma := LE ( S^{1} [0, \tau^{1}_{L} ] )$ and $\gamma' := LE ( S^{1} [0, \infty ) )$. Note that $\gamma'$ is well-defined since $S^{1}$ is transient for $d=3$. For $m < k$, let $\Lambda_{m, k}$ be the set of pairs of two paths $(\lambda^{1}, \lambda^{2})$ satisfying that
\begin{itemize}
\item $\lambda^{1}$ is a simple path started at the origin. $\lambda^{2}$ is a path started at the origin.

\item $\lambda^{i} [0, \text{len} \lambda^{i} ] \subset B(k)$ and $\lambda^{i} ( \text{len} \lambda^{i} ) \in \partial B(k)$ for each $i=1,2$.

\item $\eta^{2}_{0, m, k} (\lambda^{1}) \cap \lambda^{2} = \emptyset$.
\end{itemize}

We write $\tau^{\gamma}_{m} = \inf \{ t \ | \ \gamma (t) \in \partial B (m) \}$ and define $\tau^{\gamma'}_{m}$ similarly. Let 
\begin{align*}
&G' := \Big\{ \gamma' [ \tau^{\gamma'}_{\frac{ R n}{8}}, \tau^{\gamma'}_{\frac{ R n}{4}} ] \cap S^{2} [ \tau^{2}_{\frac{ R n}{8}}, \tau^{2}_{\frac{ R n}{4}}] = \emptyset, \ \gamma' [ \tau^{\gamma'}_{\frac{ R n}{8}}, \tau^{\gamma'}_{\frac{ R n}{4}} ] \cap \lambda^{2} = \emptyset, \ S^{2} [ \tau^{2}_{\frac{ R n}{8}}, \tau^{2}_{\frac{ R n}{4}}] \cap \lambda^{1} = \emptyset \Big\}, \\
&H' := \Big\{ \gamma' [ \tau^{\gamma'}_{\frac{ R n}{8}}, \tau^{\gamma'}_{\frac{ R n}{4}} ] \subset A^{-}_{\frac{R}{4}, n}, \ S^{2} [ \tau^{2}_{\frac{ R n}{8}}, \tau^{2}_{\frac{ R n}{4}}] \subset A^{+}_{\frac{R}{4}, n} \Big\}.
\end{align*}
(We define $G$ and $H$ by replacing $\gamma'$ by $\gamma$ above.)
Then (6.15) of \cite{Shi-gr} shows that there exists an absolute constant $c > 0$ such that for all $(\lambda^{1}, \lambda^{2}) \in \Lambda_{n, \frac{R n}{8}}$, we have
\begin{equation}\label{daru-1}
P_{1} \otimes P_{2} \Big( G', \ H' \ \big| \ \gamma' [0, \tau^{\gamma'}_{\frac{ R n}{8}}] = \lambda^{1}, S^{2} [0,  \tau^{2}_{\frac{ R n}{8}}] = \lambda^{2} \Big) \ge c P_{1} \otimes P_{2} \Big( G' \ \big| \ \gamma' [0, \tau^{\gamma'}_{\frac{ R n}{8}}] = \lambda^{1}, S^{2} [0,  \tau^{2}_{\frac{ R n}{8}}] = \lambda^{2} \Big).
\end{equation}
Taking sum for $(\lambda^{1}, \lambda^{2}) \in \Lambda_{n, \frac{R n}{8}}$, we have
\begin{align}\label{daru-2}
&P_{1} \otimes P_{2} \Big( G'_{1}, \ H', \ \eta^{2}_{0, n, \frac{R n}{8}} \big( \gamma' [0, \tau^{\gamma'}_{\frac{ R n}{8}}] \big) \cap S^{2} [0,  \tau^{2}_{\frac{ R n}{8}}] = \emptyset \Big) \notag \\
&\ge c P_{1} \otimes P_{2} \Big( G'_{1}, \ \eta^{2}_{0, n, \frac{R n}{8}} \big( \gamma' [0, \tau^{\gamma'}_{\frac{ R n}{8}}] \big) \cap S^{2} [0,  \tau^{2}_{\frac{ R n}{8}}] = \emptyset \Big),
\end{align}
where
\begin{align*}
&G'_{1} := \Big\{ \gamma' [ \tau^{\gamma'}_{\frac{ R n}{8}}, \tau^{\gamma'}_{\frac{ R n}{4}} ] \cap S^{2} [ \tau^{2}_{\frac{ R n}{8}}, \tau^{2}_{\frac{ R n}{4}}] = \emptyset, \ \gamma' [ \tau^{\gamma'}_{\frac{ R n}{8}}, \tau^{\gamma'}_{\frac{ R n}{4}} ] \cap S^{2} [0,  \tau^{2}_{\frac{ R n}{8}}] = \emptyset, \notag \\
&\ \ \ \ \ \ \ \  \ \  S^{2} [ \tau^{2}_{\frac{ R n}{8}}, \tau^{2}_{\frac{ R n}{4}}] \cap \gamma' [0, \tau^{\gamma'}_{\frac{ R n}{8}}] = \emptyset \Big\}.
\end{align*}
(Again we define $G_{1}$ by replacing $\gamma'$ by $\gamma$ above.) 

But by Proposition 4.4 \cite{Mas}, the distribution of $\gamma' [ 0, \tau^{\gamma'}_{\frac{ R n}{4}} ]$ is comparable to that of $\gamma [ 0, \tau^{\gamma}_{\frac{ R n}{4}} ]$. Therefore,
\begin{align}\label{daru-3}
&P_{1} \otimes P_{2} \Big( G_{1}, \ H, \ \eta^{2}_{0, n, \frac{R n}{8}} \big( \gamma [0, \tau^{\gamma}_{\frac{ R n}{8}}] \big) \cap S^{2} [0,  \tau^{2}_{\frac{ R n}{8}}] = \emptyset \Big) \notag \\
&\ge c P_{1} \otimes P_{2} \Big( G_{1}, \ \eta^{2}_{0, n, \frac{R n}{8}} \big( \gamma [0, \tau^{\gamma}_{\frac{ R n}{8}}] \big) \cap S^{2} [0,  \tau^{2}_{\frac{ R n}{8}}] = \emptyset \Big).
\end{align}
Once $\gamma [ 0, \tau^{\gamma}_{\frac{ R n}{4}} ]$ and $S^{2} [ 0, \tau^{2}_{\frac{ R n}{4}}]$ are separated as in $H$, by attaching paths from $\partial B( \frac{ R n}{4} )$ to $\partial B( L )$, we see that
\begin{equation}\label{daru-4}
P_{1} \otimes P_{2} \Big( \text{Sep}_{L, R, n}, \  F_{L, R, n} \Big) \ge c P_{1} \otimes P_{2} \Big( G_{1}, \ H, \ \eta^{2}_{0, n, \frac{R n}{8}} \big( \gamma [0, \tau^{\gamma}_{\frac{ R n}{8}}] \big) \cap S^{2} [0,  \tau^{2}_{\frac{ R n}{8}}] = \emptyset \Big),
\end{equation}
for some $c > 0$.

Suppose that $\gamma [\tau^{\gamma}_{\frac{ R n}{8}}, \tau^{\gamma}_{R n}] \cap \partial B(n) = \emptyset$. Then $\eta^{2}_{0, n, \frac{R n}{8}} \big( \gamma [0, \tau^{\gamma}_{\frac{ R n}{8}}] \big) \cup \gamma [\tau^{\gamma}_{\frac{ R n}{8}}, \tau^{\gamma}_{\frac{ R n}{4}}] \subset \eta^{2}_{0, n, R n} (\gamma )$. Therefore if we write $\sigma := \max \{ t \ \tau^{\gamma}_{R n} \ | \ \gamma (t ) \in B(n) \}$, then
\begin{align*}
&P_{1} \otimes P_{2} \Big( G_{1}, \ \eta^{2}_{0, n, \frac{R n}{8}} \big( \gamma [0, \tau^{\gamma}_{\frac{ R n}{8}}] \big) \cap S^{2} [0,  \tau^{2}_{\frac{ R n}{8}}] = \emptyset \Big) \\
&\ge P_{1} \otimes P_{2} \Big( \gamma [\tau^{\gamma}_{\frac{ R n}{8}}, \tau^{\gamma}_{R n}] \cap \partial B(n) = \emptyset, \ \eta^{2}_{0, n, R n} (\gamma ) \cap S^{2} [0,  \tau^{2}_{R n}] = \emptyset, \ \gamma [0, \sigma ] \cap S^{2} [ \tau^{2}_{\frac{ R n}{8}}, \tau^{2}_{\frac{ R n}{4}}] = \emptyset \Big) \\
&\ge P_{1} \otimes P_{2} \Big(  \eta^{2}_{0, n, R n} (\gamma ) \cap S^{2} [0,  \tau^{2}_{R n}] = \emptyset, \ \gamma [0, \sigma ] \cap S^{2} [ \tau^{2}_{\frac{ R n}{8}}, \tau^{2}_{\frac{ R n}{4}}] = \emptyset \Big) \\
&- P_{1} \otimes P_{2} \Big( \gamma [\tau^{\gamma}_{\frac{ R n}{8}}, \tau^{\gamma}_{R n}] \cap \partial B(n) \neq \emptyset \Big) \\
&\ge P_{1} \otimes P_{2} \Big(  \eta^{2}_{0, n, R n} (\gamma ) \cap S^{2} [0,  \tau^{2}_{R n}] = \emptyset, \ \gamma [0, \sigma ] \cap S^{2} [ \tau^{2}_{\frac{ R n}{8}}, \tau^{2}_{\frac{ R n}{4}}] = \emptyset \Big) - \frac{C}{R},
\end{align*}
for some $C < \infty$. Here we used Proposition 1.5.10 \cite{Law-book90} in the last inequality. Let 
\begin{equation*}
q := \max \{ k \ | \ \gamma [0, \sigma ] \subset B ( 2^{k} n ) \}.
\end{equation*}
Since $\gamma [0, \sigma ] \subset B ( R n ) $, we have $q \le \log_{2}R +1$. Therefore, 
\begin{align}\label{daru-5}
&P_{1} \otimes P_{2} \Big(  \eta^{2}_{0, n, R n} (\gamma ) \cap S^{2} [0,  \tau^{2}_{R n}] = \emptyset, \ \gamma [0, \sigma ] \cap S^{2} [ \tau^{2}_{\frac{ R n}{8}}, \tau^{2}_{\frac{ R n}{4}}] = \emptyset \Big) - \frac{C}{R} \notag \\
&\ge P_{1} \otimes P_{2} \Big(  F_{L, R, n} \Big) - P_{1} \otimes P_{2} \Big(  \gamma [0, \sigma ] \cap S^{2} [ \tau^{2}_{\frac{ R n}{8}}, \tau^{2}_{\frac{ R n}{4}}] \neq \emptyset \Big) - \frac{C}{R}.
\end{align}
But by Proposition 1.5.10 \cite{Law-book90},
\begin{align*}
&P_{1} \otimes P_{2} \Big(  \gamma [0, \sigma ] \cap S^{2} [ \tau^{2}_{\frac{ R n}{8}}, \tau^{2}_{\frac{ R n}{4}}] \neq \emptyset \Big) \le \sum_{k=1}^{\log_{2}R +1} P_{1} \otimes P_{2} \Big( q=k, \ \gamma [0, \sigma ] \cap S^{2} [ \tau^{2}_{\frac{ R n}{8}}, \tau^{2}_{\frac{ R n}{4}}] \neq \emptyset \Big) \\
&\le \sum_{k=1}^{\log_{2}R +1} P_{1} \otimes P_{2} \Big( q=k, \ B(2^{k} n) \cap S^{2} [ \tau^{2}_{\frac{ R n}{8}}, \tau^{2}_{\frac{ R n}{4}}] \neq \emptyset \Big) \le  \sum_{k=1}^{\log_{2}R +1} C 2^{-k} \frac{2^{k}}{R} \le \frac{C \log R}{R}.
\end{align*}
Combining this with \eqref{daru-4} and \eqref{daru-5}, we have
\begin{equation*}
P_{1} \otimes P_{2} \Big( \text{Sep}_{L, R, n}, \  F_{L, R, n} \Big) \ge c_{1} P_{1} \otimes P_{2} \Big(  F_{L, R, n} \Big) - \frac{C_{1} \log R}{R},
\end{equation*}
for some $c_{1} > 0, C_{1} < \infty$. However, by Corollary 4.2 \cite{Law-cuttimes}, it follows that there exist $c_{2} > 0$ and $\xi \in (\frac{1}{2}, 1)$ such that
\begin{equation*}
P_{1} \otimes P_{2} \Big( F_{L, R, n}  \Big) \ge P_{1} \otimes P_{2} \Big( S^{1} [\tau^{1}_{n}, \tau^{1}_{L} ] \cap S^{2} [0, \tau^{2}_{Rn} ] = \emptyset \Big) \ge c_{2} R^{-\xi },
\end{equation*}
where $\xi$ is referred to as the intersection exponent (see \cite{Law-cuttimes} for $\xi$). Since we know that $\xi < 1$ (see \cite{Law-cuttimes}), there exists $C < \infty $ such that $c_{1} c_{2} R^{-\xi } > \frac{2 C_{1} \log R }{R}$ for all $R \ge C$. Then for all $R \ge C$, we see that $P_{1} \otimes P_{2} \Big( \text{Sep}_{L, R, n} \cap F_{L, R, n} \Big) \ge \frac{c_{1}}{2} P_{1} \otimes P_{2} \Big( F_{L, R, n}  \Big)$, which finishes the proof for $R \ge C$. It is easy to check that the lemma holds for $R \le C$, so we finish the proof of lemma. \end{proof}

\medskip

Once we show Lemma \ref{escape-17}, using the same argument as in the proof of Proposition 6.2.1 \cite{Shi-gr}, we get the following lemma immediately. We shall omit its proof and leave it to the reader.

\begin{lem}\label{escape-25}
Let $d=3$. There exists $C < \infty$ such that for all for all $R \ge 4$, $n \ge 1$ and $Rn \le L \le 4 Rn$, we have 
\begin{equation}\label{escape-26}
\frac{1}{C} P_{1} \otimes P_{2} \Big( F_{L, R, n}  \Big) \le Es (n, L ) \le C P_{1} \otimes P_{2} \Big( F_{L, R, n}  \Big),
\end{equation}
where $F_{L, R, n}$ was defined as in \eqref{escape-14}.
\end{lem}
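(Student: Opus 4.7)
The plan is to follow the strategy of Proposition 6.2.1 of \cite{Shi-gr}, with Lemma \ref{escape-17} as the main new input. Relabeling $S^{1} \leftrightarrow S^{2}$ using independence, set $\gamma := LE(S^{1}[0,\tau^{1}_{L}])$, so $Es(n,L) = P_{1} \otimes P_{2}\bigl(\eta^{2}_{0,n,L}(\gamma) \cap S^{2}[1,\tau^{2}_{L}] = \emptyset\bigr)$.

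\emph{Lower bound} $P_{1} \otimes P_{2}(F_{L,R,n}) \le C\, Es(n,L)$. By Lemma \ref{escape-17}, $P(F_{L,R,n} \cap \text{Sep}_{L,R,n}) \ge c\, P(F_{L,R,n})$. On $\text{Sep}_{L,R,n}$, the endpoints $\gamma(\tau^{\gamma}_{Rn}) \in \partial B(Rn) \cap \{x_{1} \le -2Rn/3\}$ and $S^{2}(\tau^{2}_{Rn}) \in \partial B(Rn) \cap \{x_{1} \ge 2Rn/3\}$ lie in opposite caps, and the pre-exit pieces $\eta^{2}_{0,n,Rn}(\gamma)$, $S^{2}[0,\tau^{2}_{Rn}]$ sit inside $A^{-}_{R,n}$, $A^{+}_{R,n}$ respectively. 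We then extend both paths from scale $Rn$ to scale $L \le 4Rn$ while forcing them to stay in the disjoint regions (i) $S^{2}[\tau^{2}_{Rn}, \tau^{2}_{L}] \subset \{|x| > Rn,\ x_{1} > 0\}$ and (ii) $\gamma[\tau^{\gamma}_{Rn}, \tau^{\gamma}_{L}] \subset \{|x| > Rn,\ x_{1} \le 0\}$. Event (i) is a simple random-walk tube estimate, bounded below by a universal constant since $L/Rn \in [1,4]$. For event (ii), the domain Markov property (Lemma \ref{domain}) identifies $\gamma[\tau^{\gamma}_{Rn}, \tau^{\gamma}_{L}]$, conditional on $\gamma[0,\tau^{\gamma}_{Rn}]$, as the loop-erasure of a walk $Y$ from $\gamma(\tau^{\gamma}_{Rn})$ conditioned on $Y[1,\sigma^{Y}_{B(L)}] \cap \gamma[0,\tau^{\gamma}_{Rn}] = \emptyset$. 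Since $\gamma[0,\tau^{\gamma}_{Rn}] \subset \overline{B(Rn)}$, the sub-event $\{Y[1,\sigma^{Y}_{B(L)}] \subset \{|x| > Rn,\ x_{1} \le 0\}\}$ already lies inside the conditioning event, so its conditional probability is at least its unconditional probability, which is bounded below by the same kind of tube estimate. The loop-erasure is contained in the trace of $Y$, so the confinement passes to $\gamma[\tau^{\gamma}_{Rn}, \tau^{\gamma}_{L}]$. When (i) and (ii) both hold, the two continuations avoid $\overline{B(Rn)} \supset B(3Rn/4)$ and lie in disjoint half-spaces, so they are disjoint from each other and from the respective $A^{\mp}_{R,n}$. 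Combined with the $F_{L,R,n}$ non-intersection at scale $Rn$, this produces the full $Es(n,L)$-event, giving $Es(n,L) \ge c' P_{1} \otimes P_{2}(F_{L,R,n})$.

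\emph{Upper bound} $Es(n,L) \le C\, P_{1} \otimes P_{2}(F_{L,R,n})$. On the event $\mathcal{N}$ that $\gamma$ does not return to $\partial B(n)$ after first exiting $B(Rn)$, the piece $\eta^{2}_{0,n,Rn}(\gamma)$ is a prefix of $\eta^{2}_{0,n,L}(\gamma)$ and is disjoint from $B(n) \ni 0$; hence the $Es(n,L)$-event implies the $F_{L,R,n}$-event. The complementary probability $P(\mathcal{N}^{c})$ is $O(R^{-1})$ by Proposition 1.5.10 of \cite{Law-book90} applied to the underlying walk. Using Proposition 4.4 of \cite{Mas} to compare $\gamma$ with the transient LERW, together with Theorem \ref{escape-5}, one has $P_{1} \otimes P_{2}(F_{L,R,n}) \ge c\, Es(n,Rn) \ge c\, c_{\kappa}\, R^{-\alpha-\kappa}$; since $\alpha + \kappa < 1$ for small $\kappa$, the $O(R^{-1})$ error is absorbed into $P(F_{L,R,n})$, yielding the claimed upper bound.

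The delicate point is step (ii): conditioning the underlying walk to avoid $\gamma[0,\tau^{\gamma}_{Rn}]$ could in principle damage the tube estimate, but choosing the confinement region outside $B(Rn)$ decouples it from the conditioning set $\gamma[0,\tau^{\gamma}_{Rn}] \subset \overline{B(Rn)}$, reducing the conditional estimate to the unconditional one. This is exactly the same spirit as the separation-plus-attachment argument of Proposition 6.2.1 of \cite{Shi-gr}, which is why the author can defer the formal proof to the reader.
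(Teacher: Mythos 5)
Your argument is correct and is precisely the separation-plus-attachment scheme the paper has in mind when it omits this proof and defers to Proposition 6.2.1 of \cite{Shi-gr}: Lemma \ref{escape-17} separates the two paths at scale $Rn$, tube estimates together with the domain Markov property extend them disjointly to scale $L$, and the converse direction uses the $O(R^{-1})$ cost of $\gamma$ revisiting $\partial B(n)$, absorbed into a lower bound $P_{1}\otimes P_{2}(F_{L,R,n}) \ge c R^{-\theta}$ with $\theta < 1$. The only cosmetic difference is that you obtain this last bound via $Es(n,Rn)$ and Theorem \ref{escape-5} (which requires the Masson comparison at nearly equal scales), whereas the paper's proof of Lemma \ref{escape-17} gets it directly from the intersection-exponent estimate $P_{1}\otimes P_{2}(F_{L,R,n}) \ge c R^{-\xi}$ of \cite{Law-cuttimes}; either route works.
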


\subsection{Scaling limit of LERW in three dimensions}\label{scaling limit}
In this subsection, we will review some known facts about the scaling limit of LERW in three dimensions. As we explain in Section 1.1, the scaling limit of LERW for $d=3$ exists \cite{Koz}, and some properties of it were studied in \cite{SS}. We will explain the details here.

Let $D = \{ x \in \mathbb{R}^{3} \ | \ |x| < 1 \}$ and $\overline{D}$ be its closure.  Let 
\begin{equation}\label{rescale}
\text{LEW}_{n} =\frac{LE ( S[0, \tau_{n}] )}{n}. 
\end{equation}
Here $S$ is a simple random walk started at the origin on $\mathbb{Z}^{3}$ and $\tau_{n} = \inf \{ t \ | \ S(t) \in \partial B (n) \}$. 

\medskip

We write ${\cal H} (\overline{D} )$ for the metric space of the set of compact subsets in $\overline{D}$ with the Hausdorff distance $d_{\text{H}}$. Thinking of $\text{LEW}_{n}$ as random elements of ${\cal H} (\overline{D} )$, let $P^{(n)}$ be the probability measure on ${\cal H} (\overline{D} )$ induced by $\text{LEW}_{n}$. Then \cite{Koz} shows that $P^{(2^{j})}$ is Cauchy with respect to the weak convergence topology, and therefore $P^{(2^{j})}$ converges weakly. Let $\nu$ be its limit probability measure. We call $\nu$ the scaling limit measure of LERW in three dimensions. We write ${\cal K}$ for the random compact subset associated with $\nu$. We call ${\cal K}$ the scaling limit of LERW in three dimensions. It is also shown in \cite{Koz} that ${\cal K}$ is invariant under rotations and dilations. 

\medskip

Some properties of ${\cal K}$ were studied in \cite{SS}. In \cite{SS}, it is shown that ${\cal K}$ is a simple path almost surely (Theorem 1.2 \cite{SS}). Furthermore, if we let $Y$ be the union of ${\cal K}$ and loops from independent Brownian loop soup in $D$ which intersect ${\cal K}$, more precisely,
\begin{equation}\label{decomp}
Y := {\cal K} \cup \{ \ell \in \text{BS} \ | \ \ell \cap {\cal K} \neq \emptyset \},
\end{equation}
then $Y$ has the same distribution in ${\cal H} (\overline{D} )$ as the trace of three dimensional Brownian motion up to exiting from $D$ (Theorem 1.1 \cite{SS}). Here $\text{BS}$ is the Brownian loop soup in $D$ which is independent of ${\cal K}$ (see \cite{LW} for the Brownian loop soup).

\medskip

We denote the Hausdorff dimension by $\text{dim}_{\text{H}} (\cdot )$. Bounds of $\text{dim}_{\text{H}} ({\cal K})$ were given in Theorem 1.4 \cite{SS} as follows. Let $\xi$ be the intersection exponent for three dimensional Brownian motion (see \cite{Law-haus} for $\xi$). Let $\beta = 2- \alpha$, where $\alpha$ is the exponent as in Theorem \ref{escape-5}. Then Theorem 1.4 \cite{SS} shows that 
\begin{equation}\label{haus-bound}
2- \xi \le \text{dim}_{\text{H}} ({\cal K}) \le \beta, \text{ almost surely}.
\end{equation}
In particular, since $\xi \in (\frac{1}{2}, 1)$ (see \cite{Law-haus}) and $\beta \in (1, \frac{5}{3}]$ (see \cite{Law LERW99}), we have
\begin{equation}\label{haus-bound-1}
1 < \text{dim}_{\text{H}} ({\cal K}) \le \frac{5}{3}, \text{ almost surely}.
\end{equation}

\medskip

The main purpose of the present paper is to show that 
\begin{equation}\label{haus-bound-2}
 \text{dim}_{\text{H}} ({\cal K}) \ge \beta, \text{ almost surely},
\end{equation}
which concludes that $\text{dim}_{\text{H}} ({\cal K}) = \beta$ almost surely.

 \section{The number of small boxes hit by ${\cal K}$} 
 From here to the end of the present paper, we will assume $d=3$. In this section, we will give bounds of the number of small boxes hit by ${\cal K}$. To do it, we will first estimate the probability that ${\cal K}$ hits two distinct small boxes (see Theorem \ref{key thm}), which is one of the key result in the paper. We will show Theorem \ref{key thm} in Section 3.1. Then using the second moment method, we will give some bounds of the number of boxes hit by ${\cal K}$ in Section 3.2. 
 
 \subsection{Probability of ${\cal K}$ hitting two small boxes}
 Recall that $D = \{ x \in \mathbb{R}^{3} \ | \ |x| < 1 \}$ and $\overline{D}$ is its closure. For $ r > 0$, we write $D_{r} = \{ x \in \mathbb{R}^{3} \ | \ |x| < r \}$ and let $\overline{D_{r}}$ be its closure. For $x = (x_{1}, x_{2}, x_{3}) \in \mathbb{Z}^{3}$, let
\begin{equation}\label{box}
B_{x} = \prod_{i=1}^{3} [x_{i}, x_{i}+1]
\end{equation}
In this subsection, we will establish an upper bound of the probability that ${\cal K}$ hits both $\epsilon B_{x}$ and $\epsilon B_{y}$ with $\frac{1}{3} \le |\epsilon x |, |\epsilon y | \le \frac{2}{3}$ and $x, y \in \mathbb{Z}^{3}$ (see Theorem \ref{key thm}). The upper bound will be given in terms of escape probabilities defined in Section 2.2. In the proof of Theorem \ref{key thm}, we will repeatedly use several properties of escape probabilities explained in Section 2.2 as well as Proposition 4.2, 4.4 and 4.6 in \cite{Mas}.

\medskip

Let $\text{LEW}_{n} =\frac{LE ( S[0, \tau_{n}] )}{n}$. Here $S$ is a simple random walk started at the origin on $\mathbb{Z}^{3}$ and $\tau_{n} = \inf \{ t \ | \ S(t) \in \partial B (n) \}$.  Since $\text{LEW}_{2^{j}}$ converges weakly to ${\cal K}$ (see Section \ref{scaling limit}), we can define $\{ \text{LEW}_{2^{j}} \}_{j \ge 1}$ and ${\cal K}$ on the same probability space $(\Omega, {\cal F}, P)$ such that 
\begin{equation}\label{skorohod}
\lim_{j \to \infty} d_{\text{H}} ( \text{LEW}_{2^{j}}, {\cal K} ) = 0, \ P \text{-almost surely, }
\end{equation}
where $d_{\text{H}}$ is the Hausdorff metric on ${\cal H} (\overline{D} )$ (see Section \ref{scaling limit} for ${\cal H} (\overline{D} )$).
 
Take $\epsilon > 0$. By \eqref{skorohod}, for $P$-a.s., $\omega$, there exists $N_{\epsilon} ( \omega ) < \infty $ such that 
\begin{equation*}
d_{\text{H}} ( \text{LEW}_{2^{j}}, {\cal K} ) < \epsilon^{2}, \text{ for all } j \ge N_{\epsilon}.
\end{equation*}
Since $P ( N_{\epsilon} < \infty ) = 1$, there exists $j_{\epsilon}$ such that 
\begin{equation}\label{skorohod-2}
P ( N_{\epsilon} < j_{\epsilon} ) \ge 1- \epsilon^{100}.
\end{equation}
On the event $\{ N_{\epsilon} < j_{\epsilon} \}$, if we write $n_{\epsilon} := 2^{j_{\epsilon }}$, then 
\begin{equation}\label{skorohod-3}
d_{\text{H}} ( \text{LEW}_{n_{\epsilon}}, {\cal K} ) < \epsilon^{2}.
\end{equation}
From now on, we fix $n=n_{\epsilon} = 2^{j_{\epsilon }}$ for each $\epsilon > 0$ such that \eqref{skorohod-2} holds.

\medskip

One of the key results in this paper is the following theorem.

\begin{thm}\label{key thm}
Fix $\epsilon > 0$ and take $n=n_{\epsilon} = 2^{j_{\epsilon }}$ such that \eqref{skorohod-2} holds. Suppose that $x \neq y \in \mathbb{Z}^{3}$ satisfy 
\begin{equation}\label{key thm-1}
\epsilon B_{x} \subset D_{\frac{2}{3}} \setminus D_{\frac{1}{3}} \text{ and } \epsilon B_{y} \subset D_{\frac{2}{3}} \setminus D_{\frac{1}{3}}.
\end{equation}
Let $l := |x-y |$. Then there exists an absolute constant $C < \infty$ such that
\begin{equation}\label{key-2}
P \big( {\cal K} \cap \epsilon B_{x} \neq \emptyset \text{ and } {\cal K} \cap \epsilon B_{y} \neq \emptyset \big) \le C \text{Es} ( \epsilon n , l \epsilon n) \text{Es} ( \epsilon n , n )  \frac{\epsilon}{l}.
\end{equation}
\end{thm}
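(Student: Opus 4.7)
Following the sketch in Section 1.2, my approach is to transfer the event from ${\cal K}$ to the discrete LERW $\gamma := LE(S[0,\tau_n])$ using the coupling in \eqref{skorohod-2}--\eqref{skorohod-3}, and then exploit the domain Markov property of LERW (Lemma \ref{domain}) to decompose the two-box event. Outside an event of probability $\epsilon^{100}$, the Hausdorff distance between $\text{LEW}_n$ and ${\cal K}$ is at most $\epsilon^2$, so the event in \eqref{key-2} is contained, up to this negligible error, in the corresponding event for $\gamma$ with the boxes $n\epsilon B_x, n\epsilon B_y$ replaced by their $O(\epsilon^2 n)$-enlargements; this enlargement costs only a constant factor.

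Next I split on the order in which $\gamma$ visits the two boxes. By symmetry it suffices to handle the case where $\gamma$ hits $n\epsilon B_x$ first. Summing over the first hit point $z \in \partial(n\epsilon B_x)$ and applying Lemma \ref{domain}, conditionally on the initial segment $\gamma_0$ of $\gamma$ ending at $z$, the remainder of $\gamma$ is the loop-erasure of a random walk $Y$ from $z$ conditioned to avoid $\gamma_0$ until exiting $B(n)$. The probability of hitting both boxes then factors as
\begin{equation*}
P\bigl(\gamma \cap n\epsilon B_x \neq \emptyset, \ \gamma \cap n\epsilon B_y \neq \emptyset\bigr) \ \le \ P\bigl(\gamma \cap n\epsilon B_x \neq \emptyset\bigr) \cdot \sup_{z,\gamma_0} P\bigl( LE(Y[0,\tau^Y_n]) \cap n\epsilon B_y \neq \emptyset \bigr),
\end{equation*}
and my goal is to bound the two factors by $C\epsilon\, Es(\epsilon n, n)$ and $\frac{C}{l}\, Es(\epsilon n, l\epsilon n)$ respectively. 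For the first factor, Lemma \ref{reversal} rewrites the event $\{z \in \gamma\}$ as a two-walk non-intersection event from $z$, giving $P(z \in \gamma) \asymp n^{-1} Es(n)$ for $z$ at distance $\asymp n$ from the origin; summing over $z \in n\epsilon B_x$ and dividing by the expected number of $\gamma$-sites in the box (of order $(\epsilon n)^{2-\alpha}$), then rearranging with Proposition \ref{escape-3} and Lemma \ref{escape-8}, yields the claimed bound. For the second factor, $n\epsilon B_y$ lies at distance $\asymp l\epsilon n$ from $z$, so in $B(z, l\epsilon n)$ the conditioned walk $Y$ can be compared to an unconditioned walk via Proposition 4.4 of \cite{Mas}; the probability of reaching $n\epsilon B_y$ within this ball is $\sim 1/l$ by a harmonic-measure estimate, and the factor $Es(\epsilon n, l\epsilon n)$ arises from the probability that this visit survives loop-erasure, which uses Lemma \ref{escape-17} and Lemma \ref{escape-25}.

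The main obstacle is controlling the second factor. The walk $Y$ is conditioned to avoid a long and random initial segment $\gamma_0$ stretching across $B(n)$, and a priori this could inflate its probability of reaching a distant small box on the far side. The separation lemma (Lemma \ref{escape-17}) is essential here: it shows that typical initial segments are well separated from $z$ at the appropriate scale, allowing the conditioning to be absorbed into a universal multiplicative constant when comparing with the unconditioned walk. Combining this decoupling with careful bookkeeping of escape probabilities at the three nested scales $\epsilon n \le l\epsilon n \le n$ via Proposition \ref{escape-3} and Lemma \ref{escape-8} completes the argument, and the bound for $\gamma$ then transfers to ${\cal K}$ via the coupling.
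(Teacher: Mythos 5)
Your reduction to the discrete LERW via the coupling, and the splitting according to which box is visited first, agree with the paper's opening moves. The genuine problem is your displayed factorization. After conditioning on the initial segment $\gamma_0$ of $\gamma$ up to its first visit to $\partial(n\epsilon B_x)$ and invoking the domain Markov property, you bound the conditional probability of hitting $n\epsilon B_y$ by a supremum over all admissible pairs $(z,\gamma_0)$. That supremum is not $O\big(l^{-1}Es(\epsilon n, l\epsilon n)\big)$; it is of order $1$. Indeed, $\gamma_0$ ranges over all simple paths from $0$ to $z$, including paths that occupy most of $B(n)$ except for a narrow corridor leading from $z$ through $n\epsilon B_y$ to $\partial B(n)$; a walk $Y$ conditioned to avoid such a $\gamma_0$ hits $n\epsilon B_y$ with probability bounded below, and so does its loop-erasure. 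Such configurations are of course extremely unlikely as initial segments of $\gamma$, but once you have taken the supremum you have discarded exactly the information needed to exploit that. The tools you invoke cannot rescue this step: Lemma \ref{escape-17} gives separation of a LERW and an independent walk in a fixed annulus around their common starting point and says nothing about where $\gamma_0$ sits relative to the distant box $n\epsilon B_y$, while Proposition 4.4 of \cite{Mas} compares loop-erasures of walks stopped at different radii, not a $\gamma_0$-conditioned walk with an unconditioned one. To keep the needed correlation between the weight of $\gamma_0$ and the conditional hitting probability you must retain the sum over $\gamma_0$, which is precisely the hard part of the theorem.

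The paper avoids this trap by not conditioning on the past of the loop-erasure at all in the proof of Theorem \ref{key thm}. It instead performs a last exit decomposition of the underlying walk $S$ at its last visits $T^x<T^y$ to the enlarged boxes, rewrites the event in terms of independent walks started at $z\in\partial(\epsilon n B'_x)$, $w\in\partial(\epsilon n B'_y)$ and the origin, and reverses paths so that every non-intersection condition becomes an escape event emanating from one of the boxes (Lemmas \ref{hosoku-1}--\ref{hosoku-3}, with Lemma \ref{key-21} controlling the returns of the reversed walk to the neighbourhood of $B'_y$ and Proposition 4.6 of \cite{Mas} supplying the decoupling across scales). The domain Markov route you propose is used in the paper only in Section 4, where the target region $F^{\gamma}_{i,n}$ is chosen adaptively in front of the tip of $\gamma_0$ precisely so that the adversarial configurations above cannot arise. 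Your bound for the first factor is fine (it is the estimate quoted from Lemma 7.1 of \cite{SS}), but the second factor is not controlled, so the proof as written has a gap at its central step.
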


\medskip

\begin{rem}\label{polish-rem-1}
Since the proof of Theorem \ref{key thm} is quite long, we explain some of its ideas here. Take $n=n_{\epsilon} = 2^{j_{\epsilon }}$ such that \eqref{skorohod-2} holds. Since $\epsilon^{100} \ll \text{Es} ( \epsilon n , l \epsilon n) \text{Es} ( \epsilon n , n )  \frac{\epsilon}{l}$, we may suppose that $d_{\text{H}} ( \text{LEW}_{n}, {\cal K} ) < \epsilon^{2}$. In that case if ${\cal K}$ hits both $\epsilon B_{x}$ and $\epsilon B_{y}$, then $\gamma := LE ( S[0, \tau_{n}] )$ hits both $\epsilon n B'_{x}$ and $\epsilon n B'_{y}$, where $B'_{z} = \prod_{i=1}^{3} [z_{i} -2, z_{i} + 2]$. So we need to estimate
\begin{equation}\label{intui}
P \Big( \gamma \cap \epsilon n B'_{x} \neq \emptyset, \ \gamma \cap \epsilon n B'_{y} \neq \emptyset \Big) \le P \Big( \tau^{\gamma, x} < \tau^{\gamma, y} < \infty \Big) + P \Big( \tau^{\gamma, y} < \tau^{\gamma, x} < \infty \Big).
\end{equation}
Here $\tau^{\gamma, z} := \inf \{ t \ | \ \gamma (t) \in \epsilon n B'_{z}  \}$. We want to show that 
\begin{equation}\label{intui-1}
P \Big( \tau^{\gamma, x} < \tau^{\gamma, y} < \infty \Big) \le C P \Big( \tau^{\gamma, x} <  \infty \Big) P^{\epsilon n x} \Big( \tau^{\gamma, y} < \infty \Big).
\end{equation}
Note that if $\gamma $ were $S[0, \tau_{n}]$, \eqref{intui-1} would hold because of the strong Markov property. However, since $\gamma = LE ( S[0, \tau_{n}] )$ is not a Markov process and the distribution of $\gamma [\tau^{\gamma, x} , \tau^{\gamma, y} ]$ strongly depends on the shape of $\gamma [0, \tau^{\gamma, x}]$. We need to control such dependence and this will be done in Lemma \ref{hosoku-2}, Lemma \ref{key-21} and \ref{hosoku-3}. Then we will prove \eqref{intui-1}. Once \eqref{intui-1} is proved then Theorem \ref{key thm} immediately follows because 
\begin{equation}\label{intui-2}
P \Big( \tau^{\gamma, x} <  \infty \Big) P^{\epsilon n x} \Big( \tau^{\gamma, y} < \infty \Big) \le  C \text{Es} ( \epsilon n , l \epsilon n) \text{Es} ( \epsilon n , n )  \frac{\epsilon}{l},
\end{equation}
and the second probability in RHS of \eqref{intui} can be estimated similarly.
\end{rem}

\medskip

We will split the proof of Theorem \ref{key thm} as follows. Since we want to estimate the probability in LHS of \eqref{intui-1} in terms of escape probabilities, we first rewrite the probability in terms of independent random walks by reversing paths in Lemma \ref{hosoku-1}. Such independent random walks consist of three walks $S^{1}, \cdots, S^{3}$ with $S^{1} (0)=  S^{2} (0) \in \partial (\epsilon n B'_{x})$ and $S^{3}(0) \in \partial (\epsilon n B'_{y})$ (see Figure 1). In order for $\gamma = LE ( S[0, \tau_{n}] )$ to hit $\epsilon n B'_{x}$, the loop erasure of the time reverse of $S^{1}$, say $(S^{1})^{R}$, does not intersect a composition of two walks $S^{2}$ and $S^{3}$. In addition, in order for $\gamma$ to hit $\epsilon n B'_{y}$, the loop erasure of a composition of two walks $(S^{1})^{R}$ and $S^{2}$ does not intersect $S^{3}$. To control the independence, we will replace the latter event by events that the loop erasure of $(S^{2})^{R}$ up to some stopping time does not intersect $S^{3}$ in Lemma \ref{hosoku-2} ($S^{4}$ corresponds to $(S^{2})^{R}$ up to that stopping time in Lemma \ref{hosoku-2}). The distribution of the loop erasure of $(S^{2})^{R}$ up to the stopping time will be studied in Lemma \ref{key-21}, which allows us to think that the latter event is independent from the former one, and to estimate the probability of the latter event in terms of escape probabilities. Finally in Lemma \ref{hosoku-3} we will estimate the probability of the former event using escape probabilities, and then prove Theorem \ref{key thm}.

\begin{figure}
\begin{center}

\includegraphics[width=8.5cm]{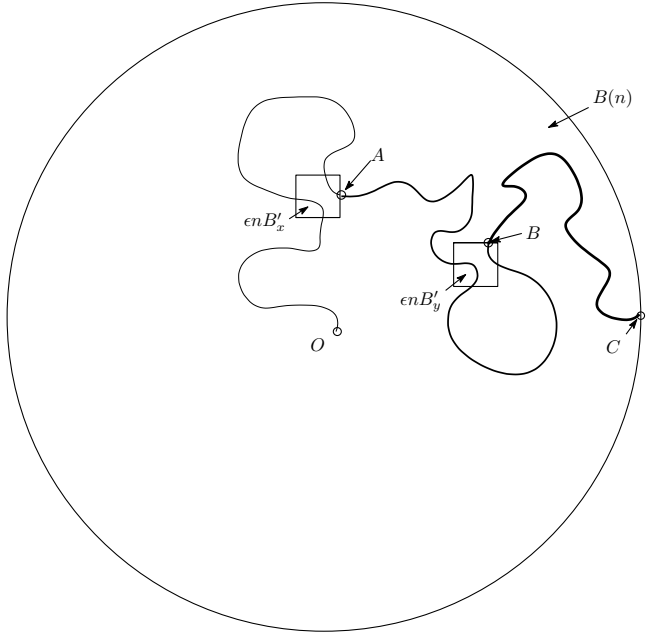}

\end{center}

\caption{A simple path from $O$ to $C$ stands for $\gamma = LE ( S[0, \tau_{n}] )$. Points $A$ and $B$ stand for last visits of $\gamma$ to $\epsilon n B'_{x}$ and $\epsilon n B'_{y}$. Then $A=S^{1} (0)=  S^{2} (0)$ and $B=S^{3}(0)$. We let $S^{1}$ run until it hits $O$, let $S^{2}$ run until it hits $B$, and let $S^{3}$ run until it hits $C$. The simple path from $O$ to $A$ corresponds to the loop erasure of the time reverse of $S^{1}$, say $(S^{1})^{R}$. The simple path from $O$ to $B$ corresponds to the loop erasure of $(S^{1})^{R} + S^{2}$. Finally, $\gamma$ corresponds to the loop erasure of $(S^{1})^{R} + S^{2} + S^{3}$. }

\end{figure}

\medskip

\begin{proof}
It suffices to show \eqref{key-2} for $l \ge 10^{6}$. Indeed, $\text{Es} ( \epsilon n, l \epsilon n ) \ge c$ for $l \le 10^{6}$ and we already showed that 
\begin{equation*}
P ( {\cal K} \cap \epsilon B_{x} \neq \emptyset ) \le C \text{Es} ( \epsilon n, n ) \epsilon.
\end{equation*}
(See the proof of Lemma 7.1 \cite{SS} for this inequality.)  Therefore, 
\begin{equation*}
P \big( {\cal K} \cap \epsilon B_{x} \neq \emptyset \text{ and } {\cal K} \cap \epsilon B_{y} \neq \emptyset \big) \le C \text{Es} ( \epsilon n, n ) \epsilon \le C \text{Es} ( \epsilon n , l \epsilon n) \text{Es} ( \epsilon n , n )  \frac{\epsilon}{l},
\end{equation*}
for $l \le 10^{6}$.

Thus we may assume that $10^{6} \le l \le \frac{2}{\epsilon}$. Note that by \eqref{escape-7},
\begin{equation*}
\epsilon^{100} \le C \text{Es} ( \epsilon n , l \epsilon n) \text{Es} ( \epsilon n , n )  \frac{\epsilon}{l}.
\end{equation*}
So by \eqref{skorohod-2}, 
\begin{align*}
&P \big( {\cal K} \cap \epsilon B_{x} \neq \emptyset, \  {\cal K} \cap \epsilon B_{y} \neq \emptyset, \ d_{\text{H}} ( \text{LEW}_{n}, {\cal K} ) \ge \epsilon^{2} \big) \le P \big( {\cal K} \cap \epsilon B_{x} \neq \emptyset, \  {\cal K} \cap \epsilon B_{y} \neq \emptyset, \ N_{\epsilon} \ge j_{\epsilon} \big) \\
&\le \epsilon^{100} \le C \text{Es} ( \epsilon n , l \epsilon n) \text{Es} ( \epsilon n , n )  \frac{\epsilon}{l}.
\end{align*}
Therefore it suffices to show that
\begin{equation}\label{key-3}
P \big( {\cal K} \cap \epsilon B_{x} \neq \emptyset, \  {\cal K} \cap \epsilon B_{y} \neq \emptyset, \ d_{\text{H}} ( \text{LEW}_{n}, {\cal K} ) < \epsilon^{2} \big) \le C \text{Es} ( \epsilon n , l \epsilon n) \text{Es} ( \epsilon n , n )  \frac{\epsilon}{l}.
\end{equation}

Suppose that ${\cal K} \cap \epsilon B_{x} \neq \emptyset, \  {\cal K} \cap \epsilon B_{y} \neq \emptyset, \ d_{\text{H}} ( \text{LEW}_{n}, {\cal K} ) < \epsilon^{2}$. Let $B'_{x} = \prod_{i=1}^{3} [x_{i} -2, x_{i} + 2]$ and $B'_{y} = \prod_{i=1}^{3} [y_{i} -2, y_{i} + 2]$. Then 
\begin{equation}\label{key-4}
\text{LEW}_{n} \cap \epsilon B'_{x} \neq \emptyset, \  \text{LEW}_{n} \cap \epsilon B'_{y} \neq \emptyset.
\end{equation}
So we have to estimate
\begin{equation*}
P \big( \text{LEW}_{n} \cap \epsilon B'_{x} \neq \emptyset, \  \text{LEW}_{n} \cap \epsilon B'_{y} \neq \emptyset \big) = P \big( LE ( S[0, \tau_{n} ] ) \cap \epsilon n B'_{x} \neq \emptyset, \  LE ( S[0, \tau_{n} ] ) \cap \epsilon n B'_{y} \neq \emptyset \big).
\end{equation*}
Suppose that $LE ( S[0, \tau_{n} ] ) \cap \epsilon n B'_{x} \neq \emptyset, \  LE ( S[0, \tau_{n} ] ) \cap \epsilon n B'_{y} \neq \emptyset$. Then clearly $S[0, \tau_{n} ] \cap \epsilon n B'_{x} \neq \emptyset, \ S[0, \tau_{n} ]  \cap \epsilon n B'_{y} \neq \emptyset$. So we may define
\begin{equation*}
T^{x} = \max \{ t \le \tau_{n} \ | \ S(t) \in \partial (\epsilon n B'_{x}) \}, \  T^{y} = \max \{ t \le \tau_{n} \ | \ S(t) \in \partial  (\epsilon n B'_{y}) \}.
\end{equation*}
Then
\begin{align}\label{key-5}
&P \big( LE ( S[0, \tau_{n} ] ) \cap \epsilon n B'_{x} \neq \emptyset, \  LE ( S[0, \tau_{n} ] ) \cap \epsilon n B'_{y} \neq \emptyset \big) \notag \\
&\le P \big( LE ( S[0, \tau_{n} ] ) \cap \epsilon n B'_{x} \neq \emptyset, \  LE ( S[0, \tau_{n} ] ) \cap \epsilon n B'_{y} \neq \emptyset, T^{x} < T^{y} \big) \notag  \\
&+ P \big( LE ( S[0, \tau_{n} ] ) \cap \epsilon n B'_{x} \neq \emptyset, \  LE ( S[0, \tau_{n} ] ) \cap \epsilon n B'_{y} \neq \emptyset, T^{x} > T^{y} \big)
\end{align}
We will deal with only the first probability in the right hand side of \eqref{key-5}. The second probability can be estimated similarly. 

\medskip

Define
\begin{equation}\label{mendoi-1}
\sigma^{i}_{z} = \max \{ t \le \tau^{1}_{n} \ | \ S^{i} (t) = z \},
\end{equation}
and
\begin{align}\label{mendoi-2}
&\overline{\sigma}_{1} := \inf \{ t \ | \ LE ( S^{1} [0, \sigma^{1}_{z} ] ) (t) \in \partial (\epsilon n B'_{x}) \}, \notag \\
&\overline{\sigma}_{2}:= \inf \{ t \ | \  LE ( S^{1} [0, \sigma^{1}_{z} ] + S^{2} [0, \sigma^{2}_{w}] ) (t) \in \partial (\epsilon n B'_{y}) \}.
\end{align}

\medskip

The estimate of the first probability in the right hand side of \eqref{key-5} will be carried out below, but it is quite long. So we will split it into shorter claims (Lemma \ref{hosoku-1}, \ref{hosoku-2}, \ref{key-21}, and \ref{hosoku-3}). 

\medskip

In order to estimate the first probability in the right hand side of \eqref{key-5} in terms of the escape probabilities, we need to decompose the simple random walk path into three parts; $S$ from the origin to the $\epsilon n$ cube around $x$, $S$ from the $\epsilon n$ cube around $x$ to the cube around $y$, and $S$ from the cube around $y$ to the boundary of $B(n)$. By using a standard technique called ``last exit decomposition" (see Proposition 2.4.1 \cite{Law-book90} for details), Lemma \ref{hosoku-1} below deals with this decomposition. In the Lemma \ref{hosoku-1}, these three parts in the decomposition correspond to $S^{1}$, $S^{2}$, and $S^{3}$ respectively.

\medskip

\begin{lem}\label{hosoku-1}
There exists a $C < \infty$ such that 
\begin{align}\label{hosoku-1-1}
&P \big( LE ( S[0, \tau_{n} ] ) \cap \epsilon n B'_{x} \neq \emptyset, \  LE ( S[0, \tau_{n} ] ) \cap \epsilon n B'_{y} \neq \emptyset, T^{x} < T^{y} \big) \notag \\
&\le C \sum_{z \in \partial (\epsilon n B'_{x})} \sum_{w \in \partial (\epsilon n B'_{y})}  P^{0}_{1} \otimes P^{z}_{2} \otimes P^{w}_{3} \Big( \sigma^{1}_{z} < \tau^{1}_{n}, \  \sigma^{2}_{w} < \tau^{2}_{n}, \notag \\
&\ \ \ \ \ \  S^{2} [1, \sigma^{2}_{w} ] \cap \overline{(\epsilon n B'_{x})} = \emptyset, \ S^{3} [ 1, \tau^{3}_{n} ] \cap  \overline{(\epsilon n B'_{y})}  = \emptyset \notag \\
& \ \ \ \  \  LE ( S^{1} [0, \sigma^{1}_{z} ] ) [0, \overline{\sigma}_{1}] \cap ( S^{2} [ 0 , \sigma^{2}_{w} ] \cup S^{3} [0, \tau^{3}_{n}]) = \emptyset, \ LE ( S^{1} [0, \sigma^{1}_{z} ] + S^{2} [0, \sigma^{2}_{w}] ) [0, \overline{\sigma}_{2}] \cap S^{3} [ 0 , T^{3}_{w, \frac{ \epsilon l n}{4}} ] = \emptyset  \Big).
\end{align}
\end{lem}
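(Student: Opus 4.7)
The plan is to perform a standard last-exit decomposition of $S[0, \tau_n]$ relative to the sets $\partial(\epsilon n B'_x)$ and $\partial(\epsilon n B'_y)$, and then translate the loop-erasure conditions via the identity for $LE(\lambda_1 + \lambda_2)$ recorded in \eqref{comp-1}. Set $T^x := \max\{t \le \tau_n \ | \ S(t) \in \partial(\epsilon n B'_x)\}$ and $T^y$ analogously, work on $\{T^x < T^y\}$, and sum over the endpoints $z := S(T^x) \in \partial(\epsilon n B'_x)$ and $w := S(T^y) \in \partial(\epsilon n B'_y)$. By maximality of $T^x$, the walk cannot visit $\overline{\epsilon n B'_x}$ after time $T^x$ (any return would force a later crossing of $\partial(\epsilon n B'_x)$), and likewise for $T^y$; in the ``true'' last-exit decomposition we therefore also have $S[T^y, \tau_n] \cap \overline{\epsilon n B'_x} = \emptyset$, a constraint that will be dropped in the final step to produce the upper bound.

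\medskip

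Fixing $(z, w)$, the path-by-path last-exit decomposition yields a measure-preserving bijection between trajectories of $S[0, \tau_n]$ with the prescribed last-exit structure and concatenations $S^1[0, \sigma^1_z] + S^2[0, \sigma^2_w] + S^3[0, \tau^3_n]$ of three independent walks subject to the no-return conditions appearing in the statement. The reweighting factor is the product $P^z(S[1, \tau_n] \not\ni z) \cdot P^w(S[1, \tau_n] \not\ni w)$ in the denominator; since $z, w$ lie at distance at least $cn$ from $\partial B(n)$ and the simple random walk on $\mathbb{Z}^3$ is transient, each factor is bounded below by a universal positive constant, which is absorbed into the constant $C$ of the lemma. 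Summing over $(z,w)$ and relaxing the extra constraint $S^3 \cap \overline{\epsilon n B'_x} = \emptyset$ yields a bound of the shape claimed, once the LE events have been rephrased.

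\medskip

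The main step is to show that $\{LE(S[0, \tau_n]) \cap \epsilon n B'_x \neq \emptyset\}$ implies condition (4), and $\{LE(S[0, \tau_n]) \cap \epsilon n B'_y \neq \emptyset\}$ implies condition (5). For (4), apply \eqref{comp-1} with $\lambda_1 = S^1[0, \sigma^1_z]$ and $\lambda_2 = S^2[0, \sigma^2_w] + S^3[0, \tau^3_n]$, and let $u_1$ denote the first time $LE(\lambda_1)$ hits $\lambda_2$. Under the true decomposition $\lambda_2 \cap \overline{\epsilon n B'_x} = \{z\}$, so a case analysis on the position of $u_1$ relative to $\overline{\sigma}_1$ shows that $LE(S) \cap \epsilon n B'_x = \emptyset$ whenever $u_1 \le \overline{\sigma}_1$: if $u_1 < \overline{\sigma}_1$ then $LE(S^1)[0, u_1]$ stops before reaching $\partial(\epsilon n B'_x)$ and the continuation $LE(\lambda_2[s_1, \text{end}])$ starts and stays outside $\overline{\epsilon n B'_x}$, whereas $u_1 = \overline{\sigma}_1$ forces $LE(S^1)(u_1) = z$, so $LE(S^1)$ never properly enters $\epsilon n B'_x$ and again the continuation avoids $\overline{\epsilon n B'_x}$. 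Contraposing, the event $LE(S) \cap \epsilon n B'_x \neq \emptyset$ forces $u_1 > \overline{\sigma}_1$, which is exactly condition (4). The analogous argument with $\lambda_1 = S^1 + S^2$ and $\lambda_2 = S^3$ yields the stronger statement $LE(S^1 + S^2)[0, \overline{\sigma}_2] \cap S^3[0, \tau^3_n] = \emptyset$, which implies the weaker condition (5). The main obstacle is precisely this case analysis: one must verify that no loop formed by $S^2 + S^3$ can delete the portion of $LE(S^1)$ entering $\overline{\epsilon n B'_x}$ unless $LE(S^1)[0, \overline{\sigma}_1]$ is already disjoint from $S^2 \cup S^3$, and the essential input throughout is the no-return condition that keeps $S^2 \cup S^3$ away from $\overline{\epsilon n B'_x}$.
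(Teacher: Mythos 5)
Your proposal is correct and follows essentially the same route as the paper: a last-exit decomposition at $\partial(\epsilon n B'_x)$ and $\partial(\epsilon n B'_y)$ with the Green's-function factors $1/p_z$, $1/p_w$ absorbed into $C$ by transience, combined with the observation (the paper's \eqref{key-7}, proved via the $LE^{(1)}+LE^{(2)}$ decomposition of \eqref{comp-1}) that hitting a cube forces the loop-erasure up to its first entrance to avoid the post-last-exit portion of the walk. The only cosmetic difference is that you run the loop-erasure case analysis after passing to the three independent walks, whereas the paper establishes it for $S$ itself before decomposing.
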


\begin{proof}
 Suppose that $LE ( S[0, \tau_{n} ] ) \cap \epsilon n B'_{x} \neq \emptyset$. Let
\begin{equation}\label{key-6}
\sigma_{1}' = \inf \{ t \ | \ LE ( S[0, \tau_{n} ] ) (t) \in \partial (\epsilon n B'_{x}) \}, \ \overline{\sigma}_{1}' = \inf \{ t \ | \ LE ( S[0, T^{x} ] ) (t) \in \partial (\epsilon n B'_{x}) \}.
\end{equation}

Note that 
\begin{equation}\label{key-7}
LE ( S[0, T^{x} ] ) [0, \overline{\sigma}'_{1}] \cap S[ T^{x}, \tau_{n} ] = \emptyset.
\end{equation}
To see this, we let $LE^{(1)} = LE_{1} ( \lambda_{1}, \lambda_{2} )$ and $LE^{(2)} = LE_{2} ( \lambda_{1}, \lambda_{2} )$ where $\lambda_{1} = LE ( S[0, T^{x} ] )$ and $\lambda_{2} = S[ T^{x}, \tau_{n} ] $ (see \eqref{comp-1} for $LE^{(i)}$). Then $LE ( S[0, \tau_{n} ] ) = LE^{(1)} + LE^{(2)}$. Let $u = \inf \{ t \ | \ \lambda_{1} (t) \in \lambda_{2} \}$ and $s = \sup \{ \lambda_{2} (t) = \lambda_{1} (u) \}$. Then $LE^{(1)} = \lambda_{1} [0, u]$ and $LE^{(2)} = LE ( \lambda_{2} [s,  \text{len} \lambda_{2} ]  )$. If $LE ( S[0, T^{x} ] ) [0, \overline{\sigma}'_{1}] \cap S[ T^{x}, \tau_{n} ] \neq \emptyset$, then $u \le \overline{\sigma}'_{1}$. By definition of $\overline{\sigma}'_{1}$, this implies that $LE ( S[0, T^{x} ] ) [0, u] \cap \epsilon n B'_{x} = \emptyset$. Moreover, since $S[ T^{x}, \tau_{n} ] \cap \epsilon n B'_{x} = \emptyset$, we see that $LE^{(2)} \cap \epsilon n B'_{x} = \emptyset$. This implies that $LE ( S[0, \tau_{n} ] ) \cap \epsilon n B'_{x} = \emptyset$ and we get a contradiction. Therefore \eqref{key-7} holds and $\overline{\sigma}'_{1} < u$. Thus $LE ( S[0, \tau_{n} ] ) [0, \overline{\sigma}'_{1} ] = LE ( S[0, T^{x} ] ) [0, \overline{\sigma}'_{1} ] $ and $\sigma'_{1} = \overline{\sigma}'_{1}$.

Thus,
\begin{align}\label{key-8}
& P \big( LE ( S[0, \tau_{n} ] ) \cap \epsilon n B'_{x} \neq \emptyset, \  LE ( S[0, \tau_{n} ] ) \cap \epsilon n B'_{y} \neq \emptyset, T^{x} < T^{y} \big) \notag \\
&\le P \big( LE ( S[0, T^{x} ] ) [0, \overline{\sigma}'_{1}] \cap S[ T^{x}, \tau_{n} ] = \emptyset, \ LE ( S[0, T^{y} ] ) [0, \overline{\sigma}'_{2}] \cap S[ T^{y}, \tau_{n} ] = \emptyset, \ T^{x} < T^{y} < \tau_{n} \big),
\end{align}
where $\overline{\sigma}'_{2} = \inf \{ t \ | \ LE ( S[0, T^{y} ] ) (t) \in \partial (\epsilon n B'_{y}) \}$.

Next we will decompose $S[0, \tau_{n}]$ into three parts, $S[0, T^{x}]$, $S[T^{x}, T^{y}]$ and $S[T^{y}, \tau_{n}]$, using a standard technique called ``last exit decomposition" (see Proposition 2.4.1 \cite{Law-book90} for details). Note that by the Markov property at time $k_{1}$ and $k_{1} + k_{2}$, we have
\begin{align}\label{key-9}
&P \big( LE ( S[0, T^{x} ] ) [0, \overline{\sigma}'_{1}] \cap S[ T^{x}, \tau_{n} ] = \emptyset, \ LE ( S[0, T^{y} ] ) [0, \overline{\sigma}'_{2}] \cap S[ T^{y}, \tau_{n} ] = \emptyset, \ T^{x} < T^{y} < \tau_{n} \big) \notag \\
&= \sum_{k_{1} > 0} \sum_{k_{2} > 0} \sum_{z \in \partial (\epsilon n B'_{x})} \sum_{w \in \partial (\epsilon n B'_{y})} \notag \\
& \ \ \ \ P \Big( T^{x} =k_{1}, \ S(k_{1}) = z, \ T^{y} = k_{1} + k_{2}, \ S(k_{1} + k_{2} ) = w, \ k_{1} + k_{2} < \tau_{n} \notag \\
& \ \ \ \ \  \ \ \ LE ( S[0, T^{x} ] ) [0, \overline{\sigma}'_{1}] \cap S[ T^{x}, \tau_{n} ] = \emptyset, \ LE ( S[0, T^{y} ] ) [0, \overline{\sigma}'_{2}] \cap S[ T^{y}, \tau_{n} ] = \emptyset  \Big) \notag \\
&= \sum_{k_{1} > 0} \sum_{k_{2} > 0} \sum_{z \in \partial (\epsilon n B'_{x})} \sum_{w \in \partial (\epsilon n B'_{y})} \notag \\
& \ \ \ \ P^{0}_{1} \otimes P^{z}_{2} \Big( S^{1} (k_{1}) = z, \ k_{1} < \tau^{1}_{n}, \ S^{2} ( k_{2} ) = w, \ k_{2} < \tau^{2}_{n} \notag \\
& \ \ \ \  \ \ \ \ S^{2} [1, \tau^{2}_{n} ] \cap \overline{(\epsilon n B'_{x})} = \emptyset, \ S^{2} [k_{2} + 1, \tau^{2}_{n} ] \cap \overline{(\epsilon n B'_{y})} = \emptyset \notag \\
& \ \ \ \  \ \ \ \ LE ( S^{1} [0, k_{1} ] ) [0, \overline{\sigma}_{1}] \cap S^{2} [ 0 , \tau^{2}_{n} ] = \emptyset, \ LE ( S^{1} [0, k_{1} ] + S^{2} [0, k_{2}] ) [0, \overline{\sigma}_{2}] \cap S^{2} [ k_{2} , \tau^{2}_{n} ] = \emptyset  \Big) \notag \\
& = \sum_{z \in \partial (\epsilon n B'_{x})} \sum_{w \in \partial (\epsilon n B'_{y})} \frac{1}{p_{z}} \frac{1}{p_{w}} P^{0}_{1} \otimes P^{z}_{2} \otimes P^{w}_{3} \Big( \sigma^{1}_{z} < \tau^{1}_{n}, \  \sigma^{2}_{w} < \tau^{2}_{n}, \notag \\ 
& \ \ \ \  \ S^{2} [1, \sigma^{2}_{w} ] \cap \overline{(\epsilon n B'_{x})} = \emptyset, \ S^{3} [ 1, \tau^{3}_{n} ] \cap (\overline{(\epsilon n B'_{x})} \cup \overline{(\epsilon n B'_{y})} ) = \emptyset \notag \\
& \ \ \ \  \  LE ( S^{1} [0, \sigma^{1}_{z} ] ) [0, \overline{\sigma}_{1}] \cap ( S^{2} [ 0 , \sigma^{2}_{w} ] \cup S^{3} [0, \tau^{3}_{n}]) = \emptyset, \ LE ( S^{1} [0, \sigma^{1}_{z} ] + S^{2} [0, \sigma^{2}_{w}] ) [0, \overline{\sigma}_{2}] \cap S^{3} [ 0 , \tau^{3}_{n} ] = \emptyset  \Big) \notag \\
&\ \ \ (\text{where } p_{z} = P^{z} ( z \notin  S [1, \tau_{n} ] ) )
\end{align}
which finishes the proof of Lemma \ref{hosoku-1}. \end{proof}

\medskip

\begin{rem}\label{polish-rem}
There are six events in the probability in the right hand side of \eqref{hosoku-1-1}. We want to say they are ``independent up to constant". Namely, we will show that the probability in RHS of \eqref{hosoku-1-1} is comparable to the product of six probabilities coming from each of six events. Then we need to estimate each of those probabilities. The first four events are easy to estimate. The fifth event corresponds to the probability that the loop erasure of a random walk from the $\epsilon n$ cube around $x$ to the origin does not intersect a random walk from the cube around $x$ to the boundary of $B(n)$. This probability is comparable to $Es ( \epsilon n, n )$. Similarly we will see that the sixth events corresponds to $Es ( \epsilon n, \epsilon l n )$.

\end{rem}

\medskip

With the strategy in Remark \ref{polish-rem} in mind, we introduce some notation before going to the next lemma.

We write 
\begin{align}\label{key-10}
&F^{1} := \Big\{ \sigma^{1}_{z} < \tau^{1}_{n}, \  \sigma^{2}_{w} < \tau^{2}_{n}, \ S^{2} [1, \sigma^{2}_{w} ] \cap \overline{(\epsilon n B'_{x})} = \emptyset, \ S^{3} [ 1, \tau^{3}_{n} ] \cap  \overline{(\epsilon n B'_{y})}  = \emptyset \notag \\ 
& \ \   LE ( S^{1} [0, \sigma^{1}_{z} ] ) [0, \overline{\sigma}_{1}] \cap ( S^{2} [ 0 , \sigma^{2}_{w} ] \cup S^{3} [0, \tau^{3}_{n}]) = \emptyset, \ LE ( S^{1} [0, \sigma^{1}_{z} ] + S^{2} [0, \sigma^{2}_{w}] ) [0, \overline{\sigma}_{2}] \cap S^{3} [ 0 , \tau^{3}_{n} ] = \emptyset  \Big\}.
\end{align}

By Lemma \ref{hosoku-1}, we have to estimate $P^{0}_{1} \otimes P^{z}_{2} \otimes P^{w}_{3} (F^{1})$. To do so, define $A^{r}_{z}= B(z, 2^{r} \epsilon n ) \setminus B ( z, 2^{r-1} \epsilon n )$ for $r \ge 1$ and $A^{0}_{r}= B( z, \epsilon n)$. Let $u^{(1)} = \text{len} LE ( S^{1} [0, \sigma^{1}_{z} ] )$ and 
\begin{equation}\label{q1}
q^{(1)} = \max \{ r \ge 0 \ | \ LE ( S^{1} [0, \sigma^{1}_{z} ] ) [\overline{\sigma}_{1}, u^{(1)} ] \cap A^{r}_{z} \neq \emptyset \}.
\end{equation}
($q^{(1)}$ is well-defined because $LE ( S^{1} [0, \sigma^{1}_{z} ] ) [\overline{\sigma}_{1}, u^{(1)} ] \cap A^{0}_{r} \neq \emptyset$.) We will first deal with the case of $q^{(1)} \le \log_{2} l -3$ so that $2^{q^{(1)}} \epsilon n \le 2^{-3} l \epsilon n$. So suppose that $q^{(1)} = r \le \log_{2} l -3$. Let
\begin{equation}\label{key-11}
T^{2}_{z, \frac{l \epsilon n}{2}} = \inf \{ t \ | \ S^{2} (t) \in \partial B (z, \frac{l \epsilon n}{2} ) \}.
\end{equation}
Then by the strong Markov property for $S^{2}$ at $T^{2}_{z, \frac{l \epsilon n}{2}}$,
\begin{align}\label{key-12}
&P^{0}_{1} \otimes P^{z}_{2} \otimes P^{w}_{3} (F^{1}, \ q^{(1)}= r) \notag \\
&=\sum_{z' \in \partial B (z, \frac{l \epsilon n}{2} ) } P^{0}_{1} \otimes P^{z}_{2} \otimes P^{w}_{3} (F^{1}, \ q^{(1)}= r, \ S^{2} ( T^{2}_{z, \frac{l \epsilon n}{2}} ) = z' ) \notag \\
&= \sum_{z' \in \partial B (z, \frac{l \epsilon n}{2} ) } P^{0}_{1} \otimes P^{z}_{2} \otimes P^{z'}_{4} \otimes P^{w}_{3}  \notag \\
& \ \ \ \ \ \Big(  \sigma^{1}_{z} < \tau^{1}_{n}, \  \sigma^{4}_{w} < \tau^{4}_{n}, \ (S^{2} [1, T^{2}_{z, \frac{l \epsilon n}{2}} ] \cup S^{4}[0, \sigma^{4}_{w}]) \cap \overline{(\epsilon n B'_{x})} = \emptyset, \ S^{3} [ 1, \tau^{3}_{n} ] \cap  \overline{(\epsilon n B'_{y})}  = \emptyset \notag \\
& \ \ \ \ \ \ \ LE ( S^{1} [0, \sigma^{1}_{z} ] ) [0, \overline{\sigma}_{1}] \cap ( S^{2} [ 0 , T^{2}_{z, \frac{l \epsilon n}{2}} ] \cup S^{4}[0, \sigma^{4}_{w}] \cup S^{3} [0, \tau^{3}_{n}]) = \emptyset \notag \\
& \ \ \ \ \ \ \ LE ( S^{1} [0, \sigma^{1}_{z} ] + S^{2} [0, T^{2}_{z, \frac{l \epsilon n}{2}}] + S^{4}[0, \sigma^{4}_{w}] ) [0, \overline{\sigma}_{2}] \cap S^{3} [ 0 , \tau^{3}_{n} ] = \emptyset, \ q^{(1)}= r, \ S^{2} ( T^{2}_{z, \frac{l \epsilon n}{2}} ) = z' \Big).
\end{align}
We define an event $F^{2}$ by
\begin{align}\label{key-13}
&F^{2} = \Big\{ \sigma^{1}_{z} < \tau^{1}_{n}, \ S^{2} [1, T^{2}_{z, \frac{l \epsilon n}{2}} ] \cap \overline{(\epsilon n B'_{x})} = \emptyset, \notag \\
& \ \ \ \ \  \ \ \ \ LE ( S^{1} [0, \sigma^{1}_{z} ] ) [0, \overline{\sigma}_{1}] \cap  S^{2} [ 0 , T^{2}_{z, \frac{l \epsilon n}{2}} ] = \emptyset, \ q^{(1)}= r, \ S^{2} ( T^{2}_{z, \frac{l \epsilon n}{2}} ) = z' \Big\}.
\end{align}
Define a sequence of stopping times $T_{i}$ by $T_{0} = 0$ and 
\begin{align}\label{ex1}
&T_{2i+1} = \inf \{ t \ge T_{2i} \ | \ S^{4} (t) \in \partial B ( w, \frac{ \epsilon l n}{4} ) \} \notag \\
&T_{2i} = \inf \{ t \ge T_{2i-1} \ | \ S^{4} (t) \in \partial B ( w, \frac{ \epsilon l n}{800} ) \}.
\end{align} 
Let 
\begin{align}\label{darui}
&u'_{4} =u'_{4, i} := \inf \{ t \ | \ LE ( S^{4} [0, T_{2i + 1} ] ) (t) \in \partial B ( w, \frac{\epsilon l n}{1600} ) \} \notag \\
&\overline{\sigma}^{\star }_{4} =\overline{\sigma}^{\star }_{4, i} := \max \{ t \le u'_{4} \ | \ LE ( S^{4} [0, T_{2i + 1} ] ) (t) \in \partial B ( w, 8 \epsilon n ) \}.
\end{align}

\medskip

\begin{rem}\label{polish-rem-2}
Recall that $z$ and $w$ are points in the $\epsilon n$ neighborhood of $x$ and $y$. 
By reversing paths of $S^{1}$ and $S^{2}$ in the probability in RHS of \eqref{key-12}, $S^{1}$ is a random walk from $z$ to the origin, $S^{2}$ is a random walk from $z$ to $z'$, $S^{4}$ is a random walk from $w$ to $z'$, and $S^{3}$ is a random walk from $w$ to $\partial B(n)$. We want to deal with eight events in the probability of \eqref{key-12} as if they were independent. Some technical issues arise when we deal with the fifth, sixth, and seventh events. We will first deal with the sixth event in the next lemma below, by using entrance and exit times defined as in \eqref{ex1}.

\end{rem}

\medskip

We have to estimate the probability in RHS of \eqref{key-12}. With the strategy in Remark \ref{polish-rem-2} in mind, we first deal with the sixth event of the probability in \eqref{key-12}. The sixth event is written in terms of the loop-erasure of three walks $S^{1}$, $S^{2}$ and $S^{4}$. We want to replace it by the loop-erasure of $S^{4}$ only. In the next lemma, we will do the replacement by using entrance and exit times defined in \eqref{ex1}.

% In order to show \eqref{rem-2-1} and \eqref{rem-2-2}, we will prove Lemma \ref{hosoku-2} below. Lemma \ref{hosoku-2} states that we may replace $LE ( S^{1} [0, \sigma^{1}_{z} ] + S^{2} [0, T^{2}_{z, \frac{l \epsilon n}{2}}] + S^{4}[0, \sigma^{4}_{w}] ) [0, \overline{\sigma}_{2}]$ by $LE ( S^{4} [ 0, T_{2i+1} ] ) [\overline{\sigma}^{\star }_{4}, u_{4}']$ after reversing $S^{4}[0, \sigma^{4}_{w}]$ (see \eqref{ex1} and \eqref{darui} for $T_{i}$, $\overline{\sigma}^{\star }_{4}$, and $u_{4}'$).

\medskip

\begin{lem}\label{hosoku-2}
Suppose that $r \le \log_{2} l -3$. Then there exists $C < \infty$ such that 
\begin{align}\label{hosoku-2-1}
&P^{0}_{1} \otimes P^{z}_{2} \otimes P^{w}_{3} (F^{1}, \ q^{(1)}= r, \ S^{2} ( T^{2}_{z, \frac{l \epsilon n}{2}} ) = z' ) \notag \\
&\le C E^{0}_{1} \otimes E^{z}_{2} \Big\{ {\bf 1}_{F^{2}} \notag \\
&\times \Big( \frac{C}{\epsilon n} \frac{1}{\epsilon l n} \sum_{i=0}^{\infty}  P^{w}_{4} \otimes P^{w}_{3} \Big( T_{2i+1} <  \tau_{n}^{4}, \  LE ( S^{4} [ 0, T_{2i+1} ] ) [\overline{\sigma}^{\star }_{4}, u_{4}'] \cap S^{3} [0, T^{3}_{w, \frac{l \epsilon n}{4}} ] = \emptyset \Big) + \frac{C}{( \epsilon l n)^{2}} \Big) \notag \\
&\times \max_{w_{1} \in \partial B ( w, \frac{\epsilon l n}{4} ) } P^{w_{1}}_{3} \Big( LE ( S^{1} [0, \sigma^{1}_{z} ] ) [0, \overline{\sigma}_{1}] \cap S^{3} [ 0 , \tau^{3}_{n}] = \emptyset \Big) \Big\}.
\end{align}
(See \eqref{ex1} and \eqref{darui} for $T_{i}$, $\overline{\sigma}^{\star }_{4}$, and $u_{4}'$)
\end{lem}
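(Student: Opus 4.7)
The goal is to decouple the six-event probability on the left-hand side of \eqref{hosoku-2-1}, which involves four independent walks, into approximately independent factors matching the three pieces on the right-hand side: the indicator ${\bf 1}_{F^2}$ (capturing $S^1$ and the initial segment of $S^2$ near $x$), the sum $\sum_i$ (capturing excursions of $S^4$ around $w$), and the $\max_{w_1}$ factor (capturing $S^3$ after it exits $B(w, \epsilon l n /4)$). The plan proceeds in three stages: strong Markov for $S^3$, a last-exit decomposition combined with time reversal for $S^4$, and a path-surgery argument identifying a portion of $LE$ of the three-walk concatenation with the corresponding portion of $LE(S^4)$ alone.

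\textbf{Strong Markov for $S^3$ and last-exit decomposition for $S^4$.} I would first apply the strong Markov property for $S^3$ at the stopping time $T^3_{w, \epsilon l n /4}$, splitting $S^3[0, \tau^3_n]$ into an initial excursion and a tail. The event $LE(S^1[0, \sigma^1_z])[0, \overline{\sigma}_1] \cap S^3[0, \tau^3_n] = \emptyset$ is implied by its intersection with only the tail $S^3[T^3_{w, \epsilon l n /4}, \tau^3_n]$; bounding the conditional probability of this tail event by the maximum over the exit location $w_1 = S^3(T^3_{w, \epsilon l n /4}) \in \partial B(w, \epsilon l n /4)$ produces the $\max_{w_1}$ factor on the right-hand side. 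Next, for $S^4$ I would use a last-exit decomposition at $w$ (in the spirit of Proposition 2.4.1 of \cite{Law-book90}), writing $P^{z'}_4(\sigma^4_w < \tau^4_n, \cdot) = \sum_k P^{z'}_4(S^4(k) = w,\, w \notin S^4[k+1, \tau^4_n], \cdot)$ and applying the Markov property at time $k$. This gives a Green's function contribution $G_{B(n)}(z', w)$ of order $1/(\epsilon l n)$ (since $|z'-w|$ is of order $\epsilon l n$ by the triangle inequality and $l \ge 10^6$), together with a local factor of order $1/(\epsilon n)$ from the probability that the continuation leaves the scale-$\epsilon n$ neighborhood of $w$ without returning to $w$, yielding the prefactor $\tfrac{C}{\epsilon n}\cdot\tfrac{1}{\epsilon l n}$. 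Time reversal of the pre-$\sigma^4_w$ portion converts the decomposition into a probability for an unconditioned walk $S^4$ starting at $w$, which is the one appearing in the sum on the right-hand side of \eqref{hosoku-2-1}.

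\textbf{Loop-erasure identification and excursion sum.} The crucial step is to re-express the sixth event of $F^1$, namely $LE\bigl(S^1[0,\sigma^1_z] + S^2[0, T^2_{z, \epsilon l n/2}] + S^4[0, \sigma^4_w]\bigr)[0, \overline{\sigma}_2] \cap S^3[0, T^3_{w, \epsilon l n/4}] = \emptyset$, in terms of $LE(S^4)$ alone. Using the identity \eqref{comp-1} for the loop-erasure of a concatenation (applied twice, once for $(S^1 + S^2) + S^4$ and once inside $S^4$), together with the separation provided by $F^2$, the portion of $LE(S^1 + S^2 + S^4)[0, \overline{\sigma}_2]$ between its last visit to $\partial B(w, 8 \epsilon n)$ and its first visit to $\partial B(w, \epsilon l n /1600)$ coincides with the analogous portion of $LE(S^4)$. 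Decomposing the (time-reversed) $S^4$ according to the index $i$ of the final excursion of $S^4$ through the annulus $B(w, \epsilon l n /4) \setminus B(w, \epsilon l n /800)$ that contains the approach to $w$ then produces the displayed sum $\sum_{i=0}^\infty P^w_4 \otimes P^w_3(\cdots)$. Atypical configurations --- for instance $S^4$ paths that reach $w$ without completing the standard excursion structure, or mismatches between the stopping times $\overline{\sigma}^{\star}_{4, i}, u'_{4, i}$ and $\overline{\sigma}_2$ --- are controlled by a direct hitting-probability estimate at scale $\epsilon l n$, producing the additive residue $C/(\epsilon l n)^2$.

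\textbf{Main obstacle.} The principal difficulty is the loop-erasure identification in the final stage. Concretely, one must verify rigorously, via \eqref{comp-1}, that the segment of $LE(S^1 + S^2 + S^4)$ between the last visit to $\partial B(w, 8 \epsilon n)$ and the first visit to $\partial B(w, \epsilon l n /1600)$ really does coincide with the segment $LE(S^4[0, T_{2i+1}])[\overline{\sigma}^{\star}_{4, i}, u'_{4, i}]$ of $LE(S^4)$, and that the matching stopping rules reproduce $\overline{\sigma}_2$ on the concatenated side up to the error absorbed in $C/(\epsilon l n)^2$. This path surgery relies crucially on the structure of $F^2$ (in particular that $S^2$ stays inside $B(z, \epsilon l n/2)$ and hence far from $B(w, \epsilon l n/4)$, and that $LE(S^1)[0, \overline{\sigma}_1]$ exits $\overline{\epsilon n B'_x}$ through the controlled region) and on careful bookkeeping of how loops are erased across the two concatenation boundaries.
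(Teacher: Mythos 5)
Your overall architecture matches the paper's proof of Lemma \ref{hosoku-2}: the strong Markov property for $S^3$ at $T^3_{w,\epsilon l n/4}$ to peel off the $\max_{w_1}$ factor, reversal of $S^4[0,\sigma^4_w]$ into a walk started at $w$, the excursion decomposition of the reversed walk through the annulus between $\partial B(w,\tfrac{\epsilon l n}{800})$ and $\partial B(w,\tfrac{\epsilon l n}{4})$ producing the sum over $i$, and the identification (via \eqref{comp-1} and the separation built into $F^2$ together with $q^{(1)}=r\le\log_2 l-3$) of the relevant segment of $LE(S^1+S^2+S^4)$ with a segment of $LE(S^4[0,T_{2i+1}])$. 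The additive $C/(\epsilon l n)^2$ likewise corresponds to the paper's atypical event that the reversed $S^4$ re-enters $B(w,8\epsilon n)$ after reaching radius $\tfrac{\epsilon l n}{1600}$, which is exactly when the segment identification can fail.

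There is, however, a concrete error in your accounting for the prefactor $\frac{C}{\epsilon n}\cdot\frac{1}{\epsilon l n}$ that, as stated, leaves you a factor $\epsilon n$ short. You attribute the $\frac{1}{\epsilon n}$ to ``the probability that the continuation leaves the scale-$\epsilon n$ neighborhood of $w$ without returning to $w$.'' In $d=3$ the probability of never returning to a single point is bounded below by a positive constant (transience), so this mechanism yields a factor of order $1$, not $1/(\epsilon n)$. In the paper the $\frac{1}{\epsilon n}$ is extracted from the fourth event of $F^1$, namely $S^3[1,\tau^3_n]\cap\overline{(\epsilon n B'_y)}=\emptyset$: since $w\in\partial(\epsilon n B'_y)$, Proposition 1.5.10 of \cite{Law-book90} gives $P^w_3\big(S^3[1,T^3_{w,6\epsilon n}]\cap\overline{(\epsilon n B'_y)}=\emptyset\big)\le c/(\epsilon n)$, i.e.\ the factor comes from $S^3$ escaping the cube, an event your proposal never uses. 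The $\frac{1}{\epsilon l n}$ is indeed of Green's-function type, but in the paper it appears after the excursion decomposition, as $\max_{w'\in\partial B(w,\epsilon l n/4)}P^{w'}_4(\tau^4_{z'}<\infty)\le C/(\epsilon l n)$ for the reversed $S^4$ hitting the point $z'$ after completing its $(i+1)$-st excursion; this placement is what allows the strong Markov property to detach the sum over $i$ from the endpoint constraint. With the $\frac{1}{\epsilon n}$ re-sourced to the $S^3$ event, the rest of your plan goes through essentially as in the paper.
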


\begin{proof} Condition on $S^{1}[0, \sigma^{1}_{z} ]$ and $S^{2}[  0 , T^{2}_{z, \frac{l \epsilon n}{2}} ]$ on $F^{2}$, let 
\begin{align}\label{key-14}
&\gamma = LE ( S^{1}[0, \sigma^{1}_{z} ] + S^{2}[  0 , T^{2}_{z, \frac{l \epsilon n}{2}} ] ), \notag \\
&\gamma_{1} = LE_{1} ( S^{1}[0, \sigma^{1}_{z} ], S^{2}[  0 , T^{2}_{z, \frac{l \epsilon n}{2}} ] ), \notag \\
&\gamma_{2} = LE_{2} ( S^{1}[0, \sigma^{1}_{z} ], S^{2}[  0 , T^{2}_{z, \frac{l \epsilon n}{2}} ] ),
\end{align}
so that $\gamma = \gamma_{1} + \gamma_{2}$. Since $LE ( S^{1} [0, \sigma^{1}_{z} ] ) [0, \overline{\sigma}_{1}] \cap  S^{2} [ 0 , T^{2}_{z, \frac{l \epsilon n}{2}} ]= \emptyset$ on $F^{2}$, we see that $\text{len} \gamma_{1} > \overline{\sigma}_{1}$, $LE ( S^{1} [0, \sigma^{1}_{z} ] ) [0, \overline{\sigma}_{1}] \subset \gamma_{1} $, $\gamma_{2} (0) \in  LE ( S^{1} [0, \sigma^{1}_{z} ] ) [ \overline{\sigma}_{1}, u^{(1)} ]$ and $\gamma_{2} ( \text{len} \gamma_{2} ) = z'$.

Conditioning $S^{1}[0, \sigma^{1}_{z} ]$ and $S^{2}[  0 , T^{2}_{z, \frac{l \epsilon n}{2}} ]$ on $F^{2}$, we will deal with $S^{4}$ and $S^{3}$. Suppose that $LE ( S^{1} [0, \sigma^{1}_{z} ] ) [0, \overline{\sigma}_{1}] \cap S^{4}[0, \sigma^{4}_{w}] = \emptyset$. Let 
\begin{align}\label{key-15}
&\lambda = LE ( S^{1}[0, \sigma^{1}_{z} ] + S^{2}[  0 , T^{2}_{z, \frac{l \epsilon n}{2}} ] + S^{4}[0, \sigma^{4}_{w}] ), \notag \\
&\lambda_{1} = LE_{1} ( \gamma, S^{4}[0, \sigma^{4}_{w}] ), \notag \\
&\lambda_{2} = LE_{2} ( \gamma, S^{4}[0, \sigma^{4}_{w}] ),
\end{align}
so that $\lambda = \lambda_{1} + \lambda_{2}$. Since $LE ( S^{1} [0, \sigma^{1}_{z} ] ) [0, \overline{\sigma}_{1}] \cap S^{4}[0, \sigma^{4}_{w}] = \emptyset$, we see that $\text{len} \lambda_{1} > \overline{\sigma}_{1}$, $\lambda_{2} (0) \in \gamma_{1} [\overline{\sigma}_{1}, \text{len} \gamma_{1} ] \cup \gamma_{2}$ and $LE ( S^{1} [0, \sigma^{1}_{z} ] ) [0, \overline{\sigma}_{1}] \subset \lambda_{1}$. 

Let $u_{2} = \text{len} \lambda_{1}$ and let $T' = \max \{ t \le \sigma^{4}_{w} \ | \ S^{4} (t) = \lambda_{1} (u_{2}) \}$. We see that $S^{4} (T') = \lambda_{1} (u_{2}) = \lambda_{2} (0) \in \gamma_{1} [\overline{\sigma}_{1}, \text{len} \gamma_{1} ] \cup \gamma_{2}$. Suppose that $q^{(1)} = r \le \log_{2} l -3$. Then $\gamma_{1} [\overline{\sigma}_{1}, \text{len} \gamma_{1} ] \subset LE ( S^{1} [0, \sigma^{1}_{z} ] ) [ \overline{\sigma}_{1}, u^{(1)} ] \subset B ( z, 2^{r} \epsilon n ) \subset B( z, \frac{l \epsilon n}{4} )$. Thus $S^{4} ( T' ) \in B( z, \frac{l \epsilon n}{2} )$. 

Note that $u_{2} = \inf \{ t \ | \ \gamma (t) \in S^{4} [ 0, \sigma^{4}_{w} ] \}$ and $T' = \max \{ t \le \sigma^{4}_{w} \ | \ S^{4} (t) = \gamma ( u_{2} ) \}$. Conditioning $\gamma$, we are interested in 
\begin{align}\label{key-16}
&\tilde{p}_{1} := P^{z'}_{4} \otimes P^{w}_{3} \Big( T' \le \sigma^{4}_{w} < \tau_{n}^{4}, \ S^{4} ( T') \in \gamma_{1} [\overline{\sigma}_{1}, \text{len} \gamma_{1} ] \cup \gamma_{2}, S^{3} [ 1, \tau^{3}_{n} ] \cap \overline{(\epsilon n B'_{y})}  = \emptyset, \notag \\
& \ \ \  \ \ \  \ \ \ \ \ \  \  LE ( S^{1} [0, \sigma^{1}_{z} ] ) [0, \overline{\sigma}_{1}] \cap S^{3} [ T^{3}_{w, \frac{l \epsilon n}{4}} , \tau^{3}_{n}] = \emptyset, LE ( S^{4} [T', \sigma^{4}_{w} ] ) [0, \overline{\sigma}_{4}] \cap S^{3} [0, T^{3}_{w, \frac{l \epsilon n}{4}} ] = \emptyset \Big) \notag \\
&(\text{where } \overline{\sigma}_{4} := \inf \{ t \ | \ LE ( S^{4} [T', \sigma^{4}_{w} ] ) (t) \in \partial (\epsilon n B'_{y} \} ) \notag \\
&\le P^{z'}_{4} \otimes P^{w}_{3} \Big( T' \le \sigma^{4}_{w} < \tau_{n}^{4}, \ S^{4} ( T') \in \gamma_{1} [\overline{\sigma}_{1}, \text{len} \gamma_{1} ] \cup \gamma_{2}, \notag \\
& \ \ \ \ \ \ \ \ \ \ \ \ \ \ S^{3} [ 1, T^{3}_{w, \frac{l \epsilon n}{4}} ] \cap \overline{(\epsilon n B'_{y})}  = \emptyset, LE ( S^{4} [T', \sigma^{4}_{w} ] ) [0, \overline{\sigma}_{4}] \cap S^{3} [0, T^{3}_{w, \frac{l \epsilon n}{4}} ] = \emptyset \Big) \notag \\
&\times \max_{w_{1} \in \partial B ( w, \frac{\epsilon l n}{4} ) } P^{w_{1}}_{3} \Big( LE ( S^{1} [0, \sigma^{1}_{z} ] ) [0, \overline{\sigma}_{1}] \cap S^{3} [ 0 , \tau^{3}_{n}] = \emptyset \Big).
\end{align}
We will consider the time reverse of $S^{4} [0, \sigma^{4}_{w} ]$. Note that for each SRW path $\eta = [ \eta (0), \cdots , \eta (m) ]$ with $\eta (0) = z'$ and $\eta (m) = w$, we have $P^{z'}_{4} \big( S^{4} [0, \sigma^{4}_{w} ] = \eta \big) = \frac{p_{w}}{p_{z}} P^{w}_{4} \big( S^{4} [0, \sigma^{4}_{z} ] = \eta^{R} \big)$. Suppose that $S^{4} (0) = w$ and $\sigma^{4}_{z'} < \tau^{4}_{n}$ (this is equivalent to $\tau^{4}_{z'} < \tau^{4}_{n}$). Define $u_{2}' := \inf \{ t \ | \ \gamma (t) \in S^{4} [0, \sigma^{4}_{w} ] \}$ and $T'' := \inf \{ t \ | \ S^{4} (t) = \gamma ( u_{2}' ) \}$. Let $\overline{\sigma}_{4}' := \max \{ t \ | \ LE ( S^{4} [ 0, T'' ] ) (t) \in \partial (\epsilon n B'_{y} ) \}$ and $u_{4} := \text{len} LE ( S^{4} [ 0, T'' ] ) $. Then by the time reversibility of LERW (see Lemma \ref{reversal}), the distribution of $LE ( S^{4} [T', \sigma^{4}_{w} ] ) [0, \overline{\sigma}_{4}]$ under $P^{z'}_{4}$ is same to that of $\big( LE ( S^{4} [ 0, T'' ] ) [\overline{\sigma}_{4}', u_{4}] \big)^{R}$ under $P^{w}_{4}$. Therefore,
\begin{align}\label{key-17}
&P^{z'}_{4} \otimes P^{w}_{3} \Big( T' \le \sigma^{4}_{w} < \tau_{n}^{4}, \ S^{4} ( T') \in \gamma_{1} [\overline{\sigma}_{1}, \text{len} \gamma_{1} ] \cup \gamma_{2}, \notag \\
& \ \ \ \ \ \ \ \ \ \ \ \ \ \ S^{3} [ 1, T^{3}_{w, \frac{l \epsilon n}{4}} ] \cap \overline{(\epsilon n B'_{y})}  = \emptyset, LE ( S^{4} [T', \sigma^{4}_{w} ] ) [0, \overline{\sigma}_{4}] \cap S^{3} [0, T^{3}_{w, \frac{l \epsilon n}{4}} ] = \emptyset \Big) \notag \\
&= \frac{p_{w}}{p_{z}} P^{w}_{4} \otimes P^{w}_{3} \Big( T'' \le \sigma^{4}_{z'} < \tau_{n}^{4}, \ S^{4} ( T'') \in \gamma_{1} [\overline{\sigma}_{1}, \text{len} \gamma_{1} ] \cup \gamma_{2}, \notag \\
& \ \ \ \ \ \ \ \ \ \ \ \ \ \ S^{3} [ 1, T^{3}_{w, \frac{l \epsilon n}{4}} ] \cap \overline{(\epsilon n B'_{y})}  = \emptyset, LE ( S^{4} [ 0, T'' ] ) [\overline{\sigma}_{4}', u_{4}] \cap S^{3} [0, T^{3}_{w, \frac{l \epsilon n}{4}} ] = \emptyset \Big) \notag \\
&\le \frac{c}{\epsilon n} \times \max_{w_{3} \in \partial B ( w, 6 \epsilon n )} P^{w}_{4} \otimes P^{w_{3}}_{3} \Big( T'' \le \sigma^{4}_{z'} < \tau_{n}^{4}, \ S^{4} ( T'') \in \gamma_{1} [\overline{\sigma}_{1}, \text{len} \gamma_{1} ] \cup \gamma_{2}, \notag \\
&  \ \  \ \ \ \ \ \ \ \ \ \ \ \ \ \ \ \ \ \  \ \ \ \ \ \ \  \ \ \ \ \ \ \ \ \ \  LE ( S^{4} [ 0, T'' ] ) [\overline{\sigma}_{4}'', u_{4}] \cap S^{3} [0, T^{3}_{w, \frac{l \epsilon n}{4}} ] = \emptyset \Big),
\end{align}
where $\overline{\sigma}_{4}'' := \max \{ t \ | \ LE ( S^{4} [ 0, T'' ] ) (t) \in \partial B ( w, 8 \epsilon n ) \}$ and we used $P^{w}_{3} \Big(  S^{3} [ 1, T^{3}_{w, 6 \epsilon n} ] \cap \overline{(\epsilon n B'_{y})}  = \emptyset \Big) \le \frac{c}{\epsilon n}$ in the last inequality (see Proposition 1.5.10 \cite{Law-book90} for this). By the Harnack principle (see Theorem 1.7.6 \cite{Law-book90}),
\begin{align}\label{key-18}
&\text{ RHS of \eqref{key-17} } \le \frac{C}{\epsilon n} P^{w}_{4} \otimes P^{w}_{3} \Big( T'' \le \sigma^{4}_{z'} < \tau_{n}^{4}, \ S^{4} ( T'') \in \gamma_{1} [\overline{\sigma}_{1}, \text{len} \gamma_{1} ] \cup \gamma_{2}, \notag \\
&  \ \  \ \ \ \ \ \ \ \ \ \ \ \ \ \ \ \ \ \  \ \ \ \ \ \ \  \ \ \ \ \ \ \ \ \ \  LE ( S^{4} [ 0, T'' ] ) [\overline{\sigma}_{4}'', u_{4}] \cap S^{3} [0, T^{3}_{w, \frac{l \epsilon n}{4}} ] = \emptyset \Big).
\end{align}

Let $\tau' := \inf \{ t \ge T^{4}_{w, \frac{\epsilon l n}{1600}} \ | \ S^{4}(t) \in \partial B (w, 8 \epsilon n ) \}$. Then $P^{w}_{4} \big( \tau' < \sigma^{4}_{z'} < \tau_{n}^{4} \big) \le \frac{C}{l^{2} \epsilon n}$ (see Proposition 1.5.10 \cite{Law-book90}). Therefore,
\begin{align}\label{key-19}
&P^{w}_{4} \otimes P^{w}_{3} \Big( T'' \le \sigma^{4}_{z'} < \tau_{n}^{4}, \ S^{4} ( T'') \in \gamma_{1} [\overline{\sigma}_{1}, \text{len} \gamma_{1} ] \cup \gamma_{2}, \ LE ( S^{4} [ 0, T'' ] ) [\overline{\sigma}_{4}'', u_{4}] \cap S^{3} [0, T^{3}_{w, \frac{l \epsilon n}{4}} ] = \emptyset \Big) \notag \\
&\le P^{w}_{4} \otimes P^{w}_{3} \Big( T'' \le \sigma^{4}_{z'} < \tau_{n}^{4}, \ \tau' > \sigma^{4}_{z'},   \ S^{4} ( T'') \in \gamma_{1} [\overline{\sigma}_{1}, \text{len} \gamma_{1} ] \cup \gamma_{2}, \notag \\
& \ \ \ \ \ \ \ \ \ \ \ \  \ \ \ LE ( S^{4} [ 0, T'' ] ) [\overline{\sigma}_{4}'', u_{4}] \cap S^{3} [0, T^{3}_{w, \frac{l \epsilon n}{4}} ] = \emptyset \Big) + \frac{C}{l^{2} \epsilon n}.
\end{align}

Suppose that $q^{(1)} = r \le \log_{2} l -3$, $T'' \le \sigma^{4}_{z'} < \tau_{n}^{4}$ and $S^{4} ( T'') \in \gamma_{1} [\overline{\sigma}_{1}, \text{len} \gamma_{1} ] \cup \gamma_{2}$. Then $S^{4} ( T'') \in B ( z, \frac{ \epsilon l n}{2} ) \subset B( w, \frac{ \epsilon l n}{3} )^{c}$. Let $i_{0}$ be the unique index $i$ such that $T_{2i+1} < T'' \le \min \{ T_{2i+2}, \sigma^{4}_{z'} \}$. Suppose that $\tau' > \sigma^{4}_{z'}$. Since $T_{2i_{0} + 1} < T'' < T_{2i_{0} + 2}$ and $S^{4} [ T_{2i_{0} + 1}, T''] \cap B ( w, \frac{ \epsilon l n}{800} ) = \emptyset$, we have
\begin{equation*}
\overline{\sigma}_{4}'' = \max \{ t \ | \ LE ( S^{4} [0, T_{2i_{0} + 1} ] ) (t) \in \partial B ( w, 8 \epsilon n ) \}.
\end{equation*}
(Recall that $\overline{\sigma}_{4}'' := \max \{ t \ | \ LE ( S^{4} [ 0, T'' ] ) (t) \in \partial B ( w, 8 \epsilon n ) \}$.) Furthermore, if we let
\begin{equation*}
u''_{4} = \inf \{ t \ | \ LE ( S^{4} [0, T_{2i_{0} + 1} ] ) (t) \in \partial B ( w, \frac{\epsilon l n}{1600} ) \},
\end{equation*}
then $\overline{\sigma}_{4}'' < u''_{4}$, $\overline{\sigma}_{4}'' = \max \{ t \le u''_{4} \ | \ LE ( S^{4} [0, T_{2i_{0} + 1} ] ) (t) \in \partial B ( w, 8 \epsilon n ) \}$ and 
\begin{equation*}
u''_{4} = \inf \{ t \ | \ LE ( S^{4} [ 0, T'' ] ) (t) \in \partial B ( w, \frac{\epsilon l n}{1600} ) \}.
\end{equation*}
Therefore we see that $LE ( S^{4} [ 0, T'' ] ) [\overline{\sigma}_{4}'', u''_{4} ] = LE ( S^{4} [0, T_{2i_{0} + 1} ] ) [ \overline{\sigma}_{4}'', u''_{4} ]$ and 
\begin{align}\label{key-20}
&\text{The first term of RHS of \eqref{key-19} } \notag \\
&\le \sum_{i=0}^{\infty} P^{w}_{4} \otimes P^{w}_{3} \Big( T_{2i+1} < \sigma^{4}_{z'} < \tau_{n}^{4}, \ \tau' > \sigma^{4}_{z'},   \ LE ( S^{4} [ 0, T_{2i+1} ] ) [\overline{\sigma}^{\star }_{4}, u_{4}'] \cap S^{3} [0, T^{3}_{w, \frac{l \epsilon n}{4}} ] = \emptyset \Big) \notag \\
&(\text{Recall that } \overline{\sigma}^{\star }_{4} \text{ and } u_{4}' \text{ were defined as in \eqref{darui}}) \notag \\
&\le \sum_{i=0}^{\infty} \frac{C}{\epsilon l n} P^{w}_{4} \otimes P^{w}_{3} \Big( T_{2i+1} <  \tau_{n}^{4}, \  LE ( S^{4} [ 0, T_{2i+1} ] ) [\overline{\sigma}^{\star }_{4}, u_{4}'] \cap S^{3} [0, T^{3}_{w, \frac{l \epsilon n}{4}} ] = \emptyset \Big),
\end{align}
where we used the strong Markov property and the fact that $\max_{w' \in \partial B( w, \frac{\epsilon l n}{4} )} P^{w'}_{4} \big(  \tau^{4}_{z'} < \infty \big) \le \frac{C}{\epsilon l n}$ in the last inequality (see Proposition 1.5.10 \cite{Law-book90}), and we finish the proof of Lemma \ref{hosoku-2}.  \end{proof}

\medskip

Recall the strategy in Remark \ref{polish-rem-2}. By Lemma \ref{hosoku-2}, we replaced the sixth event in \eqref{key-12} by the event the loop-erasure of $S^{4}$ up to some stopping time  does not intersect $S^{3}$. We want to show that the probability of that event is bounded above by an escape probability, i.e., we want to prove that 
\begin{equation}\label{polish-rem-3-1}
\sum_{i=0}^{\infty}  P^{w}_{4} \otimes P^{w}_{3} \Big( T_{2i+1} <  \tau_{n}^{4}, \  LE ( S^{4} [ 0, T_{2i+1} ] ) [\overline{\sigma}^{\star }_{4}, u_{4}'] \cap S^{3} [0, T^{3}_{w, \frac{l \epsilon n}{4}} ] = \emptyset \Big) \le C Es ( \epsilon n, \epsilon l n ).
\end{equation}
In order to show \eqref{polish-rem-3-1}, we need to study the distribution of $LE ( S^{4} [ 0, T_{2i+1} ] )$. The next lemma compares the distribution of $LE ( S^{4} [ 0, T_{2i+1} ] )$ with that of $LE ( S^{4} [ 0, T_{1} ] )$. Note that the probability of $T_{2i+1} < \infty$ is bounded above by $c~{i}$ for some $c < 1$. The next lemma shows that conditioned on $T_{2i+1} < \infty$, the distribution of $LE ( S^{4} [ 0, T_{2i+1} ] )$ is comparable to that of $LE ( S^{4} [ 0, T_{1} ] )$.

\medskip

\begin{lem}\label{key-21}
There exists a $c \in (\frac{1}{2}, 1)$ such that for all $i \ge 0$ and for every simple path $\eta = [\eta (0), \cdots , \eta (m) ]$ with $\eta (0) = w$ and $\eta \subset \overline{B( w, \frac{\epsilon l n}{1600} )}$, we have 
\begin{equation}\label{key-22}
P^{w}_{4} \Big( T_{2i+1} <  \tau_{n}^{4}, \  LE ( S^{4} [ 0, T_{2i+1} ] ) [0, m] = \eta \Big) \le c^{i} P^{w}_{4} \Big(  LE ( S^{4} [ 0, T_{1} ] ) [0, m] = \eta \Big),
\end{equation}
where $T_{i}$ was defined as in \eqref{ex1}.
\end{lem}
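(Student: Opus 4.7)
My plan is to prove Lemma~\ref{key-21} by induction on $i$. Define
$$A_k := \{T_{2k+1} < \tau^{4}_n,\ LE(S^{4}[0, T_{2k+1}])[0, m] = \eta\}.$$
Since $P^w_4(A_0) \le P^w_4(LE(S^{4}[0,T_1])[0,m] = \eta)$, it suffices to establish $P^w_4(A_i) \le c\, P^w_4(A_{i-1})$ for some universal $c \in (\tfrac{1}{2}, 1)$ and then iterate. The base case $i=0$ is trivial.

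The key combinatorial observation is that on the event $A_i$, the LERW ``last-visit'' times $s_j := \max\{t \le T_{2i+1} : S^{4}(t) = \eta(j)\}$ are well-defined, strictly increasing, and satisfy $LE(S^{4}[0, s_m]) = \eta$ exactly. Since $\eta \subset \overline{B(w, \epsilon l n/1600)}$ while $S^{4}(T_{2i-1}) \in \partial B(w, \epsilon l n/4)$, I would split $A_i$ according to whether $s_m \le T_{2i-1}$ (Case (a)) or $s_m > T_{2i-1}$ (Case (b)).

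\emph{Case (a).} The monotonicity $s_0 < \cdots < s_m$ forces $s_j \le T_{2i-1}$ for every $j$, so $S^{4}[T_{2i-1}, T_{2i+1}] \cap \eta = \emptyset$; a direct check from the LERW procedure then gives $LE(S^{4}[0, T_{2i-1}])[0, m] = \eta$, i.e.\ $A_{i-1}$ holds. Applying the strong Markov property at $T_{2i-1}$, the event $\{T_{2i+1} < \tau^{4}_n\}$ requires the continuation from $v := S^{4}(T_{2i-1}) \in \partial B(w, \epsilon l n/4)$ to first hit $\partial B(w, \epsilon l n/800)$ and then return to $\partial B(w, \epsilon l n/4)$ before $\tau^{4}_n$. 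Since the ratio of these radii is $1/200$, the classical $3$-dimensional Green's-function estimate (e.g.\ Proposition 1.5.10 of \cite{Law-book90}) bounds this probability by some $c_1 < 1$ uniformly in $v$. Hence $P^w_4(A_i \cap \{s_m \le T_{2i-1}\}) \le c_1 P^w_4(A_{i-1})$.

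\emph{Case (b).} Now $S^{4}[T_{2i-1}, T_{2i+1}]$ must enter the small ball $B(w, \epsilon l n/1600)$, which starting from $\partial B(w, \epsilon l n/4)$ has probability $\le c_2 \asymp 1/400$ by the Beurling-type estimate. The delicate point, and the main obstacle of the proof, is that the naive bound $c_2 \cdot P^w_4(T_{2i-1} < \tau^{4}_n)$ is far too weak: one must recover a factor of $P^w_4(A_{i-1})$ instead. For this I would apply the concatenation formula $LE(\lambda_1 + \lambda_2) = LE^{(1)} + LE^{(2)}$ with $\lambda_1 = S^{4}[0, T_{2i-1}]$ and $\lambda_2 = S^{4}[T_{2i-1}, T_{2i+1}]$: the event $\{LE(\lambda_1 + \lambda_2)[0, m] = \eta\}$ forces $LE(\lambda_1)[0, u'] = \eta[0, u']$ for some $u' \le m$, together with $LE^{(2)}$ continuing as $\eta[u', m]$. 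Summing over $u'$ and using the domain Markov property for LERW (Lemma~\ref{domain}) to chain the ``prefix'' probability $P(LE(\lambda_1)[0, u'] = \eta[0, u'])$ with the ``suffix'' probability (interpreted as a walk from $\eta(u')$ conditioned to avoid $\eta[0, u'-1]$) telescopes back to a factor of $P^w_4(A_{i-1})$, with the Beurling factor $c_2$ absorbing the cost of the re-entrance into the innermost ball.

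Combining the two cases yields $P^w_4(A_i) \le (c_1 + c_2) P^w_4(A_{i-1})$; choosing the geometric constants so that $c := c_1 + c_2 \in (\tfrac12, 1)$ closes the induction. The hard part is Case (b): rigorously carrying out the telescoping via the domain Markov property, so that the rare re-entrance of the walk into $B(w, \epsilon l n/1600)$ does not inflate the bound beyond $c^i$ times $P^w_4(LE(S^{4}[0, T_1])[0, m] = \eta)$.
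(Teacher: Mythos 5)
Your induction skeleton and case split match the paper's: the paper also decomposes the event at level $i$ according to whether the excursion $S^{4}[T_{2i},T_{2i+1}]$ meets $\eta$ (on the event $LE(S^4[0,T_{2i+1}])[0,m]=\eta$ this is equivalent to your condition $s_m>T_{2i-1}$), and your Case (a) — deducing $LE(S^4[0,T_{2i-1}])[0,m]=\eta$ from the concatenation formula and then factoring off the return probability by the strong Markov property at $T_{2i-1}$ — is sound and essentially identical to \eqref{key-24}.

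The gap is in Case (b), and it is exactly where you say the difficulty lies. Your claimed conclusion, that Case (b) ``telescopes back to a factor of $P^w_4(A_{i-1})$'' so that $P(A_i)\le (c_1+c_2)P(A_{i-1})$, cannot be right as stated: on the Case (b) event the loop-erasure at time $T_{2i-1}$ agrees with $\eta$ only up to the first re-hit index $u'=j$ (beyond $\eta(j)$ it is arbitrary, since that portion is erased by the later excursion), so there is no containment in $A_{i-1}$ and no way to produce the event $\{LE(S^4[0,T_{2i-1}])[0,m]=\eta\}$. What the induction hypothesis can be applied to is only the \emph{prefix} event $\{T_{2i-1}<\tau_n^4,\ LE(S^4[0,T_{2i-1}])[0,j]=\eta[0,j]\}$, giving $c^{i-1}P^w_4(LE(S^4[0,T_1])[0,j]=\eta[0,j])$; one must then show that multiplying by the suffix probability $P^{\eta(j)}_4\big(LE(S^4[0,t^2])[0,m-j]=\eta[j,m],\ S^4[0,t^2]\cap\eta[0,j-1]=\emptyset\big)$ reconstitutes $P^w_4(LE(S^4[0,T_1])[0,m]=\eta)$ times a summable-in-$j$ small factor. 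Lemma \ref{domain} alone does not deliver this: the paper needs the exact product formula for the LERW distribution in terms of Green's functions (line 10, p.~199 of \cite{Law LERW99}), applied both to the suffix and to $LE(S^4[0,T_1])$, together with the identity expressing $P^{w''}_4(u^{\star}=j)$ as (hit $\eta(j)$ avoiding $\eta[0,j-1]$) $\times\, G(\eta(j),\eta(j),B\setminus\eta[0,j-1])\times$ (escape from $\eta[0,j]$); only then do the two cases recombine, with the $j$-sum collapsing to $P^{w''}_4(S^4[0,t^2]\cap\eta\neq\emptyset)$ and the smallness coming from $\sum_{w''}P^{w'_0}_4(S^4(t^1)=w''_0)\le \tfrac12$ rather than from a Beurling estimate. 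Until that recombination identity is carried out, Case (b) — and hence the induction step — is not established.
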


\begin{proof} We will show this sublemma by induction. Take a simple path $\eta = [\eta (0), \cdots , \eta (m) ]$ with $\eta (0) = w$ and $\eta \subset \overline{B( w, \frac{\epsilon l n}{1600} )}$. The inequality \eqref{key-22} trivially holds when $i=0$. So suppose that \eqref{key-22} holds for $c \in (\frac{1}{2}, 1)$ and $i-1$. Note that 
\begin{align}\label{key-23}
&P^{w}_{4} \Big( T_{2i+1} <  \tau_{n}^{4}, \  LE ( S^{4} [ 0, T_{2i+1} ] ) [0, m] = \eta \Big) = P^{w}_{4} \Big( T_{2i} <  \tau_{n}^{4}, \  LE ( S^{4} [ 0, T_{2i+1} ] ) [0, m] = \eta \Big) \notag \\
&= P^{w}_{4} \Big( T_{2i} <  \tau_{n}^{4}, \  LE ( S^{4} [ 0, T_{2i+1} ] ) [0, m] = \eta, \ S^{4} [T_{2i}, T_{2i+1}] \cap \eta = \emptyset \Big) \notag \\
&  \ \ \  + P^{w}_{4} \Big( T_{2i} <  \tau_{n}^{4}, \  LE ( S^{4} [ 0, T_{2i+1} ] ) [0, m] = \eta, \ S^{4} [T_{2i}, T_{2i+1}] \cap \eta \neq \emptyset \Big).
\end{align}

Suppose that $T_{2i} <  \tau_{n}^{4}$, $LE ( S^{4} [ 0, T_{2i+1} ] ) [0, m] = \eta$ and $S^{4} [T_{2i}, T_{2i+1}] \cap \eta = \emptyset$. Let $LE^{(1)} = LE_{1} ( \lambda_{1}, \lambda_{2} )$ and $LE^{(2)} = LE_{2} ( \lambda_{1}, \lambda_{2} )$ where $\lambda_{1} = LE ( S^{4} [ 0, T_{2i-1} ] )$ and $\lambda_{2} = S^{4} [T_{2i-1}, T_{2i+1}]$ (see \eqref{comp-1} for $LE^{(i)}$). Then $LE ( S^{4} [ 0, T_{2i+1} ] ) = LE^{(1)} + LE^{(2)}$. Let $u= \text{len}LE^{(1)} = \inf \{ t \ | \ \lambda_{1} (t) \in \lambda_{2} \}$. Then $u > m$. Indeed, if $u \le m$, then $LE^{(1)} (u) = LE ( S^{4} [ 0, T_{2i+1} ] ) (u) = \eta (u)$. This implies $\eta (u) \in \lambda_{2}$. Since $S^{4} [T_{2i-1}, T_{2i}] \cap \overline{B( w, \frac{\epsilon l n}{1600} )} = \emptyset$, we see that $\eta (u) \in S^{4} [T_{2i}, T_{2i+1}]$, and we get a contradiction. Thus $u > m$. Therefore $\eta = LE ( S^{4} [ 0, T_{2i+1} ] ) [0, m] = LE^{(1)} [0, m] =  LE ( S^{4} [ 0, T_{2i-1} ] ) [0, m]$. So
\begin{align}\label{key-24}
&P^{w}_{4} \Big( T_{2i} <  \tau_{n}^{4}, \  LE ( S^{4} [ 0, T_{2i+1} ] ) [0, m] = \eta, \ S^{4} [T_{2i}, T_{2i+1}] \cap \eta = \emptyset \Big) \notag \\
&\le P^{w}_{4} \Big( T_{2i-1} <  \tau_{n}^{4}, \  LE ( S^{4} [ 0, T_{2i-1} ] ) [0, m] = \eta \Big) \times \max_{w' \in \partial B( w, \frac{ \epsilon l n }{4} ) }  P^{w'}_{4} \Big( t^{1} < \tau_{n}^{4}, \ S^{4} [t^{1}, t^{2}] \cap \eta = \emptyset \Big),
\end{align}
where $t^{1} = \inf \{ t \ | \ S^{4} (t) \in \partial B( w, \frac{\epsilon l n}{800} ) \}$ and $t^{2} = \inf \{ t \ge t^{1} \ | \ S^{4} (t) \in \partial B( w, \frac{\epsilon l n}{4} ) \}$.

Next we will deal with the second term in the RHS of \eqref{key-23}. Suppose that $T_{2i} <  \tau_{n}^{4}$, $S^{4} [T_{2i}, T_{2i+1}] \cap \eta \neq \emptyset$ and $LE ( S^{4} [ 0, T_{2i+1} ] ) [0, m] = \eta$. On this event, we may define $u' := \inf \{ t \ | \ \eta (t) \in S^{4} [T_{2i}, T_{2i+1}] \}$. Then $u' \le m$. Note that
\begin{align}\label{key-25}
&P^{w}_{4} \Big( T_{2i} <  \tau_{n}^{4}, \  LE ( S^{4} [ 0, T_{2i+1} ] ) [0, m] = \eta, \ S^{4} [T_{2i}, T_{2i+1}] \cap \eta \neq \emptyset \Big) \notag \\
&= \sum_{j=0}^{m} P^{w}_{4} \Big( T_{2i} <  \tau_{n}^{4}, \ u'=j, \ LE ( S^{4} [ 0, T_{2i+1} ] ) [0, m] = \eta \Big).
\end{align}

Suppose that $T_{2i} <  \tau_{n}^{4}$, $u' = j$ and $LE ( S^{4} [ 0, T_{2i+1} ] ) [0, m] = \eta$. Since $S^{4} [T_{2i-1}, T_{2i+1}]$ does not intersect $\eta [0, j-1]$, in order for $LE ( S^{4} [ 0, T_{2i+1} ] ) [0, m]$ to be $\eta$, $\eta [0, j]$ must be contained in the loop-erasure of $S^{4}$ up to $T_{2i-1}$. The rest part of $\eta$, say $\eta [j+1, m]$, is constructed by the loop-erasure of $S^{4} [T_{2i}, T_{2i+1}]$. Therefore, we have 
\begin{itemize}
\item $LE ( S^{4} [ 0, T_{2i-1} ] ) [0, j] = \eta [0, j] $,

\item If we let $\tau^{\star}_{\eta (j)} := \inf \{ t \ge T_{2i-1} \ | \ S^{4} (t) = \eta (j) \}$, then $T_{2i} < \tau^{\star}_{\eta (j)} < T_{2i+1}$ and $S^{4} [ T_{2i}, \tau^{\star}_{\eta (j)} ] \cap \eta [0, j-1] = \emptyset$,

\item $LE ( S^{4} [ \tau^{\star}_{\eta (j)}, T_{2i+1} ] ) [0, m-j] = \eta [j, m]$,

\item $S^{4} [ \tau^{\star}_{\eta (j)}, T_{2i+1} ]  \cap \eta [0, j-1 ] = \emptyset $.

\end{itemize}
So the probability in RHS of \eqref{key-25} is bounded above by the probability of four events above as follows.
\begin{align}\label{key-26}
&P^{w}_{4} \Big( T_{2i} <  \tau_{n}^{4}, \ u'=j, \ LE ( S^{4} [ 0, T_{2i+1} ] ) [0, m] = \eta \Big) \notag \\
&\le P^{w}_{4} \Big( T_{2i} <  \tau_{n}^{4}, \ LE ( S^{4} [ 0, T_{2i-1} ] ) [0, j] = \eta [0, j], \ T_{2i} < \tau^{\star}_{\eta (j)} < T_{2i+1}, \ S^{4} [ T_{2i}, \tau^{\star}_{\eta (j)} ] \cap \eta [0, j-1] = \emptyset, \notag \\
& \ \ \ \ \  \ \ \ \  \ LE ( S^{4} [ \tau^{\star}_{\eta (j)}, T_{2i+1} ] ) [0, m-j] = \eta [j, m], \ S^{4} [ \tau^{\star}_{\eta (j)}, T_{2i+1} ]  \cap \eta [0, j-1 ] = \emptyset \Big) \notag \\
&= \sum_{w' \in \partial B( w, \frac{ \epsilon l n }{4} ) } P^{w}_{4} \Big( T_{2i-1} <  \tau_{n}^{4}, \ S^{4} (T_{2i-1}) = w', \ LE ( S^{4} [ 0, T_{2i-1} ] ) [0, j] = \eta [0, j] \Big) \notag \\ 
& \ \   \times P^{w'}_{4} \Big( t^{1} < \tau_{n}^{4}, \ \tau^{4}_{\eta (j) } < t^{2}, \ S^{4} [ t^{1}, \tau^{4}_{\eta (j) } ] \cap \eta [0, j-1] = \emptyset, \notag \\ 
& \ \ \ \ \ \ \ \ LE ( S^{4} [ \tau^{4}_{\eta (j)}, t^{2} ] ) [0, m-j] = \eta [j, m], \ S^{4} [ \tau^{4}_{\eta (j)}, t^{2} ] \cap \eta [0, j-1] = \emptyset \Big),
\end{align}
where $\tau^{4}_{\eta (j) } = \inf \{ t \ | \ S^{4} (t) = \eta (j) \}$. Since $\eta \subset \overline{B( w, \frac{\epsilon l n}{1600} )}$, in order for $S^{4}$ to hit $\eta$, $S^{4}$ must intersect $\partial B( w, \frac{\epsilon l n}{800} )$ before $\tau^{4}_{\eta (j) }$. So by using the strong Markov property at $t^{1}$ first, then using it again at $\tau^{4}_{\eta (j) }$, we have 
\begin{align}\label{key-27}
&P^{w'}_{4} \Big( t^{1} < \tau_{n}^{4}, \ \tau^{4}_{\eta (j) } < t^{2}, \ S^{4} [ t^{1}, \tau^{4}_{\eta (j) } ] \cap \eta [0, j-1] = \emptyset, \notag \\ 
& \ \ \ \ \ \ \ \ LE ( S^{4} [ \tau^{4}_{\eta (j)}, t^{2} ] ) [0, m-j] = \eta [j, m], \ S^{4} [ \tau^{4}_{\eta (j)}, t^{2} ] \cap \eta [0, j-1] = \emptyset \Big) \notag \\
&= \sum_{w'' \in \partial B( w, \frac{\epsilon l n}{800} ) } P^{w'}_{4} \big( S^{4} ( t^{1} ) = w'' \big)  \notag \\ 
& \ \ \times P^{w''}_{4} \Big( \tau^{4}_{\eta (j) } < t^{2}, \ S^{4} [0, \tau^{4}_{\eta (j) } ] \cap \eta [0, j-1] = \emptyset, \notag \\ 
& \ \ \ \ \ \ \ \ LE ( S^{4} [ \tau^{4}_{\eta (j)}, t^{2} ] ) [0, m-j] = \eta [j, m], \ S^{4} [ \tau^{4}_{\eta (j)}, t^{2} ] \cap \eta [0, j-1] = \emptyset \Big) \notag \\
&= \sum_{w'' \in \partial B( w, \frac{\epsilon l n}{800} ) } P^{w'}_{4} \big( S^{4} ( t^{1} ) = w'' \big)  \times P^{w''}_{4} \Big( \tau^{4}_{\eta (j) } < t^{2}, \ S^{4} [0, \tau^{4}_{\eta (j) } ] \cap \eta [0, j-1] = \emptyset \Big) \notag \\ 
& \ \ \times P^{\eta (j)}_{4} \Big(  LE ( S^{4} [ 0, t^{2} ] ) [0, m-j] = \eta [j, m], \ S^{4} [ 0, t^{2} ] \cap \eta [0, j-1] = \emptyset \Big).
\end{align}
By the equation in line 10, page 199 of \cite{Law LERW99}, we can write the distribution of LERW in terms of Green's functions and non-intersecting probabilities of $\eta$ as follows.
\begin{align}\label{key-28}
&P^{\eta (j)}_{4} \Big(  LE ( S^{4} [ 0, t^{2} ] ) [0, m-j] = \eta [j, m], \ S^{4} [ 0, t^{2} ] \cap \eta [0, j-1] = \emptyset \Big) \notag \\
&= \prod_{q=j}^{m-1} G \big( \eta (q), \eta (q), B \setminus \eta [0, q-1] \big) \ P^{\eta (q)}_{4} \big( S^{4}(1) = \eta (q+1) \big)  \ G \big( \eta (m), \eta (m), B \setminus \eta [0, m-1] \big) \notag \\
&\ \   \ \times P^{\eta (m)}_{4} \big( S^{4} [ 1, t^{2} ] \cap \eta [0, m] = \emptyset \big),
\end{align}
where $B = B (w, \frac{ \epsilon l n}{4} )$ and $G (\cdot, \cdot, \cdot )$ is Green's function defined in Section \ref{kigou}.

Take $w'_{0} \in \partial B( w, \frac{ \epsilon l n }{4} )$ and $w''_{0} \in \partial B( w, \frac{\epsilon l n}{800} )$ such that 
\begin{equation*}
P^{w'_{0}}_{4} \big( S^{4} ( t^{1} ) = w''_{0} \big) = \max_{w' \in \partial B( w, \frac{ \epsilon l n }{4} ), \ w'' \in \partial B( w, \frac{\epsilon l n}{800} ) } P^{w'}_{4} \big( S^{4} ( t^{1} ) = w'' \big). 
\end{equation*}
Then by using Proposition 1.5.10 \cite{Law-book90}, we see that 
\begin{equation}\label{key-29}
P^{w'_{0}}_{4} \big( S^{4} ( t^{1} ) = w''_{0} \big) \le \frac{6400}{(\epsilon l n)^{2}}.
\end{equation}

By \eqref{key-26}-\eqref{key-28}, and by definition of $w'_{0}$ and $w''_{0}$, 
\begin{align}\label{unchi-1}
&P^{w}_{4} \Big( T_{2i} <  \tau_{n}^{4}, \ u'=j, \ LE ( S^{4} [ 0, T_{2i+1} ] ) [0, m] = \eta \Big) \notag \\
&\le \sum_{w' \in \partial B( w, \frac{ \epsilon l n }{4} ) } P^{w}_{4} \Big( T_{2i-1} <  \tau_{n}^{4}, \ S^{4} (T_{2i-1}) = w', \ LE ( S^{4} [ 0, T_{2i-1} ] ) [0, j] = \eta [0, j] \Big) \notag \\ 
& \ \ \times \sum_{w'' \in \partial B( w, \frac{\epsilon l n}{800} ) } P^{w'}_{4} \big( S^{4} ( t^{1} ) = w'' \big)  \times P^{w''}_{4} \Big( \tau^{4}_{\eta (j) } < t^{2}, \ S^{4} [0, \tau^{4}_{\eta (j) } ] \cap \eta [0, j-1] = \emptyset \Big) \notag \\
& \ \ \times \prod_{q=j}^{m-1} G \big( \eta (q), \eta (q), B \setminus \eta [0, q-1] \big) \ P^{\eta (q)}_{4} \big( S^{4}(1) = \eta (q+1) \big)  \ G \big( \eta (m), \eta (m), B \setminus \eta [0, m-1] \big) \notag \\
&\ \   \ \times P^{\eta (m)}_{4} \big( S^{4} [ 1, t^{2} ] \cap \eta [0, m] = \emptyset \big) \notag \\
% &\le \sum_{w' \in \partial B( w, \frac{ \epsilon l n }{4} ) } P^{w}_{4} \Big( T_{2i-1} <  \tau_{n}^{4}, \ S^{4} (T_{2i-1}) = w', \ LE ( S^{4} [ 0, T_{2i-1} ] ) [0, j] = \eta [0, j] \Big) \notag \\ 
% & \ \ \times \sum_{w'' \in \partial B( w, \frac{\epsilon l n}{800} ) } P^{w'_{0}}_{4} \big( S^{4} ( t^{1} ) = w''_{0} \big)  \times P^{w''}_{4} \Big( \tau^{4}_{\eta (j) } < t^{2}, \ S^{4} [0, \tau^{4}_{\eta (j) } ] \cap \eta [0, j-1] = \emptyset \Big) \notag \\
% & \ \ \times \prod_{q=j}^{m-1} G \big( \eta (q), \eta (q), B \setminus \eta [0, q-1] \big) \ P^{\eta (q)}_{4} \big( S^{4}(1) = \eta (q+1) \big)  \ G \big( \eta (m), \eta (m), B \setminus \eta [0, m-1] \big) \notag \\
% &\ \   \ \times P^{\eta (m)}_{4} \big( S^{4} [ 1, t^{2} ] \cap \eta [0, m] = \emptyset \big) \notag \\
&\le  P^{w}_{4} \Big( T_{2i-1} <  \tau_{n}^{4}, \  LE ( S^{4} [ 0, T_{2i-1} ] ) [0, j] = \eta [0, j] \Big) \notag \\ 
& \ \ \times \sum_{w'' \in \partial B( w, \frac{\epsilon l n}{800} ) } P^{w'_{0}}_{4} \big( S^{4} ( t^{1} ) = w''_{0} \big)  \times P^{w''}_{4} \Big( \tau^{4}_{\eta (j) } < t^{2}, \ S^{4} [0, \tau^{4}_{\eta (j) } ] \cap \eta [0, j-1] = \emptyset \Big) \notag \\
& \ \ \times \prod_{q=j}^{m-1} G \big( \eta (q), \eta (q), B \setminus \eta [0, q-1] \big) \ P^{\eta (q)}_{4} \big( S^{4}(1) = \eta (q+1) \big)  \ G \big( \eta (m), \eta (m), B \setminus \eta [0, m-1] \big) \notag \\
&\ \   \ \times P^{\eta (m)}_{4} \big( S^{4} [ 1, t^{2} ] \cap \eta [0, m] = \emptyset \big).  
\end{align}
In order to estimate the RHS of \eqref{unchi-1}, now we use the assumption of the induction for $\eta [0, j]$. By using it as well as the equation in line 10, page 199 of \cite{Law LERW99} for the distribution of $LE ( S^{4} [ 0, T_{1} ] ) [0, m]$, we see that
\begin{align}\label{key-30}
&\le c^{i-1}  P^{w}_{4} \Big( LE ( S^{4} [ 0, T_{1} ] ) [0, j] = \eta [0, j] \Big) \notag \\ 
& \ \ \times \sum_{w'' \in \partial B( w, \frac{\epsilon l n}{800} ) } P^{w'_{0}}_{4} \big( S^{4} ( t^{1} ) = w''_{0} \big)  \times P^{w''}_{4} \Big( \tau^{4}_{\eta (j) } < t^{2}, \ S^{4} [0, \tau^{4}_{\eta (j) } ] \cap \eta [0, j-1] = \emptyset \Big) \notag \\
& \ \ \times \prod_{q=j}^{m-1} G \big( \eta (q), \eta (q), B \setminus \eta [0, q-1] \big) \ P^{\eta (q)}_{4} \big( S^{4}(1) = \eta (q+1) \big)  \ G \big( \eta (m), \eta (m), B \setminus \eta [0, m-1] \big) \notag \\
&\ \   \ \times P^{\eta (m)}_{4} \big( S^{4} [ 1, t^{2} ] \cap \eta [0, m] = \emptyset \big) \notag \\
&= c^{i-1}  P^{w}_{4} \Big( LE ( S^{4} [ 0, T_{1} ] ) [0, m] = \eta [0, m] \Big) \notag \\ 
& \ \ \times \sum_{w'' \in \partial B( w, \frac{\epsilon l n}{800} ) } P^{w'_{0}}_{4} \big( S^{4} ( t^{1} ) = w''_{0} \big)  \times P^{w''}_{4} \Big( \tau^{4}_{\eta (j) } < t^{2}, \ S^{4} [0, \tau^{4}_{\eta (j) } ] \cap \eta [0, j-1] = \emptyset \Big) \notag \\
& \ \ \times  G \big( \eta (j), \eta (j), B \setminus \eta [0, j-1] \big) \ P^{\eta (j)}_{4} \big( S^{4} [ 1, t^{2} ] \cap \eta [0, j] = \emptyset \big).
\end{align}
Let $u^{\star} := \inf \{ t \ | \ \eta (t) \in S^{4} [0, t^{2} ] \}$. In order for $u^{\star}$ to be $j$, first $S^{4}$ hits $\eta (j)$ before $t^{2}$ and intersecting $\eta [0, j-1]$, then $S^{4}$ does not hit $\eta [0, j-1]$ from $\tau^{4}_{\eta (j) }$ to the last visit of $\eta (j)$, and finally it exits $B (w, \frac{ \epsilon l n}{4} )$ without intersecting $\eta [0, j-1]$. Thus we have
\begin{align*}
&P^{w''}_{4} \big( u^{\star} = j \big) \notag = \sum_{k=0}^{\infty} P^{w''}_{4} \Big( \tau^{4}_{\eta (j) } < t^{2}, \ t^{\star} = \tau^{4}_{\eta (j) } + 2k, \ S^{4} [0, t^{2}] \cap \eta [0, j-1] = \emptyset \Big) \notag \\
&(\text{ where } t^{\star} := \max \{ t \le t^{2} \ | \ S^{4} (t) = \eta (j) \} ) \notag \\
&= \sum_{k=0}^{\infty} P^{w''}_{4} \Big(  \tau^{4}_{\eta (j) } < t^{2}, \ S^{4} [0, \tau^{4}_{\eta (j) }] \cap \eta [0, j-1] = \emptyset \Big) \notag \\
&\ \ \times P^{\eta (j)}_{4} \Big( S^{4} (2k) = \eta (j), \ S^{4} [0, 2k ] \cap \eta [0, j-1] = \emptyset, \ S^{4} [0, 2k ] \subset B \Big) \ P^{\eta (j)}_{4} \Big( S^{4} [1, t^{2} ] \cap \eta [0, j] = \emptyset \Big) \notag \\
&= P^{w''}_{4} \Big( \tau^{4}_{\eta (j) } < t^{2}, \ S^{4} [0, \tau^{4}_{\eta (j) } ] \cap \eta [0, j-1] = \emptyset \Big) \ G \big( \eta (j), \eta (j), B \setminus \eta [0, j-1] \big) \notag \\
&\ \ \times P^{\eta (j)}_{4} \big( S^{4} [ 1, t^{2} ] \cap \eta [0, j] = \emptyset \big).
\end{align*}
Combining this with \eqref{key-30}, we have 
\begin{align}\label{key-31}
&P^{w}_{4} \Big( T_{2i} <  \tau_{n}^{4}, \ u'=j, \ LE ( S^{4} [ 0, T_{2i+1} ] ) [0, m] = \eta \Big) \notag \\
&\le c^{i-1}  P^{w}_{4} \Big( LE ( S^{4} [ 0, T_{1} ] ) [0, m] = \eta [0, m] \Big) \ \sum_{w'' \in \partial B( w, \frac{\epsilon l n}{800} ) } P^{w'_{0}}_{4} \big( S^{4} ( t^{1} ) = w''_{0} \big) \ P^{w''}_{4} \big( u^{\star} = j \big).
\end{align}
Clearly events $\{ u'=j \}$ are disjoint, and the same thing holds for events $\{ u^{\star} = j \}$. So taking sum for $j$ in \eqref{key-31}, we have 
\begin{align}\label{key-32}
&P^{w}_{4} \Big( T_{2i} <  \tau_{n}^{4}, \  LE ( S^{4} [ 0, T_{2i+1} ] ) [0, m] = \eta, \ S^{4} [T_{2i}, T_{2i+1}] \cap \eta \neq \emptyset \Big) \notag \\
&\le c^{i-1} P^{w}_{4} \Big( LE ( S^{4} [ 0, T_{1} ] ) [0, m] = \eta [0, m] \Big) \notag \\
& \ \ \times \sum_{w'' \in \partial B( w, \frac{\epsilon l n}{800} ) } P^{w'_{0}}_{4} \big( S^{4} ( t^{1} ) = w''_{0} \big) \ P^{w''}_{4} \Big( S^{4} [0, t^{2} ] \cap \eta [0, m ] \neq \emptyset \Big).
\end{align}

The estimate of the case that $S^{4} [T_{2i}, T_{2i+1}] \cap \eta \neq \emptyset$ was given as in \eqref{key-32}. For the case that $S^{4} [T_{2i}, T_{2i+1}] \cap \eta = \emptyset$, by \eqref{key-24} and the assumption of the induction, 
\begin{align}\label{key-33}
&P^{w}_{4} \Big( T_{2i} <  \tau_{n}^{4}, \  LE ( S^{4} [ 0, T_{2i+1} ] ) [0, m] = \eta, \ S^{4} [T_{2i}, T_{2i+1}] \cap \eta = \emptyset \Big) \notag \\
% &\le P^{w}_{4} \Big( T_{2i-1} <  \tau_{n}^{4}, \  LE ( S^{4} [ 0, T_{2i-1} ] ) [0, m] = \eta \Big) \notag \\
% & \ \ \times \sum_{w'' \in \partial B( w, \frac{\epsilon l n}{800} ) }  P^{w'_{0}}_{4} \big( S^{4} ( t^{1} ) = w''_{0} \big)  P^{w''}_{4} \Big( S^{4} [0, t^{2} ] \cap \eta [0, m ] = \emptyset \Big) \notag \\
&\le c^{i-1} P^{w}_{4} \Big( LE ( S^{4} [ 0, T_{1} ] ) [0, m] = \eta [0, m] \Big) \notag \\
& \ \ \times \sum_{w'' \in \partial B( w, \frac{\epsilon l n}{800} ) }  P^{w'_{0}}_{4} \big( S^{4} ( t^{1} ) = w''_{0} \big)  P^{w''}_{4} \Big( S^{4} [0, t^{2} ] \cap \eta [0, m ] = \emptyset \Big).
\end{align}

But \eqref{key-29} shows that $P^{w'_{0}}_{4} \big( S^{4} ( t^{1} ) = w''_{0} \big)$ is small enough compared with the number of lattice points in $\partial B( w, \frac{\epsilon l n}{800} )$. Since we assume $c \in (\frac{1}{2}, 1)$ in the assumption of the induction, this leads to finish the proof of the induction as follows.    
\begin{align}\label{key-34}
&P^{w}_{4} \Big( T_{2i+1} <  \tau_{n}^{4}, \  LE ( S^{4} [ 0, T_{2i+1} ] ) [0, m] = \eta \Big) \notag \\
&\le c^{i-1} P^{w}_{4} \Big( LE ( S^{4} [ 0, T_{1} ] ) [0, m] = \eta [0, m] \Big) \notag \\ 
& \ \ \times  \sum_{w'' \in \partial B( w, \frac{\epsilon l n}{800} ) }  P^{w'_{0}}_{4} \big( S^{4} ( t^{1} ) = w''_{0} \big) \Big\{ P^{w''}_{4} \Big( S^{4} [0, t^{2} ] \cap \eta [0, m ] \neq \emptyset \Big) + P^{w''}_{4} \Big( S^{4} [0, t^{2} ] \cap \eta [0, m ] = \emptyset \Big) \Big\} \notag \\
&\le c^{i-1} P^{w}_{4} \Big( LE ( S^{4} [ 0, T_{1} ] ) [0, m] = \eta [0, m] \Big) \frac{6400}{(\epsilon l n)^{2}} \frac{ 50 (\epsilon l n)^{2}}{640000} \le c^{i} P^{w}_{4} \Big( LE ( S^{4} [ 0, T_{1} ] ) [0, m] = \eta [0, m] \Big),
\end{align}
which finishes the proof of Lemma \ref{key-21}. \end{proof}

\medskip

Recall the strategy in Remark \ref{polish-rem-2}. Since $LE ( S^{4} [ 0, T_{2i+1} ] ) [\overline{\sigma}^{\star}_{4}, u_{4}'] \subset \overline{B( w, \frac{\epsilon l n}{1600} )}$, by Lemma \ref{key-21}, Lemma \ref{escape-25} and \eqref{escape-4}
\begin{align}\label{key-35}
&\sum_{i=0}^{\infty}  P^{w}_{4} \otimes P^{w}_{3} \Big( T_{2i+1} <  \tau_{n}^{4}, \  LE ( S^{4} [ 0, T_{2i+1} ] ) [\overline{\sigma}^{\star}_{4}, u_{4}'] \cap S^{3} [0, T^{3}_{w, \frac{l \epsilon n}{4}} ] = \emptyset \Big) \notag \\
&\le \sum_{i=0}^{\infty}  c^{i} P^{w}_{4} \otimes P^{w}_{3} \Big( LE ( S^{4} [ 0, T_{1} ] ) [t_{2}^{\star}, t_{1}^{\star} ] \cap S^{3} [0, T^{3}_{w, \frac{l \epsilon n}{4}} ] = \emptyset \Big) \notag \\
&\Big( \text{where } t_{1}^{\star} := \inf \{ t \ | \ LE ( S^{4} [ 0, T_{1} ] ) (t) \in \partial B( w, \frac{\epsilon l n}{1600} ) \} \notag \\ 
&\text{ and } t_{2}^{\star} := \max \{ t \le t_{1}^{\star} \ | \ LE ( S^{4} [ 0, T_{1} ] ) (t) \in \partial B( w, 8 \epsilon  n ) \} \Big) \notag \\
&\le C Es ( \epsilon n, \epsilon l n ).
\end{align}
Therefore, by \eqref{key-19},
\begin{align}\label{key-36}
&P^{w}_{4} \otimes P^{w}_{3} \Big( T'' \le \sigma^{4}_{z'} < \tau_{n}^{4}, \ S^{4} ( T'') \in \gamma_{1} [\overline{\sigma}_{1}, \text{len} \gamma_{1} ] \cup \gamma_{2}, \ LE ( S^{4} [ 0, T'' ] ) [\overline{\sigma}_{4}'', u_{4}] \cap S^{3} [0, T^{3}_{w, \frac{l \epsilon n}{4}} ] = \emptyset \Big) \notag \\
&\le \frac{C}{\epsilon l n} Es ( \epsilon n, \epsilon l n ),
\end{align}
where we used $l^{-1} \le Es ( \epsilon n, \epsilon l n )$ in the last inequality (see \eqref{escape-7}).

Thus by \eqref{key-16},
\begin{equation}\label{key-37}
\tilde{p}_{1} \le \frac{C}{\epsilon  n} \frac{1}{\epsilon l n} Es ( \epsilon n, \epsilon l n ) \max_{w_{1} \in \partial B ( w, \frac{\epsilon l n}{4} ) } P^{w_{1}}_{3} \Big( LE ( S^{1} [0, \sigma^{1}_{z} ] ) [0, \overline{\sigma}_{1}] \cap S^{3} [ 0 , \tau^{3}_{n}] = \emptyset \Big).
\end{equation}

\medskip

Combining \eqref{key-37} with \eqref{key-12}, we have 
\begin{align}\label{key-38}
&P^{0}_{1} \otimes P^{z}_{2} \otimes P^{w}_{3} \Big( F^{1}, \ q^{(1)}= r, \ S^{2} ( T^{2}_{z, \frac{l \epsilon n}{2}} ) = z' \Big) \le E^{0}_{1} \otimes E^{z}_{2} \Big( \tilde{p}_{1} {\bf 1}_{F^{2}} \Big) \notag \\
&(\text{Recall that } F^{2} \text{ was defined in } \eqref{key-13}.) \notag \\
&\le \frac{C}{\epsilon  n} \frac{1}{\epsilon l n} Es ( \epsilon n, \epsilon l n ) E^{0}_{1} \otimes E^{z}_{2} \Big\{ {\bf 1}_{F^{2}} \max_{w_{1} \in \partial B ( w, \frac{\epsilon l n}{4} ) } P^{w_{1}}_{3} \Big( LE ( S^{1} [0, \sigma^{1}_{z} ] ) [0, \overline{\sigma}_{1}] \cap S^{3} [ 0 , \tau^{3}_{n}] = \emptyset \Big) \Big\}.
\end{align}

\medskip

We need to estimate the expectation in RHS of \eqref{key-38}. Using the time reversibility of LERW (see Lemma \ref{reversal}), we can replace the loop erasure of $S^{1}$ from the origin to $z$ by the loop erasure of $S^{1}$ from $z$ to the origin. Therefore we have
\begin{align}\label{key-39}
&E^{0}_{1} \otimes E^{z}_{2} \Big\{ {\bf 1}_{F^{2}} \max_{w_{1} \in \partial B ( w, \frac{\epsilon l n}{4} ) } P^{w_{1}}_{3} \Big( LE ( S^{1} [0, \sigma^{1}_{z} ] ) [0, \overline{\sigma}_{1}] \cap S^{3} [ 0 , \tau^{3}_{n}] = \emptyset \Big) \Big\} \notag \\
&= E^{z}_{1} \otimes E^{z}_{2} \Big\{ {\bf 1}_{\tilde{F}^{2}} \max_{w_{1} \in \partial B ( w, \frac{\epsilon l n}{4} ) } P^{w_{1}}_{3} \Big( LE ( S^{1} [0, \tau^{1}_{0} ] ) [\tilde{u}_{1}, \tilde{u}_{2}] \cap S^{3} [ 0 , \tau^{3}_{n}] = \emptyset \Big) \Big\},
\end{align}
where 
\begin{equation}\label{TIMES}
\tilde{u}_{1} := \max \{ t \ | \ LE ( S^{1} [0, \tau^{1}_{0} ] ) (t) \in \partial (\epsilon n B'_{x} ) \}, \ \tilde{u}_{2} := \text{ len} LE ( S^{1} [0, \tau^{1}_{0} ] ),
\end{equation}
and 
\begin{align}\label{key-40}
&\tilde{F}^{2} := \Big\{ \tau^{1}_{0} < \tau^{1}_{n}, \ S^{2} [1, T^{2}_{z, \frac{l \epsilon n}{2}} ] \cap \overline{(\epsilon n B'_{x})} = \emptyset, \ S^{2} ( T^{2}_{z, \frac{l \epsilon n}{2}} ) = z', \notag \\
& \ \ \ \ \  \ \ \  \  LE ( S^{1} [0, \tau^{1}_{0} ] ) [\tilde{u}_{1}, \tilde{u}_{2}] \cap S^{2} [ 0 , T^{2}_{z, \frac{l \epsilon n}{2}} ] = \emptyset, \ LE ( S^{1} [0, \tau^{1}_{0} ] ) [0, \tilde{u}_{1}] \cap A^{r}_{z} \neq \emptyset, \notag \\
& \ \ \ \  \ \ \ \ \  LE ( S^{1} [0, \tau^{1}_{0} ] ) [0, \tilde{u}_{1}] \subset B (z, 2^{r} \epsilon n ) \Big\}.
\end{align}

%With Remark \ref{polish-rem-4} in mind, we will show \eqref{rem-4-1} in the next lemma.
\medskip

We have to estimate the expectation in RHS of \eqref{key-39}. As we discussed, we want to deal with all events in $\tilde{F}^{2}$ as if they were independent. That will be done in the next lemma. In order to control the independence of LERW, we will use Proposition 4.6 \cite{Mas}, which states that $\eta^{1}_{0, R} (S[0, \tau_{n} ])$ and $\eta^{2}_{0, 4R, n} ( S[0, \tau_{n} ])$ are ``independent up to constant". 

\medskip

\begin{lem}\label{hosoku-3}
Suppose that $r \le \log_{2} l -3$. Then there exist universal constants $C < \infty$ and $\delta > 0$ such that 
\begin{align}\label{hosoku-3-1}
&E^{z}_{1} \otimes E^{z}_{2} \Big\{ {\bf 1}_{\tilde{F}^{2}} \max_{w_{1} \in \partial B ( w, \frac{\epsilon l n}{4} ) } P^{w_{1}}_{3} \Big( LE ( S^{1} [0, \tau^{1}_{0} ] ) [\tilde{u}_{1}, \tilde{u}_{2}] \cap S^{3} [ 0 , \tau^{3}_{n}] = \emptyset \Big) \Big\} \notag \\
&\le \frac{C}{n} \frac{1}{ \epsilon n} 2^{- \delta r} Es (\epsilon n, n ) P^{z}_{2} \Big( S^{2} ( T^{2}_{z, \frac{l \epsilon n}{2}} ) = z' \Big).
\end{align}
\end{lem}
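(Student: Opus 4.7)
The plan is to isolate the $S^2$-contribution first and then estimate the remaining $S^1$-expectation. In $\tilde F^2$ the only coupling between $S^1$ and $S^2$ is the non-intersection condition $LE(S^1[0,\tau^1_0])[\tilde u_1, \tilde u_2] \cap S^2[0, T^2_{z,l\epsilon n/2}] = \emptyset$, which I would simply discard since only an upper bound is needed. The surviving $S^2$-events are that $S^2$ starts at $z \in \partial(\epsilon n B'_x)$, never returns to $\overline{\epsilon n B'_x}$ after its first step, and exits $B(z, l\epsilon n/2)$ at $z'$. Applying the strong Markov property of $S^2$ at the first exit from $B(z, c\epsilon n)$ for a small constant $c$, the non-return probability in three dimensions is at most $C/(\epsilon n)$ by Proposition 1.5.10 of \cite{Law-book90}; Harnack's inequality (Theorem 1.7.6 of \cite{Law-book90}) then identifies the conditional exit distribution on $\partial B(z, l\epsilon n/2)$ with the unconditioned one up to a multiplicative constant, yielding the factor $C \, P^z_2(S^2(T^2_{z, l\epsilon n/2}) = z')$.

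It remains to estimate
\[
E^z_1 \Bigl[ {\bf 1}_{\tilde G} \cdot \max_{w_1 \in \partial B(w, l\epsilon n/4)} P^{w_1}_3 \bigl( LE(S^1[0,\tau^1_0])[\tilde u_1, \tilde u_2] \cap S^3[0, \tau^3_n] = \emptyset \bigr) \Bigr],
\]
where $\tilde G$ collects the three $S^1$-events in $\tilde F^2$: $\tau^1_0 < \tau^1_n$, $LE(S^1[0,\tau^1_0])[0,\tilde u_1] \cap A^r_z \neq \emptyset$, and $LE(S^1[0,\tau^1_0])[0,\tilde u_1] \subset B(z, 2^r \epsilon n)$. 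The plan is to decouple the near-$z$ segment $\gamma_0 := LE(S^1[0,\tau^1_0])[0, \tilde u_1]$ from the far-from-$z$ segment $\gamma_1 := LE(S^1[0,\tau^1_0])[\tilde u_1, \tilde u_2]$. By the time-reversibility of LERW (Lemma \ref{reversal}) together with invariance of the SRW law under reversal, the pair $(\gamma_0, \gamma_1)$ has the same joint law as two pieces of the LERW of an SRW from the origin conditioned to hit $z$ before exiting $B(n)$: $\gamma_0$ corresponds to the outer piece near $z$ and $\gamma_1$ to the inner piece near the origin, and the hypothesis $r \le \log_2 l - 3$ gives scale separation $2^r \epsilon n \le l\epsilon n/8$. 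Proposition 4.6 of \cite{Mas} then decouples these two pieces up to a multiplicative constant.

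For the near-origin segment $\gamma_1$, the non-intersection with $S^3$ started at $w_1$ is uniform in $w_1$ by Harnack, and combined with the hitting-probability bound $P^z_1(\tau^1_0 < \tau^1_n) \le C/n$ (Proposition 1.5.10 of \cite{Law-book90}) and Lemma \ref{escape-25} together with Proposition \ref{escape-3}, it produces a contribution of at most $(C/n)\, Es(\epsilon n, n)$. For the near-$z$ segment $\gamma_0$, the event is a return event: starting at $z$, the LERW reaches the annulus $A^r_z$ and then comes back to $\partial(\epsilon n B'_x)$ while staying inside $B(z, 2^r \epsilon n)$. Using Proposition 4.4 of \cite{Mas} to replace this conditioned LERW by the LERW of the infinite SRW on the same scale, the return event has probability at most $C 2^{-\delta r}$ for some $\delta > 0$ by the three-dimensional gambler's-ruin estimate (an SRW started at distance $2^{r-1}\epsilon n$ returns to within $\epsilon n$ with probability of order $2^{-(r-1)}$). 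Assembling the four factors reproduces exactly \eqref{hosoku-3-1}.

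The main technical hurdle is verifying that Proposition 4.6 of \cite{Mas} applies with constants independent of $r$ despite the conditioning $\tau^1_0 < \tau^1_n$, and that the reduction to infinite-walk LERW in the return estimate is uniform in the remaining geometry; the assumption $r \le \log_2 l - 3$ is imposed precisely to guarantee the eightfold scale separation that the Mas estimates require.
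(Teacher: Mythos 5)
Your plan breaks at the very first step, where you ``simply discard'' the non-intersection condition $LE(S^{1}[0,\tau^{1}_{0}])[\tilde u_{1},\tilde u_{2}]\cap S^{2}[0,T^{2}_{z,l\epsilon n/2}]=\emptyset$. That event is not a harmless coupling term: it is precisely what supplies the factor $Es(\epsilon n,\epsilon l n)$ coming from the scales between $\epsilon n$ and $\epsilon l n$. After discarding it, the only remaining non-intersection is with $S^{3}$, which starts at $w_{1}\in\partial B(w,\frac{\epsilon l n}{4})$, i.e.\ at distance of order $\epsilon l n$ from $z$. Such a walk does not ``test'' the loop-erasure at scales below $\epsilon l n$: with probability bounded below it never enters $B(z,\frac{\epsilon l n}{2})$, so $P^{w_{1}}_{3}\bigl( LE(S^{1}[0,\tau^{1}_{0}])[\tilde u_{1},\tilde u_{2}]\cap S^{3}[0,\tau^{3}_{n}]=\emptyset \bigr)$ is of order $Es(\epsilon l n,n)$, not $Es(\epsilon n,n)$; by \eqref{escape-7} these differ by a factor of order $l^{\alpha}$, so your claimed bound $(C/n)\,Es(\epsilon n,n)$ for this term is false. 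The same error resurfaces in your Harnack step: you cannot move the starting point of $S^{3}$ by a distance $\epsilon l n$ when the set to be avoided (your $\gamma_{1}$, which begins on $\partial(\epsilon n B'_{x})$) comes within distance $\epsilon n$ of $z$; Harnack is only legitimate after the LERW has been truncated to its piece lying outside $B(z,4\epsilon l n)$.

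The paper's proof is organized exactly around keeping both non-intersections and splitting the LERW at the intermediate scale: the piece $\eta^{2}_{\epsilon l n}(\gamma)$ (from the last visit to $\partial(\epsilon n B'_{x})$ up to the first exit of $B(z,\epsilon l n)$) is tested against $S^{2}$ and yields $Es(\epsilon n,\epsilon l n)$, while the piece after the last visit to $B(z,4\epsilon l n)$ is tested against $S^{3}$ (after a now-legitimate Harnack step) and yields $Es(\epsilon l n,n)$; Proposition 4.6 of \cite{Mas} decouples the two pieces and Proposition \ref{escape-3} multiplies the two escape probabilities back into $Es(\epsilon n,n)$. Your remaining ingredients --- the $\frac{1}{\epsilon n}$ non-return bound for $S^{2}$, the $\frac{C}{n}$ from Propositions 4.2 and 4.4 of \cite{Mas}, and the $2^{-\delta r}$ from the return event of the initial segment --- do match the paper's accounting, but without the $Es(\epsilon n,\epsilon l n)$ factor the product falls short of \eqref{hosoku-3-1} by $l^{\alpha}$, and this loss would propagate fatally into the summation over $l$ in the second-moment and energy estimates downstream.
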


% \vspace{1zw}
% \begin{rem}\label{polish-rem-5}
% The key step for the proof of Lemma \ref{hosoku-3} is to establish ``up to constant independence" between two events $ LE ( S^{1} [0, \tau^{1}_{0} ] ) [\tilde{u}_{1}, \tilde{u}_{2}] \cap S^{2} [ 0 , T^{2}_{z, \frac{l \epsilon n}{2}} ] = \emptyset$ and $LE ( S^{1} [0, \tau^{1}_{0} ] ) [\tilde{u}_{1}, \tilde{u}_{2}] \cap S^{3} [ 0 , \tau^{3}_{n}] = \emptyset $. In order to do it, we decompose $LE ( S^{1} [0, \tau^{1}_{0} ] )$ into two parts; $LE ( S^{1} [0, \tau^{1}_{0} ] )$ up to hitting $\partial B (z, R)$ and $LE ( S^{1} [0, \tau^{1}_{0} ] )$ after last exit from $ B (z,  4 R)$ for several $R$-s ($R= l \epsilon n$, for example). Then we use Proposition 4.6 \cite{Mas} to say that they are independent up to constant.
% \end{rem}

\medskip

\begin{proof}
Throughout the proof, let
\begin{equation*}
\eta^{2}_{R} (\lambda ) := \lambda [s, u],
\end{equation*}
where $u = \inf \{ t \ | \ \lambda (t) \in \partial B (z, R) \}$ and $s = \sup \{ t \le u \ | \ \lambda (t) \in  \partial (\epsilon n B'_{x} ) \}$. Suppose that $LE ( S^{1} [0, \tau^{1}_{0} ] ) [0, \tilde{u}_{1}] \cap A^{r}_{z} \neq \emptyset$ and $LE ( S^{1} [0, \tau^{1}_{0} ] ) [0, \tilde{u}_{1}] \subset B (z, 2^{r} \epsilon n )$. Since $r \le \log_{2} l -3$, we see that 
\begin{equation*}
\eta^{2}_{\frac{n}{16}} ( LE ( S^{1} [0, \tau^{1}_{0} ] ) ) \subset LE ( S^{1} [0, \tau^{1}_{0} ] ) [\tilde{u}_{1}, \tilde{u}_{2}], \ \eta^{2}_{\epsilon l n} ( LE ( S^{1} [0, \tau^{1}_{0} ] ) ) \subset LE ( S^{1} [0, \tau^{1}_{0} ] ) [\tilde{u}_{1}, \tilde{u}_{2}].
\end{equation*}
Therefore, if we write
\begin{align}\label{key-43}
&\tilde{F}^{3} := \Big\{ \tau^{1}_{0} < \tau^{1}_{n}, \ S^{2} [1, T^{2}_{z, \frac{l \epsilon n}{2}} ] \cap \overline{(\epsilon n B'_{x})} = \emptyset, \ S^{2} ( T^{2}_{z, \frac{l \epsilon n}{2}} ) = z', \notag \\
& \ \ \ \ \ \ \ \ \  \ \  \eta^{2}_{\epsilon l n} ( LE ( S^{1} [0, \tau^{1}_{0} ] ) ) \cap S^{2} [ 0 , T^{2}_{z, \frac{l \epsilon n}{2}} ] = \emptyset, \notag \\
&  \ \ \ \  \ \ \ \ \ \   \ \Big( \eta^{1}_{z, 2^{r} \epsilon n} \big( LE ( S^{1} [0, \tau^{1}_{0} ] ) \big) \setminus \eta^{1}_{z, 2^{r-1} \epsilon n} \big( LE ( S^{1} [0, \tau^{1}_{0} ] ) \big) \Big) \cap \overline{(\epsilon n B'_{x})} \neq \emptyset \Big\},
\end{align}
(recall that $\eta^{1}$ was defined as in Definition \ref{escape}) then we can replace $LE ( S^{1} [0, \tau^{1}_{0} ] ) [\tilde{u}_{1}, \tilde{u}_{2}]$ as follows. 
\begin{align}\label{key-44}
&E^{z}_{1} \otimes E^{z}_{2} \Big\{ {\bf 1}_{\tilde{F}^{2} \cap G} \max_{w_{1} \in \partial B ( w, \frac{\epsilon l n}{4} ) } P^{w_{1}}_{3} \Big( LE ( S^{1} [0, \tau^{1}_{0} ] ) [\tilde{u}_{1}, \tilde{u}_{2}] \cap S^{3} [ 0 , \tau^{3}_{n}] = \emptyset \Big) \Big\} \notag \\
&\le E^{z}_{1} \otimes E^{z}_{2} \Big\{ {\bf 1}_{\tilde{F}^{3} \cap G} \max_{w_{1} \in \partial B ( w, \frac{\epsilon l n}{4} ) } P^{w_{1}}_{3} \Big( \eta^{2}_{\frac{n}{16}} ( LE ( S^{1} [0, \tau^{1}_{0} ] ) ) \cap S^{3} [ 0 , \tau^{3}_{n}] = \emptyset \Big) \Big\}.
\end{align}
By Proposition 4.2,  4.4 \cite{Mas} and Proposition 1.5.10 \cite{Law-book90}, the distribution of the loop erasure of a random walk conditioned to be hit the origin before exiting $B (n)$ is equal to (up to multiplicative constants) the distribution of the loop erasure of $S^{1}$ up to exiting $B(z, \frac{n}{4})$. So we have
\begin{align}\label{key-45}
&E^{z}_{1} \otimes E^{z}_{2} \Big\{ {\bf 1}_{\tilde{F}^{3} \cap G} \max_{w_{1} \in \partial B ( w, \frac{\epsilon l n}{4} ) } P^{w_{1}}_{3} \Big( \eta^{2}_{\frac{n}{16}} ( LE ( S^{1} [0, \tau^{1}_{0} ] ) ) \cap S^{3} [ 0 , \tau^{3}_{n}] = \emptyset \Big) \Big\} \notag \\
&\le \frac{C}{n} E^{z}_{1} \otimes E^{z}_{2} \Big\{ {\bf 1}_{\tilde{F}^{4} \cap G} \max_{w_{1} \in \partial B ( w, \frac{\epsilon l n}{4} ) } P^{w_{1}}_{3} \Big( \eta^{2}_{\frac{n}{16}} ( LE ( S^{1} [0, T^{1}_{z, \frac{n}{4}} ] ) ) \cap S^{3} [ 0 , \tau^{3}_{n}] = \emptyset \Big) \Big\},
\end{align}
where
\begin{align}\label{key-46}
&\tilde{F}^{4} := \Big\{  S^{2} [1, T^{2}_{z, \frac{l \epsilon n}{2}} ] \cap \overline{(\epsilon n B'_{x})} = \emptyset, \ S^{2} ( T^{2}_{z, \frac{l \epsilon n}{2}} ) = z', \ \eta^{2}_{\epsilon l n} \big( LE ( S^{1} [0, T^{1}_{z, \frac{n}{4}} ] ) \big) \cap S^{2} [ 0 , T^{2}_{z, \frac{l \epsilon n}{2}} ] = \emptyset, \notag \\
&  \ \ \ \  \ \ \ \ \ \   \ \Big( \eta^{1}_{z, 2^{r} \epsilon n} \big( LE ( S^{1} [0, T^{1}_{z, \frac{n}{4}} ] ) \big) \setminus \eta^{1}_{z, 2^{r-1} \epsilon n} \big( LE ( S^{1} [0, T^{1}_{z, \frac{n}{4}} ] ) \big) \Big) \cap \overline{(\epsilon n B'_{x})} \neq \emptyset \Big\}.
\end{align}

\medskip

We will estimate the expectation in the RHS of \eqref{key-45}. To do it, let $\gamma := LE ( S^{1} [0, T^{1}_{z, \frac{n}{4}} ] )$ and $\tau^{\gamma}_{R} := \inf \{ t \ | \ \gamma (t ) \in \partial B (z, R ) \}.$ Suppose that $\tilde{F}^{4}$ and $\gamma [ \tau^{\gamma}_{\frac{n}{16}}, \tau^{\gamma}_{\frac{n}{4}} ] \cap B ( z, 2^{r+ 4} \epsilon n ) \neq \emptyset$ occur. Then $S^{1}$ returns to $B ( z, 8 \epsilon n )$ after hitting $\partial B (z, 2^{r-1} \epsilon n )$. After $S^{1}$ returns to $B ( z, 8 \epsilon n )$ and goes to $\partial B (z, \frac{n}{16} )$, $S^{1}$ must return to $B (z, 2^{r+ 4} \epsilon n )$. By Proposition 1.5.10 \cite{Law-book90}, that probability is bounded above by $C \frac{ 2^{r} \epsilon n}{n} \frac{\epsilon n}{ 2^{r} \epsilon n} = C \epsilon$. Thus by the strong Markov property, Proposition 1.5.10 \cite{Law-book90}, and \eqref{escape-7},
\begin{align}\label{pol-1}
&\frac{C}{n} P^{z}_{1} \otimes P^{z}_{2} \Big( \tilde{F}^{4}, \ \gamma [ \tau^{\gamma}_{\frac{n}{16}}, \tau^{\gamma}_{\frac{n}{4}} ] \cap B ( z, 2^{r+ 4} \epsilon n ) \neq \emptyset \Big)
\le \frac{C}{n}  \frac{1}{\epsilon n} P^{z}_{2} \big(  S^{2} ( T^{2}_{z, \frac{l \epsilon n}{2}} ) = z' \big) \times \epsilon \notag \\
&\le \frac{C}{n}  \frac{1}{\epsilon n} 2^{-c r} Es ( \epsilon n, n ) P^{z}_{2} \big(  S^{2} ( T^{2}_{z, \frac{l \epsilon n}{2}} ) = z' \big),
\end{align}
for some $c > 0$. So it suffices to consider the case that $\gamma [ \tau^{\gamma}_{\frac{n}{16}}, \tau^{\gamma}_{\frac{n}{4}} ] \cap B ( z, 2^{r+ 4} \epsilon n ) = \emptyset$. With this in mind, define $k_{0} := \min \{ k \ | \ \gamma [ \tau^{\gamma}_{\frac{n}{16}}, \tau^{\gamma}_{\frac{n}{4}} ] \cap B ( z, 2^{-k} n ) = \emptyset \}$. Then we may assume that $2^{- k_{0}} n \ge 2^{r+4} \epsilon n$. Now we consider two cases.

\medskip

\underline{Case-1:} $2^{-k_{0}} n \ge 4 l \epsilon n$. \\
In this case, we have $\eta^{2}_{z, 4 \epsilon l n, \frac{n}{16}} ( \gamma ) = \eta^{1}_{z, \frac{n}{16}} ( \eta^{2}_{z, 4 \epsilon l n, \frac{n}{4} } ( \gamma ) )$ (see Definition \ref{escape} for $\eta^{i}$). Thus by the Harnack principle (see Theorem 1.7.6 \cite{Law-book90}),
\begin{align}\label{pol-2}
&\frac{C}{n} E^{z}_{1} \otimes E^{z}_{2} \Big\{ {\bf 1}_{\tilde{F}^{4} \cap \text{Case-1}} \max_{w_{1} \in \partial B ( w, \frac{\epsilon l n}{4} ) } P^{w_{1}}_{3} \Big( \eta^{2}_{\frac{n}{16}} (\gamma ) \cap S^{3} [ 0 , \tau^{3}_{n}] = \emptyset \Big) \Big\} \notag \\
&\le \frac{C}{n} E^{z}_{1} \otimes E^{z}_{2} \Big\{ {\bf 1}_{\tilde{F}^{4} \cap \text{Case-1}} \max_{w_{1} \in \partial B ( w, \frac{\epsilon l n}{4} ) } P^{w_{1}}_{3} \Big( \eta^{1}_{z, \frac{n}{16}} ( \eta^{2}_{z, 4 \epsilon l n, \frac{n}{4} } ( \gamma ) ) \cap S^{3} [ 0 , \tau^{3}_{n}] = \emptyset \Big) \Big\} \notag \\
&\le \frac{C}{n} E^{z}_{1} \otimes E^{z}_{2} \Big\{ {\bf 1}_{\tilde{F}^{4} \cap \text{Case-1}}  P^{z}_{3} \Big( \eta^{1}_{z, \frac{n}{16}} ( \eta^{2}_{z, 4 \epsilon l n, \frac{n}{4} } ( \gamma ) ) \cap S^{3} [ 0 , \tau^{3}_{n}] = \emptyset \Big) \Big\}.
\end{align}
But by Proposition 4.6 \cite{Mas}, the distribution of $\gamma$ from $z$ to $\partial B (z, \epsilon l n )$ and the distribution of $\gamma$ after last visit to $B(z, 4 \epsilon l n)$ are independent (up to multiplicative constants). Therefore the RHS of \eqref{pol-2} is bounded above by
\begin{equation}\label{pol-3}
\frac{C}{n} P^{z}_{1} \otimes P^{z}_{2} \Big( \tilde{F}^{4} \Big) \times P^{z}_{1} \otimes P^{z}_{3} \Big( \eta^{1}_{z, \frac{n}{16}} ( \eta^{2}_{z, 4 \epsilon l n, \frac{n}{4} } ( \gamma ) ) \cap S^{3} [ 0 , \tau^{3}_{n}] = \emptyset \Big).
\end{equation}
By Lemma \ref{escape-25}, Proposition \ref{escape-3}, and the strong Markov property, we see that \eqref{pol-3} is bounded above by 
\begin{equation}\label{pol-4}
\frac{C}{n} \frac{1}{\epsilon n}  2^{-c r} Es ( \epsilon n, n ) P^{z}_{2} \big(  S^{2} ( T^{2}_{z, \frac{l \epsilon n}{2}} ) = z' \big).
\end{equation}
So we finish Case-1.

\medskip

\underline{Case-2:} $ 2^{r+4} \epsilon n \le 2^{-k_{0}} n \le 4 l \epsilon n$. \\
Suppose that $k_{0} = k$ with $ 2^{r+4} \epsilon n \le 2^{-k} n \le 4 l \epsilon n$. Note that $\eta^{2}_{z, 2^{-k} n, \frac{n}{16}} ( \gamma ) = \eta^{1}_{z, \frac{n}{16}} ( \eta^{2}_{z, 2^{-k} n, \frac{n}{4} } ( \gamma ) )$, and $\Big( \eta^{2}_{z, 2^{-k} n, \frac{n}{4} } ( \gamma ) \setminus \eta^{1}_{z, \frac{n}{16}} ( \eta^{2}_{z, 2^{-k} n, \frac{n}{4} } ( \gamma ) ) \Big) \cap B (z, 2^{-(k-1)} n ) \neq \emptyset$. So by similar arguments using Proposition 4.6 \cite{Mas} as above, we see that 
\begin{align}\label{pol-4}
&\frac{C}{n} E^{z}_{1} \otimes E^{z}_{2} \Big\{ {\bf 1}_{\tilde{F}^{4} \cap \{ k_{0} = k \}} \max_{w_{1} \in \partial B ( w, \frac{\epsilon l n}{4} ) } P^{w_{1}}_{3} \Big( \eta^{2}_{\frac{n}{16}} (\gamma ) \cap S^{3} [ 0 , \tau^{3}_{n}] = \emptyset \Big) \Big\} \notag \\
&\le \frac{C}{n} P^{z}_{1} \otimes P^{z}_{2} \Big( \Big( \eta^{2}_{z, 2^{-k} n, \frac{n}{4} } ( \gamma ) \setminus \eta^{1}_{z, \frac{n}{16}} ( \eta^{2}_{z, 2^{-k} n, \frac{n}{4} } ( \gamma ) ) \Big) \cap B (z, 2^{-(k-1)} n ) \neq \emptyset, \ \tilde{F}^{4}_{k} \Big) \notag \\
&\le \frac{C}{n} P^{z}_{1} \otimes P^{z}_{2} \Big( \Big( \eta^{2}_{z, 2^{-k} n, \frac{n}{4} } ( \gamma ) \setminus \eta^{1}_{z, \frac{n}{16}} ( \eta^{2}_{z, 2^{-k} n, \frac{n}{4} } ( \gamma ) ) \Big) \cap B (z, 2^{-(k-1)} n ) \neq \emptyset \Big) \times P^{z}_{1} \otimes P^{z}_{2} \Big( \tilde{F}^{4}_{k} \Big) \notag \\
&\le \frac{C}{n} Es ( 2^{-k} n, n ) 2^{- c k} P^{z}_{1} \otimes P^{z}_{2} \Big( \tilde{F}^{4}_{k} \Big),
\end{align}
for some $c > 0$. Here $\tilde{F}^{4}_{k}$ is defined by
\begin{align*}
&\tilde{F}^{4}_{k} := \Big\{  S^{2} [1, T^{2}_{z, \frac{l \epsilon n}{2}} ] \cap \overline{(\epsilon n B'_{x})} = \emptyset, \ S^{2} ( T^{2}_{z, \frac{l \epsilon n}{2}} ) = z', \ \eta^{2}_{2^{-(k+2)} n} \big( \gamma \big) \cap S^{2} [ 0 , T^{2}_{z, \frac{l \epsilon n}{2}} ] = \emptyset, \notag \\
&  \ \ \ \  \ \ \ \ \ \   \ \Big( \eta^{1}_{z, 2^{r} \epsilon n} \big( \gamma \big) \setminus \eta^{1}_{z, 2^{r-1} \epsilon n} \big( \gamma \big) \Big) \cap \overline{(\epsilon n B'_{x})} \neq \emptyset \Big\}.
\end{align*}
But by Lemma \ref{escape-25}, Proposition \ref{escape-3}, and the strong Markov property, RHS of \eqref{pol-4} is bounded above by
\begin{equation}\label{pol-5}
\frac{C}{n} \frac{1}{\epsilon n}  2^{- c k}  2^{-c r} Es ( \epsilon n, n ) P^{z}_{2} \big(  S^{2} ( T^{2}_{z, \frac{l \epsilon n}{2}} ) = z' \big),
\end{equation}
for some $c > 0$. Taking sum for $k$, we have 
\begin{align}\label{pol-6}
&\frac{C}{n} E^{z}_{1} \otimes E^{z}_{2} \Big\{ {\bf 1}_{\tilde{F}^{4} \cap \text{Case-2}} \max_{w_{1} \in \partial B ( w, \frac{\epsilon l n}{4} ) } P^{w_{1}}_{3} \Big( \eta^{2}_{\frac{n}{16}} (\gamma ) \cap S^{3} [ 0 , \tau^{3}_{n}] = \emptyset \Big) \Big\} \notag \\
&\le \frac{C}{n} \frac{1}{\epsilon n}  2^{-c r} Es ( \epsilon n, n ) P^{z}_{2} \big(  S^{2} ( T^{2}_{z, \frac{l \epsilon n}{2}} ) = z' \big).
\end{align}
So we finish Case-2, and Lemma \ref{hosoku-3} is proved.
\end{proof}

\medskip

Now we return to the proof of Theorem \ref{key thm}. Using Lemma \ref{hosoku-1}, \ref{hosoku-2}, \ref{key-21}, and \ref{hosoku-3}, by \eqref{key-38},
\begin{align}\label{key-64}
&P^{0}_{1} \otimes P^{z}_{2} \otimes P^{w}_{3} \Big( F^{1}, \ q^{(1)}= r, \ S^{2} ( T^{2}_{z, \frac{l \epsilon n}{2}} ) = z' \Big) \le E^{0}_{1} \otimes E^{z}_{2} \Big( \tilde{p}_{1} {\bf 1}_{F^{2}} \Big) \notag \\
&\le \frac{C}{\epsilon n} \frac{1}{\epsilon l n} \frac{1}{n} \frac{1}{\epsilon n} 2^{-\delta r} Es (\epsilon n,  n ) Es (\epsilon n, \epsilon l  n ) P^{z}_{2} \Big( S^{2} ( T^{2}_{z, \frac{l \epsilon n}{2}} ) = z' \Big).
\end{align}
Taking sum for $z' \in \partial B (z, \frac{ \epsilon l n}{2} )$ and $0 \le r \le \log_{2} l -3 $, by \eqref{key-12}, we see that 
\begin{equation}\label{key-65}
P^{0}_{1} \otimes P^{z}_{2} \otimes P^{w}_{3} (F^{1}, \ q^{(1)} \le \log_{2} l -3) \le \frac{C}{\epsilon n} \frac{1}{\epsilon l n} \frac{1}{n} \frac{1}{\epsilon n}  Es (\epsilon n,  n ) Es (\epsilon n, \epsilon l  n ).
\end{equation}
(Recall that $F^{1}$ was defined in \eqref{key-10}.) For the case that $q^{(1)} \ge \log_{2} l -3$, by the same argument as above, one can prove that
\begin{equation}\label{key-66}
P^{0}_{1} \otimes P^{z}_{2} \otimes P^{w}_{3} (F^{1}, \ q^{(1)} \ge \log_{2} l -3) \le \frac{C}{\epsilon n} \frac{1}{\epsilon l n} \frac{1}{n} \frac{1}{\epsilon n}  Es (\epsilon n,  n ) Es (\epsilon n, \epsilon l  n ).
\end{equation} 
(We shall omit the proof of \eqref{key-66} and leave it to the reader.) Taking sum for $z \in \partial (\epsilon n B'_{x})$ and $w \in \partial (\epsilon n B'_{y})$, by \eqref{key-9}, we have
\begin{align}\label{key-67}
&P \big( LE ( S[0, T^{x} ] ) [0, \overline{\sigma}'_{1}] \cap S[ T^{x}, \tau_{n} ] = \emptyset, \ LE ( S[0, T^{y} ] ) [0, \overline{\sigma}'_{2}] \cap S[ T^{y}, \tau_{n} ] = \emptyset, \ T^{x} < T^{y} < \tau_{n} \big) \notag \\
&\le \frac{C \epsilon }{l} Es (\epsilon n,  n ) Es (\epsilon n, \epsilon l  n ).
\end{align}
(Note that $\sharp \{ z \in \partial (\epsilon n B'_{x}) \cap \mathbb{Z}^{3} \} \le C (\epsilon n )^{2}$.)  Combining \eqref{key-67} with \eqref{key-5} and \eqref{key-8}, we finish the proof of Theorem \ref{key thm}.
\end{proof}

\subsection{Estimates of the number of boxes hit by ${\cal K}$}

Now we are ready to estimate the first and the second moment of the number of cubes hit by ${\cal K}$. For $\epsilon > 0$, let 
\begin{equation}\label{number-box}
Y^{\epsilon} := \sharp \big\{ x \in \mathbb{Z}^{3} \ | \ \epsilon B_{x} \subset D_{\frac{2}{3}} \setminus D_{\frac{1}{3}}, \ {\cal K} \cap \epsilon B_{x} \neq \emptyset \big\}
\end{equation}
(Recall that $B_{x}$ was defined in \eqref{box}.) In this subsection, we will give a lower bound of $Y^{\epsilon}$ in Corollary \ref{positive-low}. In order to prove it, we first estimate the second moment of $Y^{\epsilon}$ (see Corollary \ref{second-moment} below) using Theorem \ref{key thm}. Then we also give a lower bound of $E ( Y^{\epsilon })$ in Proposition \ref{first-moment-low}, and using the second moment method we get Corollary \ref{positive-low} in the end of this subsection.

\medskip

Theorem \ref{key thm} and estimates of escape probabilities introduced as in Section 2.2 immediately show the following corollary, which gives a second moment estimate of $Y^{\epsilon}$.

\begin{cor}\label{second-moment}
Take $\epsilon > 0$ and fix $n=n_{\epsilon} = 2^{j_{\epsilon }}$ such that \eqref{skorohod-2} holds. Then there exists an absolute constant $C < \infty$ such that
\begin{equation}\label{second-1}
E \big( (Y^{\epsilon})^{2} \big) \le C \Big\{ \epsilon^{-2} Es ( \epsilon n , n ) \Big\}^{2}.
\end{equation}
\end{cor}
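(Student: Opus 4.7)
The plan is to expand $(Y^{\epsilon})^{2}$ as a double sum indexed by admissible cubes and split the expectation into a diagonal and off-diagonal piece:
\[
E\bigl((Y^{\epsilon})^{2}\bigr) = \sum_{x} P\bigl({\cal K} \cap \epsilon B_{x} \neq \emptyset\bigr) + \sum_{x \neq y} P\bigl({\cal K} \cap \epsilon B_{x} \neq \emptyset,\ {\cal K} \cap \epsilon B_{y} \neq \emptyset\bigr),
\]
where all sums run over $x, y \in \mathbb{Z}^{3}$ with $\epsilon B_{x}, \epsilon B_{y} \subset D_{\frac{2}{3}} \setminus D_{\frac{1}{3}}$.

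For the off-diagonal piece I plug in Theorem \ref{key thm} and reorganize the sum by $l = |x-y|$, which runs over $1 \le l \le C \epsilon^{-1}$. Using the volume bounds $O(\epsilon^{-3})$ for the number of admissible $x$ and $O(l^{2})$ for the number of admissible $y$ at distance exactly $l$ from $x$, the off-diagonal contribution is bounded by $C \epsilon^{-2} Es(\epsilon n, n) \sum_{l=1}^{C/\epsilon} l\, Es(\epsilon n, l \epsilon n)$. To estimate the remaining sum I apply \eqref{escape-10} of Lemma \ref{escape-8} with $\kappa > 0$ chosen small enough that $\alpha + \kappa < 1$ (possible since $\alpha \in [\frac{1}{3}, 1)$ by Theorem \ref{escape-5}). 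This gives $Es(\epsilon n, l \epsilon n) \le C_{\kappa} (\epsilon l)^{-(\alpha+\kappa)} Es(\epsilon n, n)$, and then the standard estimate $\sum_{l=1}^{C/\epsilon} l^{1-\alpha-\kappa} \le C'_{\kappa} \epsilon^{-(2-\alpha-\kappa)}$ produces
\[
\sum_{l=1}^{C/\epsilon} l\, Es(\epsilon n, l \epsilon n) \le C''_{\kappa} \epsilon^{-(\alpha+\kappa)} Es(\epsilon n, n) \cdot \epsilon^{-(2-\alpha-\kappa)} = C''_{\kappa} \epsilon^{-2} Es(\epsilon n, n).
\]
Combining these bounds shows the off-diagonal contribution is $\le C\bigl(\epsilon^{-2} Es(\epsilon n, n)\bigr)^{2}$, as desired.

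For the diagonal piece I use the one-box estimate $P({\cal K} \cap \epsilon B_{x} \neq \emptyset) \le C \epsilon\, Es(\epsilon n, n)$ (recalled in the proof of Theorem \ref{key thm} from Lemma 7.1 of \cite{SS}), which gives a diagonal contribution of order $\epsilon^{-2} Es(\epsilon n, n)$. This is absorbed into the target bound $(\epsilon^{-2} Es(\epsilon n, n))^{2}$ because Theorem \ref{escape-5} yields $\epsilon^{-2} Es(\epsilon n, n) \ge c_{\kappa} \epsilon^{-(2 - \alpha - \kappa)} \to \infty$ as $\epsilon \to 0$, so its square dominates itself for small $\epsilon$. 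There is no substantive obstacle here: Corollary \ref{second-moment} is a direct bookkeeping consequence of the two-point estimate in Theorem \ref{key thm} and the escape-probability bounds from Section 2.2, with the only slightly delicate point being the verification that the constraint $\alpha + \kappa < 1$ needed for the $l$-sum to be dominated by its largest term is compatible with the known upper bound $\alpha < 1$.
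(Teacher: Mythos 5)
Your proposal is correct and follows essentially the same route as the paper: the same diagonal/off-diagonal split, the same substitution of Theorem \ref{key thm} with the sum reorganized over $l=|x-y|$, and the same use of Lemma \ref{escape-8} to reduce $\sum_{l} l\, Es(\epsilon n, l\epsilon n)$ to $C\epsilon^{-2}Es(\epsilon n, n)$ after choosing the exponent correction $\kappa$ with $\alpha+\kappa<1$. The only cosmetic difference is that you invoke \eqref{escape-10} directly where the paper rederives it from \eqref{escape-9}.
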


\begin{proof}
By Theorem \ref{key thm}, we have
\begin{align}\label{second-2}
&E \big( (Y^{\epsilon})^{2} \big) \le \sum_{|x|, |y| \in [\frac{1}{3 \epsilon }, \frac{2}{3 \epsilon } ] } P \Big( {\cal K} \cap \epsilon B_{x} \neq \emptyset, \ {\cal K} \cap \epsilon B_{y} \neq \emptyset \Big) \notag \\
&\le C \sum_{|x| \in [\frac{1}{3 \epsilon }, \frac{2}{3 \epsilon } ] } \sum_{l=1}^{\frac{2}{\epsilon}} l^{2} Es ( \epsilon n , l \epsilon n) Es ( \epsilon n , n )  \frac{\epsilon}{l} + \sum_{|x| \in [\frac{1}{3 \epsilon }, \frac{2}{3 \epsilon } ] } P \Big( {\cal K} \cap \epsilon B_{x} \neq \emptyset \Big) \notag \\
&\le C \epsilon^{-2}  Es ( \epsilon n , n ) \sum_{l=1}^{\frac{2}{\epsilon}} l Es ( \epsilon n , l \epsilon n) 
\end{align}

By Lemma \ref{escape-8}, for any $\delta > 0$ there exists $C= C_{\delta } < \infty$ such that
\begin{equation*}
(\epsilon l n )^{\alpha + \delta} Es ( \epsilon l n ) \le C_{\delta } n^{\alpha + \delta } Es (n).
\end{equation*}
Dividing both sides by $(\epsilon l n)^{\alpha + \delta} Es ( \epsilon n )$ and using \eqref{escape-4}, we have
\begin{equation*}
Es ( \epsilon n, \epsilon l n ) \le C_{\delta} \epsilon^{-\alpha - \delta } l^{-\alpha - \delta} Es ( \epsilon n, n ).
\end{equation*}
Fix $\delta > 0$ so that $1- \alpha - \delta > 0$. Combining this with \eqref{second-2}, we have 
\begin{align}\label{second-3}
&\epsilon^{-2}  Es ( \epsilon n , n ) \sum_{l=1}^{\frac{2}{\epsilon}} l Es ( \epsilon n , l \epsilon n) \le C_{\delta} \epsilon^{-2}  Es ( \epsilon n , n ) \sum_{l=1}^{\frac{2}{\epsilon}} l \epsilon^{-\alpha - \delta } l^{-\alpha - \delta} Es ( \epsilon n, n ) \notag \\
&= C_{\delta} \epsilon^{-2-\alpha - \delta } Es ( \epsilon n, n )^{2} \sum_{l=1}^{\frac{2}{\epsilon}} l^{1-\alpha - \delta } \le C_{\delta} \epsilon^{-2-\alpha - \delta } Es ( \epsilon n, n )^{2} \epsilon^{-2 + \alpha + \delta} = C_{\delta} \epsilon^{-4} Es ( \epsilon n, n )^{2},
\end{align}
which finishes the proof.
\end{proof}

\medskip

Take $\epsilon > 0$. Fix $n=n_{\epsilon} = 2^{j_{\epsilon }}$ such that \eqref{skorohod-2} holds. Recall that $B'_{x} := \prod_{i=1}^{3} [x_{i} -2, x_{i} + 2]$ was defined just before \eqref{key-4}. In the proof of Lemma 7.1 \cite{SS}, it was shown that 
\begin{equation*}
P \big( LE ( S [0, \tau_{n} ] ) \cap \epsilon n B'_{x} \neq \emptyset \big) \le C \epsilon Es ( \epsilon n , n ),
\end{equation*}
for $x \in \mathbb{Z}^{3}$ with $ \frac{1}{3} \le | \epsilon x | \le \frac{2}{3}$. Using this and \eqref{skorohod-2}, we have 
\begin{align*}
& P \big( {\cal K} \cap \epsilon B_{x} \neq \emptyset \big) \\
&\le P \big( {\cal K} \cap \epsilon B_{x} \neq \emptyset, \   d_{\text{H}} ( \text{LEW}_{n}, {\cal K} ) < \epsilon^{2} \big) + P \big( {\cal K} \cap \epsilon B_{x} \neq \emptyset, \   d_{\text{H}} ( \text{LEW}_{n}, {\cal K} ) \ge \epsilon^{2} \big) \\
&\le P \big( \text{LEW}_{n} \cap \epsilon B'_{x} \neq \emptyset \big) + \epsilon^{100} \le C \epsilon Es ( \epsilon n , n ).
\end{align*}
So we see that 
\begin{equation}\label{first-moment-up}
E ( Y^{\epsilon} ) \le C \epsilon^{-2} Es ( \epsilon n, n ).
\end{equation}

\medskip

In the next proposition, we will give the lower bound of $E ( Y^{\epsilon} ) $. As Remark 7.2 \cite{SS} states, its proof is almost included in the proof of Lemma 7.1 \cite{SS}. However we will give the proof for completeness.

\begin{prop}\label{first-moment-low}
Take $\epsilon > 0$ and fix $n=n_{\epsilon} = 2^{j_{\epsilon }}$ such that \eqref{skorohod-2} holds. Then there exists an absolute constant $c > 0$ such that 
\begin{equation}\label{moment-1}
E ( Y^{\epsilon} ) \ge c \epsilon^{-2} Es ( \epsilon n, n ).
\end{equation}
\end{prop}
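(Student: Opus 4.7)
My plan is to follow the strategy of the upper-bound counterpart in Lemma 7.1 of \cite{SS} and produce a matching lower bound. First, by the Skorokhod coupling in \eqref{skorohod-2}, and since $Es(\epsilon n, n) \ge \epsilon^{\alpha + o(1)}$ with $\alpha < 1$ by Theorem \ref{escape-5}, the coupling error $\epsilon^{100}$ is negligible compared with $\epsilon\, Es(\epsilon n, n) \ge \epsilon^{1+\alpha+o(1)}$. It therefore suffices to establish the per-box lower bound
\[ P\bigl( LE(S[0, \tau_n]) \cap \epsilon n B_x \neq \emptyset \bigr) \ge c' \, \epsilon \, Es(\epsilon n, n) \]
for each $x \in \mathbb{Z}^3$ with $\frac{1}{3} \le |\epsilon x| \le \frac{2}{3}$; summing over the $\asymp \epsilon^{-3}$ such $x$ then yields the claim.

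For the per-box bound, I would introduce the last-exit time $T^x := \max\{t < \tau_n : S(t) \in \overline{\epsilon n B_x}\}$ on the event that $S[0, \tau_n]$ visits $\overline{\epsilon n B_x}$, and use the inclusion of events
\[ \{LE(S[0, \tau_n]) \cap \overline{\epsilon n B_x} \neq \emptyset\} \supset \{T^x \text{ is defined and } LE(S[0, T^x]) \cap S[T^x+1, \tau_n] = \emptyset\}. \]
On the right-hand event, the terminal vertex $S(T^x) \in \overline{\epsilon n B_x}$ of $LE(S[0, T^x])$ is preserved in $LE(S[0, \tau_n])$ via the concatenation identity \eqref{comp-1}. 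By Proposition 1.5.10 of \cite{Law-book90}, the entrance probability $P(S[0, \tau_n] \cap \overline{\epsilon n B_x} \neq \emptyset)$ is of order $\epsilon$ uniformly in $x$ in the indicated range, so the remaining task is to show that the conditional non-intersection probability is bounded below by a constant multiple of $Es(\epsilon n, n)$.

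For this conditional estimate I would perform a last-exit decomposition at $T^x$, sum over landing points $z$ on the boundary of $\overline{\epsilon n B_x}$, apply the Markov property, and use the time reversibility of LERW (Lemma \ref{reversal}) to rewrite the event in terms of two essentially independent walks $S^1, S^2$ started near $z$: $S^1$ reaches the origin before $\tau^1_n$ and $S^2$ exits $B(n)$ without returning to $\overline{\epsilon n B_x}$, subject to $LE(S^1[0, T^1_0]) \cap S^2[1, \tau^2_n] = \emptyset$. By Proposition 4.4 of \cite{Mas}, conditioning $S^1$ to hit the origin does not alter the LERW distribution (up to multiplicative constants) on scales at most $n/4$ around $z$, so the problem reduces to lower-bounding a two-walk non-intersection probability at inner scale $\epsilon n$ and outer scale $\asymp n$, which by Lemma \ref{escape-25} is of order $Es(\epsilon n, n)$.

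The main obstacle is justifying that the geometric constraints (both walks avoiding $\overline{\epsilon n B_x}$ after leaving, and $S^1$ reaching the origin) only cost a constant factor and not additional powers of $\epsilon$. This is precisely where the separation lemma (Lemma \ref{escape-17}) enters: one restricts to the positive-probability core of non-intersecting configurations in which the two walks lie in opposite half-spaces at scale $\asymp n/4$, after which deterministic path extensions can force $S^1$ to the origin and $S^2$ to $\partial B(n)$ while respecting the avoidance conditions, each succeeding with probability bounded below by the Harnack principle (Theorem 1.7.6 of \cite{Law-book90}). This is the same mechanism underlying the proof of Lemma 7.1 in \cite{SS}, and Remark 7.2 there indeed notes that the matching lower bound is essentially contained in that argument.
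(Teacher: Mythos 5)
Your overall architecture matches the paper's (reduce via the coupling, prove a per-box bound $\ge c\,\epsilon\, Es(\epsilon n,n)$, last-exit decomposition at the cube, time reversal, then the separation lemma plus Lemma \ref{escape-25} and Harnack to absorb the geometric constraints). However, there is a genuine gap in your choice of the sufficient event. You require the \emph{entire} loop-erasure $LE(S[0,T^x])$ to be disjoint from $S[T^x+1,\tau_n]$ so that its terminal vertex survives. After reversal this becomes the event that the \emph{full} loop-erasure of $S^1$ from $z$ to the origin avoids $S^2[1,\tau^2_n]$, with both walks issued from the same boundary point $z$ of the cube. That is a non-intersection event down to scale $O(1)$, not merely at scales between $\epsilon n$ and $n$: its probability is comparable to $Es(n)\asymp Es(\epsilon n)\,Es(\epsilon n, n)$ rather than $Es(\epsilon n,n)$, so your per-box bound comes out as $c\,\epsilon\,Es(n)$, short of the target by the factor $Es(\epsilon n)=(\epsilon n)^{-\alpha+o(1)}$. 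Relatedly, Lemma \ref{escape-25} cannot be invoked for this event, since $F_{L,R,n}$ is defined through the truncation $\eta^2_{0,n,Rn}$, i.e.\ only the part of the loop-erasure \emph{after its last visit} to the inner ball, not the whole path.

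The missing idea is the relaxation the paper uses. One sets $T$ to be the last visit of $S$ to a small inner ball $B_1\subset B_3\subset \epsilon n B_x$ and only requires $LE(S[0,T])[0,\tau]\cap S[T+1,\tau_n]=\emptyset$, where $\tau$ is the \emph{first} time $LE(S[0,T])$ enters $B_3$. One then checks via the concatenation identity \eqref{comp-1} that this weaker requirement still forces $LE(S[0,\tau_n])$ to meet $B_3$: either the whole segment $LE(S[0,T])[0,\tau]$ survives, or the surviving part terminates at a point of $S[T,\tau_n]$ not in $S[T+1,\tau_n]$, which must be $S(T)\in B_1$. After time reversal, the segment $LE(S[0,T])[0,\tau]$ becomes exactly the portion of $LE(S^1[0,\tau^1_0])$ \emph{after its last visit} to the $\epsilon n$-scale ball, which is the $Es(\epsilon n,n)$-type event to which Lemma \ref{escape-17} and Lemma \ref{escape-25} apply. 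With that single modification, the rest of your outline (entrance cost $\asymp\epsilon$ from summing $\asymp(\epsilon n)^2$ boundary points each contributing $\asymp\tfrac{1}{\epsilon n}\cdot\tfrac{1}{n}$, Proposition 4.4 of \cite{Mas}, separation, Harnack) goes through as you describe.
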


\begin{proof}
Take $x=(x_{1}, x_{2}, x_{3} ) \in \mathbb{Z}^{3}$ with $ \frac{1}{3} \le | \epsilon x | \le \frac{2}{3}$. Let $x'= (x_{1} + \frac{1}{2}, x_{2}+ \frac{1}{2}, x_{3} + \frac{1}{2})$ be the center of $B_{x}$. Let $y = \epsilon n x'$. We write $B_{1} = B (y, \frac{\epsilon n}{1000} )$, $B_{2} = B (y, \frac{\epsilon n}{3} )$ and $B_{3} = B (y, \frac{\epsilon n}{2} )$  throughout the proof.

Using \eqref{skorohod-2}, it suffices to show that 
\begin{equation}\label{moment-2}
P \big( LE ( S [0, \tau_{n} ] ) \cap B_{3} \neq \emptyset \big) \ge c \epsilon Es ( \epsilon n , n ),
\end{equation}
Let $T := \max \{ t \ | \ S(t) \in B_{1} \}$ and $\tau := \min \{ t \ | \ LE ( S[0, T ] ) (t) \in B_{3} \}$. Then we have 
\begin{equation*}
P \big( LE ( S [0, \tau_{n} ] ) \cap B_{3} \neq \emptyset \big) \ge P \big( T < \tau_{n}, \  LE ( S[0, T ] ) [0, \tau ] \cap S[T+1, \tau_{n} ] = \emptyset \big).
\end{equation*}
By the last exit decomposition as in \eqref{key-9} and reversing a path, we have 
\begin{align*}
&P \big( T < \tau_{n}, \  LE ( S[0, T ] ) [0, \tau ] \cap S[T+1, \tau_{n} ] = \emptyset \big) \\
&\ge \sum_{z \in \partial_{i} B_{1} } P^{z}_{1} \otimes P^{z}_{2} \Big( \tau^{1}_{0} < \tau^{1}_{n}, \ S^{2}[1, \tau^{2}_{n}] \cap B_{1} = \emptyset, \ LE ( S^{1} [0, \tau^{1}_{0} ] ) [ \sigma_{1}, \sigma_{2} ] \cap S^{2} [0, \tau^{2}_{n} ] = \emptyset \Big),
\end{align*}
where $\tau^{1}_{0} = \inf \{ t \ | \ S^{1} (t) = 0 \}$, $\sigma_{1} = \max \{ t \ | \ LE ( S^{1} [0, \tau^{1}_{0} ] ) (t) \in B_{3} \}$ and $\sigma_{2} = \text{len} LE ( S^{1} [0, \tau^{1}_{0} ] )$. 

Let $\tau^{2}_{B_{2}} := \{ t \ | \ S^{2} (t) \in \partial B_{2} \}$. Then for each $z \in \partial_{i} B_{1}$,
\begin{align*}
&P^{z}_{1} \otimes P^{z}_{2} \Big( \tau^{1}_{0} < \tau^{1}_{n}, \ S^{2}[1, \tau^{2}_{n}] \cap B_{1} = \emptyset, \ LE ( S^{1} [0, \tau^{1}_{0} ] ) [ \sigma_{1}, \sigma_{2} ] \cap S^{2} [0, \tau^{2}_{n} ] = \emptyset \Big) \\
&\ge P^{z}_{2} \Big(  S^{2}[1, \tau^{2}_{B_{2}}] \cap B_{1} = \emptyset \Big) \\
&\ \ \times E^{z}_{1} \Big\{ {\bf 1}_{ \{ \tau^{1}_{0} < \tau^{1}_{n} \} } \min_{w \in \partial B_{2}} P^{w}_{2} \Big( S^{2} [0, \tau^{2}_{n}] \cap B_{1} = \emptyset, \ LE ( S^{1} [0, \tau^{1}_{0} ] ) [ \sigma_{1}, \sigma_{2} ] \cap S^{2} [0, \tau^{2}_{n} ] = \emptyset \Big) \Big\}.
\end{align*}
However, by the Harnack principle (see Theorem 1.7.6 \cite{Law-book90}) and Proposition 1.5.10 \cite{Law-book90}, for any $w \in \partial B_{2}$, 
\begin{align*}
&P^{w}_{2} \Big( S^{2} [0, \tau^{2}_{n}] \cap B_{1} = \emptyset, \ LE ( S^{1} [0, \tau^{1}_{0} ] ) [ \sigma_{1}, \sigma_{2} ] \cap S^{2} [0, \tau^{2}_{n} ] = \emptyset \Big) \\
&= P^{w}_{2} \Big( LE ( S^{1} [0, \tau^{1}_{0} ] ) [ \sigma_{1}, \sigma_{2} ] \cap S^{2} [0, \tau^{2}_{n} ] = \emptyset \Big) \\
& - P^{w}_{2} \Big( S^{2} [0, \tau^{2}_{n}] \cap B_{1} \neq \emptyset, \ LE ( S^{1} [0, \tau^{1}_{0} ] ) [ \sigma_{1}, \sigma_{2} ] \cap S^{2} [0, \tau^{2}_{n} ] = \emptyset \Big) \\
&\ge c P^{z}_{2} \Big( LE ( S^{1} [0, \tau^{1}_{0} ] ) [ \sigma_{1}, \sigma_{2} ] \cap S^{2} [0, \tau^{2}_{n} ] = \emptyset \Big) \\
& - P^{w}_{2} \Big( S^{2} [0, \tau^{2}_{n}] \cap B_{1} \neq \emptyset, \ LE ( S^{1} [0, \tau^{1}_{0} ] ) [ \sigma_{1}, \sigma_{2} ] \cap S^{2} [0, \tau^{2}_{n} ] = \emptyset \Big) \\
&\ge \frac{c}{2} P^{z}_{2} \Big( LE ( S^{1} [0, \tau^{1}_{0} ] ) [ \sigma_{1}, \sigma_{2} ] \cap S^{2} [0, \tau^{2}_{n} ] = \emptyset \Big).
\end{align*}
Note that the last inequality holds since we let the radius of $B_{1}$ be $\frac{\epsilon n}{1000}$.

Thus,
\begin{align*}
&P^{z}_{1} \otimes P^{z}_{2} \Big( \tau^{1}_{0} < \tau^{1}_{n}, \ S^{2}[1, \tau^{2}_{n}] \cap B_{1} = \emptyset, \ LE ( S^{1} [0, \tau^{1}_{0} ] ) [ \sigma_{1}, \sigma_{2} ] \cap S^{2} [0, \tau^{2}_{n} ] = \emptyset \Big) \\
&\ge \frac{c}{\epsilon n} P^{z}_{1} \otimes P^{z}_{2} \Big( \tau^{1}_{0} < \tau^{1}_{n}, \ LE ( S^{1} [0, \tau^{1}_{0} ] ) [ \sigma_{1}, \sigma_{2} ] \cap S^{2} [0, \tau^{2}_{n} ] = \emptyset \Big).
\end{align*}

Let $B = B (z, \frac{n}{4} )$ and $\tau_{i} = \inf \{ t \ | \ S^{i} (t) \in \partial B \}$ for $i=1, 2$. We write $\gamma = LE ( S^{1} [0, \tau_{1} ] )$ and let $\sigma := \max \{ t \ | \ \gamma (t) \in B_{3} \}$. We define events $F$ and $G$ by 
\begin{align*}
&F = \Big\{ \text{dist} \Big( \gamma ( \text{len} \gamma ), S^{2} [0, \tau_{2} ] \Big) \ge \frac{n}{12}, \ \text{dist} \Big( \gamma [\sigma ,  \text{len} \gamma ), S^{2} ( \tau_{2} ) \Big) \ge \frac{n}{12} \Big\}, \\
& G = \Big\{ \gamma [0, \sigma ] \cap B \big( \gamma (\text{len} \gamma ) , \frac{n}{12} \big) = \emptyset \Big\}.
\end{align*}
By Proposition 1.5.10 \cite{Law-book90}, we have 
\begin{equation*}
P^{z}_{1} \otimes P^{z}_{2} \Big(  G^{c} \Big) \le  \frac{C \epsilon }{n}
\end{equation*}

By the strong Markov property,
\begin{align*}
&P^{z}_{1} \otimes P^{z}_{2} \Big( \tau^{1}_{0} < \tau^{1}_{n}, \ LE ( S^{1} [0, \tau^{1}_{0} ] ) [ \sigma_{1}, \sigma_{2} ] \cap S^{2} [0, \tau^{2}_{n} ] = \emptyset \Big) \\
&\ge P^{z}_{1} \otimes P^{z}_{2} \Big( \tau^{1}_{0} < \tau^{1}_{n}, \ F, \ G, \ \gamma [\sigma ,  \text{len} \gamma ) \cap S^{2} [0, \tau_{2} ] = \emptyset, \ S^{1} [ \tau_{1}, \tau^{1}_{0} ] \cap S^{2} [ \tau_{2}, \tau^{2}_{n} ] = \emptyset, \\
& \ \ \ \ \ \ \ \ \ \ \ \ \ \ \ \ \ \big(  S^{1} [ \tau_{1}, \tau^{1}_{0} ] \cap B \big) \subset B \big( S^{1} ( \tau_{1} ), \frac{n}{12} \big), \  \big(  S^{2} [ \tau_{2}, \tau^{2}_{n} ] \cap B \big) \subset B \big( S^{2} ( \tau_{2} ), \frac{n}{12} \big) \Big) \\
&\ge \frac{c}{n} P^{z}_{1} \otimes P^{z}_{2} \Big( F, \ G, \ \gamma [\sigma ,  \text{len} \gamma ) \cap S^{2} [0, \tau_{2} ] = \emptyset \Big) \\
&\ge \frac{c}{n} P^{z}_{1} \otimes P^{z}_{2} \Big( F,  \ \gamma [\sigma ,  \text{len} \gamma ) \cap S^{2} [0, \tau_{2} ] = \emptyset \Big) - \frac{C \epsilon}{n}.
\end{align*}

However, by Lemma \ref{escape-17} and \ref{escape-25}, we have 
\begin{equation*}
P^{z}_{1} \otimes P^{z}_{2} \Big( F,  \ \gamma [\sigma ,  \text{len} \gamma ) \cap S^{2} [0, \tau_{2} ] = \emptyset \Big) \ge c P^{z}_{1} \otimes P^{z}_{2} \Big( \gamma [\sigma ,  \text{len} \gamma ) \cap S^{2} [0, \tau_{2} ] = \emptyset \Big) \ge c Es ( \epsilon n, n ).
\end{equation*}
Combining these estimates, we see that 
\begin{equation}\label{sansho}
P \big( LE ( S [0, \tau_{n} ] ) \cap B_{3} \neq \emptyset \big) \ge \sum_{z \in \partial_{i} B_{1} } \frac{c}{\epsilon n} \frac{1}{n} Es ( \epsilon n, n ) \ge c \epsilon Es ( \epsilon n, n ),
\end{equation}
which finishes the proof.
\end{proof}

\medskip

By Corollary \ref{second-moment}, Proposition \ref{first-moment-low} and the second moment method, we get the following lower bound of $Y^{\epsilon}$.

\begin{cor}\label{positive-low}
Take $\epsilon > 0$ and fix $n=n_{\epsilon} = 2^{j_{\epsilon }}$ such that \eqref{skorohod-2} holds. Then there exists an absolute constant $c > 0$ such that 
\begin{equation}\label{moment-1}
P \Big( Y^{\epsilon} \ge c \epsilon^{-2} Es ( \epsilon n, n ) \Big) \ge c.
\end{equation}
\end{cor}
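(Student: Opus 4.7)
The plan is to deduce Corollary \ref{positive-low} as a direct second moment argument (Paley--Zygmund) using the two preceding results. From Proposition \ref{first-moment-low} we have a lower bound $E(Y^{\epsilon}) \ge c_{1} \epsilon^{-2} Es(\epsilon n, n)$, and from Corollary \ref{second-moment} we have the matching second moment upper bound $E((Y^{\epsilon})^{2}) \le C_{2} (\epsilon^{-2} Es(\epsilon n, n))^{2}$. The crucial point is that these two bounds together force $E(Y^{\epsilon})^{2}/E((Y^{\epsilon})^{2})$ to be bounded below by a positive absolute constant, independent of $\epsilon$ and of the choice of $n = n_{\epsilon}$, since the denominator $E((Y^{\epsilon})^{2})$ is of the \emph{same} order as the square of the lower bound on $E(Y^{\epsilon})$.

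Concretely, I would invoke the Paley--Zygmund inequality in the form
\begin{equation*}
P\Big( Y^{\epsilon} \ge \tfrac{1}{2} E(Y^{\epsilon}) \Big) \ge \frac{1}{4} \, \frac{E(Y^{\epsilon})^{2}}{E((Y^{\epsilon})^{2})}.
\end{equation*}
Substituting the two bounds above yields a right-hand side $\ge c_{1}^{2}/(4 C_{2})$, while the left-hand event contains $\{ Y^{\epsilon} \ge (c_{1}/2) \epsilon^{-2} Es(\epsilon n, n) \}$. Taking $c := \min\{ c_{1}/2, c_{1}^{2}/(4 C_{2}) \}$ gives \eqref{moment-1}.

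There is no real obstacle here: all the hard work is hidden in Theorem \ref{key thm} (which feeds the second moment estimate of Corollary \ref{second-moment}) and in Proposition \ref{first-moment-low}. The only thing worth double-checking is that the constants $c_{1}$ and $C_{2}$ produced by those two results genuinely do not depend on $\epsilon$ or on the choice of $n = n_{\epsilon}$, but both statements are indeed uniform in these parameters, so the Paley--Zygmund conclusion is uniform as well. This completes the plan; the corollary is essentially a one-line consequence of the preceding two estimates.
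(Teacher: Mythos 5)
Your proposal is correct and is exactly the argument the paper intends: the corollary is stated as an immediate consequence of Proposition \ref{first-moment-low}, Corollary \ref{second-moment} and the second moment (Paley--Zygmund) method, with no further input needed. Your bookkeeping of the constants and the containment $\{ Y^{\epsilon} \ge \tfrac{1}{2} E(Y^{\epsilon}) \} \subset \{ Y^{\epsilon} \ge (c_{1}/2)\, \epsilon^{-2} Es(\epsilon n, n) \}$ is right, and both input estimates are indeed uniform in $\epsilon$ and in the choice of $n_{\epsilon}$.
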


\section{Tightness of $\frac{Y^{\epsilon}}{ E(Y^{\epsilon} ) }$}

As we discussed in Section 1.2, in order to show that the Hausdorff dimension of ${\cal K}$ is equal to $2- \alpha$ almost surely ($\alpha$ is the exponent as in Theorem \ref{escape-5}), we need to improve Corollary \ref{positive-low}, i.e., we have to prove that for all $r > 0$ there exists $c_{r} > 0$ such that 
\begin{equation}\label{imp}
P \Big( Y^{\epsilon} \ge c_{r} \epsilon^{-2} Es ( \epsilon n, n ) \Big) \ge 1-r,
\end{equation}
 where $\epsilon > 0$ is an arbitrary positive number and $n=n_{\epsilon} = 2^{j_{\epsilon }}$ is an integer satisfying \eqref{skorohod-2}. 
 
 In order to prove \eqref{imp}, again we use the coupling of ${\cal K}$ and $\text{LEW}_{n}$ explained as in Section 1.2. Then \eqref{imp} boils down to the corresponding estimates for LERW as follows. Let $Y^{\epsilon}_{n}$ be the number of $\epsilon n$-cubes $n b_{x}$ with $\frac{1}{3} \le |\epsilon x| \le \frac{2}{3}$ such that $LE (S[ 0, \tau_{n} ] )$ hits $n b_{x}$. Then \eqref{imp} is reduced to proving that for all $r > 0$ there exists $c_{r} > 0$ such that
\begin{equation}\label{rough-1-1}
P \Big(  Y^{\epsilon}_{n} \ge c_{r} \epsilon^{-2} Es ( \epsilon n, n) \Big) \ge 1- r.
\end{equation}

To show \eqref{rough-1-1}, we will use ``iteration arguments" as in the proof of Theorem 6.7 of \cite{BM} and Theorem 8.2.6 of \cite{Shi-gr} where exponential lower tail bounds of $M_{n}$ were established for $d=2$ (\cite{BM}) and $d=3$ (\cite{Shi-gr}). We explain it here. Take integer $N$. Define a sequence of boxes $A_{i}$ by $A_{i} = [- \frac{n}{3} - \frac{i n }{N}, \frac{n}{3} + \frac{i n }{N} ]^{3}$ for $0 \le i \le \frac{N}{6}$. We write $\gamma = LE (S[ 0, \tau_{n} ] )$ and let $\tau (i) = \tau^{\gamma} (i)$ be the first time that $\gamma$ exits from $A_{i}$. It turns out that the expected number of $\epsilon n$-cubes hit by $\gamma_{i} := \gamma [\tau (i), \tau (i+1 ) ]$ is of order $( \epsilon N )^{-2} Es (\epsilon n, \frac{n}{N} )$ for each $i$. Conditioned on $\gamma [0, \tau (i)] = \lambda$ for a given path $\lambda$, we are interested in the probability that the number of $\epsilon n$-cubes hit by $\gamma_{i}$ is bigger than $c_{1} ( \epsilon N )^{-2} Es (\epsilon n, \frac{n}{N} )$ (we denote this probability by $p (\lambda)$). The domain Markov property (see Lemma \ref{domain}) tells that we need to study a random walk conditioned not to intersect $\lambda$. We will study such a conditioned random walk in Section 4.1 and show that there exists a universal constant $c_{1} > 0$ which does not depend on $\lambda$ such that the probability $p ( \lambda ) $ above is larger than $c_{1}$ for every $i$ (see Lemma \ref{imp-38}). Using this and the domain Markov property, we have 

\begin{equation}\label{rough-2-1}
P \Big(  Y^{\epsilon}_{n} \le c_{1} ( \epsilon N )^{-2} Es (\epsilon n, \frac{n}{N} ) \Big) \le (1- c_{1})^{\frac{N}{6}}.
\end{equation}
Since $Es ( \epsilon n, \frac{n}{N} ) \ge c_{2} Es ( \epsilon n, n)$ for some absolute constant $c_{2} >0$, taking $N = N_{r}$ such that $(1- c_{1})^{\frac{N_{r}}{6}} < r$ first, then letting $c_{r} := c_{1} c_{2} N_{r}^{-2}$, we get \eqref{rough-1-1} and \eqref{imp} (see Proposition \ref{imp-43} and \ref{imp-46}).

%  In order to prove \eqref{imp}, we will use ``iteration arguments" as we explained in Section 1.2. The arguments were used in Theorem 6.7 \cite{BM} and in Theorem 8.3.1 \cite{Shi-gr} where exponential lower tail bounds for $\text{len} LE ( S[0, \tau_{n} ] )$ are established for $d=2$ and $d=3$, respectively. To follow such strategies, we need to study random walks conditioned not to hit a given path. In Section 4.1, we will consider those conditioned random walks.
%  
%  Using estimates derived in Section 4.1, we then prove \eqref{imp} in Section 4.2. In the proof of \eqref{imp}, we will use the iteration arguments explained in Section 1.2.
 
 \subsection{Loop-erasure of conditioned random walks}
 
Given a box and a simple path $\gamma$ contained in the inside of the box except the end point $\gamma ( \text{len} \gamma )$ which is lying on the boundary of the box. Following same spirits of Theorem 6.7 \cite{BM} and Theorem 8.2.6 \cite{Shi-gr}, we are interested in a random walk $X$ staring from $\gamma ( \text{len} \gamma )$ conditioned that $X [1, \tau ] \cap \gamma = \emptyset$ for some stopping time $\tau$. Estimates of such a conditioned random walk $X$ are crucial to prove \eqref{imp}. In this subsection, we will study $X$.

\medskip

We begin with some notation. 

\begin{dfn}\label{imp-notation}
Let $M \ge 20$. Fix $\epsilon > 0$ and take $n=n_{\epsilon} = 2^{j_{\epsilon }}$ such that \eqref{skorohod-2} holds. Define
\begin{align}\label{imp-1}
&k_{i} = \frac{1}{3} + \frac{i}{M} \text{ for } i=0, 1, \cdots , \frac{M}{20}, \notag \\
&D(i) = [-k_{i}, k_{i}]^{3}, \ A(i) = D(i+1) \setminus D(i), \ D_{i, n} = n D(i) \cap \mathbb{Z}^{3}, \ A_{i, n} = n A(i) \cap \mathbb{Z}^{3}.
\end{align}

Take $i \in \{ 0, 1, \cdots , \frac{M}{20} \}$. Suppose that $\gamma = \gamma_{i}$ is a simple path in $\mathbb{Z}^{3}$ with $\gamma (0) =0$, $\gamma [0, \text{len} \gamma -1 ] \subset D_{i, n}$ and $\gamma ( \text{len} \gamma ) \in \partial D_{i, n}$. Let $v = \gamma ( \text{len} \gamma )$. We denote a face of $\partial D_{i, n}$ containing $v$ by $\pi_{1}$. Let $\ell_{1}$ be the line segment starting at $v$ and terminating at $\partial D_{i+1, n}$ which is perpendicular to $\partial D_{i, n}$. We denote the middle point of $\ell_{1}$ by $o_{1}$. We define a set $F^{\gamma }_{i, n}$ by $F^{\gamma }_{i, n} := \big( o_{1} + [ - \frac{n}{8M}, \frac{n}{8M} ]^{3} \big) \cap \mathbb{Z}^{3}$.

Let $X= X^{\gamma }$ be the random walk conditioned to hit $\partial B_{n}$ before hitting $\gamma $, i.e., $X$ is the simple random walk $S$ started at $v$ conditioned on $\{ S[1, \tau_{n} ] \cap \gamma = \emptyset \}$.
\end{dfn}

\medskip

Suppose that $x, y \in \mathbb{Z}^{3}$ satisfy $\epsilon n B'_{x} \subset F^{\gamma }_{i, n}$ and $\epsilon n B'_{y} \subset F^{\gamma }_{i, n}$. ($B'_{x}$ was defined just before \eqref{key-4}.) Let $l:= |x-y|$. 
 
As in \eqref{key-5}, we are interested in 
\begin{equation}\label{imp-2}
P_{X} \Big(  LE ( X [0, \tau^{X}_{n} ] ) \cap \epsilon n B'_{x} \neq \emptyset, \  LE ( X[0, \tau^{X}_{n} ] ) \cap \epsilon n B'_{y} \neq \emptyset \Big),
\end{equation}
where we write $P_{X}$ for the probability law of $X$ and let $\tau^{X}_{n} := \inf \{ t \ | \ X(t) \in \partial B_{n} \}$. For this probability, we have the following lemma, which is an analog of Theorem \ref{key thm} for the probability that the loop erasure of $X$ hits two distinct cubes.

\begin{lem}\label{imp-3}
Let $X$ be the conditioned random walk defined in Definition \ref{imp-notation} and suppose that $x, y \in \mathbb{Z}^{3}$ satisfy $\epsilon n B'_{x} \subset F^{\gamma }_{i, n}$ and $\epsilon n B'_{y} \subset F^{\gamma }_{i, n}$ (see Definition \ref{imp-notation} for $F^{\gamma }_{i, n}$). Then there exists an absolute constant $C < \infty$ such that 
\begin{equation}\label{imp-4}
P_{X} \Big(  LE ( X [0, \tau^{X}_{n} ] ) \cap \epsilon n B'_{x} \neq \emptyset, \  LE ( X[0, \tau^{X}_{n} ] ) \cap \epsilon n B'_{y} \neq \emptyset \Big) \le \frac{C \epsilon M}{l} Es (\epsilon n, \epsilon l n ) Es (\epsilon n, \frac{n}{M} ),
\end{equation}
where $l= |x-y|$.
\end{lem}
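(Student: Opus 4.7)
The plan is to mirror the proof of Theorem \ref{key thm} very closely, with $\gamma$-conditioning replacing the unconditioned measure and $F^{\gamma}_{i,n}$ playing the role of the annulus $D_{2/3}\setminus D_{1/3}$. First I would reduce to the case $l \ge 10^{6}$: for $l \le 10^{6}$ one has $\mathrm{Es}(\epsilon n, l\epsilon n) \ge c$, so the lemma follows from the corresponding one-box estimate $P_{X}\bigl(LE(X[0,\tau^{X}_{n}]) \cap \epsilon n B'_{x} \neq \emptyset\bigr) \le C\epsilon M \cdot \mathrm{Es}(\epsilon n, n/M)$, which is proved by the same last-exit decomposition arguments on a single cube. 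Throughout, the conditioning factor $p_{\gamma}:= P^{v}(S[1,\tau_{n}]\cap \gamma = \emptyset)$ is bounded below by a positive universal constant (since $v\in\partial D_{i,n}$ has a positive probability to escape to $\partial D_{i+1,n}$ without returning and then to reach $\partial B_{n}$ away from $\gamma$), and so is absorbed into multiplicative constants.

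Next, assuming $l \ge 10^{6}$, I would split according to which of the two cubes is visited last by $X$ before $\tau^{X}_{n}$, and treat (say) $T^{x} < T^{y}$. The argument around \eqref{key-7}, showing that the loop-erasure of $X[0,T^{x}]$ up to first hitting $\partial(\epsilon n B'_{x})$ does not meet $X[T^{x},\tau^{X}_{n}]$, applies verbatim. Then the last-exit decomposition (Proposition 2.4.1 \cite{Law-book90}) together with reversal of the first segment rewrites the probability as a sum over entry points $z \in \partial(\epsilon n B'_{x})$, $w \in \partial(\epsilon n B'_{y})$ of a three-walk expression analogous to \eqref{key-9}: an independent walk $S^{1}$ from $z$ to $v$ avoiding $\gamma$, an independent walk $S^{2}$ from $z$ to $w$ avoiding $\epsilon n B'_{x}$ and $\gamma$, and an independent walk $S^{3}$ from $w$ to $\partial B_{n}$ avoiding $\epsilon n B'_{y}$ and $\gamma$, together with the loop-erasure non-intersection events.

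The three-walk probability is then estimated exactly as in Lemmas \ref{hosoku-2}, \ref{key-21}, and \ref{hosoku-3}. The distribution of $LE(S^{4}[0,T_{2i+1}])$ is controlled by the same induction as in Lemma \ref{key-21} (the argument is purely local to scales $\le \epsilon l n/4 \le n/(32 M)$ so does not see $\gamma$). The resulting middle-annulus non-intersection probability contributes the factor $\mathrm{Es}(\epsilon n, \epsilon l n)$. The adaptation in the $S^{1}$-side estimate (the analogue of Lemma \ref{hosoku-3}) replaces the outer scale $n$ by $n/M$: since $F^{\gamma}_{i,n}$ lies at distance $\gtrsim n/(2M)$ from $\gamma$, once $S^{1}$ escapes the cube of radius $n/(4M)$ around $v$ without hitting $\gamma$, the separation lemma (Lemma \ref{escape-17}) and Lemma \ref{escape-25}, applied with outer radius $n/M$, give a contribution $\mathrm{Es}(\epsilon n, n/M)$. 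This accounts for the outer escape factor in \eqref{imp-4}. The extra factor $M$ arises from reversing $S^{1}$: the Green function $G(v, z, B_{n}\setminus\gamma)$ for $z$ in $F^{\gamma}_{i,n}$ is of order $M/n$ rather than $1/n$, because $v$ lies on $\partial \gamma$ and a walk conditioned to avoid $\gamma$ visits nearby points at distance $n/M$ with an $M$-fold larger Green density than the unconditioned walk visits distance-$n$ points; equivalently, the transition density from $v$ to $F^{\gamma}_{i,n}$ in $B_{n}\setminus\gamma$ is enhanced by $M$ relative to the free one.

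The main obstacle is handling the $\gamma$-avoidance throughout the decomposition, in particular justifying the decoupling between the local behaviour in $F^{\gamma}_{i,n}$ (on scales $\le n/M$, where $\gamma$ is invisible) and the global behaviour above scale $n/M$ (where the conditioning contributes the factor $\mathrm{Es}(\epsilon n, n/M)$ and the $M$-boost via Green-function ratios). Concretely, the analogue of the application of Proposition 4.6 \cite{Mas} in Lemma \ref{hosoku-3} must be upgraded to an independence-up-to-constant statement for the $(S^{1}, S^{2})$-pair in the slit domain $B_{n}\setminus\gamma$; this can be done by running $S^{1}$ out from $v$ to $\partial B(v, n/(8M))$ first via an unconditioned random walk (which does not hit $\gamma$ with positive probability by Harnack), and only then imposing the $\gamma$-avoidance up to $\tau_{n}$. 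Once this decoupling is established, the remainder of the proof is mechanical and parallels the proof of Theorem \ref{key thm} line by line, yielding the bound \eqref{imp-4}.
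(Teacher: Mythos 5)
Your overall architecture (write $P_{X}(\cdot)$ as a ratio as in \eqref{imp-6}, last-exit decompose the numerator into three walks, reuse Lemmas \ref{hosoku-2}--\ref{hosoku-3} for the local factors $Es(\epsilon n,\epsilon l n)$ and $Es(\epsilon n,\frac{n}{M})$, and get the extra $M$ from the reversal at scale $\frac{n}{M}$) matches the paper. But the proposal rests on a false claim: that $p_{\gamma}=P^{v}\big(S[1,\tau_{n}]\cap\gamma=\emptyset\big)$ is bounded below by a universal constant and can be absorbed into the constants. Here $v=\gamma(\text{len}\,\gamma)$ is the \emph{tip} of the simple path $\gamma$, so the walk starts adjacent to $\gamma$ and must avoid all of it out to $\partial B(n)$; this probability decays polynomially in $n$ (in the application $\gamma$ is itself a loop-erased walk, and $p_{\gamma}$ is an escape-probability-type quantity of order $n^{-\alpha+o(1)}$, not a constant). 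The parenthetical justification you give is also wrong: the chance of reaching $\partial D_{i+1,n}$ from $v\in\partial D_{i,n}$ without re-entering $D_{i,n}$ is of order $\frac{M}{n}$ by gambler's ruin, and the same objection applies to your later assertions that $S$ from $v$ avoids $\gamma$ up to $\partial B(v,\frac{n}{8M})$ "with positive probability by Harnack" and that $G(v,z,B_{n}\setminus\gamma)\asymp \frac{M}{n}$; all of these are too large by exactly the missing avoidance factor.

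This is not a cosmetic issue, because the whole point of the lemma is a cancellation that your simplification erases. In the paper's proof the numerator is shown to carry the two $\gamma$-avoidance factors $P^{z}_{1}\big(\tau^{1}_{v}<\tau^{1}_{n},\ S^{1}[0,\tau^{1}_{v}]\cap(\gamma\setminus\{v\})=\emptyset\big)\le \frac{CM}{n}P^{v}\big(S[1,T_{v,\frac{n}{8M}}]\cap\gamma=\emptyset\big)$ (from reversing $S^{1}$) and $\max_{w_{\star}\in\partial W}P^{w_{\star}}\big(S[0,\tau_{n}]\cap\gamma=\emptyset\big)$ (from cutting $S^{3}$ at $\partial W$), and the key step \eqref{imp-23} is a quasi-multiplicativity estimate, via Proposition 6.1.1 of \cite{Shi-gr} and Harnack, showing that the product of these two factors is at most $\frac{1}{c}\,p_{\gamma}$, which then cancels the denominator in \eqref{imp-6}. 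If instead you bound $1/p_{\gamma}$ by a constant and drop the avoidance events in the numerator, the ratio you obtain is too large by a factor $1/p_{\gamma}$, which blows up with $n$. To repair the argument you must keep the avoidance factors through the decomposition and prove the analogue of \eqref{imp-23}; once that is in place, the rest of your outline does go through as in the paper.
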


\begin{proof}
Throughout the proof, we will use same notation defined in the proof of Theorem \ref{key thm}.
 
Define
\begin{equation}\label{imp-5}
T^{x}_{X} := \max \{ t \le \tau^{X}_{n} \ | \ X(t) \in \partial (\epsilon n B'_{x} ) \}, \ T^{y}_{X} := \max \{ t \le \tau^{X}_{n} \ | \ X(t) \in \partial (\epsilon n B'_{y} ) \}.
\end{equation}
As in \eqref{key-5}, it suffices to estimate
\begin{align}\label{imp-6}
&P_{X} \Big( LE ( X [0, \tau^{X}_{n} ] ) \cap \epsilon n B'_{x} \neq \emptyset, \  LE ( X[0, \tau^{X}_{n} ] ) \cap \epsilon n B'_{y} \neq \emptyset, \ T^{x}_{X} < T^{y}_{X} \Big) \notag \\
&= \frac{ P^{v} \Big( LE ( S [0, \tau_{n} ] ) \cap \epsilon n B'_{x} \neq \emptyset, \  LE ( S[0, \tau_{n} ] ) \cap \epsilon n B'_{y} \neq \emptyset, \ T^{x} < T^{y}, \ S[1, \tau_{n} ] \cap \gamma = \emptyset \Big) }{ P^{v} \Big( S[1, \tau_{n} ] \cap \gamma = \emptyset \Big) }.
\end{align}

By the last exit decomposition as in \eqref{key-9}, we have 
\begin{align}\label{imp-7}
&P^{v} \Big( LE ( S [0, \tau_{n} ] ) \cap \epsilon n B'_{x} \neq \emptyset, \  LE ( S[0, \tau_{n} ] ) \cap \epsilon n B'_{y} \neq \emptyset, \ T^{x} < T^{y}, \ S[1, \tau_{n} ] \cap \gamma = \emptyset \Big) \notag \\
&\le C \sum_{z \in \partial (\epsilon n B'_{x})} \sum_{w \in \partial (\epsilon n B'_{y})}  P^{v}_{1} \otimes P^{z}_{2} \otimes P^{w}_{3} \Big( F^{1}, \ S^{1} [1, \sigma^{1}_{z} ] \cap \gamma = \emptyset, \ S^{3} [0, \tau^{3}_{n} ] \cap \gamma = \emptyset \Big),
\end{align}
where $\sigma^{1}_{z}$ and  $F^{1}$ were defined as in \eqref{mendoi-1} and \eqref{key-10}, respectively.

Let 
\begin{equation}\label{imp-8}
W:= \big( o_{1} +  [ - \frac{n}{4M}, \frac{n}{4M} ]^{3} \big) \cap \mathbb{Z}^{3},
\end{equation}
where $o_{1}$ was defined in Definition \ref{imp-notation}. Then 
\begin{align}\label{imp-9}
&P^{v}_{1} \otimes P^{z}_{2} \otimes P^{w}_{3} \Big( F^{1}, \ S^{1} [1, \sigma^{1}_{z} ] \cap \gamma = \emptyset, \ S^{3} [0, \tau^{3}_{n} ] \cap \gamma = \emptyset \Big) \notag \\
&\le P^{v}_{1} \otimes P^{z}_{2} \otimes P^{w}_{3} \Big( F^{1}_{\star}, \ S^{1} [1, \sigma^{1}_{z} ] \cap \gamma = \emptyset, \ S^{3} [ \tau^{3}_{\partial W}, \tau^{3}_{n} ] \cap \gamma = \emptyset \Big),
\end{align}
where $\tau^{3}_{\partial W} = \{ t \ | \ S^{3} ( t) \in \partial W \}$ and 
\begin{align}\label{imp-10}
&F^{1}_{\star} := \Big\{  \sigma^{1}_{z} < \tau^{1}_{n}, \  \sigma^{2}_{w} < \tau^{2}_{n}, \ S^{2} [1, T^{2}_{z, 6 \epsilon n} ] \cap \overline{(\epsilon n B'_{x})} = \emptyset, \ S^{3} [ 1, T^{3}_{w, 6 \epsilon n} ] \cap  \overline{(\epsilon n B'_{y})}  = \emptyset \notag \\ 
&    LE ( S^{1} [0, \sigma^{1}_{z} ] ) [0, \overline{\sigma}_{1}] \cap ( S^{2} [ 0 , \sigma^{2}_{w} ] \cup S^{3} [0, \tau^{3}_{\partial W}]) = \emptyset, \ LE ( S^{1} [0, \sigma^{1}_{z} ] + S^{2} [0, \sigma^{2}_{w}] ) [0, \overline{\sigma}_{2}] \cap S^{3} [ 0 , \tau^{3}_{\partial W} ] = \emptyset  \Big\}.
\end{align}
(See \eqref{mendoi-2} for $\overline{\sigma}_{1}$ and $\overline{\sigma}_{2}$.)
By the strong Markov property,
\begin{align}\label{imp-11}
&P^{v}_{1} \otimes P^{z}_{2} \otimes P^{w}_{3} \Big( F^{1}_{\star}, \ S^{1} [1, \sigma^{1}_{z} ] \cap \gamma = \emptyset, \ S^{3} [ \tau^{3}_{\partial W}, \tau^{3}_{n} ] \cap \gamma = \emptyset \Big) \notag \\
&\le P^{v}_{1} \otimes P^{z}_{2} \otimes P^{w}_{3} \Big( F^{1}_{\star}, \ S^{1} [1, \sigma^{1}_{z} ] \cap \gamma = \emptyset \Big) \max_{ w_{\star } \in \partial W} P^{w_{\star }}_{3} \Big( S^{3} [ 0, \tau^{3}_{n} ] \cap \gamma = \emptyset \Big).
\end{align}

Recall that $q^{(1)}$ was defined as in \eqref{q1} (we use the same notation here). Suppose that $0 \le r \le \log_{2} l -3$. We will first deal with $P^{v}_{1} \otimes P^{z}_{2} \otimes P^{w}_{3} \Big( F^{1}_{\star}, \ S^{1} [1, \sigma^{1}_{z} ] \cap \gamma = \emptyset, \ q^{(1)} = r \Big)$. However, as in \eqref{key-38}, we have
\begin{align}\label{imp-12}
&P^{v}_{1} \otimes P^{z}_{2} \otimes P^{w}_{3} \Big( F^{1}_{\star}, \ S^{1} [1, \sigma^{1}_{z} ] \cap \gamma = \emptyset, \ q^{(1)} = r \Big) \notag \\
&\le \frac{C}{\epsilon  n} \frac{1}{\epsilon l n} Es ( \epsilon n, \epsilon l n ) \notag \\
& \times \sum_{z' \in \partial B (z, \frac{\epsilon l n}{2} ) } E^{v}_{1} \otimes E^{z}_{2} \Big\{ {\bf 1}_{F^{2}_{\star } \cap \{ S^{1} [1, \sigma^{1}_{z} ] \cap \gamma = \emptyset \}} \max_{w_{1} \in \partial B ( w, \frac{\epsilon l n}{4} ) } P^{w_{1}}_{3} \Big( LE ( S^{1} [0, \sigma^{1}_{z} ] ) [0, \overline{\sigma}_{1}] \cap S^{3} [ 0 , \tau^{3}_{\partial W}] = \emptyset \Big) \Big\},
\end{align}
where 
\begin{align}\label{imp-13}
&F^{2}_{\star } = \Big\{ \sigma^{1}_{z} < \tau^{1}_{n}, \ S^{2} [1, T^{2}_{z, \frac{l \epsilon n}{2}} ] \cap \overline{(\epsilon n B'_{x})} = \emptyset, \notag \\
& \ \ \ \ \  \ \ \ \ LE ( S^{1} [0, \sigma^{1}_{z} ] ) [0, \overline{\sigma}_{1}] \cap  S^{2} [ 0 , T^{2}_{z, \frac{l \epsilon n}{2}} ] = \emptyset, \ q^{(1)}= r, \ S^{2} ( T^{2}_{z, \frac{l \epsilon n}{2}} ) = z' \Big\}.
\end{align}
(Recall that in order to show \eqref{imp-12}, we have to estimate $\tilde{p}_{1} $ defined as in \eqref{key-16}. Note that we don't need to care about the ``non-intersecting with $\gamma$" conditions as long as we deal with $\tilde{p}_{1} $.)

Using the time reversibility of LERW (see Lemma \ref{reversal}) as in \eqref{key-39}, we see that
\begin{align}\label{imp-14}
&E^{v}_{1} \otimes E^{z}_{2} \Big\{ {\bf 1}_{F^{2}_{\star } \cap \{ S^{1} [1, \sigma^{1}_{z} ] \cap \gamma = \emptyset \}} \max_{w_{1} \in \partial B ( w, \frac{\epsilon l n}{4} ) } P^{w_{1}}_{3} \Big( LE ( S^{1} [0, \sigma^{1}_{z} ] ) [0, \overline{\sigma}_{1}] \cap S^{3} [ 0 , \tau^{3}_{ \partial W}] = \emptyset \Big) \Big\} \notag \\
&= E^{z}_{1} \otimes E^{z}_{2} \Big\{ {\bf 1}_{\tilde{F}^{2}_{\star } \cap \{ S^{1} [0, \tau^{1}_{v} ] \cap ( \gamma \setminus \{v \} ) = \emptyset \}} \max_{w_{1} \in \partial B ( w, \frac{\epsilon l n}{4} ) } P^{w_{1}}_{3} \Big( LE ( S^{1} [0, \tau^{1}_{v} ] ) [\tilde{u}_{1}, \tilde{u}_{2}] \cap S^{3} [ 0 , \tau^{3}_{ \partial W}] = \emptyset \Big) \Big\},
\end{align}
where 
\begin{equation}\label{imp-15}
\tilde{u}_{1} := \max \{ t \ | \ LE ( S^{1} [0, \tau^{1}_{v} ] ) (t) \in \partial (\epsilon n B'_{x} ) \}, \ \tilde{u}_{2} := \text{ len} LE ( S^{1} [0, \tau^{1}_{v} ] ),
\end{equation}
and 
\begin{align}\label{imp-16}
&\tilde{F}^{2}_{\star } := \Big\{ \tau^{1}_{v} < \tau^{1}_{n}, \ S^{2} [1, T^{2}_{z, \frac{l \epsilon n}{2}} ] \cap \overline{(\epsilon n B'_{x})} = \emptyset, \ S^{2} ( T^{2}_{z, \frac{l \epsilon n}{2}} ) = z', \notag \\
& \ \ \ \ \  \ \ \  \  LE ( S^{1} [0, \tau^{1}_{v} ] ) [\tilde{u}_{1}, \tilde{u}_{2}] \cap S^{2} [ 0 , T^{2}_{z, \frac{l \epsilon n}{2}} ] = \emptyset, \ LE ( S^{1} [0, \tau^{1}_{v} ] ) [0, \tilde{u}_{1}] \cap A^{r}_{z} \neq \emptyset, \notag \\
& \ \ \ \  \ \ \ \ \  LE ( S^{1} [0, \tau^{1}_{v} ] ) [0, \tilde{u}_{1}] \subset B (z, 2^{r} \epsilon n ) \Big\}.
\end{align}

Let $\beta = LE ( S^{1} [0, \tau^{1}_{v} ] )$ and $\tau^{\beta }_{z, \frac{n}{12 M} } := \inf \{ t \ | \ \beta (t) \in \partial B (z, \frac{n}{12 M} ) \}$ . Since $r \le \log_{2} l -3$, on $\tilde{F}^{2}_{\star }$,  we have
\begin{equation}\label{imp-17}
\beta [ \tau^{\beta }_{z, \frac{n}{12 M} }, \tilde{u}_{2} ] \cap \overline{(\epsilon n B'_{x})} = \emptyset.
\end{equation}
Indeed, if $\beta [ \tau^{\beta }_{z, \frac{n}{12 M} }, \tilde{u}_{2} ] \cap \overline{(\epsilon n B'_{x})} \neq \emptyset$, then $\tau^{\beta }_{z, \frac{n}{12 M} } < \tilde{u}_{1}$, which implies that $\beta [0, \tilde{u}_{1} ] \cap \partial B (z, \frac{n}{12 M} ) \neq \emptyset$. Since $\epsilon n B'_{x} \subset F^{\gamma }_{i, n}$ and $\epsilon n B'_{y} \subset F^{\gamma }_{i, n}$, we have $|\epsilon n x - \epsilon n y | \le \frac{\sqrt{3} n }{4 M}$ which implies that $\epsilon l \le \frac{\sqrt{3}}{4M}$. Thus
\begin{equation*}
2^{r} \epsilon n \le 2^{\log_{2} -3 } \epsilon n \le \frac{1}{8} \epsilon l n \le \frac{\sqrt{3} n}{32 M} < \frac{n}{12 M},
\end{equation*}
which contradicts $\beta [0, \tilde{u}_{1}] \subset B (z, 2^{r} \epsilon n )$. Therefore, we get \eqref{imp-17}. Thus if we define $\tilde{u}_{1}^{\star }$ by 
\begin{equation*}
\tilde{u}_{1}^{\star } := \max \{ t \le \tau^{\beta }_{z, \frac{n}{12 M} } \ | \ LE ( S^{1} [0, \tau^{1}_{v} ] ) (t) \in \partial (\epsilon n B'_{x} ) \},
\end{equation*}
then $\tilde{u}_{1} = \tilde{u}_{1}^{\star }$. Since $\beta [0, \tilde{u}_{1}^{\star }]$ is $\beta [0, \tau^{\beta }_{z, \frac{n}{12 M} } ]$-measurable, by Proposition 4.2 and 4.4 \cite{Mas}, we see that 
\begin{align}\label{imp-18}
&E^{z}_{1} \otimes E^{z}_{2} \Big\{ {\bf 1}_{\tilde{F}^{2}_{\star } \cap \{ S^{1} [0, \tau^{1}_{v} ] \cap ( \gamma \setminus \{v \} ) = \emptyset \}} \max_{w_{1} \in \partial B ( w, \frac{\epsilon l n}{4} ) } P^{w_{1}}_{3} \Big( LE ( S^{1} [0, \tau^{1}_{v} ] ) [\tilde{u}_{1}, \tilde{u}_{2}] \cap S^{3} [ 0 , \tau^{3}_{ \partial W}] = \emptyset \Big) \Big\} \notag \\
&\le C P^{z}_{1} \Big( \tau^{1}_{v} < \tau^{1}_{n}, \  S^{1} [0, \tau^{1}_{v} ] \cap ( \gamma \setminus \{v \} ) = \emptyset \Big) \notag \\
&\times E^{z}_{1} \otimes E^{z}_{2} \Big\{ {\bf 1} \Big[ S^{2} [1, T^{2}_{z, \frac{l \epsilon n}{2}} ] \cap \overline{(\epsilon n B'_{x})} = \emptyset, \ S^{2} ( T^{2}_{z, \frac{l \epsilon n}{2}} ) = z', \notag \\
&\ \ \ \ \ \ \ \ \ \ \ \ \ \ \  \ \ \ \ \ \tilde{\beta } [\tilde{u}_{3}, \tilde{u}_{4}] \cap S^{2} [ 0 , T^{2}_{z, \frac{l \epsilon n}{2}} ] = \emptyset, \ \tilde{ \beta } [0, \tilde{u}_{3}] \cap A^{r}_{z} \neq \emptyset, \ \tilde{\beta } [0, \tilde{u}_{3}] \subset B (z, 2^{r} \epsilon n ) \Big]  \notag \\
&\ \ \ \ \ \ \ \ \ \ \ \ \ \ \  \ \ \ \times \max_{w_{1} \in \partial B ( w, \frac{\epsilon l n}{4} ) } P^{w_{1}}_{3} \Big( \tilde{\beta } [\tilde{u}_{3}, \tilde{u}_{4}] \cap S^{3} [ 0 , \tau^{3}_{ \partial W}] = \emptyset \Big) \Big\},
\end{align}
where $\tilde{\beta } = LE ( S^{1} [0, T^{1}_{z, \frac{n}{3M}} ] )$, $\tilde{u}_{4} = \inf \{ t \ | \ \tilde{\beta } (t) \in \partial B (z, \frac{n}{12 M} ) \}$ and $\tilde{u}_{3} = \max \{ t \le \tilde{u}_{4} \ | \ \tilde{\beta } (t) \in \partial (\epsilon n B'_{x} ) $.

However, as in \eqref{key-64}, we have 
\begin{align}\label{imp-19}
&(\text{RHS of \eqref{imp-18}}) \notag \\
&\le C P^{z}_{1} \Big( \tau^{1}_{v} < \tau^{1}_{n}, \  S^{1} [0, \tau^{1}_{v} ] \cap ( \gamma \setminus \{v \} ) = \emptyset \Big) \notag \\
&\times \frac{1}{\epsilon n} P^{z}_{2} \Big( S^{2} ( T^{2}_{z, \frac{l \epsilon n}{2}} ) = z' \Big) ES ( \epsilon n, \epsilon l n ) 2^{- \delta r } Es (\epsilon l n, \frac{n}{M} ).
\end{align}

Therefore, by \eqref{imp-12},
\begin{align}\label{imp-20}
&P^{v}_{1} \otimes P^{z}_{2} \otimes P^{w}_{3} \Big( F^{1}_{\star}, \ S^{1} [1, \sigma^{1}_{z} ] \cap \gamma = \emptyset, \ q^{(1)} = r \Big) \notag \\
&\le \frac{C}{\epsilon  n} \frac{1}{\epsilon l n} \frac{1}{\epsilon n} 2^{- \delta r } ES ( \epsilon n, \epsilon l n ) Es (\epsilon  n, \frac{n}{M} ) P^{z}_{1} \Big( \tau^{1}_{v} < \tau^{1}_{n}, \  S^{1} [0, \tau^{1}_{v} ] \cap ( \gamma \setminus \{v \} ) = \emptyset \Big).
\end{align}
Taking sum for $ r \le \log_{2} l -3$, by \eqref{imp-11},
\begin{align}\label{imp-21}
&P^{v}_{1} \otimes P^{z}_{2} \otimes P^{w}_{3} \Big( F^{1}_{\star}, \ S^{1} [1, \sigma^{1}_{z} ] \cap \gamma = \emptyset, \ S^{3} [ \tau^{3}_{\partial W}, \tau^{3}_{n} ] \cap \gamma = \emptyset, \ q^{(1)} \le \log_{2} l -3 \Big) \notag \\
&\le \frac{C}{\epsilon  n} \frac{1}{\epsilon l n} \frac{1}{\epsilon n}  ES ( \epsilon n, \epsilon l n ) Es (\epsilon  n, \frac{n}{M} ) P^{z}_{1} \Big( \tau^{1}_{v} < \tau^{1}_{n}, \  S^{1} [0, \tau^{1}_{v} ] \cap ( \gamma \setminus \{v \} ) = \emptyset \Big) \notag \\
& \times \max_{ w_{\star } \in \partial W} P^{w_{\star }}_{3} \Big( S^{3} [ 0, \tau^{3}_{n} ] \cap \gamma = \emptyset \Big).
\end{align}
Similar argument gives that 
\begin{align}\label{imp-22}
&P^{v}_{1} \otimes P^{z}_{2} \otimes P^{w}_{3} \Big( F^{1}_{\star}, \ S^{1} [1, \sigma^{1}_{z} ] \cap \gamma = \emptyset, \ S^{3} [ \tau^{3}_{\partial W}, \tau^{3}_{n} ] \cap \gamma = \emptyset \Big) \notag \\
&\le \frac{C}{\epsilon  n} \frac{1}{\epsilon l n} \frac{1}{\epsilon n}  ES ( \epsilon n, \epsilon l n ) Es (\epsilon  n, \frac{n}{M} ) P^{z}_{1} \Big( \tau^{1}_{v} < \tau^{1}_{n}, \  S^{1} [0, \tau^{1}_{v} ] \cap ( \gamma \setminus \{v \} ) = \emptyset \Big) \notag \\
& \times \max_{ w_{\star } \in \partial W} P^{w_{\star }}_{3} \Big( S^{3} [ 0, \tau^{3}_{n} ] \cap \gamma = \emptyset \Big).
\end{align}

By reversing the path, we see that 
\begin{equation*}
P^{z}_{1} \Big( \tau^{1}_{v} < \tau^{1}_{n}, \  S^{1} [0, \tau^{1}_{v} ] \cap ( \gamma \setminus \{v \} ) = \emptyset \Big) \le \frac{C M}{n} P^{v} \Big( S [1, T_{v, \frac{n}{8M}}] \cap \gamma = \emptyset \Big).
\end{equation*}
Recall that $\ell_{1}$ was defined in Definition \ref{imp-notation}. Note that $\ell_{1}$ intersects with $\partial B ( v, \frac{n}{8M} )$ at only one point. We call the point $v'$. Let $A := \partial B ( v, \frac{n}{8M} ) \cap B( v', \frac{n}{16M} )$. By Proposition 6.1.1 \cite{Shi-gr} and by the Harnack principle (see Theorem 1.7.6 \cite{Law-book90}), 
\begin{align}\label{imp-23}
&P^{v} \Big( S [1, \tau_{n} ] \cap \gamma = \emptyset \Big) \notag \\
&\ge P^{v} \Big( S[1, T_{v, \frac{n}{8M}} ] \cap \gamma = \emptyset, \ S ( T_{v, \frac{n}{8M}} ) \in A, \ \tau_{\partial W} < \tau_{n}, \ S[T_{v, \frac{n}{8M}}, \tau_{\partial W} ] \cap \gamma = \emptyset, \ S[\tau_{\partial W}, \tau_{n} ] \cap \gamma = \emptyset \Big) \notag \\
&\ge c P^{v} \Big( S[1, T_{v, \frac{n}{8M}} ] \cap \gamma = \emptyset \Big) \max_{ w_{\star } \in \partial W} P^{w_{\star }}_{3} \Big( S^{3} [ 0, \tau^{3}_{n} ] \cap \gamma = \emptyset \Big). 
\end{align}
Thus by \eqref{imp-22},
\begin{align}\label{imp-24}
&P^{v}_{1} \otimes P^{z}_{2} \otimes P^{w}_{3} \Big( F^{1}_{\star}, \ S^{1} [1, \sigma^{1}_{z} ] \cap \gamma = \emptyset, \ S^{3} [ \tau^{3}_{\partial W}, \tau^{3}_{n} ] \cap \gamma = \emptyset \Big) \notag \\
&\le \frac{C}{\epsilon  n} \frac{1}{\epsilon l n} \frac{1}{\epsilon n} \frac{M}{n} ES ( \epsilon n, \epsilon l n ) Es (\epsilon  n, \frac{n}{M} ) P^{v} \Big( S [1, T_{v, \frac{n}{8M}}] \cap \gamma = \emptyset \Big) \notag \\
&\times \max_{ w_{\star } \in \partial W} P^{w_{\star }}_{3} \Big( S^{3} [ 0, \tau^{3}_{n} ] \cap \gamma = \emptyset \Big) \notag \\
&\le \frac{C}{\epsilon  n} \frac{1}{\epsilon l n} \frac{1}{\epsilon n} \frac{M}{n} ES ( \epsilon n, \epsilon l n ) Es (\epsilon  n, \frac{n}{M} ) P^{v} \Big( S [1, \tau_{n} ] \cap \gamma = \emptyset \Big).
\end{align}
Taking sum for $z \in \partial (\epsilon n B'_{x})$ and $w \in \partial (\epsilon n B'_{y})$, by \eqref{imp-7},
\begin{align*}
&P^{v} \Big( LE ( S [0, \tau_{n} ] ) \cap \epsilon n B'_{x} \neq \emptyset, \  LE ( S[0, \tau_{n} ] ) \cap \epsilon n B'_{y} \neq \emptyset, \ T^{x} < T^{y}, \ S[1, \tau_{n} ] \cap \gamma = \emptyset \Big) \notag \\
&\le \frac{C \epsilon M}{l} Es (\epsilon n, \epsilon l n ) Es (\epsilon n, \frac{n}{M} ) P^{v} \Big( S [1, \tau_{n} ] \cap \gamma = \emptyset \Big).
\end{align*}
Combining this with \eqref{imp-6}, we finish the proof.
\end{proof}

\medskip

Next we will consider the lower bound of the probability that $LE ( X [0, \tau^{X}_{n} ] ) \cap \epsilon n B'_{x} \neq \emptyset$. 
Assume that $\gamma$ is a simple path and $X$ is a conditioned random walk defined as in Definition \ref{imp-notation}. Let
\begin{equation}\label{stopping}
t^{\gamma }_{i, n} := \inf \{ t \ | \ LE ( X [0, \tau^{X}_{n} ] ) (t) \in \partial D_{i+1, n} \},
\end{equation} 
where $D_{i+1, n}$ was defined in Definition \ref{imp-notation}. Then we have the following lemma, which is an analog of \eqref{moment-2} for the probability that the loop erasure of $X$ hits a cube.

\begin{lem}\label{imp-25}
Suppose that $x \in \mathbb{Z}^{3}$ satisfies $\epsilon n B'_{x} \subset F^{\gamma }_{i, n}$ (see Definition \ref{imp-notation} for $F^{\gamma }_{i, n}$). Then there exists an absolute constant $c > 0$ such that 
\begin{equation}\label{imp-26}
P_{X} \Big(  LE \big( X [0, \tau^{X}_{n} ]  \big) [0, t^{\gamma }_{i, n} ] \cap \epsilon n B'_{x} \neq \emptyset \Big) \ge c \epsilon M Es (\epsilon n, \frac{n}{M} ).
\end{equation}
\end{lem}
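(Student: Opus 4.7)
The plan is to mirror the proof of Proposition~\ref{first-moment-low}, executing essentially the same sample-path construction on the scale $n/M$ instead of $n$, and then dividing out the normalizing probability $\mathbf{p}:=P^{v}(S[1,\tau_{n}]\cap\gamma=\emptyset)$ hidden in the definition of $P_{X}$. Let $y$ be the center of $\epsilon n B_{x}$ and set $B_{1}:=B(y,\tfrac{\epsilon n}{1000})$, $B_{2}:=B(y,\tfrac{\epsilon n}{3})$, $B_{3}:=B(y,\tfrac{\epsilon n}{2})$. Because $\epsilon nB'_{x}\subset F^{\gamma}_{i,n}$, these balls lie in the middle of the shell $A_{i,n}$, at distance $\asymp n/M$ from $v$, from $\partial D_{i+1,n}$, and from $\gamma$. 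Rewriting
\begin{equation*}
P_{X}\big(LE(X[0,\tau^{X}_{n}])[0,t^{\gamma}_{i,n}]\cap \epsilon nB'_{x}\neq\emptyset\big)=P^{v}(\mathcal{A})/\mathbf{p},
\end{equation*}
where $\mathcal{A}$ is the same event for the unconditioned walk $S$ intersected with $\{S[1,\tau_{n}]\cap\gamma=\emptyset\}$, the lemma reduces to proving $P^{v}(\mathcal{A})\geq c\epsilon M\,Es(\epsilon n,n/M)\,\mathbf{p}$.

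Define $T:=\max\{t\leq\tau_{n}:S(t)\in B_{1}\}$ and, on $\{T<\tau_{n}\}$, $\tau:=\min\{t:LE(S[0,T])(t)\in B_{3}\}$. On
\begin{equation*}
E_{0}:=\{T<\tau_{n},\ S[0,T]\subset D_{i+1,n},\ LE(S[0,T])[0,\tau]\cap S[T+1,\tau_{n}]=\emptyset,\ S[1,\tau_{n}]\cap\gamma=\emptyset\}
\end{equation*}
one has $LE(S[0,\tau_{n}])[0,\tau]=LE(S[0,T])[0,\tau]\subset D_{i+1,n}$, and this piece visits $B_{3}\subset\epsilon nB'_{x}$ at time $\tau$; hence $E_{0}\subset\mathcal{A}$. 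Applying the last-exit decomposition at $z\in\partial_{i}B_{1}$ exactly as in \eqref{key-9} and time-reversing the portion $S[0,T]$,
\begin{equation*}
P^{v}(E_{0})\geq c\sum_{z\in\partial_{i}B_{1}}P^{z}_{1}\otimes P^{z}_{2}\big(\mathcal{E}(z)\big),
\end{equation*}
where $\mathcal{E}(z)$ requires: (a)~$\tau^{1}_{v}<\tau^{1}_{n}$, $S^{1}[0,\tau^{1}_{v}]\subset D_{i+1,n}$ and $S^{1}[0,\tau^{1}_{v}-1]\cap\gamma=\emptyset$; (b)~$S^{2}[1,\tau^{2}_{n}]\cap B_{1}=\emptyset$ and $S^{2}[0,\tau^{2}_{n}]\cap\gamma=\emptyset$; (c)~$LE(S^{1}[0,\tau^{1}_{v}])[\sigma_{1},\sigma_{2}]\cap S^{2}[0,\tau^{2}_{n}]=\emptyset$, with $\sigma_{1}=\max\{t:LE(S^{1})(t)\in B_{3}\}$ and $\sigma_{2}=\text{len}\,LE(S^{1}[0,\tau^{1}_{v}])$.

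To bound each $P^{z}_{1}\otimes P^{z}_{2}(\mathcal{E}(z))$ from below I would repeat, inside $B:=B(z,n/(4M))$, the separation-based escape argument of Proposition~\ref{first-moment-low} but now at the scale $n/M$: $B$ is disjoint from $\gamma$ because $\gamma\setminus\{v\}\subset D_{i,n}$ and $\text{dist}(z,\partial D_{i,n})\geq 3n/(8M)$, so combining Lemma~\ref{escape-17} with Lemma~\ref{escape-25} yields, uniformly in $z$, an inner factor $\geq c\,Es(\epsilon n,n/M)$ for the non-intersection condition (c) localised inside $B$, together with a separation event placing the two walks in opposing half-spaces $A^{\pm}$. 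Outside $B$ one attaches independent tails: the $S^{1}$-tail from $\partial B\cap A^{-}$ to $v$ avoiding $\gamma$, whose contribution by time-reversal (Lemma~\ref{reversal}) and Propositions~4.2,~4.4 of \cite{Mas} is $\asymp (M/n)\,P^{v}(S[1,T_{v,n/(8M)}]\cap\gamma=\emptyset)$; and the $S^{2}$-tail from $\partial B\cap A^{+}$ to $\partial B_{n}$ avoiding $\gamma$, whose contribution by the Harnack principle (Theorem~1.7.6 of \cite{Law-book90}) is $\asymp\max_{w_{*}\in\partial W}P^{w_{*}}(S[0,\tau_{n}]\cap\gamma=\emptyset)$. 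Folding in the standard three-dimensional Green's-function bound $P^{z}_{2}(S^{2}[1,\tau^{2}_{n}]\cap B_{1}=\emptyset)\asymp 1/(\epsilon n)$ and summing over the $\asymp(\epsilon n)^{2}$ points $z\in\partial_{i}B_{1}$,
\begin{equation*}
P^{v}(E_{0})\geq c\epsilon M\,Es(\epsilon n,n/M)\,P^{v}(S[1,T_{v,n/(8M)}]\cap\gamma=\emptyset)\max_{w_{*}\in\partial W}P^{w_{*}}(S[0,\tau_{n}]\cap\gamma=\emptyset),
\end{equation*}
and invoking \eqref{imp-23} identifies the last two factors with $\mathbf{p}$ up to constants, completing the argument.

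The main obstacle will be the simultaneous decoupling, at the comparatively small scale $n/M$, of three non-intersection constraints: between $S^{1}$ and $\gamma$, between $S^{2}$ and $\gamma$, and the LERW condition between $S^{1}$ and $S^{2}$. This is possible only because by construction $F^{\gamma}_{i,n}$ and hence $B(z,n/(4M))$ sit at distance $\asymp n/M\gg\epsilon n$ from $\gamma$, which keeps the Harnack-type attachments of the tails uniform in the shape of $\gamma$ and allows the separation lemma to be applied cleanly at this scale.
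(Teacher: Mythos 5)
Your proposal follows essentially the same route as the paper's proof: rewrite the conditioned probability as a ratio over $P^{v}(S[1,\tau_{n}]\cap\gamma=\emptyset)$, perform a last-exit decomposition at $B_{1}$ with path reversal, apply the separation machinery (Lemmas \ref{escape-17} and \ref{escape-25}) inside a ball of radius $\asymp n/M$ to extract the factor $Es(\epsilon n,\frac{n}{M})$, attach the two tails, and recombine them into $P^{v}(S[1,\tau_{n}]\cap\gamma=\emptyset)$ via the strong Markov property and a Harnack-type comparison. The one point the paper treats more carefully than you indicate is the $S^{2}$-tail: plain interior Harnack is not enough there, and the paper forces $S^{2}$ out through a tube to distance $L_{0}n/M$ from $v$ and uses Lemma 6.1.2 of \cite{Shi-gr} with $L_{0}$ large to show that additionally forbidding $S^{2}$ from returning to $B(v,\frac{n}{M})$ (which is needed to preserve the non-intersection with the retained piece of the loop erasure) costs only a constant factor.
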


\begin{proof}
Take $w \in \partial_{i} B ( \epsilon n x, \frac{\epsilon n}{1000} )$. Throughout the proof, we write
\begin{equation}\label{imp-27}
B_{1} := B ( \epsilon n x, \frac{\epsilon n}{1000} ), \ B_{2} := B ( w, \frac{\epsilon n}{8} ), \ B_{3} := B ( w, \frac{ n}{4M}).
\end{equation}
Suppose that $S^{1}$ and $S^{2}$ are independent simple random walks started at $w$. Let 
\begin{equation}\label{imp-28}
t^{i} := \inf \{ t \ | \ S^{i} (t) \in \partial B_{3} \},
\end{equation}
for each $i = 1, 2$. Recall that the line segment $\ell_{1}$ was defined in Definition \ref{imp-notation}. We define random sets $A_{i}$ as follows. Let $\ell^{1}$ be the line segment started at $y_{1} := S^{1} ( t^{1} )$ and terminated at $v$. Define $A_{1}$ by $A_{1} := \{ y \ | \ \text{dist} ( y, \ell^{1} ) \le \frac{n}{20 M } \}$. Let $w^{1}$ be the intersection point of the line segment connecting $v$ with $w$ and $\partial B_{3}$, and let $w^{2} \in \partial B_{3}$ be the point such that $\frac{ w^{1} + w^{2}}{2} = w$. Let $\ell^{2}$ be the line segment starting from $ w^{2}$ terminated at $\partial B (v, \frac{L_{0} n}{M} )$ which is parallel to $\ell_{1}$. Here $L_{0}$ is a (large) constant which will be defined later. Define $A_{2}$ by $A_{2} :=  \{ y \ | \ \text{dist} ( y, \ell^{2} ) \le \frac{n}{20 M } \}$. Let $\partial_{2} := \partial A_{2} \cap \{ y \ | \ \text{dist} ( y, \partial B (v, \frac{L_{0} n}{M} ) ) \le \frac{n}{20 M } \}$. For each $i=1, 2$, we write $u^{i} := \{ t \ge t^{i} \ | \ S^{i} (t) \in \partial A_{i} \}$. Finally, let $H^{i} := B ( w, \frac{n}{6M} ) \cup B(w^{i}, \frac{n}{8M} )$ for each $i=1, 2$ and $\partial_{1} := \partial B_{3} \cap B (w^{1}, \frac{n}{8M} )$.

We write 
\begin{equation*}
\sigma^{1} := \max \{ t \ | \ LE ( S^{1} [0, t^{1} ] ) (t) \in B_{2} \} \text{ and } \sigma^{2} := \text{len} LE ( S^{1} [0, t^{1} ] ).
\end{equation*}
Let 
\begin{equation*}
\tau_{i, n} := \inf \{ t \ | \ LE ( S [0, \tau_{n} ] ) (t) \in \partial D_{i+1, n} \}.
\end{equation*}
Then we have 
\begin{equation}\label{imp-29}
P_{X} \Big(  LE \big( X [0, \tau^{X}_{n} ] \big) [0, t^{\gamma }_{i, n} ] \cap \epsilon n B'_{x} \neq \emptyset \Big) = \frac{P^{v} \Big(  LE \big( S [0, \tau_{n} ]\big) [0, \tau_{i, n} ]  \cap \epsilon n B'_{x} \neq \emptyset, \ S[1, \tau_{n} ] \cap \gamma = \emptyset \Big) }{P^{v} \Big( S[1, \tau_{n} ] \cap \gamma = \emptyset \Big) }.
\end{equation}
By considering the last exit from $B_{1}$ and by reversing a path, we have 
\begin{align}\label{imp-30}
&P^{v} \Big(  LE \big( S [0, \tau_{n} ] \big) [0, \tau_{i, n} ]  \cap \epsilon n B'_{x} \neq \emptyset, \ S[1, \tau_{n} ] \cap \gamma = \emptyset \Big) \notag \\
&\ge \sum_{w \in \partial_{i} B_{1} } P^{w}_{1} \otimes P^{w}_{2} \Big( S^{2} [1, t^{2} ] \cap B_{1} = \emptyset, \ LE ( S^{1} [0, t^{1} ] ) [ \sigma^{1}, \sigma^{2} ] \cap S^{2} [0, t^{2} ] = \emptyset, \notag \\
& LE ( S^{1} [0, t^{1} ] ) [ \sigma^{1}, \sigma^{2} ] \subset H^{1}, \ LE ( S^{1} [0, t^{1} ] ) [ 0, \sigma^{1} ] \cap B(w^{1}, \frac{n}{8M} ) = \emptyset, \ S^{2} [0, t^{2} ] \subset H^{2}, \notag \\
& S^{1} [t^{1}, \tau^{1}_{v} ] \subset A_{1} \cap (\gamma \setminus \{ v \} )^{c}, \ S^{2} (u^{2} ) \in \partial_{2}, \ S^{2} [u^{2}, \tau^{2}_{n} ] \cap \gamma = \emptyset, \ S^{2} [u^{2}, \tau^{2}_{n} ] \cap B(v, \frac{n}{M} ) = \emptyset \Big)
\end{align}

By using Lemma \ref{escape-17} and \ref{escape-25} as in \eqref{sansho} and by the strong Markov property, we have 
\begin{align}\label{imp-31}
&(\text{The probability in RHS of \eqref{imp-30}}) \notag \\
&\ge \frac{c_{L_{0}}}{\epsilon n} Es ( \epsilon n, \frac{n}{M} ) \notag \\
&\ \times \min_{y_{1} \in \partial_{1},  y_{2} \in \partial_{2} }  P^{y_{1}}_{1}  \Big( S^{1} [0, \tau^{1}_{v} ] \subset A_{1} \cap (\gamma \setminus \{ v \} )^{c} \Big) P^{y_{2}}_{2} \Big(   S^{2} [0, \tau^{2}_{n} ] \cap \gamma = \emptyset, \ S^{2} [0, \tau^{2}_{n} ] \cap B(v, \frac{n}{M} ) = \emptyset \Big).
\end{align}
But by reversing a path and by Proposition 6.1.1 \cite{Shi-gr}, we see that for each $y_{1} \in \partial_{1}$, 
\begin{equation}\label{imp-32}
P^{y_{1}}_{1}  \Big( S^{1} [0, \tau^{1}_{v} ] \subset A_{1} \cap (\gamma \setminus \{ v \} )^{c} \Big) \ge \frac{c M}{n} P^{v} \Big( S[1, T_{v, \frac{n}{30M} }] \cap \gamma = \emptyset \Big).
\end{equation}

On the other hand, by Lemma 6.1.2 \cite{Shi-gr} and the Harnack principle (see Theorem 1.7.6 \cite{Law-book90}), there exists an absolute constant $C_{0} < \infty$ such that 
\begin{equation}\label{imp-33}
\max_{y \in B (v, \frac{n}{M} ) } P^{y} \big( S[0, \tau_{n} ] \cap \gamma = \emptyset \big) \le C_{0} \min_{y \in \partial_{2} } P^{y} \big( S[0, \tau_{n} ] \cap \gamma = \emptyset \big).
\end{equation}
Now take $L_{0}$ such that $\frac{2 C_{0}}{L_{0}} < \frac{1}{2}$. Then by the strong Markov property and Proposition 1.5.10 \cite{Law-book90}, we see that for each $y_{2} \in \partial_{2}$,
\begin{align}\label{imp-34}
&P^{y_{2}}_{2} \Big(   S^{2} [0, \tau^{2}_{n} ] \cap \gamma = \emptyset, \ S^{2} [0, \tau^{2}_{n} ] \cap B(v, \frac{n}{M} ) = \emptyset \Big) \notag \\
&\ge P^{y_{2}}_{2} \Big(   S^{2} [0, \tau^{2}_{n} ] \cap \gamma = \emptyset \Big) - P^{y_{2}}_{2} \Big(   S^{2} [0, \tau^{2}_{n} ] \cap \gamma = \emptyset, \ S^{2} [0, \tau^{2}_{n} ] \cap B(v, \frac{n}{M} ) \neq \emptyset \Big) \notag \\
&\ge P^{y_{2}}_{2} \Big(   S^{2} [0, \tau^{2}_{n} ] \cap \gamma = \emptyset \Big) - \frac{2}{L_{0}} \max_{y' \in B (v, \frac{n}{M} ) } P^{y'} \big( S[0, \tau_{n} ] \cap \gamma = \emptyset \big) \notag \\
&\ge P^{y_{2}}_{2} \Big(   S^{2} [0, \tau^{2}_{n} ] \cap \gamma = \emptyset \Big) - \frac{2 C_{0}}{L_{0}} \min_{y' \in \partial_{2} } P^{y'} \big( S[0, \tau_{n} ] \cap \gamma = \emptyset \big) \notag \\
&\ge P^{y_{2}}_{2} \Big(   S^{2} [0, \tau^{2}_{n} ] \cap \gamma = \emptyset \Big) - \frac{2 C_{0}}{L_{0}}  P^{y} \big( S[0, \tau_{n} ] \cap \gamma = \emptyset \big) \notag \\
&\ge \frac{1}{2} \min_{y_{2} \in \partial_{2} } P^{y_{2}}_{2} \Big(   S^{2} [0, \tau^{2}_{n} ] \cap \gamma = \emptyset \Big).
\end{align}
Again by Lemma 6.1.2 \cite{Shi-gr} and the Harnack principle (see Theorem 1.7.6 \cite{Law-book90}), we have 
\begin{equation}\label{imp-35}
\max_{y \in  B (v, \frac{n}{30M} ) } P^{y} \big( S[0, \tau_{n} ] \cap \gamma = \emptyset \big) \le C \min_{y_{2} \in \partial_{2} } P^{y_{2}}_{2} \Big(   S^{2} [0, \tau^{2}_{n} ] \cap \gamma = \emptyset \Big).
\end{equation}

Combining these estimates, we have 
\begin{align}\label{imp-36}
&P^{v} \Big(  LE \big( S [0, \tau_{n} ] \big) [0, \tau_{i, n} ]  \cap \epsilon n B'_{x} \neq \emptyset, \ S[1, \tau_{n} ] \cap \gamma = \emptyset \Big) \notag \\
&c \epsilon M Es (\epsilon n, \frac{n}{M} ) P^{v} \Big( S[1, T_{v, \frac{n}{30M} }] \cap \gamma = \emptyset \Big) \max_{y \in  B (v, \frac{n}{30M} ) } P^{y} \Big( S[0, \tau_{n} ] \cap \gamma = \emptyset \Big) \notag \\
&c \epsilon M Es (\epsilon n, \frac{n}{M} ) P^{v} \Big( S[1, \tau_{n}] \cap \gamma = \emptyset \Big),
\end{align}
which finishes the proof.
\end{proof}

\subsection{Proof of \eqref{imp}}

Suppose that $\gamma$ is a simple path and $X$ is a conditioned random walk not to hit $\gamma$ as in Definition \ref{imp-notation}. Let 
\begin{equation}\label{imp-37}
J^{\gamma}_{i, n} := \sharp \Big\{ x \in \mathbb{Z}^{3} \ | \ \epsilon n B'_{x} \subset F^{\gamma}_{i, n}, \ LE \big( X [0, \tau^{X}_{n} ] \big) [0, t^{\gamma}_{i, n} ] \cap \epsilon n B'_{x} \neq \emptyset \Big\},
\end{equation}
where $F^{\gamma}_{i, n}$ and $t^{\gamma}_{i, n}$ were defined in Definition \ref{imp-notation} and \eqref{stopping}, respectively. We are interested in the lower bound of $J^{\gamma}_{i, n}$. Using Lemma \ref{imp-3}, \ref{imp-25}, and the second moment method as in Corollary \ref{positive-low}, we will prove Lemma \ref{imp-38} below. Lemma \ref{imp-38} is an analog of Corollary \ref{positive-low} for $X$. Then using iteration arguments as in Theorem 6.7 \cite{BM} and Proposition 8.2.5 \cite{Shi-gr}, we will prove Proposition \ref{imp-43} which immediately concludes \eqref{imp}.

\medskip

We begin with the following lemma, which shows that the number of cubes hit by the loop erasure of the conditioned random walk is bigger than the expected number of such cubes with positive probability. We may think of the next lemma as an analog of Corollary \ref{positive-low} for the number of cubes hit by the loop erasure of $X$.

\begin{lem}\label{imp-38}
There exists an absolute constant $c_{1} > 0$ such that 
\begin{equation}\label{imp-39}
P_{X} \Big( J^{\gamma}_{i, n} \ge c_{1} (\epsilon M)^{-2} Es ( \epsilon n, \frac{n}{M} ) \Big) \ge c_{1}.
\end{equation}
\end{lem}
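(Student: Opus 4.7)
The plan is to apply the second moment method (Paley--Zygmund) to $J^\gamma_{i,n}$, in direct analogy with the derivation of Corollary~\ref{positive-low} from Theorem~\ref{key thm} and Proposition~\ref{first-moment-low}. The two inputs are the first-moment lower bound from Lemma~\ref{imp-25} and the two-point upper bound from Lemma~\ref{imp-3}, with the ``ambient scale'' $n$ replaced by $n/M$ throughout, since the portion of $X$ relevant for $J^\gamma_{i,n}$ lives in a neighborhood of $v$ of diameter $\sim n/M$.

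For the first moment: since $F^\gamma_{i,n}$ is a cube of side $n/(4M)$, the number of $x \in \mathbb{Z}^3$ with $\epsilon n B'_x \subset F^\gamma_{i,n}$ is of order $(\epsilon M)^{-3}$. Applying Lemma~\ref{imp-25} to each such $x$ and summing yields
\[
E_X(J^\gamma_{i,n}) \ge c_0\, (\epsilon M)^{-2}\, Es(\epsilon n, n/M).
\]

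For the second moment, since $LE(X[0,\tau^X_n])[0, t^\gamma_{i,n}] \subset LE(X[0,\tau^X_n])$, each pairwise indicator can be dominated by the indicator for the full LERW and estimated via Lemma~\ref{imp-3}. Grouping pairs by $l = |x - y|$ (with $O(l^2)$ choices of $y$ for each fixed $x$, and $l \le C/(\epsilon M)$),
\[
E_X\bigl((J^\gamma_{i,n})^2\bigr) \le E_X(J^\gamma_{i,n}) + C(\epsilon M)^{-3} \cdot \epsilon M\, Es(\epsilon n, n/M) \sum_{l=1}^{C/(\epsilon M)} l\, Es(\epsilon n, \epsilon l n).
\]
By Lemma~\ref{escape-8} with $\kappa \in (0, 1 - \alpha)$, one has $Es(\epsilon n, \epsilon l n) \le C_\kappa (\epsilon l M)^{-\alpha - \kappa} Es(\epsilon n, n/M)$, so the inner sum is at most
\[
C_\kappa (\epsilon M)^{-\alpha - \kappa}\, Es(\epsilon n, n/M) \sum_{l=1}^{C/(\epsilon M)} l^{1 - \alpha - \kappa} \le C' (\epsilon M)^{-2}\, Es(\epsilon n, n/M),
\]
where the last step uses $1 - \alpha - \kappa > 0$. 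Combining,
\[
E_X\bigl((J^\gamma_{i,n})^2\bigr) \le E_X(J^\gamma_{i,n}) + C_1 (\epsilon M)^{-4}\, Es(\epsilon n, n/M)^2 \le C_2 \bigl(E_X(J^\gamma_{i,n})\bigr)^2
\]
in the regime $E_X(J^\gamma_{i,n}) \ge 1$. Paley--Zygmund then gives $P_X\bigl(J^\gamma_{i,n} \ge \tfrac{1}{2} E_X(J^\gamma_{i,n})\bigr) \ge 1/(4 C_2)$, and combining with the first moment bound yields the claim with $c_1 = \min(c_0/2,\, 1/(4 C_2))$. The residual regime $E_X(J^\gamma_{i,n}) < 1$ corresponds, via Theorem~\ref{escape-5} and $\alpha < 1$, to $\epsilon M$ bounded away from $0$; there the target threshold $c_1 (\epsilon M)^{-2} Es(\epsilon n, n/M)$ is itself bounded, so shrinking $c_1$ further makes it $< 1$ and the desired inequality becomes $P_X(J^\gamma_{i,n} \ge 0) = 1 \ge c_1$.

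The main obstacle is the off-diagonal exponent bookkeeping: the factors $(\epsilon M)^{-3}$ (number of cubes), $\epsilon M$ (single-cube probability prefactor from Lemma~\ref{imp-3}), $(\epsilon M)^{-\alpha - \kappa}$ (from Lemma~\ref{escape-8}), and $(\epsilon M)^{-(2 - \alpha - \kappa)}$ (from $\sum l^{1 - \alpha - \kappa}$) must combine to exactly $(\epsilon M)^{-4}$, matching $(E_X J^\gamma_{i,n})^2$. This hinges on $\alpha < 1$ through the convergence criterion $1 - \alpha - \kappa > 0$, and is the precise analog of the calculation underlying Corollary~\ref{second-moment}, adapted to the conditioned walk via the substitution $n \rightsquigarrow n/M$.
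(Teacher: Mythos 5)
Your proposal is correct and follows essentially the same route as the paper: the paper's proof of Lemma \ref{imp-38} is exactly the second moment (Paley--Zygmund) argument, bounding $E_X\bigl((J^{\gamma}_{i,n})^2\bigr)$ via Lemma \ref{imp-3} together with \eqref{escape-7}--\eqref{escape-10} to get $C\{(\epsilon M)^{-2} Es(\epsilon n, n/M)\}^2$, and bounding $E_X(J^{\gamma}_{i,n})$ from below via Lemma \ref{imp-25}. Your extra care with the diagonal term and the degenerate regime $E_X(J^{\gamma}_{i,n})<1$ is a minor refinement the paper leaves implicit.
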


\begin{proof}
By Lemma \ref{imp-3} and \eqref{escape-7}
\begin{align}\label{imp-40}
&E_{X} \big( (J^{\gamma}_{i, n})^{2} \big) \notag \\
&\le \sum_{x, y \in \mathbb{Z}^{3}, \epsilon n B'_{x}, \epsilon n B'_{y} \subset F^{\gamma }_{i, n}} P_{X} \Big(  LE ( X [0, \tau^{X}_{n} ] ) \cap \epsilon n B'_{x} \neq \emptyset, \  LE ( X[0, \tau^{X}_{n} ] ) \cap \epsilon n B'_{y} \neq \emptyset \Big)  \notag \\
&\le C \sum_{x \in \mathbb{Z}^{3}, \epsilon n B'_{x} \subset F^{\gamma }_{i, n}} \sum_{l=1}^{\frac{1}{\epsilon M}} l^{2} \frac{ \epsilon M}{l} Es (\epsilon n, \epsilon l n ) Es (\epsilon n, \frac{n}{M} ) \notag \\
&=C \sum_{x \in \mathbb{Z}^{3}, \epsilon n B'_{x} \subset F^{\gamma }_{i, n}}  \epsilon M  Es (\epsilon n, \frac{n}{M} )  \sum_{l=1}^{\frac{1}{\epsilon M}} l Es (\epsilon n, \epsilon l n ) \notag \\
&\le C \sum_{x \in \mathbb{Z}^{3}, \epsilon n B'_{x} \subset F^{\gamma }_{i, n}}  \epsilon M  Es (\epsilon n, \frac{n}{M} ) (\epsilon M)^{-\alpha - \delta } Es ( \epsilon n, \frac{n}{M} ) \sum_{l=1}^{\frac{1}{\epsilon M}} l^{1- \alpha - \delta } \notag \\
&(\text{Here we used \eqref{escape-7} to say that } Es (\epsilon n, \epsilon l n ) \le C (\epsilon M)^{-\alpha - \delta } l^{-\alpha - \delta} Es (\epsilon n, \frac{n}{M} ) \notag \\
& \text{ for some } \delta > 0 ) \notag \\
&\le C \Big\{ (\epsilon M )^{-2} Es (\epsilon n, \frac{n}{M} ) \Big\}^{2}.
\end{align}

On the other hand, by Lemma \ref{imp-25}, we have
\begin{align}\label{imp-41}
&E_{X} \big( J^{\gamma}_{i, n} \big) \notag \\
&\ge \sum_{x \in \mathbb{Z}^{3}, \epsilon n B'_{x} \subset F^{\gamma }_{i, n}} P_{X} \Big(  LE \big( X [0, \tau^{X}_{n} ]  \big) [0, t^{\gamma }_{i, n} ] \cap \epsilon n B'_{x} \neq \emptyset \Big) \notag \\
&\ge c (\epsilon M)^{-2} Es (\epsilon n, \frac{n}{M} ).
\end{align}

Therefore, we see that $E_{X} \big( (J^{\gamma}_{i, n})^{2} \big) \le C \Big( E_{X} \big( J^{\gamma}_{i, n} \big) \Big)^{2}$. By the second moment method, we finish the proof.
\end{proof}

\medskip

Let
\begin{equation}\label{imp-42}
J_{\epsilon, n} := \sharp \Big\{ x \in \mathbb{Z}^{3} \ | \ \epsilon n B'_{x} \subset B (\frac{2n}{3}) \setminus B (\frac{n}{3}), \ LE \big( S [0, \tau_{n} ] \big) \cap \epsilon n B'_{x} \neq \emptyset \Big\},
\end{equation}

Now we use iteration arguments explained in the beginning of Section 4. Using the iteration argument, we prove next proposition which gives \eqref{rough-1-1}.

\begin{prop}\label{imp-43}
For every $r > 0$, there exists $c_{r} > 0$ such that 
\begin{equation}\label{imp-44}
P \Big( J_{\epsilon, n} \ge c_{r} \epsilon^{-2} Es ( \epsilon n, n ) \Big) \ge 1-r,
\end{equation}
for all $\epsilon > 0$ and $n=n_{\epsilon} = 2^{j_{\epsilon }}$ satisfying \eqref{skorohod-2}.
\end{prop}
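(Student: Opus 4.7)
The plan is to implement the iteration scheme sketched right before the statement of Proposition \ref{imp-43}. Fix $r > 0$ and let $c_1$ be the universal constant from Lemma \ref{imp-38}. Choose $M = M_r \ge 20$ large enough that $(1-c_1)^{M/20} < r$. With $M$ fixed, I define the nested cubes $D_{i,n}$ of Definition \ref{imp-notation} for $i = 0, 1, \ldots, M/20$, and consider the stopping times
\begin{equation*}
t_i := \inf\{t \ge 0 \ : \ LE(S[0,\tau_n])(t) \in \partial D_{i,n}\}, \quad i=1, \ldots, M/20.
\end{equation*}
Let $\gamma = LE(S[0,\tau_n])$ and $\gamma_i := \gamma[t_i, t_{i+1}]$. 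Since the annulus $B(2n/3) \setminus B(n/3)$ contains all of the shells $A_{i,n}$, the count $J_{\epsilon,n}$ dominates the total number of $\epsilon n$-cubes inside $\bigsqcup_i F^{\gamma}_{i,n}$ that are hit by the corresponding $\gamma_i$.

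The main engine is the domain Markov property (Lemma \ref{domain}). Conditioned on $\gamma[0,t_i] = \lambda$ for any admissible simple path $\lambda$ ending at $v \in \partial D_{i,n}$, the law of $\gamma[t_i, \cdot]$ is that of the loop-erasure of a simple random walk $X = X^{\lambda}$ started at $v$ and conditioned on $\{X[1,\tau^X_n] \cap \lambda = \emptyset\}$, which is precisely the conditioned walk of Definition \ref{imp-notation}. Applying Lemma \ref{imp-38} to this $X$, the number of $\epsilon n$-cubes in $F^{\lambda}_{i,n}$ hit by $\gamma_i \subset LE(X[0,\tau^X_n])[0,t^{\lambda}_{i,n}]$ is at least $c_1 (\epsilon M)^{-2} \mathrm{Es}(\epsilon n, n/M)$ with conditional probability at least $c_1$, uniformly in the history $\lambda$.

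Call the event above $E_i$. The uniform-in-$\lambda$ bound combined with the domain Markov property yields $P(E_{i+1}^c \mid E_1^c, \ldots, E_i^c, \gamma[0,t_{i+1}]) \le 1 - c_1$ a.s., so
\begin{equation*}
P\Big(\bigcap_{i=0}^{M/20 - 1} E_i^c\Big) \le (1-c_1)^{M/20} < r.
\end{equation*}
On the complement $\bigcup_i E_i$ at least one shell contributes $c_1 (\epsilon M)^{-2} \mathrm{Es}(\epsilon n, n/M)$ disjoint hit cubes, so
\begin{equation*}
J_{\epsilon, n} \ge c_1 (\epsilon M)^{-2} \mathrm{Es}(\epsilon n, n/M)
\end{equation*}
with probability at least $1-r$. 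To convert the right-hand side into the claimed $c_r \epsilon^{-2} \mathrm{Es}(\epsilon n, n)$, I invoke Proposition \ref{escape-3}: $\mathrm{Es}(\epsilon n, n) \le C\,\mathrm{Es}(\epsilon n, n/M)\,\mathrm{Es}(n/M, n)$, and $\mathrm{Es}(n/M, n) \le 1$ is bounded by a constant depending only on $M$ (in fact by iterating the doubling estimate of Proposition \ref{escape-3} at most $\log_2 M + O(1)$ times). Hence $\mathrm{Es}(\epsilon n, n/M) \ge c_M' \mathrm{Es}(\epsilon n, n)$, and setting $c_r := c_1 c_{M_r}' M_r^{-2}$ gives the conclusion.

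The main obstacle is justifying that Lemma \ref{imp-38} applies with a genuinely uniform constant $c_1$ that does not depend on the specific path $\lambda$ conditioned on. The proof of Lemma \ref{imp-38} rests on Lemmas \ref{imp-3} and \ref{imp-25}, whose estimates hold with absolute constants for every admissible $\lambda$ ending on $\partial D_{i,n}$; the dependence on $\lambda$ is absorbed in the ratio form used in \eqref{imp-29}. Verifying this uniformity (together with checking the applicability of the domain Markov property in its conditional form across all $i$ simultaneously) is the delicate bookkeeping, but no new estimate beyond Section 4.1 is required.
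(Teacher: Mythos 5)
Your proposal is correct and follows essentially the same route as the paper: condition on $\gamma[0,t_i]$ via the domain Markov property, apply the uniform-in-$\lambda$ bound of Lemma \ref{imp-38} to the conditioned walk on each successive shell to get the $(1-c_1)^{M/20}$ failure bound, and then pass from $Es(\epsilon n, n/M)$ to $Es(\epsilon n,n)$ via Proposition \ref{escape-3}. The only difference is cosmetic bookkeeping in how the iteration is written.
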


\begin{proof}
Take $r > 0$. Let $M_{r}$ be an integer satisfying $(1- c_{1} )^{\lfloor \frac{M_{r}}{20} \rfloor} < r$ where $c_{1} > 0$ is a constant as in Lemma \ref{imp-38}. We write $N_{r} := \lfloor \frac{M_{r}}{20} \rfloor$.

Let $\beta := LE \big( S [0, \tau_{n} ] \big)$ and recall that $\tau_{i, n} = \inf \{ t \ | \ \beta (t) \in \partial D_{i+1, n} \}$ where $D_{i, n}$ was defined as in Definition \ref{imp-notation}. For each $i=1, \cdots , N_{r}$, define
\begin{equation}\label{imp-44}
J_{i} := \sharp \Big\{  x \in \mathbb{Z}^{3} \ | \ \epsilon n B'_{x} \subset F^{\beta [0, \tau_{i-1, n} ] }_{i, n}, \ \beta [ \tau_{i-1, n}, \tau_{i, n} ] \cap \epsilon n B'_{x} \neq \emptyset \Big\}.
\end{equation}
(See Definition \ref{imp-notation} for $F^{\gamma}_{i, n}$.)

Then by the domain Markov property of LERW (see Lemma \ref{domain}) and by Lemma \ref{imp-38}, 
\begin{align}\label{imp-45}
&P \Big( J_{\epsilon, n} <  c_{1} (\epsilon M_{r})^{-2} Es ( \epsilon n, \frac{n}{M_{r}} ) \Big) \notag \\
&\le P \Big( J_{i} <  c_{1} (\epsilon M_{r})^{-2} Es ( \epsilon n, \frac{n}{M_{r}} ) \text{ for all } i=1, \cdots ,N_{r}  \Big) \notag \\
&\le E \Big\{ \bigcap_{i=1}^{N_{r} -1} \big\{ J_{i} <  c_{1} (\epsilon M_{r})^{-2} Es ( \epsilon n, \frac{n}{M_{r}} ) \big\} P \Big( J_{N_{r} } < c_{1} (\epsilon M_{r})^{-2} Es ( \epsilon n, \frac{n}{M_{r}} ) \ \big| \ \beta [0, \tau_{N_{r} -1, n} ] \Big)  \Big\} \notag \\
&\le (1- c_{1} ) P \Big( \bigcap_{i=1}^{N_{r} -1} \big\{ J_{i} <  c_{1} (\epsilon M_{r})^{-2} Es ( \epsilon n, \frac{n}{M_{r}} ) \big\}  \Big) \notag \\
&\le (1 - c_{1} )^{N_{r}} < r.
\end{align}
Since $ Es ( \epsilon n, \frac{n}{M_{r}} ) \ge c Es ( \epsilon n, n )$ for some $c > 0$, if we let $c_{r} := c c_{1} M_{r}^{-2}$, we finish the proof.
\end{proof}

By Proposition \ref{imp-43}, we get the following proposition immediately. Recall that $Y^{\epsilon }$ was defined in \eqref{number-box}.

\begin{prop}\label{imp-46}
For all $r > 0$, there exists $c_{r} > 0$ such that 
\begin{equation}\label{imp-47}
P \Big( Y^{\epsilon } \ge c_{r} \epsilon^{-2} Es ( \epsilon n, n ) \Big) \ge 1- r- \epsilon^{100},
\end{equation}
for every $\epsilon > 0$ and $n=n_{\epsilon} = 2^{j_{\epsilon }}$ satisfying \eqref{skorohod-2}.
\end{prop}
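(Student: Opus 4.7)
The plan is to deduce Proposition \ref{imp-46} almost immediately from Proposition \ref{imp-43} by combining it with the coupling \eqref{skorohod-2} and a routine cube-comparison argument. Fix $r > 0$. By \eqref{skorohod-2}, the event $G := \{ d_{\text{H}}( \text{LEW}_{n}, {\cal K}) < \epsilon^{2} \}$ satisfies $P(G) \ge 1 - \epsilon^{100}$; by Proposition \ref{imp-43} (applied with $r/2$ in place of $r$) there exists $c_{r}' > 0$ with $P(H) \ge 1 - r/2$, where $H := \{ J_{\epsilon,n} \ge c_{r}' \epsilon^{-2} Es(\epsilon n, n) \}$. It will suffice to show that on an event of probability $\ge 1 - r - \epsilon^{100}$ one has $Y^{\epsilon} \ge c_{r} \epsilon^{-2} Es(\epsilon n, n)$ for a constant $c_{r} > 0$ depending only on $r$.

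To exhibit this bound on $G \cap H$, I would associate to each $x$ counted in $J_{\epsilon,n}$ a point $p_{x} \in LE(S[0, \tau_{n}]) \cap \epsilon n B_{x}'$, so that $p_{x}/n \in \text{LEW}_{n} \cap \epsilon B_{x}'$; the coupling then provides $q_{x} \in {\cal K}$ with $|q_{x} - p_{x}/n| < \epsilon^{2}$. Let $y(x) \in \mathbb{Z}^{3}$ be the integer point with $q_{x} \in \epsilon B_{y(x)}$. Since $B_{x}' = \prod_{i=1}^{3}[x_{i}-2, x_{i}+2]$ and $\epsilon^{2} < \epsilon$ for $\epsilon < 1$, a coordinate-wise computation yields $|y(x) - x|_{\infty} \le 3$, so the map $x \mapsto y(x)$ has multiplicity at most $7^{3}$. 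Consequently the number of distinct cubes $\epsilon B_{y(x)}$ that meet ${\cal K}$ is at least $J_{\epsilon,n}/7^{3}$.

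What remains is to ensure that a comparable number of these $y(x)$ also satisfy the constraint $\epsilon B_{y(x)} \subset D_{2/3} \setminus D_{1/3}$ required by $Y^{\epsilon}$. The constraint imposed on $x$ in $J_{\epsilon,n}$ forces only $\epsilon B_{x}' \subset D_{2/3} \setminus D_{1/3}$, and since $\epsilon B_{y(x)}$ can extend $O(\epsilon)$ beyond $\epsilon B_{x}'$ the cube $\epsilon B_{y(x)}$ may escape the annulus precisely when $x$ lies in an $O(1)$-thick lattice tube of its boundary. I would discard these boundary $x$'s: there are $O(\epsilon^{-2})$ such tube-cubes, each hit by $LE(S[0,\tau_{n}])$ with probability at most $C \epsilon Es(\epsilon n, n)$ by the estimate used in the proof of Lemma 7.1 \cite{SS}, so the expected discarded count is $O(\epsilon^{-1} Es(\epsilon n, n))$, which is $o(\epsilon^{-2} Es(\epsilon n, n))$. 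Markov's inequality then shows that for every $\epsilon \le \epsilon_{0}(r)$ the discarded contribution is at most $(c_{r}'/2)\epsilon^{-2} Es(\epsilon n, n)$ off an event of probability at most $r/2$. Combining produces $Y^{\epsilon} \ge c_{r} \epsilon^{-2} Es(\epsilon n, n)$ with $c_{r} := c_{r}'/(2 \cdot 7^{3})$ on an event of probability $\ge 1 - r - \epsilon^{100}$; the range $\epsilon \ge \epsilon_{0}(r)$ is handled by further shrinking $c_{r}$. The boundary-layer bookkeeping is the only non-routine step; otherwise the proposition is a direct geometric consequence of Proposition \ref{imp-43} and \eqref{skorohod-2}.
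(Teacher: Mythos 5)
Your argument is correct and takes essentially the same route as the paper: combine Proposition \ref{imp-43} with the coupling event $\{ d_{\text{H}} ( \text{LEW}_{n}, {\cal K} ) < \epsilon^{2} \}$ and a bounded-multiplicity comparison between $J_{\epsilon, n}$ and $Y^{\epsilon}$, then conclude by a union bound. The paper simply asserts $c Y^{\epsilon} \le J_{\epsilon, n} \le \frac{1}{c} Y^{\epsilon}$ on the coupling event; your boundary-layer correction via a first-moment bound supplies a detail the paper leaves implicit, since the cubes $\epsilon B_{y(x)}$ produced near $\partial D_{\frac{1}{3}}$ and $\partial D_{\frac{2}{3}}$ need not be contained in the annulus.
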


\begin{proof}
Note that $c Y^{\epsilon} \le J_{\epsilon, n} \le \frac{1}{c} Y^{\epsilon}$ for some absolute constant $c > 0$ on $\{ d_{\text{H}} ({\cal K}, \text{LEW}_{n} ) < \epsilon^{2} \}$. Thus if $n=n_{\epsilon} = 2^{j_{\epsilon }}$ satisfies \eqref{skorohod-2}, by \eqref{skorohod-3} and Proposition \ref{imp-43},
\begin{align}\label{imp-48}
&P \Big( Y^{\epsilon } \ge c c_{r} \epsilon^{-2} Es ( \epsilon n, n ) \Big) \notag \\
&\ge P \Big( Y^{\epsilon } \ge c c_{r} \epsilon^{-2} Es ( \epsilon n, n ), \ d_{\text{H}} ( \text{LEW}_{n_{\epsilon}}, {\cal K} ) < \epsilon^{2} \Big) \notag \\
&\ge P \Big(  J_{\epsilon, n} \ge c_{r} \epsilon^{-2} Es ( \epsilon n, n ), \ d_{\text{H}} ( \text{LEW}_{n_{\epsilon}}, {\cal K} ) < \epsilon^{2} \Big) \notag \\
&\ge P \Big(  J_{\epsilon, n} \ge c_{r} \epsilon^{-2} Es ( \epsilon n, n ) \Big) - P \Big( d_{\text{H}} ( \text{LEW}_{n_{\epsilon}}, {\cal K} ) \ge \epsilon^{2} \Big) \notag \\
&\ge 1-r- \epsilon^{100},
\end{align}
which finishes the proof.
\end{proof}

\medskip

\begin{rem}\label{poli-1}
By \eqref{first-moment-up} and Markov's inequality, we see that for all $r > 0$, 
\begin{equation}\label{imp-49}
P \Big(  Y^{\epsilon} \ge \frac{C}{r} \epsilon^{-2} Es (\epsilon n, n ) \Big) \le \frac{E ( Y^{\epsilon} )}{\frac{C}{r} \epsilon^{-2} Es (\epsilon n, n )} \le r,
\end{equation}
where $C$ is a constant as in \eqref{first-moment-up}.
\end{rem}

\section{Lower bound of $\text{dim}_{\text{H}} ({\cal K})$}
In this section, we will prove that 
\begin{equation}\label{dim-1}
\text{dim}_{\text{H}} ({\cal K}) \ge 2- \alpha, \text{ almost surely}.
\end{equation}
Combining this with Theorem 1.4 \cite{SS}, we have 
\begin{equation}\label{dim-2}
\text{dim}_{\text{H}} ({\cal K}) = 2- \alpha, \text{ almost surely}.
\end{equation}
In order to prove \eqref{dim-1}, we will use a standard technique so called Frostman's lemma (see Lemma \ref{dim-3}). We will review that lemma in Section 5.1. We then give some energy estimates for suitable sequence of measures whose supports converge to ${\cal K}$ (see Lemma \ref{dim-6}). Using Lemma \ref{dim-6}, we will prove \eqref{dim-1} in Section 5.2.

\subsection{Preliminaries}
In order to give a lower bound of the Hausdorff dimension of a set in $\mathbb{R}^{d}$, the Lemma \ref{dim-3} below is a standard criterion referred to as Frostman's lemma. In this subsection, we first state it. Then in Lemma \ref{dim-6}, we will estimate $\beta$-energy for suitable measures $\mu_{k} $ defined below.

\begin{lem}\label{dim-3} (Theorem 4.13 \cite{Falconer})
Suppose that $K \subset \overline{D}$ is a closed set and let $\mu$ be a positive measure supported on $K$ with $\mu ( K)  > 0$. Define $\beta$-energy $I_{\beta} ( \mu )$ by 
\begin{equation}\label{dim-4}
 I_{\beta} ( \mu ) = \int_{\overline{D}} \int_{\overline{D}} |x- y|^{- \beta } d \mu (x ) d \mu (y).
 \end{equation}
 If $I_{\beta} ( \mu ) < \infty$, then $\text{dim}_{\text{H}} (K) \ge \beta$.
 \end{lem}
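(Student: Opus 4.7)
The plan is to apply the mass distribution principle: if I can produce a nontrivial measure $\nu$ supported on $K$ satisfying $\nu(B(x,r)) \leq C r^{\beta}$ uniformly in $x \in \mathbb{R}^{3}$ and $r>0$, then for any countable cover $K \subset \bigcup_{i} U_{i}$ with $\mathrm{diam}(U_{i}) = d_{i}$, I can dominate each piece by a ball of radius $d_{i}$ centered at a point of $U_{i}$ and conclude $\nu(K) \leq C \sum_{i} d_{i}^{\beta}$, which forces $\mathcal{H}^{\beta}(K) \geq \nu(K)/C > 0$ and hence $\dim_{\mathrm{H}}(K) \geq \beta$. So the real task is to manufacture such a $\nu$ out of $\mu$ using the hypothesis $I_{\beta}(\mu) < \infty$.

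First I would introduce the $\beta$-potential
\begin{equation*}
\phi(x) := \int_{\overline{D}} |x - y|^{-\beta}\, d\mu(y),
\end{equation*}
so that $\int \phi\, d\mu = I_{\beta}(\mu) < \infty$ means $\phi < \infty$ for $\mu$-almost every $x$. Consequently the level sets $E_{M} := \{x \in \overline{D} : \phi(x) \leq M\}$ exhaust $\mu$: $\mu(E_{M}) \uparrow \mu(K)$ as $M \to \infty$, so I can fix $M$ large enough that $\mu(E_{M}) \geq \tfrac{1}{2}\mu(K) > 0$, and define $\nu := \mu|_{E_{M}}$.

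The key pointwise estimate is the trivial inequality: for any $x$ and $r>0$,
\begin{equation*}
\phi(x) \geq \int_{B(x,r)} |x-y|^{-\beta}\, d\mu(y) \geq r^{-\beta} \mu(B(x,r)),
\end{equation*}
so on $E_{M}$ one has $\mu(B(x,r)) \leq M r^{\beta}$, and therefore $\nu(B(x,r)) \leq M r^{\beta}$ for every $x \in E_{M}$. For a general center $x \in \mathbb{R}^{3}$, either $\nu(B(x,r))=0$ or there is some $y_{0} \in E_{M} \cap B(x,r)$, in which case $B(x,r) \subset B(y_{0}, 2r)$ and hence $\nu(B(x,r)) \leq \mu(B(y_{0}, 2r)) \leq 2^{\beta} M r^{\beta}$. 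This gives the uniform Frostman-type bound $\nu(B(x,r)) \leq 2^{\beta} M r^{\beta}$ required by the mass distribution principle, completing the proof.

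The argument is entirely standard and routine; the only mild subtlety is the transfer from centers in $E_{M}$ to arbitrary centers, handled by the $B(x,r) \subset B(y_{0},2r)$ trick above. I do not anticipate any substantive obstacle, since this is a well-known textbook result (indeed the lemma is quoted directly from Falconer).
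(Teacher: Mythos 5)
The paper does not prove this lemma at all --- it is quoted directly from Falconer (Theorem 4.13) --- and your argument is exactly the standard potential-theoretic proof of that result: restrict $\mu$ to a sublevel set $E_{M}$ of the potential $\phi$ to obtain the uniform bound $\nu(B(x,r)) \le 2^{\beta} M r^{\beta}$, then conclude via the mass distribution principle. The proof is correct and complete.
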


\medskip

According to Lemma \ref{dim-3}, we need to construct a positive (random) measure $\mu$ supported on ${\cal K}$ such that its $(\beta - \delta )$-energy $I_{\beta - \delta } ( \mu )$ is finite with high probability for any $\delta > 0$, where $\beta := 2- \alpha$ (see Theorem \ref{escape-5} for $\alpha$). 
With this in mind, let $\epsilon = \epsilon_{k} = 2^{-k}$ for $k \ge 1$ and let $n=n_{\epsilon} = 2^{j_{\epsilon }}$ be an integer satisfying \eqref{skorohod-2}. Now we define a sequence of measures $\mu_{k}$ which approximates $\mu$ as follows. Let $\mu_{k}$ be the (random) measure whose density, with respect to Lebesgue measure, is $ \frac{1}{ \epsilon Es ( \epsilon n , n ) }$ on each box $\epsilon B_{x}$ with $x \in \mathbb{Z}^{3}$ and $\epsilon B_{x} \subset D_{\frac{2}{3}} \setminus D_{\frac{1}{3}}$ or $\epsilon B_{x} \cap \partial D_{\frac{i}{3}} \neq \emptyset$ for $i=1,2$ such that ${\cal K} \cap \epsilon B_{x} \neq \emptyset$, and assigns measure zero elsewhere. Then it is easy to check that $\text{supp} (\mu_{k+1} ) \subset \text{supp} (\mu_{k} )$ and with probability one $\bigcap_{k=1}^{\infty} \text{supp} (\mu_{k} ) \subset {\cal K}$.

Therefore, as we discussed as in Section 1.2, we need to show that for every $\delta > 0$ and $r > 0$ there exist constants $c_{r} > 0, C_{\delta, r} < \infty$ which do not depend on $\epsilon$ such that 
\begin{align}
&P \Big( I_{\beta - \delta } ( \mu_{k} ) \le C_{\delta, r} \Big) \ge 1-r, \label{sketch-1-1-1} \\
&P \Big( \mu_{k} (\overline{D} ) \ge c_{r} \Big) \ge 1-r. \label{sketch-1-1-2}
\end{align}
for all $k > 0$. Once \eqref{sketch-1-1-1} and \eqref{sketch-1-1-2} are proved, let $\mu$ be any weak limit of the $\mu_{k}$. Then the measure $\mu$ is a positive measure satisfying that its support is contained in ${\cal K}$ and the $(\beta - \delta )$-energy is finite with probability at least $1-r$. Using Lemma \ref{dim-3}, we get $\text{dim}_{\text{H}} ({\cal K}) \ge \beta - \delta$ with probability $\ge 1-r$, and Theorem \ref{main result} is proved.

For \eqref{sketch-1-1-2}, we have the following. Take an arbitrary $r > 0$. By Proposition \ref{imp-46}, with probability at least $1- r- \epsilon^{100}$, we have 
\begin{equation}\label{dim-5}
\mu_{k} (\overline{D} ) \ge \sum_{x \in \mathbb{Z}^{3}, \epsilon B_{x} \subset D_{\frac{2}{3}} \setminus D_{\frac{1}{3}} } {\bf 1} \{ {\cal K} \cap \epsilon B_{x} \neq \emptyset \} \frac{\epsilon^{2}}{Es ( \epsilon n , n )} = Y^{\epsilon } \frac{\epsilon^{2}}{Es ( \epsilon n , n )} \ge c_{r},
\end{equation}
for all $k$, which proves \eqref{sketch-1-1-2}.

For \eqref{sketch-1-1-1}, we start with the following lemma which gives a first moment estimate of $I_{\beta - \delta} ( \mu_{k} )$ for an arbitrary positive number $\delta$.

\begin{lem}\label{dim-6}
For every $\delta > 0$, there exists $C_{\delta } < \infty $ such that 
\begin{equation}\label{dim-7}
E \Big( I_{\beta - \delta} ( \mu_{k} ) \Big) \le C_{\delta},
\end{equation}
for all $k$. Here $\beta := 2 - \alpha $.
\end{lem}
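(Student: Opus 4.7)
The plan is to expand $I_{\beta - \delta}(\mu_{k})$ as a double sum over pairs of $\epsilon$-cubes hit by ${\cal K}$, take expectation, and apply Theorem \ref{key thm} termwise. Since the density of $\mu_{k}$ on each hit cube is $\frac{1}{\epsilon Es(\epsilon n, n)}$, each such cube carries total mass $\frac{\epsilon^{2}}{Es(\epsilon n, n)}$, so
\begin{equation*}
I_{\beta - \delta}(\mu_{k}) = \frac{1}{(\epsilon Es(\epsilon n, n))^{2}} \sum_{x, y} {\bf 1}\{ {\cal K} \cap \epsilon B_{x} \neq \emptyset, \ {\cal K} \cap \epsilon B_{y} \neq \emptyset \} \int_{\epsilon B_{x}} \int_{\epsilon B_{y}} |u - v|^{-(\beta - \delta)} \, du \, dv.
\end{equation*}
For distinct $x, y$ with $\ell := |x - y|$ the pairwise integral is at most $C \epsilon^{6} (\epsilon \ell)^{-(\beta - \delta)}$, while the diagonal $x = y$ contributes $\le C \epsilon^{6 - (\beta - \delta)}$ since $\beta - \delta < 3$.

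The diagonal contribution, combined with the single-cube bound $P({\cal K} \cap \epsilon B_{x} \neq \emptyset) \le C \epsilon Es(\epsilon n, n)$ from \eqref{first-moment-up} and summed over the $O(\epsilon^{-3})$ relevant cubes, is at most $\frac{C \epsilon^{\alpha + \delta}}{Es(\epsilon n, n)}$, which is bounded by $C_{\kappa} \epsilon^{\delta - \kappa}$ after using the lower bound $Es(\epsilon n, n) \ge c_{\kappa} \epsilon^{\alpha + \kappa}$ from Theorem \ref{escape-5} with $\kappa < \delta$. Cubes touching $\partial D_{1/3}$ or $\partial D_{2/3}$ form a set of only $O(\epsilon^{-2})$ elements and give a smaller contribution, which is absorbed.

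For the off-diagonal terms I would invoke Theorem \ref{key thm}, giving
\begin{equation*}
P({\cal K} \cap \epsilon B_{x} \neq \emptyset, \ {\cal K} \cap \epsilon B_{y} \neq \emptyset) \le C \frac{\epsilon}{\ell} Es(\epsilon n, \ell \epsilon n) Es(\epsilon n, n),
\end{equation*}
and group pairs by $\ell = |x - y|$. Using that the number of centers $x$ is $O(\epsilon^{-3})$ and that for each such $x$ there are $O(\ell^{2})$ choices of $y$ at distance $\ell$, the expected off-diagonal energy collapses to
\begin{equation*}
\frac{C \epsilon^{2 - (\beta - \delta)}}{Es(\epsilon n, n)} \sum_{\ell = 1}^{2/\epsilon} \ell^{1 - (\beta - \delta)} Es(\epsilon n, \ell \epsilon n).
\end{equation*}

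The key computation is to control this $\ell$-sum via Lemma \ref{escape-8}, which gives $Es(\epsilon n, \ell \epsilon n) \le C_{\kappa} (\ell \epsilon)^{- \alpha - \kappa} Es(\epsilon n, n)$ for any $\kappa > 0$. Choosing $\kappa < \delta$, the $\ell$-exponent becomes $1 - (\beta - \delta) - \alpha - \kappa = -1 + (\delta - \kappa) > -1$, so the truncated sum up to $\ell = 2/\epsilon$ is of order $\epsilon^{-(\delta - \kappa)}$, and the full sum is bounded by $C Es(\epsilon n, n) \epsilon^{-\alpha - \delta}$. Collecting the powers of $\epsilon$ in the off-diagonal piece yields $2 - (\beta - \delta) - \alpha - \delta = 2 - \beta - \alpha$, which vanishes exactly by the identity $\beta = 2 - \alpha$ from Remark \ref{chuui-1}, and the escape probability factors cancel. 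Hence $E(I_{\beta - \delta}(\mu_{k})) \le C_{\delta}$ uniformly in $k$. The only substantive input is Theorem \ref{key thm}; given that bound, the present argument is essentially bookkeeping in which the identity $\beta = 2 - \alpha$ provides the exact cancellation that prevents the $\epsilon$-power from blowing up.
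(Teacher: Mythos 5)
Your proposal is correct and follows essentially the same route as the paper: expand the energy over pairs of $\epsilon$-cubes, apply Theorem \ref{key thm} termwise, group off-diagonal pairs by $\ell=|x-y|$, control the resulting $\ell$-sum with the ratio bound \eqref{escape-10} for a small $\kappa<\delta$, and handle the diagonal via the single-box bound together with \eqref{escape-7}. The cancellation you note from $\beta=2-\alpha$ is exactly the computation in the paper's display \eqref{dim-8}.
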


\begin{proof}
Recall that we write $\epsilon = \epsilon_{k} = 2^{-k}$ for $k \ge 1$ and let $n=n_{\epsilon} = 2^{j_{\epsilon }}$ be an integer satisfying \eqref{skorohod-2}. Then by Theorem \ref{key thm},
\begin{align}\label{dim-hosokudao}
&E \Big( I_{\beta - \delta} ( \mu_{k} ) \Big) \le \sum_{x, y \in \mathbb{Z}^{3}, \ \epsilon B_{x}, \epsilon B_{y} \subset D_{\frac{2}{3}} \setminus D_{\frac{1}{3}}} E \Big\{ \int_{\epsilon B_{x}} \int_{\epsilon B_{y}} |z- w|^{- (\beta - \delta) } d \mu_{k} (z) d \mu_{k} (w) \Big\} \notag \\
&= \sum_{x, y \in \mathbb{Z}^{3}, \ \epsilon B_{x}, \epsilon B_{y} \subset D_{\frac{2}{3}} \setminus D_{\frac{1}{3}}} \big( \epsilon Es (\epsilon n, n ) \big)^{-2} \int_{\epsilon B_{x}} \int_{\epsilon B_{y}} |z- w|^{- (\beta - \delta) } dz dw P \Big( {\cal K} \cap \epsilon B_{x} \neq \emptyset \text{ and } {\cal K} \cap \epsilon B_{y} \neq \emptyset \Big) \notag \\
&\le C \sum_{x  \in \mathbb{Z}^{3}, \ \epsilon B_{x} \subset D_{\frac{2}{3}} \setminus D_{\frac{1}{3}}} \sum_{l = 1}^{\frac{2}{\epsilon}} l^{2} \big( \epsilon Es (\epsilon n, n ) \big)^{-2} (\epsilon l )^{-(\beta - \delta)} \epsilon^{6} Es ( \epsilon n , l \epsilon n) Es ( \epsilon n , n )  \frac{\epsilon}{l}  \notag \\
&+ C \sum_{x  \in \mathbb{Z}^{3}, \ \epsilon B_{x} \subset D_{\frac{2}{3}} \setminus D_{\frac{1}{3}}} \big( \epsilon Es (\epsilon n, n ) \big)^{-2} \int_{\epsilon B_{x}} \int_{\epsilon B_{x}} |z- w|^{- (\beta - \delta) } dz dw \ Es (\epsilon n, n ) \epsilon.
\end{align}
But RHS of \eqref{dim-hosokudao} is bounded above by
\begin{align}\label{dim-8}
&\le C \epsilon^{2- (\beta - \delta)} \frac{1}{ Es (\epsilon n, n ) } \sum_{l = 1}^{\frac{2}{\epsilon}} l^{1 - (\beta - \delta) } Es ( \epsilon n , l \epsilon n) + \frac{C \epsilon^{\alpha + \delta }}{Es (\epsilon n, n )} \notag \\
&\le C \epsilon^{2- (\beta - \delta)} \frac{1}{ Es (\epsilon n, n ) } \sum_{l = 1}^{\frac{2}{\epsilon}} l^{1 - (\beta - \delta) } (\epsilon l )^{-\alpha - \eta } Es (\epsilon n, n ) + \frac{C \epsilon^{\alpha + \delta }}{Es (\epsilon n, n )} \notag \\
&(\text{Here } \eta := \frac{\delta }{2} \text{ and we used \eqref{escape-10}}.) \notag \\
&\le C \epsilon^{2- (\beta - \delta) -\alpha - \eta } \sum_{l = 1}^{\frac{2}{\epsilon}} l^{1 - (\beta - \delta) -\alpha - \eta} + \frac{C \epsilon^{\alpha + \delta }}{Es (\epsilon n, n )} \le C_{\delta}, \notag \\
&(\text{Here we used \eqref{escape-7}})
\end{align}
which finishes the proof.
\end{proof}

\subsection{Proof of \eqref{dim-1}}
Now we are ready to prove the following theorem.

\begin{thm}\label{main-1}
Let $d=3$. Then 
\begin{equation}\label{main-2}
\text{dim}_{\text{H}} ({\cal K} ) \ge 2- \alpha, \text{ almost surely}.
\end{equation}
\end{thm}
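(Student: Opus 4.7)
The plan is to apply Frostman's lemma (Lemma \ref{dim-3}) to a weak subsequential limit $\mu$ of the random measures $\mu_{k}$ introduced just before Lemma \ref{dim-6}, with $\epsilon = \epsilon_{k} = 2^{-k}$. Fix $\delta > 0$ and $r > 0$. First I would combine Lemma \ref{dim-6} with Markov's inequality to obtain a constant $C_{\delta, r} < \infty$ such that $P( I_{\beta - \delta}(\mu_{k}) \le C_{\delta, r}) \ge 1-r$ uniformly in $k$. Second, \eqref{dim-5} (a direct consequence of Proposition \ref{imp-46}) yields $c_{r} > 0$ with $P( \mu_{k}(\overline{D}) \ge c_{r}) \ge 1 - r - \epsilon_{k}^{100}$ uniformly in $k$. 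Third, the upper bound \eqref{first-moment-up} on $E(Y^{\epsilon})$ together with Markov's inequality supplies a uniform upper bound $P(\mu_{k}(\overline{D}) \le C/r) \ge 1-r$, so the total masses of $\mu_{k}$ are tight.

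Next I would extract a weak subsequential limit. On the intersection of these three events, whose probability is at least $1 - 3r - \epsilon_{k}^{100}$, each $\mu_{k}$ is a positive Borel measure on the compact metric space $\overline{D}$ with total mass in $[c_{r}, C/r]$. By Prokhorov's theorem (tightness is automatic on $\overline{D}$) together with a Borel–Cantelli / diagonal argument as $k \to \infty$, on an event $\tilde{E}_{r}$ of probability at least $1 - 4r$ one may pass to a (random) subsequence $\mu_{k_{j}}$ converging weakly to a Borel measure $\mu$ on $\overline{D}$ with $\mu(\overline{D}) \ge c_{r}$. Since $(x,y) \mapsto |x-y|^{-(\beta - \delta)}$ is a positive lower semicontinuous kernel, the energy functional is lower semicontinuous under weak convergence, so on $\tilde{E}_{r}$
\begin{equation*}
I_{\beta - \delta}(\mu) \le \liminf_{j \to \infty} I_{\beta - \delta}(\mu_{k_{j}}) \le C_{\delta, r}.
\end{equation*}
Because $\text{supp}(\mu_{k+1}) \subset \text{supp}(\mu_{k})$ by construction and each $\text{supp}(\mu_{k})$ lies in the $\sqrt{3}\,\epsilon_{k}$-neighborhood of ${\cal K}$, the weak limit $\mu$ is supported on ${\cal K}$ almost surely.

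Frostman's lemma then gives $\text{dim}_{\text{H}}({\cal K}) \ge \beta - \delta$ on $\tilde{E}_{r}$. Letting $r \to 0$ along a countable sequence upgrades this to an almost sure statement, and letting $\delta \to 0$ along a countable sequence yields $\text{dim}_{\text{H}}({\cal K}) \ge \beta = 2 - \alpha$ almost surely, which combined with the upper bound in \eqref{haus-bound} from Theorem 1.4 of \cite{SS} proves \eqref{main-2}.

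The main obstacle I anticipate is the rigorous justification that $\text{supp}(\mu) \subset {\cal K}$ under the above weak convergence: for any bounded continuous $f$ vanishing on ${\cal K}$, the uniform continuity of $f$ together with $\text{dist}(\text{supp}(\mu_{k_{j}}), {\cal K}) \to 0$ forces $\int f \, d\mu_{k_{j}} \to 0$, whence $\int f \, d\mu = 0$, and then closedness of ${\cal K}$ yields $\text{supp}(\mu) \subset {\cal K}$. A secondary issue is the measurability of the random choice of subsequence $k_{j}(\omega)$ and of $\mu(\omega)$, handled by a standard Skorokhod-type argument rather than by new probabilistic input. Compared to the quantitative inputs already in hand (Theorem \ref{key thm}, Proposition \ref{imp-46}, and Lemma \ref{dim-6}), these remaining points are routine.
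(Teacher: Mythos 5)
Your proposal is correct and follows essentially the same route as the paper: Frostman's lemma applied to a weak limit of the measures $\mu_{k}$, with the energy bound coming from Lemma \ref{dim-6} plus Markov's inequality and the mass lower bound from Proposition \ref{imp-46} via \eqref{dim-5}, followed by letting $r \to 0$ and $\delta \to 0$. The only cosmetic difference is that the paper extracts the good subsequence via Fatou's lemma applied to the events $\{\mu_{k}(\overline{D}) \ge c_{r},\ I_{\beta-\delta}(\mu_{k}) \le C_{\delta,r}\}$ rather than via Prokhorov plus a diagonal argument, and it leaves the support-containment and lower-semicontinuity points as routine verifications, exactly as you anticipate.
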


\begin{proof}
Recall that $r > 0$ is an arbitrary positive number. Let $\delta > 0$ be an arbitrary positive number also. Define $C_{\delta, r }:= \frac{C_{\delta}}{r}$ where $C_{\delta}$ is a constant as in Lemma \ref{dim-6}. Take a constant $c_{r} > 0$ as in Proposition \ref{imp-46}. Let $\beta:= 2- \alpha$. By Lemma \ref{dim-6} and Markov's inequality,
\begin{equation}\label{main-3}
P \Big( I_{\beta - \delta} ( \mu_{k} ) \ge C_{\delta, r } \Big) \le \frac{C_{\delta }}{C_{\delta, r }} = r,
\end{equation}
for all $k$. Combining this with \eqref{dim-5}, we have
\begin{equation*}
P \Big( \mu_{k} (\overline{D} ) \ge c_{r}, \ I_{\beta - \delta} ( \mu_{k} ) \le C_{\delta, r } \Big) \ge 1 - 2r - 2^{-100 k},
\end{equation*}
for all $k$. By Fatou's lemma, this implies 
\begin{equation*}
P \Big( \mu_{k} (\overline{D} ) \ge c_{r}, \ I_{\beta - \delta} ( \mu_{k} ) \le C_{\delta, r } \ \text{i.o.} \Big) \ge 1 - 2r.
\end{equation*}
On the event above, let $\mu$ be any weak limit of the $\mu_{k}$. Then it is easy to verify that $\mu$ is supported on ${\cal K}$, $\mu ( {\cal K} ) \ge c_{r}$, and $I_{\beta - \delta} ( \mu ) \le C_{\delta, r } $. By Lemma \ref{dim-3}, we have 
\begin{equation*}
P \Big( \text{dim}_{\text{H}} ({\cal K} ) \ge 2- \alpha - \delta \Big) \ge 1 - 2r.
\end{equation*}
Since this holds for every $r > 0$ which is independent of $\delta > 0$,
\begin{equation*}
P \Big( \text{dim}_{\text{H}} ({\cal K} ) \ge 2- \alpha - \delta \Big) = 1.
\end{equation*}
Since this holds for every $\delta > 0$, we see that 
\begin{equation*}
P \Big( \text{dim}_{\text{H}} ({\cal K} ) \ge 2- \alpha  \Big) = 1,
\end{equation*}
which finishes the proof.
\end{proof}

\medskip

\begin{rem}\label{final}
We expect that 
\begin{equation}\label{compara}
Es (n) \asymp n^{-\alpha },
\end{equation}
in 3 dimensions. Here we write $a_{n} \asymp b_{n}$ if there exists $c > 0$ such that $c b_{n} \le a_{n} \le \frac{1}{c} b_{n}$ for all $n$.

This is proved for $d=2$ \cite{Law14}. The main steps in \cite{Law14} are 
\begin{itemize}
\item Write $Es (n)$ in terms of simple random walk quantities.

\item Estimate the simple random walk quantities.
\end{itemize} 
The simple random walk quantities as above come from the random walk loop measure which is related to the winding number of loops (see \cite{Law14}). In \cite{Law14}, by estimating such simple random walk quantities carefully, not only the relation as in \eqref{compara} but the exact value of $\alpha$ were also obtained in two dimensions ($\alpha = \frac{3}{4}$ in two dimensions).

Is it possible to find suitable simple random walk quantities to calculate $Es (n)$ and to compute the exact value of $\alpha$ for $d=3$? 
\end{rem}

\begin{rem}\label{final-1}
Recall that we write $Y$ for the union of ${\cal K}$ and loops from independent Brownian loop soup in $D$ which intersect ${\cal K}$, see \eqref{decomp}. Theorem 1.1 of \cite{SS} shows that $Y$ has the same distribution as the trace of three-dimensional Brownian motion. In Conjecture 1.3 of \cite{SS}, we conjectured that the law of ${\cal K}$ would be characterized uniquely by this decomposition. Namely, if the union of a random simple path $\tilde{{\cal K}}$ and loops from independent Brownian loop soup in $D$ which intersect $\tilde{{\cal K}}$ has the same distribution as the trace of three-dimensional Brownian motion, then we expect that $\tilde{{\cal K}}$ has the same distribution as ${\cal K}$. Thanks to Theorem \ref{main-result-1}, in this characterization, we may add one additional assumption for $\tilde{{\cal K}}$, i.e., $\text{dim}_{\text{H}} ( \tilde{{\cal K}} ) = \beta$ almost surely. We believe that this might be useful to prove the conjecture.

\end{rem}

{\bf Acknowledgements.} Enormous thanks go to Alain-Sol Sznitman for a careful reading of the paper and fruitful comments. This project was carried out while the author was enjoying
the hospitality of the Forschungsinstitut f\"{u}r Mathematik of the ETH Z\"{u}rich. He wishes to thank the institution. The research of the author has been supported by the Japan Society for the Promotion of Science (JSPS). Finally, the author thanks Hidemi Aihara for all her understanding and support.


\begin{thebibliography}{40}
\bibitem{BM} M. T. Barlow and R. Masson. : Exponential tail bounds for loop-erased random walk in two dimensions. Ann. Probab.  Volume 38, Number 6, 2379-2417, (2010). 
\bibitem{Bef} V. Beffara. : The dimension of the SLE curves. Ann. Probab. 36(4), 1421-1452. (2008).
\bibitem{Falconer} K. Falconer. : Fractal Geometry: Mathematical Foundations and Applications. Wiley. (1990).
\bibitem{GB} A. J. Guttmann and R. J. Bursill. : Critical exponents for the loop erased self-avoiding walk by Monte Carlo methods. Journal of Statistical Physics. 59(1), 1-9. (1990).
\bibitem{Ken} R. Kenyon. The asymptotic determinant of the discrete Laplacian. Acta Mathematica. 185(2), 239-286 (2000).
\bibitem{Koz} G. Kozma. : The scaling limit of loop-erased random walk in three dimensions. Acta Math. 199, 29-152, (2007).
\bibitem{Law-dfn} G. F. Lawler. : A self avoiding walk. Duke Math. J. 47, 655-694 (1980).
\bibitem{Law-book90} G. F. Lawler. : Intersections of random walks. Probability and its Applications. Birkhauser Boston, Inc., Boston, MA, (1991).
\bibitem{Law-haus} G. F. Lawler. : Hausdorff dimension of cut points for Brownian motion. El. Journal Probab. 1, 1-20, (1996).
\bibitem{Law-cuttimes} G. F. Lawler. : Cut times for simple random walk. Electronic Journal of Probability. 1:13, (1996).
\bibitem{Law LERW99} G. F. Lawler. : Loop-erased random walk, in Perplexing problems in probability, Progress in Probability 44, Birkhauser Boston, 197-217. (1999).
\bibitem{Law14} G. F. Lawler. The probability that planar loop-erased random walk uses a given edge. El. Journal Probab. 19, 1-13 (2014).
\bibitem{LSW} G. F. Lawler, O. Schramm and W. Werner. : Conformal invariance of planar loop-erased random walks and uniform spanning trees. Ann. Probab. 32(1B), 939-995 (2004).
\bibitem{LW} G. F. Lawler and W. Werner. : The Brownian loop soup. Probab. Theory Related Fields. 128(4), 565-588 (2004)
\bibitem{Mas} R. Masson. : The growth exponent for planar loop-erased random walk. Electron. J. Probab. 14, 1012-1073, (2009).
\bibitem{SS} A. Sapozhnikov and D. Shiraishi. : On Brownian motion, simple paths, and loops. see http://arxiv.org/abs/1512.04864
\bibitem{Sch} O. Schramm. : Scaling limits of loop-erased random walks and uniform spanning trees. Israel Journal of Mathematics. 118(1), 221-288 (2000).
\bibitem{Shi-gr} D. Shiraishi. : Growth exponent for loop-erased random walk in three dimensions. see http://arxiv.org/abs/1310.1682
\bibitem{Wil} D. B. Wilson. : The dimension of loop-erased random walk in 3D. Physical Review E. 82(6), 062102, (2010).
 
  


 


 




\end{thebibliography}
 \end{document}